\documentclass[11pt,a4paper]{amsart}
 
\usepackage{latexsym,cite, amssymb, amsmath,enumitem,ulem,soul,esint,pifont} 
\usepackage{amsfonts, graphicx,geometry}
\usepackage{graphicx,color}
\makeatletter

\usepackage [ colorlinks, citecolor=blue, anchorcolor=blue, linkcolor=blue ]{ hyperref }
\newcommand{\be}{\begin{equation}}
	\newcommand{\ee}{\end{equation}}
\newcommand{\beq}{\begin{eqnarray}}
	\newcommand{\eeq}{\end{eqnarray}}

\newtheorem{thm}{Theorem}[section]

\newtheorem{lem}{Lemma}[section]
\newtheorem{prop}{Proposition}[section]
\newtheorem{cor}{Corollary}[section]

\theoremstyle{remark}
\newtheorem{rmk}{Remark}[section]
\numberwithin{equation}{section}

 \def \Rn {\mathbb{R}^n}
 
  \newcommand{\R}{\mathbb{R}}

\newcommand{\Sp}{\mathbb{S}}

\def\ss{\mathbb{S}}

\def\C{\mathcal{C}}

\def\<{\langle}
\def\>{\rangle}
\def\div{{\rm div}}

\def\R{{\mathbb R}}

\def\be{\begin{equation}}
	\def\ee{\end{equation}}
\def\bee{\begin{equation*}}
	\def\eee{\end{equation*}}

\allowdisplaybreaks

\begin{document} 
	 \title{Blow-up phenomena for the constant Q/R-curvature equation}  
	\author{Caiyan Li }
	\address[C.L]{School of Mathematical Sciences, Xiamen University, 361005, Xiamen, P.R. China}
	\email{caiyanli@xmu.edu.cn}
	\author{Guofang Wang}
	\address[G.W]{Mathematisches Institut, Universit\"at Freiburg, Ernst-Zermelo-Str.1, 79104, Freiburg, Germany }
	\email{guofang.wang@math.uni-freiburg.de}
	  \author{Wei Wei}
	 \address[W.W]{School of Mathematics, Nanjing University, 210093, Nanjing, P.R. China}
	 \email{wei\_wei@nju.edu.cn} 
	\date{\today } 
\begin{abstract}	
Let $n\ge 25$ be an integer.
In this paper, we construct a Riemannian metric $g_{0}$ on $\mathbb{S}^n$, smooth for $n\geq 26$ and of class $C^9$ but not $C^{10}$ for $n=25$, with the property that the set of metrics in the conformal class of $g_{0}$ having positive scalar curvature and positive constant quotient $Q/R$ is non-compact. Equivalently, we construct families of solutions exhibiting blow-up behavior
 for the following equation  
\begin{align*} 
	P _{g_{0}}u- \frac{ (n+2 )(n-4 )}{4}  u^{  \frac{2}{n-4}}    L_{g_{0}}u^{  \frac{n-2}{n-4}} =0, \quad u>0\quad\text{on} \  \mathbb{S}^{n},
\end{align*}	
where  $P _{g_{0}}$ is the Paneitz operator and $ L_{g_{0}}=-\Delta_{g_{0}}  +\frac{n-2}{4(n-1 )}R_{g_{0}} $ is the conformal Laplacian of $ g_{0}$.
	\end{abstract}
	\keywords{non-compactness, Q/R-curvature, Q-curvature, Yamabe problem, Paneitz operator, conformal Laplacian, blow-up solutions}

	\maketitle 
	  	\tableofcontents
\section{Introduction}
Let $(M^{n},g_{0})$ be a compact Riemannian manifold of dimension $n\ge 3$. The problem of determining whether the background metric $g_{0}$ can be conformally transformed into a metric $g$ with a prescribed geometric property plays an important role in differential geometry and geometric analysis. The most famous example is the Yamabe problem, which asks whether any conformal class admits a metric of constant scalar curvature; this is equivalent to solving the following Yamabe equation.
 \begin{align*} 
		-\frac{4(n-1 )} {n-2} \Delta_{g_0} u+ R_{g_0} u
	= u^{\frac{n+2}{n-2}}, \quad u>0\quad\text{on } M.
	\end{align*}	
where $R_{g_0}$ is the scalar curvature of $g_0$. The affirmative answer was given by Yamabe\cite{Yamabe1960}, Trudinger\cite{Trudinger1968}, Aubin\cite{Aubin1976}, and Schoen\cite{Schoen1984}.

One of the central developments in the study of the Yamabe equation is the so-called Compactness Conjecture, which asserts that the set of all solutions to the Yamabe equation is compact, unless the underlying manifold $(M,g_0)$ is conformally equivalent to the standard sphere $\mathbb{S}^n$. This conjecture reflects a fundamental dichotomy between compactness and blow-up phenomena, and has played a crucial role in understanding the global structure of the solution space.
The conjecture was first established by Schoen \cite{Schoen1989,Schoen1991} in the locally conformally flat case. In the general setting, a complete compactness theory has been developed in low dimensions. More precisely, compactness was proved in dimension $3$ by Li-Zhu \cite{LiZhu1999}, and in dimensions $4$ and $5$ by Druet \cite{Druet2005}. These results rely on delicate blow-up analysis and refined estimates for the scalar curvature equation.

In higher dimensions, the situation becomes substantially more involved. For $n \geq 6$, compactness can be obtained under the combined input of the Positive Mass Theorem and the resolution of the Weyl Vanishing Conjecture. The latter asserts that, at any blow-up point, the Weyl tensor must vanish to order strictly greater than $\frac{n-6}{2}$. This condition imposes strong geometric restrictions on the underlying manifold and prevents the formation of certain singularities.
The Weyl Vanishing Conjecture has been verified in several important cases. In dimensions $6$ and $7$, it was proved independently by Marques \cite{Marques2005} and Li-Zhang \cite{LiZhang2005}. Subsequently, Khuri-Marques-Schoen \cite{KhuriMarquesSchoen2009} extended the result to all dimensions $n \leq 24$, thereby establishing compactness in this entire range.

On the other hand, a striking contrast emerges in higher dimensions. Marques \cite{Marques2009} constructed smooth counterexamples to the Weyl Vanishing Conjecture in dimensions $n \geq 25$, which in turn led to the failure of compactness. 

The set of constant scalar curvature metrics within a given conformal class may fail to be compact. This non-compactness phenomenon was first observed in the work of Ambrosetti-Malchiodi \cite{AmbrosettiMalchiodi1999} for non-smooth background metrics, and subsequently by Berti-Malchiodi \cite{BertiMalchiodi2001}. 
A major breakthrough was achieved by Brendle \cite{Brendle2008}, who showed that in dimensions $n \geq 52$, the set of solutions to the Yamabe equation can be non-compact even when the background metric $g_0$ is smooth. This result was later extended by Brendle-Marques \cite{BrendleMarques2009} to all dimensions $25 \leq n \leq 51$. Their constructions are based on a delicate gluing procedure, relying on the introduction of suitably chosen local model metrics. In \cite{Brendle2008}, a linear function is used in the construction, while in \cite{BrendleMarques2009} a cubic polynomial is employed to capture the higher-order interactions. The resulting blowing-up sequences exhibit a single bubble with finite volume.
More recently, Gong-Li \cite{GongLi2025} constructed sequences of conformal metrics of constant scalar curvature whose volumes become unbounded in dimensions $n \geq 25$, revealing a new type of non-compactness behavior beyond the single-bubble phenomenon.

Non-compactness in high dimensions is not restricted to the classical Yamabe problem. Similar phenomena have been observed in several related geometric equations, including the $Q$-curvature Yamabe problem \cite{WeiZhao2013}, the fractional Yamabe problem \cite{KimMussoWei}, and the boundary Yamabe problem \cite{Almaraz2011,DisconziKhuri2017,Wang2018,GhimentiMichelettiPistoia2019,ChenWu2020,GhimentiMicheletti2022,HoShin2025}. These results highlight the ubiquity of non-compactness in conformally invariant geometric problems and underscore the intricate role played by the dimension and the structure of the underlying operators.

In this paper, we further study this phenomenon for a new Yamabe-type problem involving the quotient of the $Q$-curvature and the scalar curvature, proposed by Ge-Wang-Wei \cite{GeWangWei2025-2}.
The problem is to find a metric in the conformal class $g=u^{\frac{4}{n-4}}g_0$ ($u>0$) with constant $Q/R$-curvature $\frac{n^2-4}{8(n-1)}$. This is equivalent to solving $Q_g/R_g=\frac{n^2-4}{8(n-1)}$, that is,
\begin{align} \label{eq-intro} 
	P _{g_{0}}u- \frac{ (n+2 )(n-4 )}{4}  u^{  \frac{2}{n-4}}    L_{g_{0}}u^{  \frac{n-2}{n-4}} =0, \quad u>0\quad  \text{on} \ M,
\end{align} 
where the conformal Laplacian $
L_{g _{0}}$ is defined by
 \begin{equation} \label{conf lap} 
	L_{g _{0}}:=-\Delta_{g_{0} }  +\frac{n-2}{4(n-1 )}R_{g_{0} },  
\end{equation} 
and $P _{g_{0}}$ is the fourth-order Paneitz operator introduced in \cite{Paneitz2008} and defined by 
\begin{equation}\label{p operator}  
			P _{g_{0}} : = \Delta_{g_{0}}^{2}  -{\div} _{g_{0}} \big( (a_{n}R_{g_0}g_0+b_{n}\operatorname{Ric}_{g_{0}} )\nabla_{g_0} \, \big)+	\frac{n-4}{2} Q_{g_0 },
\end{equation}   
 with
	\begin{align*}  
		a_{n}=\frac { (n-2)^{2}+4}{2(n-1)(n-2)},\quad b_{n}=-\frac{4}{n-2}.
	\end{align*}  
Here $Q_{g_0 }$ is the $Q$-curvature, defined by
	\begin{align*}  
		Q _{g _{0}}   := -\frac{ 1}{2(n-1)}\Delta_{g_{0}} R_{g_{0}}+\frac{n^{3}-4n^{2}+16n-16}{8(n-1)^{2}(n-2)^{2}}R_{g_{0}}^{2}-\frac{2}{(n-2)^{2}}|\operatorname{Ric}_{g_{0}}|^{2},
	\end{align*}  
and $\operatorname{Ric}_{g_{0}} $
is the Ricci tensor. 

As in the Yamabe problem for $Q$-curvature, the solutions of equation \eqref{eq-intro} correspond  to the critical points of a functional; in our case
the \textit{$Q/R$-curvature Yamabe functional} is 
\begin{align}  \label{energy}
	\mathcal{I}_{4 ,2}(M,g):=   \frac{\int_{M}\frac{n-4}{2} Q_{g} dv_{g}}{(\int_{M}R_{g} dv_{g})^{\frac{n-4}{n-2}}}\overset{g=u^{\frac{4}{n-4}}g_0}{=} \frac{\int_{M}uP_{g_0}u dv_{g_0}}{\big(\int_{M} \frac{4(n-1)}{n-2}u^{  \frac{n-2}{n-4}}    L_{g_{0}}u^{  \frac{n-2}{n-4}} dv_{g_{0}}\big)^{\frac{n-4}{n-2}}}.
\end{align} 
Here $P_{g_0}$ and $L_{g_{0}}$ are defined as in \eqref{p operator}  and  \eqref{conf lap}, respectively. 
Moreover, the \textit{$Q/R$-Yamabe constant} is  
\begin{align*}   
	Y_{4,2}(M,[g_{0}]):=\inf _{R_{g}>0,Q_{g}\geq0}\frac{\int_{M}\frac{n-4}{2}Q_{g} dv_{g}}{(\int_{M}R_{g} dv_{g})^{\frac{n-4}{n-2}}}.
\end{align*}
This Yamabe constant is well-defined. In \cite{GeWangWei2025-2}, an optimal Sobolev inequality involving the $Q/R$-curvature on $\mathbb{S}^{n}$ with positive scalar curvature metrics was established, and the corresponding Yamabe problem was solved under the assumption that the background metric has positive scalar curvature and semi-positive $Q$-curvature.

It is natural to ask about the compactness of solutions to this new Yamabe problem. The main objective of this paper is to
demonstrate the non-compactness of the solution set for the constant $Q/R$-curvature problem in large dimensions by constructing a one-bubble blow-up sequence. More precisely, we prove the following theorem.
\begin{thm}\label{main thm}
	Suppose $n\geq 25$. Then there exists a Riemannian metric $g_0$ on $\mathbb{S}^{n}$, with
    \[
    g_0\in
    \begin{cases}
    C^{\infty}, & n\geq 26,\\
    C^9\setminus C^{10}, & n=25,
    \end{cases}
    \]
    and a sequence of positive functions $u_k\in C^4(\mathbb{S}^n)$ $(k\in\mathbb{N})$, which are smooth when $n\geq 26$, with the following properties:
	\begin{enumerate}[label=\textup{(\roman*)}]
		\item $g_0 $ is not conformally flat,
		\item $u_{k}$ is a positive solution of the $Q/R$-curvature equation \eqref{main Q:R=c  equ} for all $k\in \mathbb{N}$ with positive scalar curvature,
        \begin{align} \label{main Q:R=c  equ} 
	  P _{g_{0}}u- \frac{ (n+2 )(n-4 )}{4}  u^{  \frac{2}{n-4}}    L_{g_{0}}u^{  \frac{n-2}{n-4}} =0, \quad u>0\quad \ \text{on}\   \mathbb{S}^n.
	\end{align}	
		\item  $\mathcal{I}_{4 ,2}(\mathbb{S}^{n},u_{k}^{\frac{4}{n-4}}g_{0})<Y_{4 ,2}(\mathbb{S}^{n})$ for all $k\in \mathbb{N}$, and  $	\mathcal{I}_{4 ,2}(\mathbb{S}^{n},u_{k}^{\frac{4}{n-4}}g_{0})\rightarrow  Y_{4 ,2}(\mathbb{S}^{n})$ as $k\rightarrow\infty$,
		\item  $\sup_{\mathbb{S}^{n}}u_{k}\rightarrow\infty$, as $k\rightarrow\infty$.
	\end{enumerate}
	Here $Y_{4 ,2}(\mathbb{S}^{n})$ denotes the $Q/R$ constant of the round metric on $ \mathbb{S}^{n}$. 
\end{thm}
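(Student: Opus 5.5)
We follow the Brendle and Brendle--Marques gluing strategy, adapted to the fourth-order quasilinear operator in \eqref{main Q:R=c  equ}. Work on $\mathbb{R}^n$ via stereographic projection. Take a perturbed metric $g=\exp(h)$ with $h$ trace-free, $\partial_i h_{ik}=0$ near the origin, and $h$ built from a Weyl-type tensor $H_{ik}(x)=\mu\lambda^{\frac{n-2}{2}... }f(\lambda^{-2}|x|^2)W_{ijkl}x_jx_l$-type expressions. Here $W$ is a fixed algebraic Weyl tensor, $f$ a cutoff profile, and $\mu$ small. Sum over a sequence of scales $\lambda_N\to0$ at well-separated points $y_N$, as in \cite{Brendle2008}; for $25\le n\le 51$ add the cubic correction of \cite{BrendleMarques2009}. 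The regularity of the summed metric is controlled by how fast $\mu_N$ decays relative to $\lambda_N$. This decay is tied to the exponent $\frac{n-2}{2}$-type threshold and is what yields $C^\infty$ for $n\ge26$ but only $C^9$ for $n=25$. Non-conformal flatness (i) is immediate from $W\neq0$.

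\textbf{Linearization.} The standard bubble $u_{(\xi,\varepsilon)}=\big(\frac{\varepsilon}{\varepsilon^2+|x-\xi|^2}\big)^{\frac{n-4}{2}}$ solves the flat equation, since for the flat metric \eqref{main Q:R=c  equ} reduces to the bi-Laplacian and Laplacian Yamabe equations with compatible constants. Linearize the operator $\mathcal{F}(u)=P_gu-\frac{(n+2)(n-4)}{4}u^{\frac{2}{n-4}}L_gu^{\frac{n-2}{n-4}}$ at this bubble. We need a nondegeneracy result: the kernel of the linearized operator in $\mathcal{E}=D^{2,2}$-type weighted spaces is spanned by the $n+1$ conformal Jacobi fields $\partial_\varepsilon u,\partial_{\xi_i}u$. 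We would prove this by the spherical harmonic decomposition. On each harmonic mode the linearized fourth-order ODE factors, because the $Q/R$ equation on the sphere is conformally invariant, so it is diagonal on spherical harmonics. This gives an explicit eigenvalue formula, and one checks that only degree $0,1$ modes vanish. Then a Lyapunov--Schmidt reduction gives, for $h$ small, a corrected solution $\bar u_{(\xi,\varepsilon)}$ solving $\mathcal{F}(\bar u)=\sum c_j\,(\text{kernel elements})$. Positivity of scalar curvature of $\bar u^{4/(n-4)}g$ is preserved since $\bar u$ is $C^2$-close, in scaled norms, to a bubble of positive scalar curvature. The reduced energy is $\mathcal{F}_g(\xi,\varepsilon)=\mathcal{I}_{4,2}(\bar u_{(\xi,\varepsilon)})$.

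\textbf{Energy expansion.} Compute $\mathcal{I}_{4,2}$ to second order in $h$. First-order terms vanish by trace-freeness and the divergence condition. The second-order term gives
\begin{align*}
\mathcal{F}(\xi,\varepsilon)=Y_{4,2}(\mathbb{S}^n)-C\mu^2\lambda^{?}F(\xi/\lambda,\varepsilon/\lambda)+\text{error},
\end{align*}
with an explicit function $F$ involving integrals of $f$, $f'$ against bubble weights. We must show $F$ has a strict local minimum (nondegenerate, or stable under perturbation) at some $(0,\varepsilon_*)$ with $F<0$. This requires $n\ge25$: the sign of the leading coefficient is a polynomial/Beta-function expression in $n$, and it changes sign exactly at $25$, mirroring Yamabe. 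This is the main obstacle. The Paneitz and $L_g u^{\frac{n-2}{n-4}}$ contributions produce new quadratic terms in $h$ that must be computed, including cross terms from the correction $w$ solving the linearized equation with source $\partial h$. Our first attempt would be to reduce them to the Yamabe quadratic form via the conformal covariance identity $P_g=L_g\circ(\cdots)$ up to curvature terms on the bubble. If that identity is unavailable, we would grind the integrals in polar coordinates.

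\textbf{Conclusion.} Given a strict local minimum for each scale $\lambda_N$, a degree/min argument yields critical points $(\xi_N,\varepsilon_N)$ with $\varepsilon_N\sim\lambda_N\to0$. The resulting $u_N$ solves \eqref{main Q:R=c  equ} exactly, which gives (ii). Its energy is $Y_{4,2}-c\mu_N^2\lambda_N^{\cdot}<Y_{4,2}$, tending to $Y_{4,2}$, which gives (iii). Finally $\sup u_N\ge\varepsilon_N^{-(n-4)/2}\to\infty$, which gives (iv).
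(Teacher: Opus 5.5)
Your overall architecture matches the paper's: a trace-free, divergence-free Weyl-type perturbation $h=\mu\epsilon^{k}f(\epsilon^{-2}|x|^{2})H(x)$, nondegeneracy by transplanting the linearized operator to $\mathbb{S}^n$ where it is diagonal on spherical harmonics, Lyapunov--Schmidt reduction, a second-order energy expansion, and gluing at points $x_N\to 0$. However, the decisive step is missing, and the ingredients you propose for it would not work.

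\textbf{The strict negative minimum of $F$.} You import Brendle's linear profile and the Brendle--Marques cubic, and assert that the relevant sign changes ``exactly at $25$, mirroring Yamabe.'' For this equation that is not what happens. The correction $\bar w$ vanishes at $\xi'=0$, so the reduced function there is $F(0,\lambda')=2^{n-4}F_Q(0,\lambda')-\frac{(n+2)(n-4)^2}{4(n-2)}\,2^{n-2}F_R(0,\lambda')$. In this combination of the $Q$-curvature and Yamabe reduced functions, the Yamabe part enters with a \emph{negative} coefficient, and the dimension threshold depends delicately on the profile $f$:
\begin{itemize}
\item a linear $f$ gives a strict minimum only for $n\ge 52$;
\item the quartic $f(s)=\tau-8126s+1662s^2-98s^3+s^4$ works for $n\ge 26$ but not for $n=25$;
\item the paper indicates that at $n=25$ the relevant quadratic form is coercive on all polynomial metric jets of order at most ten, so no cubic or quartic profile can work there.
\end{itemize}
For $n=25$ the paper needs the non-polynomial profile $f(s)=1028(s+1)^{1/2}-10(s+1)^{3/2}-\frac{121}{6}(s+1)^{5/2}+(s+1)^{7/2}+\tau(s+1)^{-1/2}$, with $h=\mu\epsilon^{7}f(\epsilon^{-2}|x|^2)H$. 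In each case the minimum at $(0,1)$ is obtained by choosing $\tau$ as a root of $I'(1)=0$. One then checks $I(1)<0$, $I''(1)>0$, $J_1(1)>0$ and $J_2(1)>0$ on explicit closed-form expressions. Without a concrete profile and these sign verifications, the existence of a critical point of the reduced energy --- the heart of the theorem --- is not established.

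\textbf{The regularity claim.} Your account of the regularity is also not a real mechanism, and your ansatz cannot produce the $C^9\setminus C^{10}$ metric the statement asserts at $n=25$. Take a polynomial profile of degree $d$, normalized as $h_N=\mu_N\epsilon_N^{2d}f(\epsilon_N^{-2}|x-x_N|^2)H(x-x_N)$. Then $h_N=\mu_N\epsilon_N^{2d+2}\bar H((x-x_N)/\epsilon_N)$, where $\bar H(y)=f(|y|^2)H(y)$ is a polynomial of degree $2d+2$. Hence every $m$-th derivative of the cut-off piece is bounded by $C_mN^{2m}\mu_N$, however $\mu_N$ compares with $\epsilon_N$, and the glued metric is $C^\infty$. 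This is exactly why Brendle--Marques get smooth metrics; no ``$\frac{n-2}{2}$-type threshold'' enters.

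In the paper, the loss of regularity at $n=25$ comes precisely from the half-integer profile. There $h_N=\mu\epsilon^{9}\bar H(x/\epsilon)$, with $\bar H$ smooth and growing like $|y|^{9}$ but not a polynomial. So derivatives up to order $9$ are $O(\mu)$ up to powers of $N$, while $\partial^{10}h_N(x_N)$ has size $\mu\epsilon^{-1}=2^{7N}$ for $\mu=2^{-N}$, $\epsilon=2^{-8N}$.
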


\begin{rmk}[Optimal regularity in dimension $25$]\label{rmk:optimal-regularity}
The regularity $g_0\in C^9\setminus C^{10}$ in dimension $n=25$ in Theorem~\ref{main thm} seems optimal with respect to the principal Pohozaev compactness obstruction. Indeed, the principal Pohozaev form for the constant $Q/R$ equation is coercive on the classical metric jet through order ten. Thus $C^9$ is the largest integer regularity below the threshold at which this ten-jet mechanism becomes available, and our construction lies precisely below that threshold. For $n\geq 26$, the corresponding form already admits a negative direction in the polynomial jet space, consistently with our smooth construction.
\end{rmk}

J. Wei-Zhao \cite{WeiZhao2013} established the non-compactness of the set of positive solutions to the prescribed $Q$-curvature equation in dimensions $n\ge 25$. Their result can be viewed as a higher-order analogue of the classical non-compactness phenomenon in the Yamabe problem mentioned above. However, in contrast to the scalar curvature case---where a cubic polynomial is sufficient to construct blowing-up sequences---the $Q$-curvature setting exhibits a more delicate structure. In particular, the higher-order nature of the underlying operator requires a more refined choice of test functions: J. Wei-Zhao employed a quartic polynomial to capture the subtle balance between leading-order terms and error terms in the expansion, which is essential for producing non-compact sequences of solutions.

On the other hand, a complementary compactness theory has been developed in low dimensions.
Gong-Kim-J. Wei \cite{GongKimWei2025} proved $C^4$-compactness for the constant $Q$-curvature equation on smooth closed Riemannian manifolds in dimensions $5\le n\le 24$. More precisely, they established compactness results for both the fourth- and sixth-order constant $Q$-curvature equations under the assumption that the underlying manifold is not conformally diffeomorphic to $\mathbb{S}^n$, and that an associated conformally covariant operator is positive. Their work shows that in this range of dimensions, blow-up phenomena do not occur and all solutions remain uniformly bounded in the $C^4$ topology. The compactness relies on the Liouville theorem for positive solutions of $(-\Delta)^2u=u^{\frac{n+4}{n-4}}$ in Euclidean space. However, the classification of positive entire solutions of our problem,
\begin{eqnarray}
    \label{entire-sol}
    (-\Delta)^2 u =\frac{(n+2)(n-4)}{4}\,u^{\frac{2}{n-4}}\,(-\Delta)u^{\frac{n-2}{n-4}} \qquad \text{in }\Rn,
\end{eqnarray}
remains open; we will further investigate these questions.

Taken together, these results reveal a sharp contrast between the compactness behavior in low and high dimensions: while compactness holds in dimensions $5 \leq n \leq 24$, it fails in dimensions $n \geq 25$. This dichotomy highlights the critical role played by the dimension and the order of the equation, and provides important context and motivation for the present work.   

By employing stereographic projection, we transfer our analysis from the sphere $\mathbb{S}^n$ to the Euclidean space $\mathbb{R}^n$, where the computations are more tractable. 
We are thus led to study equation \eqref{main Q:R=c  equ} in $\mathbb{R}^n$. 

One of the main difficulties in the $Q/R$-curvature setting stems from the lack of a precise understanding of the spectral properties of the linearized operator associated with \eqref{main Q:R=c  equ}. In contrast to the classical scalar curvature case, where the structure of the linearized operator and its spectrum are well understood, the corresponding eigenvalue problem in $\mathbb{R}^n$ for the $Q/R$-curvature equation remains largely unknown. This lack of spectral information creates a serious obstacle in carrying out a Lyapunov-Schmidt reduction. In particular, it becomes essential to obtain refined information on the eigenspaces corresponding to the first two eigenvalues of the linearized operator, as these modes govern the solvability of the linearized equation and the construction of approximate solutions.

A further difficulty arises in the choice of appropriate test functions used to detect non-compactness. In previously studied settings, such as the scalar curvature problem and the $Q$-curvature problem, polynomial test functions of low degree (typically cubic or quartic) are sufficient to capture the relevant asymptotic behavior. However, in our mixed $Q/R$-curvature framework, these classical choices are no longer adequate, especially in the borderline dimension $n=25$. More precisely, while a quartic polynomial $f(s)$ (see \eqref{4th-poly}) allows us to establish the desired non-compactness result in dimensions $n \geq 26$, it fails to produce the necessary sign structure in dimension $n=25$. To overcome this difficulty, we introduce a function $f(s)$ of fractional order $3.5$ (see \eqref{3.5th-poly}), which captures a finer balance between competing terms in the expansion. This choice, although less standard, turns out to be crucial for handling the delicate borderline case and requires a more careful analysis of the associated error terms. The resulting metric is smooth for $n\geq 26$, whereas in dimension $n=25$ the final gluing construction yields a metric of class $C^9$ but not $C^{10}$; see Remark~\ref{rmk:optimal-regularity}.

In addition, from a technical perspective, the nonlinear structure of our equation is significantly more involved than the polynomial-type nonlinearities considered in \cite{Brendle2008, BrendleMarques2009, WeiZhao2013}. In particular, the interaction between the $Q$-curvature and scalar curvature terms leads to more complicated error estimates and a stronger coupling between different modes. This makes it more challenging to establish  the required a priori estimates and to control the remainder terms in the perturbative scheme. To deal with these issues, we adapt and refine the functional setting introduced in \cite{WeiZhao2013} by employing a modified norm (see \eqref{norm}), which is specifically tailored to capture the decay and regularity properties of the solutions in our problem. This modification plays a crucial role in closing the estimates and ensuring the convergence of the scheme.

\medskip
\textit{Organization of the paper.}  
In Section~\ref{Sect. Local model metric}, we first construct a suitable local model metric, which serves as the fundamental building block for the final gluing procedure. This construction captures the essential geometric features of the problem and provides a precise framework for the subsequent analysis. In Section~\ref{Sect. Solutions under the Flat Metric}, we then study equation \eqref{main Q:R=c  equ} in the simpler setting of the flat metric, where explicit approximate solutions can be obtained and analyzed in detail.
In Section~\ref{Sect. Lyapunov-Schmidt-type reduction}, we implement a Lyapunov-Schmidt-type reduction to transform the original problem into a finite-dimensional one. More precisely, by imposing suitable orthogonality conditions, we decompose the problem into a finite-dimensional part and an infinite-dimensional remainder, thereby reducing the existence problem to the search for critical points of an associated variational functional. 
A crucial ingredient in this reduction is the solvability of the corresponding linearized equation. In Subsection~\ref{Sect. linear problem}, we carry out a detailed analysis of the linear problem \eqref{linear equ zeta z}, which is essential for controlling the error terms arising in the nonlinear scheme. This analysis relies in a fundamental way on the spectral properties of the linearized operator, which are studied in Section~\ref{Sect. eigenvalue problem}, where we investigate the associated eigenvalue problem and describe the relevant eigenspaces.
The reduction procedure is completed in Subsection~\ref{Sect. nonlinear problem}, where we solve the nonlinear equation \eqref{main Q:R=c  tilde g equ} under the same orthogonality constraints, thereby reducing the problem to a finite-dimensional nonlinear system \eqref{nonlinear c}. In order to extract precise information about this reduced problem, we further introduce a refined formulation that allows us to derive a sharp expansion of the associated energy functional; this is carried out in Section~\ref{Sect. Energy expansion}.
Finally, in Section~\ref{Sect. pf main thm}, we construct a suitable auxiliary function whose critical points correspond to solutions of the reduced problem. We show that the existence of a strict minimum of this function yields a solution to the original equation, thereby completing the proof of the main theorem. The verification of the existence of such a strict minimum is postponed to Appendix A. For dimensions $n \geq 26$, this is achieved in Section~\ref{Sect. n26} by employing a carefully chosen quartic polynomial, while for the borderline case $n=25$, the argument is carried out in Section~\ref{Sect. n25} using a function of fractional order $3.5$, reflecting the additional difficulties in this critical dimension.

\medskip
\noindent{\it Acknowledgments.}
This work was carried out while C. Li and W. Wei were visiting the University of Freiburg, and the authors would like to thank the Institute of Mathematics for its hospitality. W. Wei was supported by the Alexander von Humboldt Research Fellowship and partially supported by NSFC (Grant No. 12571218, 12271244). C. Li was partially supported by NSFC (Grant No. 12501274).

\section{Preliminaries}\label{Sect. Preliminaries}
\subsection{Local model metric}\label{Sect. Local model metric}

For convenience, we work on Euclidean space $\mathbb{R}^n$ instead of the sphere $\mathbb{S}^n$. Let $g_{0}$ be a Riemannian metric that coincides with the Euclidean metric outside the unit ball $B_1$. We assume that $g_0$ is smooth for $n\geq 26$ and of class $C^9$ for $n=25$; the final metric constructed in dimension $25$ belongs to $C^9\setminus C^{10}$. Throughout this work, we take $g_0=e^h$, where $h$ is a trace-free matrix; in particular, $\det g_{0}(x)=1$ for all $x\in\mathbb{R}^{n}$, so the volume form induced by $g_{0}$ agrees with the Euclidean one.

Fix a number $\tau\in\mathbb{R}$ depending only on the dimension $n$, and let 
$W:\mathbb{R}^{n}\times\mathbb{R}^{n}\times\mathbb{R}^{n}\times\mathbb{R}^{n}\to\mathbb{R}$ 
be a multilinear form. Assume that $W_{ijkl}$ satisfies all the algebraic properties of the Weyl tensor and that some components of $W$ are nonzero, so that
\begin{align*}  
	\sum_{ i,j,k,l=1}^{n}(W_{ikjl}+W_{iljk})^{2}>0.
\end{align*} 
We set  
\begin{align*}  
	H_{ij}(x)=\sum_{p,q=1}^nW_{ipjq}x_{p}x_{q},
\end{align*} 
and 
\begin{align*}  
	\bar{H}_{ij}(x)=f(|x|^{2})H_{ij}(x),
\end{align*}  
where $f(s)$ will be determined later. It is straightforward to verify that $H_{ij}(x)$ is trace-free, $\sum_{i=1}^{n} x_{i}H_{ik}(x)=0$, and  $\sum_{i=1}^{n} \partial_{i}H_{ik}(x)=0$ for all $x\in\mathbb{R}^{n}$. 

More precisely, we choose a quartic polynomial
\begin{equation}\label{4th-poly}
f(s)=\tau-8126s+1662s^{2}-98s^{3}+ s^{4}, 
\end{equation}
for $n\geq 26$, and the following function
\begin{equation}\label{3.5th-poly}
f(s)=1028(s+1)^{1/2}-10 (s+1)^{3/2}-\frac{121}{6}(s+1)^{5/2} +(s+1)^{7/2} +\tau(s+1)^{-1/2}, 
\end{equation}
for $n=25$, where $\tau$ will be determined later.

Consider a Riemannian metric $g_{0}$ of the form
\[
g_{0}(x)=\exp(h(x)),
\]
where $h(x)$ is a trace-free symmetric two-tensor on $\mathbb{R}^{n}$ satisfying $h(x)=0$ for $|x|\geq 1$, and
\begin{align*}  
 |h(x)|+|\partial h(x)|+|\partial^{2} h(x)|+|\partial^{3} h(x)|+|\partial^{4} h(x)|\leq \alpha
\end{align*} 
for all $x\in\mathbb{R}^{n}$.

Moreover, when $|x|\leq \rho$, we define 
\begin{align*}  
	h_{ij}(x)=\mu \epsilon^{8}f(\epsilon^{-2}|x|^{2})H_{ij}(x),
\end{align*} 
in the case \eqref{4th-poly}, while
\begin{align*}
 h_{ij}(x)=\mu \epsilon^{7}f(\epsilon^{-2}|x|^{2})H_{ij}(x) 
\end{align*} 
in the case \eqref{3.5th-poly}.  

Throughout what follows, the parameters $\epsilon$, $\mu$, and $\rho$ are chosen so that $\mu\le 1$ and $\epsilon\le \rho\le 1$.

\subsection{Solutions under the Flat Metric}\label{Sect. Solutions under the Flat Metric}
For any $\xi\in\mathbb{R}^{n}$ and $\lambda\in\mathbb{R}_{+}$, we define
\begin{align*}
u_{0}(x):=\alpha_{n}\Big(\frac{\lambda}{\lambda^{2}+|x-\xi|^{2}}\Big)^{\frac{n-4}{2}}, \quad \text{where } \alpha_{n}=2^{\frac{n-4}{2}}.
\end{align*}

One can directly verify that $u_{0}$ satisfies
\begin{align}\label{equ tilde u0}  
  \Delta^{2} u_{0}+ \frac{ (n+2 )(n-4 )}{4}  u_{0}^{  \frac{2}{n-4}}    \Delta u_{0}^{  \frac{n-2}{n-4}}=0\quad \text{in}\  \mathbb{R}^{n}.
\end{align}

We also note that
\begin{align}\label{u_0 n-2 n-4 u_0  8}
	\Delta {u}_0^{\frac{n-2}{ n-4 }} =-\frac{n(n-2)}{4}{u}_0^{\frac{n+2}{ n-4 }} \quad \text{in}\  \mathbb{R}^{n}.
\end{align}    
 
Setting $\tilde{g}_{0}(y)=g_{0}(\epsilon y)=e^{\tilde{h}(y)}$, we have
\begin{align*}
\tilde{h}_{ij}(y)=\mu\epsilon^{10}f(|y|^{2})H_{ij}(y)=\mu\epsilon^{10}\bar{H}_{ij}(y) \qquad \text{for } n\ge 26,
\end{align*}
and
\begin{align*}
\tilde{h}_{ij}(y)=\mu\epsilon^{9}f(|y|^{2})H_{ij}(y)=\mu\epsilon^{9}\bar{H}_{ij}(y) \qquad \text{for } n=25.
\end{align*}

A straightforward computation yields
\begin{align*}
\frac{Q_{g_{\mathbb{S}^{n}}}}{R_{g_{\mathbb{S}^{n}}}}=\frac{n^2-4}{8(n-1)}.
\end{align*}
It is clear that $u(x)$ satisfies \eqref{main Q:R=c  equ} if and only if the $Q/R$-curvature of the conformal metric $(\epsilon^{\tfrac{n-4}{2}}u(\epsilon y))^{\frac{4}{n-4}}\tilde{g}_0(y)$ is equal to $\frac{n^2-4}{8(n-1)}$.
Equivalently, setting
\[
v(y):=\epsilon^{\tfrac{n-4}{2}}u(\epsilon y),
\]
we see that $v$ satisfies
\begin{align} \label{main Q:R=c  tilde g equ} 
  P _{\tilde{g}_{0}}v- \frac{ (n+2 )(n-4 )}{4}  v^{  \frac{2}{n-4}}    L_{\tilde{g}_{0}}v^{  \frac{n-2}{n-4}} =0 \quad \text{in}\  \mathbb{R}^{n}.
\end{align}	

Introducing the rescaled variables $\xi'=\frac{\xi}{\epsilon}$ and $\lambda'=\frac{\lambda}{\epsilon}$, we define
\begin{align*}
\tilde{u}_{0}(y):=\epsilon^{\tfrac{n-4}{2}}u_{0}(\epsilon y)=\alpha_{n}\Big(\frac{\lambda'}{\lambda'^{2}+|y-\xi'|^{2}}\Big)^{\frac{n-4}{2}},
\end{align*}
and the set
\begin{align*}
\Lambda:=\big\{(\xi',\lambda')\in\mathbb{R}^{n}\times\mathbb{R}: |\xi'|\le 1,\ \tfrac{1}{2}<\lambda'<\tfrac{3}{2}\big\}.
\end{align*}

 \section{A Lyapunov--Schmidt-Type Reduction}\label{Sect. Lyapunov-Schmidt-type reduction}  
 
 In this section, we reduce the solution of \eqref{main Q:R=c  tilde g equ}  to finding a critical point of an energy functional, as stated in Proposition \ref{equvalence between solution and critical point}. To this end, we first solve the linearized equation corresponding to \eqref{main Q:R=c  tilde g equ} in an orthogonal subspace (Proposition \ref{unique sol linear equ zeta}, Subsection \ref{Sect. linear problem}), and subsequently treat the nonlinear equation \eqref{main Q:R=c  tilde g equ} in the same orthogonal subspace (Proposition \ref{nonlinear existence}, Subsection \ref{Sect. nonlinear problem}).
 
\subsection{Linearized problem}\label{Sect. linear problem}     
In this subsection, we study the linearized problem associated with \eqref{main Q:R=c  tilde g equ}. The corresponding linearized operator $\mathcal{L}_{\tilde{g}_{0}}$ can be directly verified to be
\begin{align*}
	\mathcal{L}_{\tilde{g}_{0}  } :=	P _{\tilde{g}_{0}}  
	 - \frac{ n+2}{4}    \Big[   (n-2 ) \tilde{u}_0^{\frac{ 2}{n-4}  }  L_{\tilde{g}_{0}} ( \tilde{u}_0^{\frac{  2}{n-4}  } \cdot)  +2 ( \tilde{u}_0^{\frac{ 6-n}{n-4}  } L_{\tilde{g}_{0}} \tilde{u}_0^{\frac{ n-2}{n-4}  })  \cdot  \Big] ,
\end{align*}	
or, equivalently, in the following form:
\begin{align*}  
	\mathcal{L}_{\tilde{g}_{0}  }	 =    & \Delta_{\tilde{g}_{0}}^{2}   - \sum_{i,j}\partial_{i}\big[ (a_{n}R_{\tilde{g}_{0}}\tilde{g}_{0}^{ij}+b_{n}\operatorname{Ric}_{\tilde{g}_{0}} ^{ij})\partial_{j}  \big] +	\frac{n-4}{2} Q_{\tilde{g}_{0} }  
	\\	 &+ \frac{  n^{2} -4}{4}  \big(    \tilde{u}_0^{\frac{ 4}{n-4}  }  \Delta_{\tilde{g}_{0} } +   \nabla _{\tilde{g}_{0}}\tilde{u}_0^{  \frac{4}{n-4}} \cdot \nabla_{\tilde{g}_{0}}   \big)
		\\&  + \frac{ n+2 }{4}   \big[ 2 \tilde{u}_0^{  \frac{6-n}{n-4}} \Delta_{\tilde{g}_{0}}\tilde{u}_0^{\frac{n-2}{n-4} }+ ( n-2 )  \tilde{u}_0^{\frac{ 2}{n-4}  } \Delta_{\tilde{g}_{0}}\tilde{u}_0^{  \frac{2}{n-4}} -\frac{n(n-2)}{4(n-1)}R_{\tilde{g}_{0}}\tilde{u}_0^{  \frac{4}{n-4}}  \big] ,
\end{align*}	 
where $\operatorname{Ric}_{\tilde{g}_{0}}^{ij}=\tilde{g}_{0}^{is}(\operatorname{Ric}_{\tilde{g}_{0}})_{st}\tilde{g}_{0}^{tj}$.

Define $v(y)=\tilde{u}_{0}(y)+\phi(y)$, where $\phi(y)$ is viewed as a perturbation. With this decomposition, $v(y)$ solves \eqref{main Q:R=c  equ} if and only if $\phi(y)$ satisfies the corresponding linearized equation:
\begin{equation} \label{linear equ}
  \begin{aligned} 
 	  & P _{\tilde{g}_{0}} \phi-\frac{ n+2}{4}    \Big[   (n-2 ) \tilde{u}_0^{\frac{ 2}{n-4}  }  L_{\tilde{g}_{0}} ( \tilde{u}_0^{\frac{  2}{n-4}  } \phi )  +2 ( \tilde{u}_0^{\frac{ 6-n}{n-4}  } L_{\tilde{g}_{0}} \tilde{u}_0^{\frac{ n-2}{n-4}  })\phi     \Big]  
 =  \mathcal{N} (\phi)-\mathcal{R} (y) ,
\end{aligned}
\end{equation} 
where  
\begin{equation}  \label{def N R}
 \left\{\begin{aligned} 
 	\mathcal{N}(\phi)=&  \frac{ (n+2 )(n-4 )}{4} \Big[  (  \tilde{u}_{0}+\phi) ^{  \frac{2}{n-4}}    L_{\tilde{g}_{0}} ( \tilde{u}_{0}+\phi) ^{  \frac{n-2}{n-4}}-   \tilde{u}_{0} ^{  \frac{2}{n-4}}    L_{\tilde{g}_{0}} \tilde{u}_{0} ^{  \frac{n-2}{n-4}} 
 	\\& \quad\quad\quad\quad\quad\quad\quad  -\frac{ n-2}{n-4}     \tilde{u}_0^{\frac{ 2}{n-4}  }  L_{\tilde{g}_{0}} ( \tilde{u}_0^{\frac{  2}{n-4}  } \phi ) - \frac{  2}{n-4}   ( \tilde{u}_0^{\frac{ 6-n}{n-4}  } L_{\tilde{g}_{0}} \tilde{u}_0^{\frac{ n-2}{n-4}  })\phi     \Big]  , 
 	\\\mathcal{R} (y)=&  P _{\tilde{g}_{0}}\tilde{u}_{0}-  \frac{ (n+2 )(n-4 )}{4}  \tilde{u}_{0} ^{  \frac{2}{n-4}}    L_{\tilde{g}_{0}} \tilde{u}_{0} ^{  \frac{n-2}{n-4}}. 
\end{aligned}\right.
\end{equation}

For $n\geq 26$, we define the norms
\begin{align}\label{norm}
	\|\phi \|_{\star}=\sup _{y \in \mathbb{R}^n} \sum_{i=0}^2\left[\frac{1}{\frac{1}{(1+|y-\xi^{\prime}|)^i}\left(\frac{\mu \epsilon^{10}}{\left(1+\left|y-\xi^{\prime}\right|\right)^{n-14}}+\alpha (\frac{\epsilon}{\rho})^{n-4}\right) }+\frac{\left(1+\left|y-\xi^{\prime}\right|\right)^{n-4+i}}{\alpha}\right] |\partial^i \phi(y)| ,
\end{align}  
and
\begin{align*}
	\| \zeta \|_{{\star}{\star}}= & \sup _{y \in \mathbb{R}^n}\bigg[\frac{\chi_{\left\{\left|y-\xi^{\prime}\right| \leq \frac{\rho}{\epsilon}\right\}}\left(1+\left|y-\xi^{\prime}\right|\right)^{n-10}}{\mu \epsilon^{10}}+\frac{\chi_{\left\{\frac{\rho}{\epsilon} \leq\left|y-\xi^{\prime}\right| \leq \frac{1}{\epsilon}\right\}}\left(1+\left|y-\xi^{\prime}\right|\right)^{n-1}}{\alpha \epsilon} \\
	&\quad \quad \quad +\frac{\chi_{\{|y-\xi^{\prime}| \geq \frac{1}{\epsilon}\}} \left(1+\left|y-\xi^{\prime}\right|\right)^{n+\sigma}}{\alpha}\bigg]|\zeta(y)|,
\end{align*}
where $\chi_E$ is the characteristic function of $E$, and $\sigma>0$ is a sufficiently small constant (see \cite[Lemma 5.2]{WeiZhao2013}) introduced to ensure integrability.
We point out that this definition of the norm is slightly different from the one in \cite{WeiZhao2013}.

For the case $n=25$, the norms are defined as above, except that in the definition of $\|\phi\|_{\star}$ the term $\frac{\mu \epsilon^{10}}{\left(1+\left|y-\xi^{\prime}\right|\right)^{n-14+i}}$ is replaced by $\frac{\mu \epsilon^{9}}{\left(1+\left|y-\xi^{\prime}\right|\right)^{n-13+i}}$, and in the definition of $\|\zeta\|_{\star\star}$ the term $\frac{\chi_{\left\{\left|y-\xi^{\prime}\right| \leq \frac{\rho}{\epsilon}\right\}}\left(1+\left|y-\xi^{\prime}\right|\right)^{n-10}}{\mu \epsilon^{10}}$ is replaced by $\frac{\chi_{\left\{\left|y-\xi^{\prime}\right| \leq \frac{\rho}{\epsilon}\right\}}\left(1+\left|y-\xi^{\prime}\right|\right)^{n-9}}{\mu \epsilon^{9}}$.
    
Denote  $ Z_{0}=\frac{\partial \tilde{u}_0}{\partial \lambda'}$, and $Z_{j}=\frac{\partial \tilde{u}_0}{\partial \xi'_{j}}$ for $j=1,\ldots,n$.  More precisely,
$$ \frac{\partial \tilde u_0}{\partial \lambda'}
= \tilde u_0 \cdot
\frac{ (n-4) (|y-\xi'|^2-\lambda'^2 )}{ \lambda'( \lambda'^2+|y-\xi'|^2 )}, \ \text{and}\quad \  \frac{\partial \tilde u_0}{\partial \xi'_i}
= \tilde u_0 \cdot
\frac{(n-4)(y_i-\xi'_i)}{\lambda'^2+|y-\xi'|^2}.$$

Let $\chi\in C^{\infty}(\mathbb{R}^{n})$ be a cut-off function such that $\chi(y)=1$ for $|y-\xi'|\le r_{0}$ and $\chi(y)=0$ for $|y-\xi'|\ge r_{0}+1$, where $r_{0}$ is chosen sufficiently large so that the matrix $\int \chi Z_i Z_j$ is invertible.
Assume $\zeta\in C^{\alpha}(\mathbb{R}^{n})$, and let $c_{i}$ be constants for $i=0,1,\ldots,n$.
We now study the following linear problem:
	\begin{align}\label{linear equ zeta z} 
		\left\{\begin{aligned}   
			&	\mathcal{L}_{\tilde{g}_{0}}  \phi =  \zeta(y)+	 \sum_{i=0}^n  c_{i}\chi Z_{i} \quad \text{in}  \ \mathbb{R}^{n},
			\\  &  \int _{\mathbb{R}^{n}}\phi \chi Z_{i}=0, \quad i=0,1,\ldots,n .   
		\end{aligned}\right. 
\end{align}	 

\begin{prop}\label{unique sol linear equ zeta}
	Suppose that $n\ge 25$. Let $(\xi',\lambda')\in\Lambda$ be fixed, and assume that $\alpha$ is sufficiently small. Then, for all sufficiently small $\epsilon>0$, equation \eqref{linear equ zeta z} admits a unique solution. Moreover,
	\begin{align*}
		\|\phi\|_{\star}\le C\|\zeta\|_{\star\star},
	\end{align*}
where the constant $C>0$ is independent of $\alpha$ and $\epsilon$.
\end{prop}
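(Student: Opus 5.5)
The plan is the standard Lyapunov--Schmidt linear theory in the style of \cite{WeiZhao2013}: first prove the a priori estimate $\|\phi\|_\star\le C\|\zeta\|_{\star\star}$ for solutions of \eqref{linear equ zeta z}, and then deduce existence and uniqueness by a Fredholm argument.

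\textbf{Step 1: control of the $c_i$.} I would first bound the Lagrange multipliers in terms of $\zeta$ and $\phi$. Testing \eqref{linear equ zeta z} against $Z_j$ and integrating by parts gives
\begin{align*}
\sum_i c_i\int\chi Z_iZ_j=\int \phi\,\mathcal{L}_{\tilde g_0}^*Z_j-\int\zeta Z_j .
\end{align*}
The flat linearized operator $\mathcal{L}_0$ of \eqref{equ tilde u0} annihilates $Z_j$, because $Z_j$ is the derivative of a family of solutions. Hence
\begin{align*}
\mathcal{L}_{\tilde g_0}^*Z_j=(\mathcal{L}_{\tilde g_0}^*-\mathcal{L}_0^*)Z_j .
\end{align*}
Since $\tilde h=O(\mu\epsilon^{10}|y|^{10})$ inside $B_{\rho/\epsilon}$ and $|\partial^k\tilde h|\le\alpha\epsilon^k$ in general, this difference is small. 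I therefore expect
\begin{align*}
|c_j|\le C\|\zeta\|_{\star\star}+o(1)\|\phi\|_\star .
\end{align*}
The decay $|Z_j|\lesssim(1+|y|)^{4-n}$ together with the $\sigma$-weight in the outer region makes $\int|\zeta Z_j|\le C\|\zeta\|_{\star\star}$ finite.

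\textbf{Step 2: the a priori estimate by contradiction.} Suppose there are $\epsilon_k\to0$ and solutions with $\|\phi_k\|_\star=1$ and $\|\zeta_k\|_{\star\star}\to0$. The weights in $\|\cdot\|_\star$ are designed so that pointwise bounds follow from a Green's representation. The leading operator is $\Delta^2+O(\tilde u_0^{4/(n-4)})\partial^2+\dots$, and the lower-order potentials decay like $(1+|y|)^{-4}$, or faster. So I would invert $\Delta^2$ using the fundamental solution $|x-y|^{4-n}$, move the potential terms to the right-hand side, and estimate the convolutions with weights. This gives an improved bound
\begin{align*}
|\partial^i\phi_k(y)|\le C\,w_i(y)\Big(\|\zeta_k\|_{\star\star}+\sum|c_j|+\sup_{B_R}|\phi_k|\Big)+o(1),
\end{align*}
where the $o(1)$ comes from the decay of the potentials beyond a large radius $R$. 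Thus $\|\phi_k\|_\star$ is controlled by its values on a fixed compact set. The three regimes of $\|\zeta\|_{\star\star}$ ($|y|\le\rho/\epsilon$, the annulus, and the exterior) must each be matched against the corresponding part of $w_i$. This is where the modified norm \eqref{norm} is used, with its two-term weight $\mu\epsilon^{10}(1+|y|)^{14-n}+\alpha(\epsilon/\rho)^{n-4}$ plus the $\alpha^{-1}(1+|y|)^{n-4+i}$ part.

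\textbf{Step 3: limit and nondegeneracy.} Passing to the limit using local elliptic estimates, $\phi_k\to\phi_\infty$ in $C^3_{loc}$, and $\phi_\infty$ solves $\mathcal{L}_0\phi_\infty=0$ with decay $(1+|y|)^{4-n}$ and $\int\chi\phi_\infty Z_i=0$. The key input, and the main obstacle, is \emph{nondegeneracy}: the kernel of $\mathcal{L}_0$ in this decay class is exactly $\mathrm{span}\{Z_0,\dots,Z_n\}$. This is the content of the eigenvalue analysis deferred to Section \ref{Sect. eigenvalue problem}, which I would invoke. To prove it, pull back to $\mathbb{S}^n$ by stereographic projection, where $\mathcal{L}_0$ becomes a conformally covariant operator commuting with rotations. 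Then decompose into spherical harmonics and check that the resulting polynomial in the Laplace eigenvalue $\ell(\ell+n-1)$ vanishes only at $\ell=1$, using the explicit form of $P_{\mathbb{S}^n}$ and $L_{\mathbb{S}^n}$ together with $\tilde u_0^{4/(n-4)}\propto$ the conformal factor. Given nondegeneracy, the orthogonality conditions force $\phi_\infty=0$, so $\sup_{B_R}|\phi_k|\to0$. Combined with Steps 1--2 this yields $\|\phi_k\|_\star\to0$, a contradiction.

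\textbf{Step 4: existence.} For existence, I would work in the Hilbert space $H=\{\phi\in H^2(\mathbb{R}^n) \text{ (or } D^{2,2}) : \int\phi\chi Z_i=0\}$. On $H$, $\mathcal{L}_{\tilde g_0}$ has the form $\Delta^2+K$ with $K$ compact, because its coefficients decay. Projecting \eqref{linear equ zeta z} onto $H$ gives a Fredholm equation of index zero, and the a priori estimate gives injectivity, so the equation is solvable. The $c_i$ are then determined by the invertibility of the matrix $\int\chi Z_iZ_j$. Finally, an approximation argument with compactly supported $\zeta$ transfers the estimate to general $\zeta$ with $\|\zeta\|_{\star\star}<\infty$. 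The constant $C$ is independent of $\alpha$ and $\epsilon$ because the smallness of $\alpha$ and $\epsilon$ enters only through the perturbation $\mathcal{L}_{\tilde g_0}-\mathcal{L}_0$.
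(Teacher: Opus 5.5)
Your overall plan is the paper's: an a priori estimate by contradiction using the nondegeneracy of Theorem~\ref{eigenvalue thm}, weighted pointwise bounds from a Green's representation, and a Fredholm argument on $\mathcal{H}$. Your Step 4 matches the paper.

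\textbf{The gap: you never rescale.} As written, the contradiction argument does not close, because the norm $\|\cdot\|_\star$ is not scale-free.
\begin{itemize}
\item Already $\|\phi_k\|_\star=1$ forces $|\phi_k(y)|\le \mu\epsilon_k^{10}(1+|y-\xi'|)^{14-n}+\alpha(\epsilon_k/\rho)^{n-4}$, which tends to $0$ locally uniformly.
\item So $\sup_{B_R}|\phi_k|\to0$ and $\phi_\infty\equiv0$ hold trivially, without using nondegeneracy or orthogonality.
\item These facts say nothing about $\|\phi_k\|_\star$. On $B_R$ that norm is of size $\sup_{B_R}|\phi_k|/(\mu\epsilon_k^{10})$.
\end{itemize}
What you need, and what Step~1 of the paper's Lemma~\ref{est linear equ zeta} proves, is $\|\phi_k\|_{C^2(B_R(\xi'))}=o(\mu\epsilon_k^{10})$. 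So the compactness argument must be run on $\phi_k/(\mu\epsilon_k^{10})$.
\begin{itemize}
\item Its limit $\hat\phi$ solves the flat linearized equation and satisfies $|\partial^i\hat\phi|\le C(1+|y-\xi'|)^{14-n-i}$ for $n\ge 26$. This is weaker decay than the $(1+|y|)^{4-n}$ you assume.
\item Still, $\int|\nabla^2\hat\phi|^2<\infty$ since $n>24$, so Theorem~\ref{eigenvalue thm} applies.
\item Only then do nondegeneracy and the orthogonality conditions force $\hat\phi=0$.
\end{itemize}
Your Step~2 inequality must be corrected accordingly: $\sup_{B_R}|\phi_k|/(\mu\epsilon_k^{10})$ replaces $\sup_{B_R}|\phi_k|$, and $o(1)\,w_i(y)$ replaces $o(1)$.

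The same scaling defect affects Step~1. Near $\xi'$ the term $\sum_i c_i\int G\,\chi Z_i$ has size $|c_i|$, while the weight there is about $\mu\epsilon^{10}$. So $|c_j|\le C\|\zeta\|_{\star\star}+o(1)\|\phi\|_\star$ is too weak. You need $|c_j|\le o(\mu\epsilon^{10})\|\phi\|_\star+C\mu\epsilon^{10}\|\zeta\|_{\star\star}$. Your testing computation does give this once you use $|\zeta|\le \mu\epsilon^{10}(1+|y-\xi'|)^{10-n}\|\zeta\|_{\star\star}$ in the inner region.

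\textbf{A secondary, technical point: the principal part.} The principal part of $\mathcal{L}_{\tilde g_0}$ is $\Delta^2_{\tilde g_0}$, not $\Delta^2$ plus lower-order potentials.
\begin{itemize}
\item If you invert the flat $\Delta^2$ with kernel $|y-z|^{4-n}$, the right-hand side contains $(\Delta^2_{\tilde g_0}-\Delta^2)\phi$. This involves $\partial^3\phi$ and $\partial^4\phi$, which $\|\cdot\|_\star$ does not control.
\item Moving those derivatives onto the kernel leaves the non-integrable kernel $|y-z|^{-n}$ when you estimate $\partial^2\phi$.
\item The paper avoids this by representing $\partial^j\phi$ through the Green's function $G$ of the metric bi-Laplacian, with $|\partial^j_yG(y,z)|\le C|y-z|^{4-n-j}$. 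Then only terms involving at most $\partial^2\phi$ appear on the right, with coefficients built from $R_{\tilde g_0}$, $\operatorname{Ric}_{\tilde g_0}$, $Q_{\tilde g_0}$ and powers of $\tilde u_0$.
\item Alternatively, you would have to supplement your scheme with rescaled local Schauder estimates for $\partial^2\phi$.
\end{itemize}
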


To establish Proposition \ref{unique sol linear equ zeta}, we begin with the following a priori estimate.

\begin{lem}\label{est linear equ zeta}
	Suppose that $\phi$ is a solution to \eqref{linear equ zeta z}. Then, under the assumptions of Proposition \ref{unique sol linear equ zeta}, we have
	\begin{align*}
		\|\phi\|_{\star}\le C\|\zeta\|_{\star\star}.
	\end{align*}
\end{lem}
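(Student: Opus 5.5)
We argue by contradiction, following the blow-up/compactness scheme of \cite{WeiZhao2013}. Suppose there are sequences $\epsilon_k\to0$, $(\xi'_k,\lambda'_k)\in\Lambda$, and solutions $\phi_k,\zeta_k,c_i^k$ of \eqref{linear equ zeta z} with $\|\phi_k\|_\star=1$ and $\|\zeta_k\|_{\star\star}\to0$. Passing to a subsequence, $(\xi'_k,\lambda'_k)\to(\xi'_\infty,\lambda'_\infty)$.

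\textbf{Step 1: the Lagrange multipliers.} Test the equation against $Z_j$. Since $Z_j$ is in the kernel of the flat linearized operator $\mathcal{L}_0$ (obtained by differentiating \eqref{equ tilde u0} in $\lambda',\xi'$), and $\mathcal{L}_0$ is formally self-adjoint (it is the second variation of a functional), we have
\[
\int \phi_k\,\mathcal{L}_{\tilde g_0}Z_j=\int\phi_k(\mathcal{L}_{\tilde g_0}-\mathcal{L}_0)Z_j .
\]
The coefficients of $\mathcal{L}_{\tilde g_0}-\mathcal{L}_0$ are $O(\mu\epsilon^{10}|y|^{10})$ for $|y|\le\rho/\epsilon$ and $O(\alpha)$, supported in $|y|\le 1/\epsilon$, beyond that. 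Using the $\star$-norm weights and the decay $|Z_j|\lesssim(1+|y|)^{4-n}$, this integral is $o(1)+O(\alpha)$. Because $\int\zeta_k Z_j\to0$ and $\int\chi Z_iZ_j$ is invertible, we obtain $|c^k|\le o(1)+C\alpha$. Here one must check that the $(\mu\epsilon^{10})^{-1}$ factor in the norm is compensated by the decay; with $n\ge25$ the relevant integrals converge.

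\textbf{Step 2: local limit.} Inside compact sets, $\phi_k$ is bounded. By interior fourth-order elliptic estimates (the operator is $\Delta^2$ plus lower-order terms), $\phi_k\to\phi_\infty$ in $C^3_{loc}$. The limit satisfies $\mathcal{L}_0\phi_\infty=\sum c_i^\infty\chi Z_i$ with decay $|\phi_\infty|\lesssim(1+|y|)^{4-n}$ and the orthogonality conditions. The key nondegeneracy input is that the bounded kernel of $\mathcal{L}_0$ is spanned by $Z_0,\dots,Z_n$. This is exactly the spectral information from Section~\ref{Sect. eigenvalue problem} on the first eigenspaces. Testing against $Z_j$ gives $c^\infty=0$, and orthogonality then forces $\phi_\infty=0$. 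So $\phi_k\to0$ uniformly on compacts, taking $\alpha$ small so that the $O(\alpha)$ term in $c$ is absorbed.

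\textbf{Step 3: exterior weighted estimate, the main obstacle.} We must show that the weighted sup defining $\|\phi_k\|_\star$ is $o(1)$ (plus a term $C\alpha$ absorbed on the left) away from compacts. To do this, we rewrite the equation as
\[
\Delta^2\phi_k=\zeta_k+\sum_i c_i\chi Z_i+E_k,
\]
where $E_k$ collects the potential terms $\tilde u_0^{4/(n-4)}\Delta\phi$ and similar terms (decaying like $|y|^{-4}$ times $\phi$), together with the metric-perturbation terms. Then we represent $\phi_k$ through the Green function $c_n|x-y|^{4-n}$ of $\Delta^2$ and estimate the convolution against each piece of the $\star\star$ weight. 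The inner region contributes $\mu\epsilon^{10}(1+|y|)^{14-n}$, since convolving $|y|^{10-n}$ gains four powers. The middle region contributes the $\alpha(\epsilon/\rho)^{n-4}$ plateau, and the far region contributes $\alpha(1+|y|)^{4-n}$, with $\sigma>0$ ensuring integrability. Derivative weights follow the same way from $\partial^i$ of the Green function. The potential terms carry extra decay $|y|^{-4}$, so for $|y|\ge R$ they contribute $o_R(1)\|\phi_k\|_\star$, while on $|y|\le R$ they are controlled by Step 2. The metric terms contribute $C\alpha\|\phi_k\|_\star$. This yields $\|\phi_k\|_\star\le o(1)+C\alpha+o_R(1)$, which contradicts $\|\phi_k\|_\star=1$ for $\alpha$ small and $R$ large. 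The delicate part is verifying that each convolution reproduces precisely the two-scale weight in \eqref{norm}, in particular matching the plateau at $|y|\sim\rho/\epsilon$; this is the reason the norm was modified relative to \cite{WeiZhao2013}.
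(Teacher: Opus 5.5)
As written, your argument does not close: the blow-up and the multiplier estimate are carried out at the wrong scale. Your overall scheme is the paper's: contradiction, control of the multipliers, blow-up plus the nondegeneracy of Theorem \ref{eigenvalue thm}, then Green-function convolution against the two-scale weights.

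\textbf{The scale problem.} Since $\|\phi_k\|_\star=1$, the first weight already gives $|\partial^i\phi_k(y)|\le(1+|y-\xi'|)^{-i}\big(\mu\epsilon^{10}(1+|y-\xi'|)^{14-n}+\alpha(\epsilon/\rho)^{n-4}\big)$. So $\phi_k\to0$ on compact sets for free, and your Step 2 carries no information. What Step 3 actually needs on $B_R(\xi')$ is \eqref{firstboundforphi}, i.e.\ $\|\phi_k\|_{C^2(B_R(\xi'))}=o(\mu\epsilon^{10})$. The reason is that the potential terms over $|z-\xi'|\le R$ contribute roughly $\sup_{B_R}|\partial^{\le 2}\phi_k|\,R^{n-6+i}(1+|y-\xi'|)^{4-n-j}$ to $\partial^j\phi_k(y)$. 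Meanwhile, near $\xi'$ the $\star$-weight is of size $(\mu\epsilon^{10})^{-1}$.

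Hence you must blow up $\phi_k/(\mu\epsilon^{10})$. The limit $\hat\phi$ inherits only the decay $|\partial^i\hat\phi|\lesssim(1+|y-\xi'|)^{14-n-i}$, not $(1+|y|)^{4-n}$ as you claim. It is $n\ge 25$ that makes $\int|\nabla^2\hat\phi|^2<\infty$. That is what allows Theorem \ref{eigenvalue thm}, which is formulated in the space $H$ rather than for bounded solutions, to identify $\hat\phi$ as a combination of the $Z_j$.

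The same issue affects your Step 1. The term $\sum_i c_i\int\partial^j_yG\,\chi Z_i$ is of size $|c|(1+|y-\xi'|)^{4-n-j}$, so you need $|c_i|\le o(\mu\epsilon^{10})+C\mu\epsilon^{10}\|\zeta_k\|_{\star\star}$, as in \eqref{c_i est 26}. The paper obtains this by testing against $\bar\chi Z_j$, cut off at scale $\rho/(4\epsilon)$. An unnormalized bound $|c^k|\le o(1)+C\alpha$ instead puts $(o(1)+C\alpha)/(\mu\epsilon^{10})$ into $\|\phi_k\|_\star$, and no smallness of $\alpha$ absorbs that as $\epsilon\to0$.

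\textbf{The Green function in Step 3.} This is a second, more technical problem. You invert the flat $\Delta^2$, so $E_k$ contains $(\Delta^2_{\tilde g_0}-\Delta^2)\phi_k$, which is fourth order in $\phi_k$. But $\|\cdot\|_\star$ controls only $\partial^{\le2}\phi$. Moving derivatives onto the kernel to estimate $\partial^2\phi$ produces the non-integrable kernel $|y-z|^{-n}$. So the assertion ``the metric terms contribute $C\alpha\|\phi_k\|_\star$'' is not justified by pointwise weighted bounds.

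The paper avoids this by using the Green function $G$ of $\Delta^2_{\tilde g_0}$ itself, with $|\partial^j_yG(y,z)|\le C(1+O(\alpha))|y-z|^{4-n-j}$. Then only lower-order terms must be convolved: the divergence-form second-order curvature term, the zeroth-order terms, and the new $Q/R$ terms. The last of these carry the extra decay $(1+|z-\xi'|)^{-(6-i)}$ against $|\partial^i\phi|$. Alternatively, you would have to recover the derivative bounds by scaled local elliptic estimates.
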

\begin{proof}
We proceed by contradiction, following the approach in \cite[Lemma 5.2]{WeiZhao2013}. Suppose that there exist sequences $\epsilon_k\to 0$, $\zeta_k$, and $\phi_k$ satisfying $\|\zeta_k\|_{\star\star}\to 0$ while $\|\phi_k\|_{\star}=1$.

We only present the case $n\ge 26$; the case $n=25$ is entirely analogous.
The proof is divided into two steps.

\medskip
\textit{Step 1.}  We prove for any fixed $R>0$,  
\begin{equation}\label{firstboundforphi}
\lim_{k\rightarrow\infty}\lim_{\mu,\epsilon\rightarrow 0}\frac{\|\phi_k\|_{C^2(B_R(\xi' ))}}{\mu\epsilon^{10}}=0.
\end{equation}
 
Let $\bar\chi\in C^{\infty}(\mathbb{R}^{n})$ be a cut-off function such that $\bar\chi(y)=1$ for $|y-\xi'|\le \frac{\rho}{4\epsilon}$, $\bar\chi(y)=0$ for $|y-\xi'|\ge \frac{\rho}{2\epsilon}$, and $|\nabla^{l}\bar\chi|\le C\big(\frac{\rho}{\epsilon}\big)^{-l}$ for $1\le l\le 4$. Multiplying \eqref{linear equ zeta z} by $\bar\chi Z_j$ and integrating by parts, we have
 \begin{align*} 
 \sum_{i=0}^n c_{i}\int_{\mathbb{R}^{n}}\bar{\chi} Z_{i}Z_{j}=\int_{\mathbb{R}^{n}} 	\mathcal{L}_{\tilde{g}_{0}}( \bar{\chi} Z_{j}  )  \phi_k  -\zeta_k \bar{\chi} Z_{j},\ \ j=0,1,\ldots,n . 
 \end{align*}
Noting that $r_0\le \frac{\rho}{4\epsilon}$, we have \begin{align*} 
	 \int_{\mathbb{R}^{n}}\bar{\chi} Z_{0}Z_{j}=\delta_{0j} \int_{\mathbb{R}^{n}}{\chi} \big( \frac{\partial \tilde{u}_{0}}{\partial \lambda'}\big)^{2},   \ \int_{\mathbb{R}^{n}}\bar{\chi} Z_{i}Z_{j}=\int_{\mathbb{R}^{n}}{\chi} Z_{i}Z_{j}=\delta_{ij} \int_{\mathbb{R}^{n}}{\chi} \big( \frac{\partial \tilde{u}_{0}}{\partial y_{j}}\big)^{2},\   i=1,\ldots,n,
\end{align*}
and  
\begin{align*}  
 \Delta ^{2}Z_{j}   
+  \frac{ n+2}{4}    \Big[   (n-2 ) \tilde{u}_0^{\frac{ 2}{n-4}  }  \Delta ( \tilde{u}_0^{\frac{  2}{n-4}  } Z_{j}  ) +2 ( \tilde{u}_0^{\frac{ 6-n}{n-4}  } \Delta  \tilde{u}_0^{\frac{ n-2}{n-4}  })   Z_{j}  \Big]  =0 ,\ \ j=0,1,\ldots,n .
\end{align*}
 Using the curvature estimates in \cite[Section 4]{WeiZhao2013}, we have 
 \begin{align} \label{c1}
 	& \int_{\mathbb{R}^{n}} 	\mathcal{L}_{\tilde{g}_{0}} (\bar{\chi} Z_{j} )  \phi_k   \nonumber
 	\\=& \int_{\mathbb{R}^{n}}  \big\{ \Delta_{\tilde{g}_{0}}^{2}(\bar{\chi} Z_{j} ) -\bar{\chi} \Delta ^{2}Z_{j}   - \sum_{s,t}\partial_{s} \big[(a_{n}R_{\tilde{g}_{0}}\tilde{g}_{0}^{st}+b_{n}\operatorname{Ric}_{\tilde{g}_{0}} ^{st})\partial_{t}(\bar{\chi} Z_{j} )\big]  \big\}\phi_k\nonumber
 	\\&+ \int_{\mathbb{R}^{n}}\big[\frac{n-4}{2} Q_{\tilde{g}_{0} }   -\frac{n(n^{2}-4)}{16(n-1)}R_{\tilde{g}_{0} }   \tilde{u}_0^{\frac{ 4}{n-4}  }\big]  \bar{\chi}Z_{j} \phi_k  \nonumber
 	\\   &+ \frac{n^{2} -4 }{4}  \int_{\mathbb{R}^{n}} \tilde{u}_0^{\frac{ 2}{n-4}  } \big[ \Delta_{\tilde{g}_{0}} ( \tilde{u}_0^{\frac{ n-2}{n-4}  } \bar{\chi}  Z_{j}  )-\bar{\chi}  \Delta ( \tilde{u}_0^{\frac{ n-2}{n-4}  } Z_{j}  )  \big]  \phi_k \nonumber
 	\\	 &+ \frac{n+2 }{2}     \int_{\mathbb{R}^{n}}    \tilde{u}_0^{\frac{ 6-n}{n-4}  } \big(  \Delta_{\tilde{g}_{0}}  \tilde{u}_0^{\frac{ n-2}{n-4}  }    -\Delta  \tilde{u}_0^{\frac{ n-2}{n-4}  } \big) \bar{\chi}  Z_{j}   \phi_k\nonumber
   	\\=&o(\mu \epsilon^{10})\| \phi_k\| _{\star}.
 \end{align}  

It is also straightforward to obtain 
 \begin{align*} 
 \int_{\mathbb{R}^{n}}  \zeta_k \bar{\chi} Z_{j}\leq  C\mu\epsilon^{10}  \| \zeta_k\| _{\star\star}.
\end{align*} 
Therefore we have  
 \begin{align}\label{c_i est 26} 
|c_{i} | \leq o(\mu \epsilon^{10})\| \phi_k\| _{\star}+C\mu\epsilon^{10}  \| \zeta_k\| _{\star\star}.
\end{align} 
 
We now establish that, for any fixed $R>0$, we have $\|\phi_k\|_{C^2(B_R(\xi'))}=o(\mu\epsilon^{10})$ for large $k$. To see this, by elliptic regularity, there exists a function $\hat\phi$ such that $\frac{\phi_k}{\mu\epsilon^{10}}\to\hat\phi$ in $C^4_{\mathrm{loc}}(\mathbb{R}^n)$ as $\mu,\epsilon\to 0$ and $k\to\infty$,
and
\begin{align*}    
  \Delta ^{2}  \hat{\phi} 
	+  \frac{ n+2}{4}    \big[ 2 ( \tilde{u}_0^{\frac{ 6-n}{n-4}  } \Delta \tilde{u}_0^{\frac{ n-2}{n-4}  })   \hat{\phi} +  (n-2 ) \tilde{u}_0^{\frac{ 2}{n-4}  }  \Delta  ( \tilde{u}_0^{\frac{  2}{n-4}  } 
\hat{\phi} )  \big] =0
 	\ \  \text{in}\   \mathbb{R}^n,
\end{align*}	
that is, by \eqref{u_0 n-2 n-4 u_0  8},
 \begin{align*}    
 	\Delta^{2}  \hat{\phi} + \frac{  n^{2}-4  } {4 }  [ \tilde{u}_0^{\frac{2}{ n-4 }} \Delta (  \tilde{u}_0^{\frac{2}{ n-4 }} \hat{\phi} )-\frac{n}{2} \tilde{u}_0^{  \frac{8}{n-4}} \hat{\phi} ]=0,
 	\ \  \text{in}\   \mathbb{R}^n.
 \end{align*}	   	  
Consequently, using Theorem \ref{eigenvalue thm}, $\hat{\phi}$ can be expressed as a linear combination of the functions $Z_j, j=0,1,\dots,n$. On the other hand, the orthogonality conditions imposed on $\hat{\phi}$ imply that  $ \int_{\mathbb{R}^{n}}  \hat{\phi} \bar{\chi} Z_{j}=0$. Thus, $\hat{\phi} \equiv 0 $, and the claim follows.

\medskip
\textit{Step 2.} We conclude that for large $k$ and sufficiently small $\varepsilon$,
\begin{align*}
\|\phi_k\|_{\star}\le C\|\zeta_k\|_{\star\star}+o(1),
\end{align*}
which contradicts $\|\phi_k\|_{\star}=1$. This completes the proof.
 
 \medskip

To simplify notation, we omit the subscript $k$ in what follows.
By the definition of the Green function $G$ for $\Delta_{g_0}^2$, for $j=0,1,2$, we have
\begin{align*}
\partial_{y}^j	\phi (y)=& \int _{\mathbb{R}^{n}}  \partial_{y}^jG(y,z)  \partial_{s}\big[ (  a_{n}R_{\tilde{g}_{0}}\tilde{g}_{0}^{st}+b_{n}\operatorname{Ric}_{\tilde{g}_{0}}^{st})\partial_{t}\phi\big](z)dz -\frac{n-4}{2} \int _{\mathbb{R}^{n}} \partial_{y_{}}^jG(y,z) Q_{\tilde{g}_{0}} (z )\phi (z )dz
\\&  + \frac{  n(n^{2}-4)  }{16(n-1)}   \int _{\mathbb{R}^{n}} \partial_{y_{}}^jG(y,z)   R_{g_{0}} \tilde{u}_0^{\frac{ 4}{n-4}  } \phi  (z)dz  
	\\&-  \frac{n^{2}-4}{4}   \int _{\mathbb{R}^{n}} \partial_{y_{}}^jG(y,z)    \Big[  \tilde{u}_0^{\frac{ 2}{n-4}  }  \Delta_{\tilde{g}_{0}} ( \tilde{u}_0^{\frac{  2}{n-4}  } \phi)  +\frac{2}{n-2} ( \tilde{u}_0^{\frac{ 6-n}{n-4}  } \Delta_{\tilde{g}_{0}} \tilde{u}_0^{\frac{ n-2}{n-4}  })   \phi \Big]  (z)dz   
\\& +  \int _{\mathbb{R}^{n}} \partial_{y_{}}^jG(y,z)  \zeta(z)dz   
 + \sum_{i}c_{i} \int _{\mathbb{R}^{n}} \partial_{y_{}}^jG(y,z)  \chi Z_{i} (z)dz\\
 =:& I +II- \frac{n^{2}-4}{4} III+IV+V.
\end{align*}   
In the proof of \cite[Lemma 5.2]{WeiZhao2013}, the terms $I$, $II$, $IV$, and $V$ were handled, and it remains to estimate the term $III$.
Next, we estimate $III$.
\begin{align*} 
	III=&\int _{\mathbb{R}^{n}} \partial_{y_{}}^jG(y,z)  \tilde{u}_{0}^{  \frac{4}{n-4}}\Delta_{\tilde{g}_{0}} \phi dz+ \int _{\mathbb{R}^{n}}  \partial_{y_{}}^jG(y,z)  \nabla _{\tilde{g}_{0}}\tilde{u}_{0}^{  \frac{4}{n-4}}\cdot\nabla _{\tilde{g}_{0}}\phi dz
	\\& +\int _{\mathbb{R}^{n}} \partial_{y_{}}^j G(y,z) \big( \tilde{u}_{0}^{  \frac{2}{n-4}}  \Delta_{\tilde{g}_{0}} \tilde{u}_{0}^{  \frac{2}{n-4}}+\frac{2}{n-2}\tilde{u}_{0}^{\frac{6-n}{n-4}} \Delta_{\tilde{g}_{0}} \tilde{u}_{0}^{  \frac{n-2}{n-4}  }\big) \phi dz.  
\end{align*} 
Since $|\partial_{y_{i}}G(y,z)|\leq C[1+O(\alpha)]||y-z|^{3-n}$ and $|\partial_{y_{i}y_{j}}^{2}G(y,z)|\leq C[1+O(\alpha)]||y-z|^{2-n}$, we note that
\begin{align*} 
	&  
 \Big|\int _{\mathbb{R}^{n}}  \partial_{y_{}}^jG(y,z)  \tilde{u}_{0}^{  \frac{4}{n-4}}(  \tilde{g}_{0}^{ij}  \partial_{ij}\phi+\partial_{i}\tilde{g}_{0}^{ij}\partial_{j}\phi  )   dz \Big|
	\\\leq  &C\int _{\mathbb{R}^{n}}  \frac{1}{| y-z | ^{n-4+j} }\frac{1}{ (1+ | z-\xi'|)^{4} }|\partial ^{2}\phi (z)|dz +C\epsilon\int _{B_{\frac{1}{\epsilon}}}  \frac{1}{| y-z | ^{n-4+j} }\frac{1}{ (1+ | z-\xi'|)^{4} }|\partial  \phi (z)|dz , 
\end{align*} 
\begin{align*} 
	&\Big|\int _{\mathbb{R}^{n}}| \partial_{y_{}}^jG(y,z)  \tilde{g}_{0} ^{ij} \partial_{i}\tilde{u}_{0}^{  \frac{4}{n-4}}  \partial_{j}\phi\Big|
	\leq  C\int _{\mathbb{R}^{n}}  \frac{1}{| y-z | ^{n-4+j} }\frac{1}{ (1+ | z-\xi'|)^{5} }|\partial \phi (z)|dz,
\end{align*} 
and
\begin{align*} 
	& \Big|\int _{\mathbb{R}^{n}} |\partial_y^jG(y,z) \big[   \big(\tilde{u}_{0}^{  \frac{2}{n-4}} \tilde{g}_{0} ^{ij} \partial_{ij}\tilde{u}_{0}^{  \frac{2}{n-4}}+\frac{2}{n-2}\tilde{u}_{0}^{\frac{6-n}{n-4}}       \tilde{g}_{0} ^{ij} \partial_{ij}\tilde{u}_{0}^{  \frac{n-2}{n-4}}\big) 
 	\\&\quad\quad\quad\quad\quad\quad+  \big(\tilde{u}_{0}^{  \frac{2}{n-4 }} \partial_{i}\tilde{g}_{0} ^{ij} \partial_{j}\tilde{u}_{0}^{  \frac{2}{n-4}} +\frac{2}{n-2}\tilde{u}_{0}^{\frac{6-n}{n-4}}       \partial_{i}\tilde{g}_{0} ^{ij} \partial_{j}\tilde{u}_{0}^{  \frac{n-2}{n-4}}\big) \big] \phi  dz\Big|
 \\\leq  &C\int _{\mathbb{R}^{n}}  \frac{1}{| y-z | ^{n-4+j} }\frac{1}{ (1+ | z-\xi'|)^{6} }| \phi (z)|dz +C\epsilon\int _{B_{\frac{1}{\epsilon}}}  \frac{1}{| y-z | ^{n-4+j} }\frac{1}{ (1+ | z-\xi'|)^{5} }|  \phi (z)|dz .
\end{align*}  

From the above three formulas, it is clear that the terms in $III$ can be classified as follows:
\begin{align*}
III_{i}^{j}(y)&:=\int_{\mathbb{R}^{n}}\frac{1}{|y-z|^{n-4+j}}\frac{1}{(1+|z-\xi'|)^{6-i}}\,|\partial^{i}\phi(z)|\,dz,
\end{align*}
for $i,j\in\{0,1,2\}$, and
\begin{align*}
\widetilde{III}_{l}^{j}(y)&:=C\epsilon\int_{B_{\frac{1}{\epsilon}}}\frac{1}{|y-z|^{n-4+j}}\frac{1}{(1+|z-\xi'|)^{5-i}}\,|\partial^{i}\phi(z)|\,dz,
\end{align*}
for $l\in\{0,1\}$ and $j\in\{0,1,2\}$.
\medskip

We claim that
\begin{align}\label{III}
|III_{i}^{j}(y)|&\le\big(o(1)+o(1)\|\phi\|_{\star}\big)\bigg(\chi_{\{x,\,|x-\xi'|\le \frac{\rho}{\epsilon}\}}(y)\frac{1}{\left(1+\left|y-\xi^{\prime}\right|\right)^j}\Big(\frac{\mu\epsilon^{10}}{\left(1+\left|y-\xi^{\prime}\right|\right)^{n-14}}+\alpha \big(\tfrac{\epsilon}{\rho}\big)^{n-4}\Big)\nonumber\\
&\qquad\qquad\qquad\quad+\chi_{\{x,\,|x-\xi'|> \frac{\rho}{\epsilon}\}}(y)\frac{\alpha}{\left(1+\left|y-\xi^{\prime}\right|\right)^{n-4+j}}\bigg).
\end{align}

\medskip

In the same manner, we obtain that $\widetilde{III}_{l}^{j}(y)$ obeys the same estimate as $|III_{i}^{j}(y)|$.

For the reader's convenience, we recall two inequalities from \cite[Appendix]{WeiZhao2013} and \cite{LiNi}. For any $0<s,t<n$ such that $s+t<n$,
\begin{equation}\label{ineq1}
    \int_{\mathbb{R}^n} \frac{1}{|y-z|^{n-s}} \frac{1}{\left(1+\left|z-\xi^{\prime}\right|\right)^{n-t}} \mathrm{~d} z \leq C\left(1+\left|y-\xi^{\prime}\right|\right)^{t+s-n}
\end{equation}
and
\begin{equation}\label{ineq2}
    \int_{B_r} \frac{1}{|y-z|^{n-s}} \frac{1}{\left(1+\left|z-\xi^{\prime}\right|\right)^{n-t}} \mathrm{~d} z \leq C r^t\left(1+\left|y-\xi^{\prime}\right|\right)^{s-n}.
\end{equation}

Note that by \eqref{firstboundforphi} and \eqref{ineq2}, we have
\begin{align}
 & \int_{|z-\xi^{\prime}|<R}\frac{1}{|y-z|^{n-4+j}}\frac{1}{(1+|z-\xi'|)^{6-i}}|\partial^{i}\phi|(z)dz\nonumber\\
= & {o(\mu\varepsilon^{10})}\int_{|z-\xi^{\prime}|<R}\frac{1}{|y-z|^{n-4+j}}\frac{1}{(1+|z-\xi'|)^{6-i}}dz\nonumber\\
= & o(\mu\varepsilon^{10})\int_{|z-\xi^{\prime}|<R}\frac{1}{|y-z|^{n-4+j}}\frac{1}{(1+|z-\xi'|)^{n-(n-6+i)}}dz\nonumber\\
\le & o(\mu\varepsilon^{10})R^{n-6+i}(1+|y-\xi'|)^{4-n-j}\nonumber\\
\le & \frac{o(\mu\varepsilon^{10})R^{n-6+i}}{\alpha}\frac{\alpha}{\ensuremath{\left(1+\left|y-\xi^{\prime}\right|\right)^{n-4+j}}},\label{caseR}
\end{align}
and  
\begin{align}
 & \int_{|z-\xi^{\prime}|<R}\frac{1}{|y-z|^{n-4+j}}\frac{1}{(1+|z-\xi'|)^{6-i}}|\partial^{i}\phi|(z)dz\nonumber\\
 =&{o(\mu\varepsilon^{10})}\int_{|z-\xi^{\prime}|<R}\frac{1}{|y-z|^{n-4+j}}\frac{1}{(1+|z-\xi'|)^{n-10}}(1+|z-\xi'|)^{n-16+i}dz\nonumber\\
 \le & C\frac{o(\mu\varepsilon^{10})}{(1+|y-\xi'|)^{n-14+j}}R^{n-16+i}.\label{|y|ball}
\end{align} 

From the definition of $\|\phi\|_{\star}$, we know that 
\begin{align*}
III_{i,R}^{j}(y)
:=&\int_{|z-\xi^{\prime}|\ge R}\frac{1}{|y-z|^{n-4+j}}\frac{1}{(1+|z-\xi'|)^{6-i}}|\partial^{i}\phi|(z)dz\\
\le&\int_{|z-\xi^{\prime}|\ge R}\frac{1}{|y-z|^{n-4+j}}\frac{\|\phi\|_{\star}}{(1+|z-\xi'|)^{6-i}}\Bigg(\frac{1}{\ensuremath{\frac{1}{\frac{1}{(1+|z-\xi^{\prime}|)^i}\left(\frac{\mu \epsilon^{10}}{\left(1+\left|z-\xi^{\prime}\right|\right)^{n-14}}+\alpha (\frac{\epsilon}{\rho})^{n-4}\right) }+\frac{\left(1+\left|z-\xi^{\prime}\right|\right)^{n-4+i}}{\alpha}}}\Bigg)dz.
\end{align*}

Case 1: $\left|y-\xi^{\prime}\right|\le \frac{\rho}{\epsilon}$.

\begin{align} 
III_{i,R}^{j}(y) 
\le &\int_{|z-\xi^{\prime}|\ge R}\frac{1}{|y-z|^{n-4+j}}\frac{\|\phi\|_{\star}}{(1+|z-\xi^{\prime}|)^{6-i}}\left(\frac{\mu\epsilon^{10}}{\left(1+\left|z-\xi^{\prime}\right|\right)^{n-14+i}}+\alpha (\frac{\epsilon}{\rho})^{n-4}\frac{1}{(1+|z-\xi^{\prime}|)^i}\right)dz\nonumber\\ 
= & \int_{R<|z-\xi^{\prime}|}\frac{1}{|y-z|^{n-4+j}}\frac{\|\phi\|_{\star}\mu\epsilon^{10}}{\left(1+\left|z-\xi^{\prime}\right|\right)^{n-8}}dz\nonumber\\
 & +\alpha (\frac{\epsilon}{\rho})^{n-4}\int_{R<|z-\xi^{\prime}|}\frac{1}{|y-z|^{n-4+j}}\frac{\|\phi\|_{\star}}{(1+|z-\xi^{\prime}|)^{6}}dz\nonumber\\ 
\le & C\|\phi\|_{\star}\frac{\mu\epsilon^{10}}{R^{2}}\frac{1}{\left(1+\left|y-\xi^{\prime}\right|\right)^{n-14+j}} 
+C\frac{1}{R}\alpha (\frac{\epsilon}{\rho})^{n-4}\int_{R<|z-\xi^{\prime}|}\frac{1}{|y-z|^{n-4+j}}\frac{\|\phi\|_{\star}}{(1+|z-\xi^{\prime}|)^{5}}dz\nonumber\\
\le & C\|\phi\|_{\star}\frac{1}{R}\Big(  \frac{\mu\epsilon^{10} }{\left(1+\left|y-\xi^{\prime}\right|\right)^{n-14+j}} 
+ \alpha (\frac{\epsilon}{\rho})^{n-4}\frac{1}{(1+|y-\xi^{\prime}|)^{j}}\Big)
\label{case11}
\end{align} 
where the last inequality holds due to \eqref{ineq1}. 
 
Case 2: $\left|y-\xi^{\prime}\right|\ge\frac{\rho}{\epsilon}$.  

We have  
\begin{align}\label{case2Re}
III_{i,R}^{j}(y) & \le\int_{|z-\xi^{\prime}|\ge R}\frac{1}{|y-z|^{n-4+j}}\frac{\|\phi\|_{\star}}{(1+|z-\xi^{\prime}|)^{6-i}}\frac{\alpha}{\ensuremath{\left(1+\left|z-\xi^{\prime}\right|\right)^{n-4+i}}}dz\nonumber\\
 & \le\alpha\|\phi\|_{\star}\int_{|z-\xi^{\prime}|\ge R}\frac{1}{|y-z|^{n-4+j}}\frac{1}{\ensuremath{\left(1+\left|z-\xi^{\prime}\right|\right)^{n+2}}}dz\nonumber\\
 & \le\alpha\|\phi\|_{\star}o(\frac{1}{R})(1+|y-\xi^{\prime}|)^{4-n-j},
\end{align}
where we used $\ensuremath{\int_{\mathbb{R}^{n}}\frac{1}{|x-y|^{n-s}}\frac{1}{(1+|y|)^{t}}\mathrm{~d}y}\le C(1+|x|)^{s-n}$
for $t>n$. 

Collecting all terms \eqref{caseR}-\eqref{case2Re}, we obtain the \textit{claim}.
 
Recalling the estimates in \cite{WeiZhao2013}, we know that $I$, $II$, and $V$ satisfy the same estimates as $III$, namely,
\begin{align}\label{I+II+V}
&|I(y)|+|II(y)|+|V(y)|\nonumber\\
\le&(o(1)+o(1)\|\phi\|_{\star})\bigg(\chi_{\{x,|x-\xi'|\le \frac{\rho}{\epsilon}\}}(y)\frac{1}{\left(1+\left|y-\xi^{\prime}\right|\right)^j}\big(\frac{\mu\epsilon^{10}}{\left(1+\left|y-\xi^{\prime}\right|\right)^{n-14}}+\alpha (\frac{\epsilon}{\rho})^{n-4}\big)\nonumber\\
&+\chi_{\{x,|x-\xi'|> \frac{\rho}{\epsilon}\}}(y)\frac{\alpha}{\left(1+\left|y-\xi^{\prime}\right|\right)^{n-4+j}}\bigg),
\end{align}
by virtue of \begin{align*} 
\frac{\epsilon}{\rho}\leq    \frac{C}{  1+ | y-\xi'| }, \quad \text{for} \quad  | y-\xi'| \leq \frac{\rho}{\epsilon}.
\end{align*}

Moreover, the proof of \cite[Lemma 5.2]{WeiZhao2013} yields the estimate for $IV$:
\begin{align}\label{IV}
 |IV(y)| 
\le &C\|\zeta\|_{\star\star}\bigg(\chi_{\{x,|x-\xi'|\le \frac{\rho}{\epsilon}\}}(y)\frac{1}{\left(1+\left|y-\xi^{\prime}\right|\right)^j}\big(\frac{\mu\epsilon^{10}}{\left(1+\left|y-\xi^{\prime}\right|\right)^{n-14}}+\alpha (\frac{\epsilon}{\rho})^{n-4}\big)\nonumber\\
&\quad\quad\quad\quad +\chi_{\{x,|x-\xi'|> \frac{\rho}{\epsilon}\}}(y)\frac{\alpha}{\left(1+\left|y-\xi^{\prime}\right|\right)^{n-4+j}}\bigg).
\end{align}

To see this, for example, when $|y-\xi^{\prime}|\le \frac{\rho}{\epsilon}$, 
\begin{align*}
   |IV(y)| 
 \le C \|\zeta\|_{\star\star}\bigg(& \int_{|z-\xi^{\prime}|\le \frac{\rho}{\epsilon}} \frac{1}{|y-z|^{n-4+j}} \frac{\mu \epsilon^{10}}{(1+|z-\xi^{\prime}|)^{n-10}}\\
 &+\alpha \epsilon\int_{ \frac{\rho}{\epsilon}\le |z-\xi^{\prime}|\le \frac{1}{\epsilon}} \frac{1}{|y-z|^{n-4+j}}\frac{1}{(1+|z-\xi^{\prime}|)^{n-1}}\\
 &+\int_{|z-\xi^{\prime}|\ge \frac{1}{\epsilon}} \frac{1}{|y-z|^{n-4+j}}\frac{\alpha}{(1+|z-\xi^{\prime}|)^{n+\sigma}}\bigg)\\
 \le  C\|\zeta\|_{\star\star}\bigg(&\frac{\mu \epsilon^{10}}{(1+|y-\xi^{\prime}|)|^{n-14+j}}+\alpha  (\frac{\epsilon}{\rho})^{n-4}\frac{1}{1+|y-\xi^{\prime}|^{ j}}\bigg).
\end{align*}

Substituting the estimates \eqref{III}, \eqref{I+II+V}, and \eqref{IV} into the definition of $\|\phi\|_{\star\star}$, and first choosing $R$ large and then taking $\epsilon$ sufficiently small, we obtain
 \begin{align*} 
\|\phi\|_{\star}\leq C\|\zeta\|_{\star\star}+o(1),
 \end{align*}
which yields a contradiction.
\end{proof}

\begin{proof}[Proof of Proposition \ref{unique sol linear equ zeta}]
Let
\begin{align} \label{set H def}
\mathcal{H}=\big\{ \phi \in  H^{2}(\mathbb{R}^{n}) : \int _{\mathbb{R}^{n}} \chi Z_{j} \phi=0,\ \  j=0,1,\ldots,n\big\}
\end{align} 
be a linear space endowed with the inner product $\langle\phi,\psi\rangle=\int _{\mathbb{R}^{n}} \Delta_{\tilde{g}_{0}}\phi\,\Delta_{\tilde{g}_{0}}\psi$. Then we can reduce \eqref{linear equ zeta z} to finding $\phi\in\mathcal{H}$ such that, for any $\psi\in\mathcal{H}$,
\begin{align*} 
	\langle\phi,\psi\rangle=
	& - \int _{\mathbb{R}^{n}}    (  a_{n}R_{\tilde{g}_{0}}\tilde{g}_{0}^{ij}+b_{n}\operatorname{Ric}_{\tilde{g}_{0}}^{ij})\partial_{i}\phi \partial_{j}\psi+\big[ \frac{n-4}{2}   Q_{\tilde{g}_{0}} (z )-\frac{  n+2  }{2}       \tilde{u}_0^{  \frac{6-n}{n-4}} \Delta_{\tilde{g}_{0}}\tilde{u}_0^{\frac{n-2}{n-4} }  \big]\phi  \psi 
	\\&+ \frac{n^{2} -4 }{4}   \int _{\mathbb{R}^{n}}     \nabla_{\tilde{g}_{0} }(    \tilde{u}_0^{\frac{ 2}{n-4}  }  \phi  )\cdot \nabla_{\tilde{g}_{0} }(    \tilde{u}_0^{\frac{ 2}{n-4}  }  \psi  )  +     \int _{\mathbb{R}^{n}}   \zeta \psi.  
\end{align*}      
By Riesz's representation theorem, we can rewrite this equation in the operator form
\begin{align*}
\phi=K(\phi)+\tilde{\zeta},
\end{align*}
where $\tilde{\zeta}\in\mathcal{H}$ depends linearly on $\zeta$ and $K$ is a compact operator on $\mathcal{H}$.

Fredholm's alternative ensures unique solvability of this problem for any $\tilde{\zeta}$, provided that the homogeneous equation $\phi=K(\phi)$ admits only the trivial solution in $\mathcal{H}$, which corresponds to \eqref{linear equ zeta z} with $\zeta=0$. Lemma \ref{est linear equ zeta} ensures that the only solution in this case is zero, completing the proof.
\end{proof}

By Proposition \ref{unique sol linear equ zeta}, equation \eqref{linear equ zeta z} defines a continuous linear map $\phi=T(\zeta)$ from $L_{\star\star}^{\infty}$, equipped with the norm $\|\cdot\|_{\star\star}$, to $L_{\star}^{\infty}$, equipped with the norm $\|\cdot\|_{\star}$. Moreover, the differentiability of the operator $T$ is established as follows.

\begin{prop}\label{linear equ zeta C1}
 Suppose $(\xi' ,\lambda')\in \Lambda$. Then we have
\begin{align*}  
\|\nabla _{\xi'}T(\zeta) \|_{\star}\leq C  (\|\zeta \|_{\star\star}+\|\partial_{\xi'}\zeta \|_{\star\star}),
\end{align*}	
and 
\begin{align*}\|\nabla _{\lambda'}T(\zeta) \|_{\star}\leq C  (\|\zeta \|_{\star\star}+\|\partial_{\lambda'}\zeta \|_{\star\star}).
\end{align*}
\end{prop}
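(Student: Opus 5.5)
We follow the standard argument of del Pino--Felmer--Musso and \cite[Proposition 5.3]{WeiZhao2013}: differentiate the linear problem \eqref{linear equ zeta z} with respect to the parameters and solve the resulting equation with Proposition \ref{unique sol linear equ zeta}. We treat the derivative in $\xi'_l$; the derivative in $\lambda'$ is identical. Write $\phi=T(\zeta)$ and $c_i=c_i(\zeta)$. Formally differentiating \eqref{linear equ zeta z} gives
\begin{align*}
\mathcal{L}_{\tilde g_0}(\partial_{\xi'_l}\phi)=\partial_{\xi'_l}\zeta-(\partial_{\xi'_l}\mathcal{L}_{\tilde g_0})\phi+\sum_{i=0}^n c_i\,\partial_{\xi'_l}(\chi Z_i)+\sum_{i=0}^n(\partial_{\xi'_l}c_i)\chi Z_i,
\end{align*}
where $\partial_{\xi'_l}\mathcal{L}_{\tilde g_0}$ is the operator obtained by differentiating the $\tilde u_0$-dependent coefficients, i.e.\ the terms containing $\tilde u_0^{2/(n-4)}$ and $\tilde u_0^{(6-n)/(n-4)}L_{\tilde g_0}\tilde u_0^{(n-2)/(n-4)}$. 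Differentiating the orthogonality conditions gives $\int\partial_{\xi'_l}\phi\,\chi Z_i=-\int\phi\,\partial_{\xi'_l}(\chi Z_i)$, so $\partial_{\xi'_l}\phi$ is not itself admissible.

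\textbf{Correction.} We correct this by setting
\[
\psi=\partial_{\xi'_l}\phi+\sum_j b_j\chi Z_j,
\]
with constants $b_j$ chosen so that $\int\psi\chi Z_i=0$ for all $i$. Since $\big(\int\chi Z_iZ_j\big)$ is invertible, this is possible, and
\[
|b_j|\le C\Big|\int\phi\,\partial_{\xi'_l}(\chi Z_i)\Big|\le C\|\phi\|_\star\le C\|\zeta\|_{\star\star}.
\]
Then $\psi$ solves \eqref{linear equ zeta z} with right-hand side
\[
\tilde\zeta=\partial_{\xi'_l}\zeta-(\partial_{\xi'_l}\mathcal{L}_{\tilde g_0})\phi+\sum_ic_i\partial_{\xi'_l}(\chi Z_i)+\sum_jb_j\mathcal{L}_{\tilde g_0}(\chi Z_j)
\]
and new multipliers $\partial_{\xi'_l}c_i$. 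Hence $\psi=T(\tilde\zeta)$, and Proposition \ref{unique sol linear equ zeta} gives $\|\psi\|_\star\le C\|\tilde\zeta\|_{\star\star}$. It then remains to bound each piece of $\tilde\zeta$ in $\|\cdot\|_{\star\star}$ by $C(\|\zeta\|_{\star\star}+\|\partial_{\xi'}\zeta\|_{\star\star})$. Once this is done, $\|\partial_{\xi'_l}\phi\|_\star\le\|\psi\|_\star+\sum|b_j|\|\chi Z_j\|_\star$ yields the claim, because $\chi Z_j$ has compact support and $\|\chi Z_j\|_\star$ is bounded. This last point needs care, since the weight involves $\mu\epsilon^{10}$. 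We handle it by using $|b_j|\lesssim\int|\phi|\,|\partial(\chi Z_i)|$, where $\phi$ is evaluated on a bounded set; the $\|\cdot\|_\star$ norm of $\phi$ there already carries the $\mu\epsilon^{10}$ smallness, so $b_j\chi Z_j$ is controlled by $C\|\phi\|_\star$ in the $\star$-norm.

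\textbf{Estimating the pieces of $\tilde\zeta$.} The terms $\sum_jb_j\mathcal{L}_{\tilde g_0}(\chi Z_j)$ and $c_i\partial_{\xi'_l}(\chi Z_i)$ are compactly supported near $\xi'$. By \eqref{c_i est 26} and the computation \eqref{c1} (where $\mathcal{L}_{\tilde g_0}Z_j$ differs from the flat kernel equation only by curvature terms of size $\mu\epsilon^{10}$), they are bounded in $\|\cdot\|_{\star\star}$ by $C\|\zeta\|_{\star\star}$, using the weight $\mu\epsilon^{10}$ on $\{|y-\xi'|\le\rho/\epsilon\}$. The term $(\partial_{\xi'_l}\mathcal{L}_{\tilde g_0})\phi$ has coefficients such as $\partial_{\xi'}\tilde u_0^{4/(n-4)}$, which decay like $(1+|y-\xi'|)^{-5}$, together with their derivatives. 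This term therefore has the same structure as the $III$-type integrands in Lemma \ref{est linear equ zeta}, i.e.\ products of $\partial^i\phi$ with $(1+|y-\xi'|)^{-(6-i)}$ or better. Using the definition of $\|\phi\|_\star$, one checks pointwise that it is bounded by $C\|\phi\|_\star$ times the $\star\star$ weight on each of the three regions. Near $\xi'$ one compares $\mu\epsilon^{10}(1+|y|)^{-(n-14)-6}$ with $\mu\epsilon^{10}(1+|y|)^{-(n-10)}$, which is fine since $n-8\ge n-10$. On the remaining regions the $\alpha$-parts of the $\star$-norm decay like $(1+|y|)^{-(n+2)}$, which beats both $\alpha\epsilon(1+|y|)^{-(n-1)}$ and $\alpha(1+|y|)^{-(n+\sigma)}$.

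\textbf{Main obstacle and justification.} The main obstacle is the $\alpha(\epsilon/\rho)^{n-4}$ part of $\|\phi\|_\star$ on the intermediate region $\rho/\epsilon\le|y-\xi'|\le 1/\epsilon$ when multiplied by coefficients decaying only like $(1+|y|)^{-4}$. There we use $\epsilon/\rho\le C(1+|y-\xi'|)^{-1}$ inside the inner region, and the $(1+|y|)^{-(n-4+i)}$ branch outside it; this mirrors exactly the case split used for $III$. Finally, the formal differentiation is justified by applying the same argument to the difference quotients $(\phi(\xi'+te_l)-\phi(\xi'))/t$, obtaining uniform bounds, and passing to the limit $t\to0$. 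This also shows that $T$ is $C^1$ in $(\xi',\lambda')$.
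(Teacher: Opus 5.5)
Your proposal is correct and follows essentially the same route as the paper. You differentiate \eqref{linear equ zeta z}, restore orthogonality by adding $\sum_j b_j\chi Z_j$ with $|b_j|\le C\mu\epsilon^{10}\|\phi\|_\star$ (your observation that $\phi$ on a bounded set already carries the $\mu\epsilon^{10}$ smallness is exactly the paper's \eqref{estimateforb}), apply Proposition~\ref{unique sol linear equ zeta} to the corrected function, and bound each piece of $\tilde\zeta$ in $\|\cdot\|_{\star\star}$ by $C(\|\zeta\|_{\star\star}+\|\partial\zeta\|_{\star\star})$. Your difference-quotient remark additionally justifies the formal differentiation, which the paper leaves implicit.
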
 
 \begin{proof}
 Denote $Z=\partial_{\xi' } \phi $. By equation \eqref{linear equ zeta z},  $Z$ satisfies
  \begin{align*}  
  	\mathcal{L}_{\tilde{g}_{0}}  Z  
  =&  - \frac{n^{2}-4}{4} \Big\{ ( \partial_{\xi' } \tilde{u}_{0}^{  \frac{4}{n-4}})\Delta_{\tilde{g}_{0}}  \phi +   \nabla_{\tilde{g}_{0}}(\partial_{\xi'}   \tilde{u}_{0}^{  \frac{4}{n-4}})\cdot\nabla_{\tilde{g}_{0}}  \phi 
 	\\&\quad\quad\quad\quad + \partial_{\xi' } \big(  \tilde{u}_{0}^{  \frac{2}{n-4}}  \Delta_{\tilde{g}_{0}} \tilde{u}_{0}^{  \frac{2}{n-4}}+\frac{2}{n-2}\tilde{u}_{0}^{\frac{6-n}{n-4}} \Delta_{\tilde{g}_{0}} \tilde{u}_{0}^{  \frac{n-2}{n-4}}-\frac{n}{4(n-1 )}   R_{\tilde{g}_{0}} \tilde{u}_{0}^{  \frac{4}{n-4}}   \big) \phi \Big\} \\&+ \sum_{i=0}^{n}  c_{i}\partial_{\xi' }(\chi Z_{i})+  (\partial_{\xi' } c_{i})\chi Z_{i}.
 \end{align*}	  
 
 On the other hand, by differentiating the orthogonality condition $\int _{\mathbb{R}^{n}} \phi \chi Z_{j}=0 $ with respect to $\xi' $, we obtain
\begin{align*} 
\int_{ \mathbb{R}^{n } } \phi  \partial_{\xi' }(\chi Z_{j})+ \chi ZZ_{j} =0,\ j=0,1,\ldots,n.
 \end{align*}	 
 Select $b_{i}$ satisfying 
 \begin{align*} 
 \sum_{i=0}^{n} b_{i}	\int_{ \mathbb{R}^{n } }   \chi Z_{i} Z_{j} = \int_{ \mathbb{R}^{n } }  \phi  \partial_{\xi' }(\chi Z_{j}),\ j=0,1,\ldots,n.
 \end{align*}	 
 Owing to the diagonal dominance of the system and the uniform boundedness of its coefficients, we conclude that it admits a unique solution and that for any $(\xi',\lambda')\in \Lambda$ and for $n\geq 26$,
 \begin{equation}\label{estimateforb}
     |b_{i}|\leq C\mu \epsilon^{10}\|\phi\|_{\star}, 
 \end{equation} 
and the factor $\epsilon^{10}$ is replaced by  $\epsilon^{9}$ when  $n=25$.

 Set $\eta=Z+\sum_{i=0}^{n}b_{i}\chi Z_{i}$. Then we have 
 \begin{align*} 
 	\left\{\begin{aligned}   
 		&	\mathcal{L}_{\tilde{g}_{0}}  \eta=  \tilde{\zeta} +	\sum_{i=0}^{n}  (\partial_{\xi'} c_{i})\chi Z_{i} \quad \text{in}  \ \mathbb{R}^{n},
 		\\  &  \int _{\mathbb{R}^{n}}\eta \chi Z_{i}=0, \quad i=0,1,\ldots,n, 
 	\end{aligned}\right. 
\end{align*}	  
 where 	   	  
\begin{align*} 
 \tilde{\zeta}= &\sum_{i} b_{i}  	\mathcal{L}_{\tilde{g}_{0}}  (\chi Z_{i}) +\partial_{\xi'}\zeta+c_{i}\partial_{\xi'}(\chi Z_{i})
\\&- \frac{n^{2}-4}{4} \Big\{  ( \partial_{\xi' } \tilde{u}_{0}^{  \frac{4}{n-4}})\Delta_{\tilde{g}_{0}}  \phi +   \nabla_{\tilde{g}_{0}}(\partial_{\xi'}   \tilde{u}_{0}^{  \frac{4}{n-4}})\cdot\nabla_{\tilde{g}_{0}}  \phi 
 \\&\quad\quad\quad\quad + \partial_{\xi' } \big(  \tilde{u}_{0}^{  \frac{2}{n-4}}  \Delta_{\tilde{g}_{0}} \tilde{u}_{0}^{  \frac{2}{n-4}}+\frac{2}{n-2}\tilde{u}_{0}^{\frac{6-n}{n-4}} \Delta_{\tilde{g}_{0}} \tilde{u}_{0}^{  \frac{n-2}{n-4}}-\frac{n}{4(n-1 )}   R_{\tilde{g}_{0}} \tilde{u}_{0}^{  \frac{4}{n-4}}   \big) \phi \Big\} .
\end{align*}	   
By Lemma \ref{est linear equ zeta}, we have $\|\eta\|_{\star}\le C\|\tilde{\zeta}\|_{\star\star}$ and $|\partial c_{i}|\le o(\mu\epsilon^{10})\|\phi\|_{\star}+C\mu\epsilon^{10}\|\partial\zeta\|_{\star\star}$ for $n\ge 26$; the factor $\epsilon^{10}$ is replaced by $\epsilon^{9}$ when $n=25$.

In fact, note that
\begin{align*}
&| ( \partial_{\xi' } \tilde{u}_{0}^{  \frac{4}{n-4}})\Delta_{\tilde{g}_{0}}  \phi +   \nabla_{\tilde{g}_{0}}(\partial_{\xi'}   \tilde{u}_{0}^{  \frac{4}{n-4}})\cdot\nabla_{\tilde{g}_{0}}  \phi | 
\\\leq &C  \big[ \frac{1}{(1+|y-\xi'|)^{ 5} }  |\partial ^{2} \phi|+  \frac{1}{(1+|y-\xi'|)^{ 6} }| \partial\phi |\big],
\end{align*} 
and
\begin{align*} 
&\big| \partial_{\xi' } \big(  \tilde{u}_{0}^{  \frac{2}{n-4}}  \Delta_{\tilde{g}_{0}} \tilde{u}_{0}^{  \frac{2}{n-4}}+\frac{2}{n-2}\tilde{u}_{0}^{\frac{6-n}{n-4}} \Delta_{\tilde{g}_{0}} \tilde{u}_{0}^{  \frac{n-2}{n-4}}-\frac{n}{4(n-1 )}     R_{\tilde{g}_{0}} \tilde{u}_{0}^{  \frac{4}{n-4}} \big)\big||\phi |
 \leq C    \frac{1}{(1+|y-\xi'|)^{ 5} }|\phi |.
\end{align*} 

By the definition of $\|\cdot\|_{\star\star}$, we get
 \begin{align*} 
	& \big\|( \partial_{\xi' } \tilde{u}_{0}^{  \frac{4}{n-4}})\Delta_{\tilde{g}_{0}}  \phi +   \nabla_{\tilde{g}_{0}}(\partial_{\xi'}   \tilde{u}_{0}^{  \frac{4}{n-4}})\cdot\nabla_{\tilde{g}_{0}}  \phi 
	\\& + \partial_{\xi' } \big(  \tilde{u}_{0}^{  \frac{2}{n-4}}  \Delta_{\tilde{g}_{0}} \tilde{u}_{0}^{  \frac{2}{n-4}}+\frac{2}{n-2}\tilde{u}_{0}^{\frac{6-n}{n-4}} \Delta_{\tilde{g}_{0}} \tilde{u}_{0}^{  \frac{n-2}{n-4}}-\frac{n}{4(n-1 )}   R_{\tilde{g}_{0}} \tilde{u}_{0}^{  \frac{4}{n-4}}   \big) \phi \big\| _{\star\star} \leq C \|\phi \|_{\star}.
\end{align*}
Again by the definition of $\|\cdot\|_{\star\star}$ and \eqref{estimateforb}, we directly compute that 
\begin{align*} 
	|b_{i}| \| \mathcal{L}_{\tilde{g}_{0}}  (\chi Z_{i}) \| _{\star\star}&\leq C \|\phi \|_{\star}.
\end{align*} 
 
Similarly, together with \eqref{c_i est 26}, we have
\begin{align*} 
		|c_{i}|\| \partial_{\xi'}(\chi Z_{i})\|_{\star\star}\leq C\| \zeta \|_{\star\star}.
\end{align*}
Therefore $ \| \tilde{\zeta} \|_{\star\star}\leq C(\|\zeta \|_{\star\star}+\|\partial_{\xi'}\zeta \|_{\star\star})$, and so $ \| \eta \|_{\star}\leq C(\|\zeta \|_{\star\star}+\|\partial_{\xi'}\zeta \|_{\star\star})$. 
On the other hand, by Proposition \ref{unique sol linear equ zeta} and \eqref{estimateforb}, we have $ \|b_{i} Z  _{i}\|_{\star}\leq C\|\phi \|_{\star}\leq C\|\zeta \|_{\star\star} $. 
Thus we get $ \|Z \|_{\star}\leq C  (\| \zeta  \|_{\star\star}+\|\partial_{\xi'}\zeta \|_{\star\star})$.  

An analogous argument applies to differentiation with respect to $\lambda'$, thereby completing the proof.
\end{proof}

\subsection{Nonlinear problem}\label{Sect. nonlinear problem}
To solve \eqref{linear equ}, we proceed indirectly by introducing an intermediate step.
We write $v=\tilde{u}_{0}+\phi$, where $\phi$ is defined as the solution of
\begin{align} \label{nonlinear c}
\left\{\begin{aligned}   
	&	\mathcal{L}_{\tilde{g}_{0}}  \phi =   \mathcal{N}(\phi)-\mathcal{R} (y)+	 \sum_{i=0}^n c_{i} \chi Z_{i} \quad \text{in}  \ \mathbb{R}^{n}
			\\  &  \int _{\mathbb{R}^{n}}\phi \chi Z_{i}=0, \quad i=0,1,\ldots,n .   
\end{aligned}\right.
\end{align} 

For notational simplicity, we let $a= \frac{ (n+2 )(n-4 )}{4} $ and $p=\frac{n-2}{n-4}$ throughout this subsection.
\begin{prop}\label{nonlinear existence}
There exists a unique solution to \eqref{nonlinear c} such that
$$\|\phi\|_{\star}\leq \beta,$$
where $\beta$ is a large constant independent of $\alpha$ and $\epsilon$.
\end{prop}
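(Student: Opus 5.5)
The plan is a contraction mapping argument in the Banach space $E=\{\phi:\|\phi\|_\star<\infty,\ \int\phi\chi Z_i=0,\ i=0,\dots,n\}$. By Proposition \ref{unique sol linear equ zeta}, problem \eqref{nonlinear c} is equivalent to the fixed point equation
\[
\phi=\mathcal{A}(\phi):=T\big(\mathcal{N}(\phi)-\mathcal{R}\big).
\]
Here $T$ is the solution operator of \eqref{linear equ zeta z}, and the constants $c_i$ are produced automatically by $T$. We will show that $\mathcal{A}$ maps the ball $B_\beta=\{\phi\in E:\|\phi\|_\star\le\beta\}$ into itself and is a contraction there. Uniqueness in $B_\beta$ and existence then follow from Banach's fixed point theorem.

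\textbf{Step 1: the error term.} We first prove $\|\mathcal{R}\|_{\star\star}\le C_0$ with $C_0$ independent of $\alpha,\epsilon$. Since $\tilde u_0$ solves the flat equation \eqref{equ tilde u0}, $\mathcal{R}$ only contains differences of the form $P_{\tilde g_0}-\Delta^2$ and $L_{\tilde g_0}-(-\Delta)$ applied to $\tilde u_0$ and to $\tilde u_0^{p}$. These are controlled by the curvature and metric expansions of \cite[Section 4]{WeiZhao2013}, used region by region.
\begin{itemize}
\item For $|y-\xi'|\le\rho/\epsilon$ we have $\tilde h=\mu\epsilon^{10}\bar H$, with $|\partial^k\bar H|\lesssim(1+|y|)^{10-k}$. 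So $R_{\tilde g_0}$ and $\operatorname{Ric}_{\tilde g_0}$ are $O(\mu\epsilon^{10}(1+|y|)^{8})$, $Q_{\tilde g_0}$ is of order $\mu\epsilon^{10}(1+|y|)^6$, and their products with $\tilde u_0\sim(1+|y|)^{4-n}$ and its derivatives give $|\mathcal R|\lesssim\mu\epsilon^{10}(1+|y|)^{10-n}$.
\item For $\rho/\epsilon\le|y-\xi'|\le1/\epsilon$ the bound $|\partial^k h|\le\alpha$ gives $|\partial^k\tilde h|\le\alpha\epsilon^k$, and hence $|\mathcal R|\lesssim\alpha\epsilon(1+|y|)^{1-n}$.
\item For $|y-\xi'|\ge1/\epsilon$ the metric is flat and $\mathcal R=0$.
\end{itemize}
Here the trace-free and divergence-free properties of $H$ are used exactly as in \cite{WeiZhao2013}. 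For $n=25$ the powers $\epsilon^{10}$ are replaced by $\epsilon^{9}$. The contributions of the new nonlinear term $\tilde u_0^{2/(n-4)}(L_{\tilde g_0}-L)\tilde u_0^{p}$ decay no worse than the Paneitz contributions, because $\tilde u_0^{2/(n-4)}\tilde u_0^{p}\sim\tilde u_0^{(n)/(n-4)}$ behaves like $(1+|y|)^{-n}$ times second derivatives of $\tilde h$. Consequently $\|T\mathcal R\|_\star\le CC_0$, and we will take $\beta=2CC_0$.

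\textbf{Step 2: the nonlinear term (main obstacle).} We need
\[
\|\mathcal N(\phi)\|_{\star\star}\le C\|\phi\|_\star^2\,\delta,\qquad \|\mathcal N(\phi_1)-\mathcal N(\phi_2)\|_{\star\star}\le C\delta\,(\|\phi_1\|_\star+\|\phi_2\|_\star)\|\phi_1-\phi_2\|_\star,
\]
where $\delta=\delta(\alpha,\epsilon)\to0$. This is the delicate step, since $\mathcal N$ involves $L_{\tilde g_0}(\tilde u_0+\phi)^p$ and therefore second derivatives of $\phi$, and fractional powers.

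We first note that the $\star$-norm forces $|\partial^i\phi|\le(\mu\epsilon^{10}(1+|y|)^{14-n-i}+\alpha(\epsilon/\rho)^{n-4}(1+|y|)^{-i})\|\phi\|_\star$, and also $|\partial^i\phi|\le\alpha(1+|y|)^{4-n-i}\|\phi\|_\star$. Hence $|\phi|\le o(1)\tilde u_0$ for small $\alpha$, and all powers $(\tilde u_0+\phi)^{s}$ are smooth Taylor-expandable functions of $\phi/\tilde u_0$.

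Expanding $(\tilde u_0+\phi)^{2/(n-4)}L_{\tilde g_0}(\tilde u_0+\phi)^{p}$ to second order, $\mathcal N$ becomes a sum of terms quadratic in $(\phi,\partial\phi,\partial^2\phi)$. Schematically these are
\[
\tilde u_0^{\frac{4}{n-4}-1}\phi\,\partial^2\phi,\qquad \tilde u_0^{\frac{4}{n-4}-1}|\partial\phi|^2,\qquad \tilde u_0^{\frac4{n-4}-2}\phi^2\,\partial^2\tilde u_0,
\]
plus lower order terms and curvature corrections. In each of them one factor will be bounded by the decay $\alpha(1+|y|)^{4-n-i}$ and the other by the $\mu\epsilon^{10}$ or $\alpha$ branch of the norm. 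The weight $\tilde u_0^{\frac{8-n}{n-4}}\sim(1+|y|)^{n-8}$ then leaves a product in each of the three regions of $\|\cdot\|_{\star\star}$ bounded by $o(1)\|\phi\|_\star^2$ times the required weight. For example, in the inner region we get
\[
\mu\epsilon^{10}(1+|y|)^{12-n}\cdot\alpha\,(1+|y|)^{-4}\ \le\ \alpha\cdot \mu\epsilon^{10}(1+|y|)^{10-n}.
\]
The Lipschitz estimate is proved in the same way, using the mean value theorem on the Taylor remainders.

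\textbf{Step 3: conclusion.} For $\phi\in B_\beta$, Steps 1 and 2 give
\[
\|\mathcal A(\phi)\|_\star\le C\big(C_0+\delta\beta^2\big)\le\beta
\]
once $\alpha,\epsilon$ are small, and $\mathcal A$ is a contraction with constant $C\delta\beta<1$. The fixed point $\phi$, together with its constants $c_i$, solves \eqref{nonlinear c}, and it is unique in $B_\beta$. The constant $\beta$ depends only on $C$ and $C_0$, so it is independent of $\alpha$ and $\epsilon$.
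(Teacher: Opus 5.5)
Your proposal is correct and follows essentially the same route as the paper. Both use the fixed-point formulation $\phi=T(\mathcal N(\phi)-\mathcal R)$, a uniform bound $\|\mathcal R\|_{\star\star}\le C$ from the Wei--Zhao and Brendle--Marques expansions, a Taylor-expansion bound on $\mathcal N(\phi)$ with the same weights $(1+|y-\xi'|)^{n-10},(1+|y-\xi'|)^{n-9},(1+|y-\xi'|)^{n-8}$, and a contraction on $\{\|\phi\|_\star\le\beta\}$.

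The only difference is the source of smallness: you take it from $\alpha$ via the decay branch of $\|\cdot\|_\star$, while the paper writes $\epsilon^{2-\sigma}$. In both versions, the $\alpha(\epsilon/\rho)^{n-4}$ branch of $\|\cdot\|_\star$ inside $|y-\xi'|\le\rho/\epsilon$ tacitly needs a mild parameter relation, such as boundedness of $\epsilon^{2}\mu^{-1}\rho^{-12}$; the final choice $\mu=2^{-N}$, $\epsilon=2^{-8N}$, $\rho=\frac{1}{4N^{2}}$ satisfies it.
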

\begin{proof}
Equation \eqref{nonlinear c} can be rewritten as
  $$\phi=T[ \mathcal{N}(\phi)-\mathcal{R}] =:A(\phi).$$
Using Proposition \ref{unique sol linear equ zeta}, 
\begin{equation}\label{A(p)}
\|A(\phi) \|_{\star} \leq C(\|\mathcal{R}\|_{\star\star}+\|\mathcal{N}(\phi)\|_{\star\star}).
\end{equation}
Define the set
  $$\mathcal{S}=\{\phi\in \mathcal{H}\cap L_{\star}^{\infty}(\mathbb{R}^{n}): \|\phi\|_{\star}\leq \beta\},$$ 
where the space $\mathcal{H}$ is given by \eqref{set H def} and $\beta$ is a positive constant determined by \eqref{R rst} and \eqref{N  phi est}. 

Note that by \eqref{equ tilde u0},
\begin{align*}   
	\mathcal{R}(y)=  &  P_{\tilde{g}_{0}}\tilde{u}_{0}-\Delta^{2} \tilde{u}_{0}
	-   \frac{ (n+2 )(n-4 )}{4}\tilde{u}_{0} ^{  \frac{2}{n-4}}  ( L_{\tilde{g}_{0}} \tilde{u}_{0} ^{  \frac{n-2}{n-4}}  +\Delta \tilde{u}_{0} ^{  \frac{n-2}{n-4}} ) 
	\\=&  P_{\tilde{g}_{0}}\tilde{u}_{0}-\Delta^{2} \tilde{u}_{0}
	+ \frac{ (n+2 )(n-4 )}{4}\tilde{u}_{0} ^{  \frac{2}{n-4}}  ( \Delta_{\tilde{g}_{0}} \tilde{u}_{0}^{  \frac{n-2}{n-4}}- \Delta \tilde{u}_{0}  ^{  \frac{n-2}{n-4}}   -\frac{n-2}{4(n-1 )}R_{\tilde{g}_{0}} \tilde{u}_{0}^{  \frac{n-2}{n-4}}   ) .     
\end{align*}	 

We only present the case $n\geq 26$; the case $n=25$ is entirely analogous.

By \cite[Lemma 4.9]{WeiZhao2013},
\begin{equation*} 
	\begin{aligned}
		| P_{\tilde{g}_{0}}\tilde{u}_{0}-\Delta^{2} \tilde{u}_{0}| 
		&\leq  \begin{cases} 
			C \frac{\mu \epsilon^{10}}{ \left(1+\left|y-\xi^{\prime}\right|\right)^{n-10}}\ \  \text{for} \ 
			\left|y \right| \leq \frac{\rho}{\epsilon}  
			\\C \frac{\alpha \epsilon}  { \left(1+\left|y-\xi^{\prime}\right|\right)^{n-1}} \ \  \  \text{for} \   \frac{\rho}{\epsilon} \leq \left|y \right| \leq \frac{1}{\epsilon} 
				\\0, \quad \quad \quad \quad \quad \ \   \ \text{for} \   \left|y \right| \geq \frac{1}{\epsilon}.
		\end{cases}       
	\end{aligned}
\end{equation*}
In a manner analogous to \cite[Proposition 7]{Brendle2008} (or, alternatively, \cite[Proposition 5]{BrendleMarques2009}), we have
\begin{align*}
 	& \big|\tilde{u}_{0} ^{  \frac{2}{n-4}} \big( \Delta_{\tilde{g}_{0}} \tilde{u}_{0}^{  \frac{n-2}{n-4}}- \Delta \tilde{u}_{0}  ^{  \frac{n-2}{n-4}}   -\frac{n-2}{4(n-1 )}R_{\tilde{g}_{0}} \tilde{u}_{0}^{  \frac{n-2}{n-4}}  \big)  \big|
 	\\=&\tilde{u}_{0} ^{  \frac{2}{n-4}} \big| \partial_{i} \big[(\tilde{g}_{0} ^{ij}-\delta_{ij} ) \partial_{j}\tilde{u}_{0}^{  \frac{n-2}{n-4}}\big] -\frac{n-2}{4(n-1 )}R_{\tilde{g}_{0}} \tilde{u}_{0}^{  \frac{n-2}{n-4}}     \big|
 		\\=&\tilde{u}_{0} ^{  \frac{2}{n-4}} \big| \partial_{i} \big[(\tilde{h}_{ij} +O(|\tilde{h}|^{2})) \partial_{j}\tilde{u}_{0}^{  \frac{n-2}{n-4}}\big] -\frac{n-2}{4(n-1 )}R_{\tilde{g}_{0}} \tilde{u}_{0}^{  \frac{n-2}{n-4}}     \big|
 	\\\leq &\left\{\begin{aligned} 
 		&   C \frac{\mu  \epsilon^{10}}{ \left(1+\left|y-\xi^{\prime}\right|\right)^{n-10}}\ \  \text{for} \ 
 		\left|y \right| \leq \frac{\rho}{\epsilon},  
 		\\  & C \frac{\alpha \epsilon}  { \left(1+\left|y-\xi^{\prime}\right|\right)^{n }}\quad \ \  \  \text{for} \   \frac{\rho}{\epsilon} \leq \left|y \right| \leq \frac{1}{\epsilon} 
 			\\&0, \quad \quad \quad \quad \quad \quad\quad \ \  \ \text{for} \   \left|y \right| \geq \frac{1}{\epsilon} ,
 	\end{aligned}\right. 
 \end{align*} 	 
Here we used \cite[Lemma 4.3]{WeiZhao2013} to estimate $|R_{\tilde{g}_{0}}|$, which yields
 	 $ 	|R_{\tilde{g}_{0}}(y)|   \leq C  \mu^{2}  \epsilon^{20}   \left|y \right| ^{20}\leq C  \mu   \epsilon^{10}  \left(1+\left|y-\xi^{\prime}\right|\right)^{ 10}$, for $\left|y \right| \leq \frac{\rho}{\epsilon}$,  
 	 $ 	|R_{\tilde{g}_{0}}(y)|   \leq  C  \alpha \epsilon^{2} $,  for $\frac{\rho}{\epsilon} \leq \left|y \right| \leq \frac{1}{\epsilon} $, and 
 		 $ 	|R_{\tilde{g}_{0}}(y)|=0,$  for $ \left|y \right| \geq \frac{1}{\epsilon} $.
 		 
Putting these facts together, we obtain
\begin{align}\label{R rst}
 \|\mathcal{R}\|_{\star\star}\leq C,
\end{align}
where $C$ is a constant independent of $\alpha$ and $\epsilon$.
Recalling $p=\frac{n-2}{n-4}$, we observe that
\begin{equation} \label{Nphi expension}
	\begin{aligned}
\mathcal{N}(\phi)=& a\Big[  (  \tilde{u}_{0}+\phi) ^{  p-1 }   L_{\tilde{g}_{0}} ( \tilde{u}_{0}+\phi) ^{ p}-   \tilde{u}_{0} ^{  p-1}    L_{\tilde{g}_{0}} \tilde{u}_{0} ^{ p}    -p  \tilde{u}_0^{p-1}  L_{\tilde{g}_{0}} ( \tilde{u}_0^{p-1} \phi ) -(p-1)     \tilde{u}_0^{p-2} L_{\tilde{g}_{0}} \tilde{u}_0^{p} \phi     \Big]  .
\end{aligned}
\end{equation}
By Taylor's theorem, there exists $\theta\in(0,1)$ such that
  \begin{equation}\label{N phi Taylor}
  	\begin{aligned} 
  		\mathcal{N} (\phi)=&a\Big\{(p-1) (p-2)  (  \tilde{u}_{0}+\theta\phi) ^{  p-3 }   L_{\tilde{g}_{0}} (  \tilde{u}_{0}+\theta\phi) ^{p}  \phi^{2}
  		\\&+2p(p-1)  (  \tilde{u}_{0}+\theta\phi) ^{  p-2 }   \phi L_{\tilde{g}_{0}}\big[ ( \tilde{u}_{0}+\theta\phi) ^{  p-1 }   \phi\big]
  		\\&+p(p-1)   (  \tilde{u}_{0}+\theta\phi) ^{  p-1 }   L_{\tilde{g}_{0}} \big[ ( \tilde{u}_{0}+\theta\phi) ^{  p-2 }   \phi^{2}\big]\Big\}.
  	\end{aligned} 
  \end{equation}
Hence, we have
   \begin{equation}\label{N  phi est1}
   	 \begin{aligned} 
 |\mathcal{N} (\phi )|	\leq C\big[ &(1+ |y-\xi'| )^{n-10}|\phi |^{2}+ (1+ |y-\xi'| )^{ n-9}|\phi | |\partial\phi | 
 \\&+  (1+ |y-\xi'| )^{n-8} (|\partial\phi | ^{2}+|\phi | |\partial^{2}\phi | )\big],  
\end{aligned} 
\end{equation}
which implies that
\begin{align}\label{N  phi est}
	\|\mathcal{N} (\phi)\|_{\star\star} \leq C \epsilon^{2-\sigma}\|\phi\|_{ \star}^{2} .
\end{align}   

Consequently, together with \eqref{R rst} and \eqref{A(p)}, we have $A(\phi)\in\mathcal{S}$ for $\phi\in\mathcal{S}$ and $\epsilon$ sufficiently small.
Moreover, we have
$$\|\mathcal{N}(\phi_{1})-\mathcal{N}(\phi_{2})\|_{\star\star}\leq C\epsilon^{2-\sigma}\|\phi_{1}-\phi_{2}\|_{\star}.$$
It follows that
$$\|A(\phi_{1})-A(\phi_{2})\|_{\star}\leq C\|\mathcal{N}(\phi_{1})-\mathcal{N}(\phi_{2})\|_{\star\star}\leq C\epsilon^{2-\sigma}\|\phi_{1}-\phi_{2}\|_{\star}.$$
Hence, by the contraction mapping theorem on $\mathcal{S}$, existence and uniqueness follow.
\end{proof}
 
The following proposition establishes the differentiability of $\phi$ defined in Proposition \ref{nonlinear existence}. This will be used to relate solutions of \eqref{main Q:R=c  tilde g equ} to critical points of a functional; see Proposition \ref{equvalence between solution and critical point}.

\begin{prop}\label{derivative of phi-estimates}
 Let $\phi$ be the solution to \eqref{nonlinear c}. Then we have 
 	 $$ \| \nabla_{(\xi',\lambda')}\phi \|_{ \star}\leq C.$$
 \end{prop}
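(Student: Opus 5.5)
We will differentiate the fixed-point identity $\phi=A(\phi)=T[\mathcal{N}(\phi)-\mathcal{R}]$ from Proposition \ref{nonlinear existence} with respect to the parameters $(\xi',\lambda')$. The map $(\xi',\lambda',\phi)\mapsto \phi-T[\mathcal{N}(\phi)-\mathcal{R}]$ is $C^1$. Its $\phi$-derivative is $\mathrm{Id}-T\circ D_\phi\mathcal{N}(\phi)$, where $D_\phi\mathcal{N}$ denotes the Fréchet derivative of $\mathcal{N}$ in $\phi$. We first check that this map is a small perturbation of the identity on $L^\infty_\star$. Since $\|\mathcal{N}(\phi_1)-\mathcal{N}(\phi_2)\|_{\star\star}\le C\epsilon^{2-\sigma}\|\phi_1-\phi_2\|_\star$ on $\mathcal{S}$, Proposition \ref{unique sol linear equ zeta} gives $\|T\circ D_\phi\mathcal{N}(\phi)\|_{\star\to\star}\le C\epsilon^{2-\sigma}$. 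Hence it is invertible, and the implicit function theorem, combined with the uniqueness in Proposition \ref{nonlinear existence}, shows that $\phi=\phi(\xi',\lambda')$ is differentiable.

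Next we write the equation for $\partial\phi$, where $\partial$ denotes $\partial_{\xi'_j}$ or $\partial_{\lambda'}$:
\begin{align*}
\partial\phi=(\partial T)[\mathcal{N}(\phi)-\mathcal{R}]+T\big[(\partial_{\xi',\lambda'}\mathcal{N})(\phi)+D_\phi\mathcal{N}(\phi)[\partial\phi]-\partial\mathcal{R}\big].
\end{align*}
Here $\partial_{\xi',\lambda'}\mathcal{N}$ is the explicit parameter dependence through $\tilde u_0$. The term $(\partial T)[\zeta]$ with $\zeta=\mathcal{N}(\phi)-\mathcal{R}$ is controlled by Proposition \ref{linear equ zeta C1}. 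More precisely, $\nabla T(\zeta)-T(\nabla\zeta)$ is bounded by $C(\|\zeta\|_{\star\star}+\|\partial\zeta\|_{\star\star})$. Together with the $T(\partial\zeta)$ part, it therefore suffices to bound $\|\mathcal{R}\|_{\star\star}$, $\|\partial\mathcal{R}\|_{\star\star}$, $\|\mathcal{N}(\phi)\|_{\star\star}$ and $\|(\partial_{\xi',\lambda'}\mathcal{N})(\phi)\|_{\star\star}$ by constants.

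For $\mathcal{R}$ we already have \eqref{R rst}. Since $\partial_{\xi'}\tilde u_0$ and $\partial_{\lambda'}\tilde u_0$ obey the same pointwise bounds as $\tilde u_0$ (and likewise for their derivatives), uniformly on $\Lambda$, the proof of \eqref{R rst} repeats verbatim for $\partial\mathcal{R}$. This uses \cite[Lemma 4.9]{WeiZhao2013} for $P_{\tilde g_0}\partial\tilde u_0-\Delta^2\partial\tilde u_0$ and the same curvature bounds for the $L_{\tilde g_0}$ part, and gives $\|\partial\mathcal{R}\|_{\star\star}\le C$. For $\mathcal{N}$, we differentiate the Taylor representation \eqref{N phi Taylor} in the parameters, with $\phi$ frozen. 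This produces the same types of terms as \eqref{N  phi est1}, with one factor of $\tilde u_0$ replaced by $\partial\tilde u_0$, so the estimate \eqref{N  phi est} carries over: $\|(\partial_{\xi',\lambda'}\mathcal{N})(\phi)\|_{\star\star}\le C\epsilon^{2-\sigma}\beta^2$. Finally, $\|T D_\phi\mathcal{N}(\phi)[\partial\phi]\|_\star\le C\epsilon^{2-\sigma}\|\partial\phi\|_\star$, and this term is absorbed into the left side. Together these give $\|\partial\phi\|_\star\le C$.

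The main obstacle is justifying the $D_\phi\mathcal{N}$ and parameter-derivative bounds in the weighted norms. The nonlinearity involves $(\tilde u_0+\theta\phi)^{p-3}$ and $L_{\tilde g_0}$ acting on products, so negative powers appear. We will first confirm that $|\phi|\le \tfrac12\tilde u_0$ pointwise, which follows from $\|\phi\|_\star\le\beta$ since $\alpha$ is small and $\epsilon$ is small. We will then check that every term of $D_\phi\mathcal{N}$ carries a factor $\phi$ or $\partial\phi$, which produces the gain $\epsilon^{2-\sigma}$ exactly as in the derivation of \eqref{N  phi est}. A second delicate point is rigor in differentiating $T$. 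If needed, we will bypass Proposition \ref{linear equ zeta C1}'s abstract form and redo its argument directly on \eqref{nonlinear c}, introducing $b_i$ to restore orthogonality and bounding $\partial c_i$ as there.
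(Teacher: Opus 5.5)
Your proposal follows the paper's proof essentially step for step. You differentiate $\phi=T[\mathcal N(\phi)-\mathcal R]$ and invoke Propositions \ref{unique sol linear equ zeta} and \ref{linear equ zeta C1}. You bound $\|\mathcal R\|_{\star\star}$, $\|\partial\mathcal R\|_{\star\star}$, $\|\mathcal N(\phi)\|_{\star\star}$ and the derivative of $\mathcal N(\phi)$, then absorb the small multiple of $\|\partial\phi\|_\star$. The paper bounds the total derivative by $C\epsilon^{3-\sigma}(1+\|\partial\phi\|_\star)$, where you split it into the explicit-parameter part and $D_\phi\mathcal N(\phi)[\partial\phi]$.

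The only variation is the differentiability step. You apply the implicit function theorem to the fixed-point map, with invertibility of $\mathrm{Id}-T\circ D_\phi\mathcal N(\phi)$ coming from the contraction constant. The paper instead applies it to the system $\mathbf H((\xi',\lambda'),\phi,\mathbf c)$ and re-runs the argument of Proposition \ref{unique sol linear equ zeta} for the linearization at $\tilde u_0+\phi$. Your version needs $C^1$ dependence of $T$ on $(\xi',\lambda')$, which Proposition \ref{linear equ zeta C1} supplies only as an a priori bound; you correctly flag this as the point requiring care.
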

\begin{proof}
	 We first show the differentiability of  $\phi$. Define a $C^{1}$ map $H:\Lambda\times   \mathcal{S}\times \mathbb{R}^{n+1}\rightarrow L_{\star}^{\infty}(\mathbb{R}^{n})\times \mathbb{R}^{n+1}$ by
\begin{align*}
 \textbf{H}((\xi',\lambda'),\phi, \textbf{c}) =\left( 
 \begin{array}{c} 
   P _{\tilde{g}_{0}}( \tilde{u}_{0}+\phi)-a( \tilde{u}_{0}+\phi)^{  p-1}    L_{\tilde{g}_{0}} (\tilde{u}_{0}+\phi)^{ p} -\sum_{i=0}^{n}c_{i}\chi  Z_{i}
 \\\int_{ \mathbb{R}^{n } }\chi  Z_{0}\phi
 \\\vdots
 \\\int_{ \mathbb{R}^{n } }\chi  Z_{n}\phi
 \end{array} \right)
\end{align*}
where $\textbf{c}=\{c_i\}$.
Then \eqref{nonlinear c} can be rewritten as $\textbf{H}((\xi',\lambda'),\phi,\textbf{c})=0$.

By Proposition \ref{nonlinear existence}, there exists a unique solution $\phi_{(\xi',\lambda')}$ for each fixed $(\xi',\lambda')\in\Lambda$.

To establish the $C^{1}$ regularity via the implicit function theorem, we show that the linear operator $\frac{\partial \textbf{H}((\xi',\lambda'),\phi,\textbf{c})}{\partial (\phi,\textbf{c})}\big|_{((\xi',\lambda'),\phi_{(\xi',\lambda')},\textbf{c}_{(\xi',\lambda')})}$ is invertible provided $\alpha$ and $\epsilon$ are sufficiently small. In fact,
\begin{align*}
\frac{\partial \textbf{H}((\xi',\lambda'),\phi, \textbf{c}) }{\partial (\phi, \textbf{c})}|_{ ((\xi',\lambda'), \phi_{(\xi',\lambda') }, \textbf{c}_{(\xi',\lambda') }  )}  ( \varphi , \textbf{d})=\left( 
	\begin{array}{c} 
		\mathcal{L}_{\tilde{g}_{0},\tilde{u}_{0}+\phi } \varphi  -\sum_{i=0}^{n}d_{i}\chi  Z_{i} 
		\\\int_{ \mathbb{R}^{n } }\chi  Z_{0}\varphi 
		\\\vdots
		\\\int_{ \mathbb{R}^{n } }\chi  Z_{n}\varphi 
	\end{array} \right)
\end{align*}
 where  
 \begin{align*}  
 	\mathcal{L}_{\tilde{g}_{0},\tilde{u}_{0}+\phi _{(\xi',\lambda') }}\varphi	 = P _{\tilde{g}_{0}}\varphi	-a   \Big\{ &  p( \tilde{u}_{0}+\phi_{(\xi',\lambda') })^{p-1 }  L_{\tilde{g}_{0}} \big[  ( \tilde{u}_{0}+\phi_{(\xi',\lambda') })^{p-1} \varphi	\big]
 	\\&+(p-1)(  \tilde{u}_{0}+\phi_{(\xi',\lambda') })^{p-2 } L_{\tilde{g}_{0}}  ( \tilde{u}_{0}+\phi_{(\xi',\lambda') })^{p}\varphi	  \Big\}  ,
 \end{align*}		
and $\textbf{d}=\{d_i\}$. Since $\|\phi\|_{\star}\le C$, we can show that this operator is invertible, similarly to the proof of Proposition \ref{unique sol linear equ zeta}.

Next, we prove the $C^{1}$ estimate. Since $\phi=T[\mathcal{N}(\phi)-\mathcal{R}]$, by applying Propositions \ref{unique sol linear equ zeta} and \ref{linear equ zeta C1} we obtain
$$\|  \partial _{\xi'} \phi \|_{ \star} \leq C\big(\|	\mathcal{R}\|_{ \star\star} +\|	\mathcal{N}(\phi )\|_{ \star\star} + \|\partial_{\xi'}	\mathcal{R}\|_{ \star\star} +\|\partial_{\xi'}	\mathcal{N}(\phi )\|_{ \star\star}\big).$$

Differentiating \eqref{Nphi expension} with respect to $\xi'$ then yields
\begin{align*} 
	a^{-1}	\partial_{\xi'}	\mathcal{N}(\phi)=& (p-1)\big[	(\tilde{u}_{0}+\phi) ^{  p-2}  (	\partial_{\xi'} \tilde{u}_{0}+	\partial_{\xi'}\phi) L_{\tilde{g}_{0}} ( \tilde{u}_{0}+\phi) ^{ p}-\tilde{u}_{0} ^{  p-2}   \partial_{\xi'} \tilde{u}_{0}   L_{\tilde{g}_{0}} \tilde{u}_{0} ^{ p} \big] 
	\\& + p\big[(  \tilde{u}_{0}+\phi) ^{  p-1 }   L_{\tilde{g}_{0}} \big( ( \tilde{u}_{0}+\phi) ^{ p-1}(	\partial_{\xi'} \tilde{u}_{0}+	\partial_{\xi'}\phi) \big)-   \tilde{u}_{0} ^{  p-1}    L_{\tilde{g}_{0}}	 (  \tilde{u}_{0} ^{ p-1} 	\partial_{\xi'} \tilde{u}_{0} ) \big]
	\\&    -p\tilde{u}_0^{p-2}\big[(p-1) 	\partial_{\xi'}   \tilde{u}_0  L_{\tilde{g}_{0}} ( \tilde{u}_0^{p-1} \phi )+ \tilde{u}_0  L_{\tilde{g}_{0}}     \big( (p-1)\tilde{u}_0^{p-2}  \partial_{\xi'}\tilde{u}_0  \phi + \tilde{u}_0^{p-1}  \partial_{\xi'}\phi \big)  \big]
	\\&-(p-1)   \tilde{u}_0^{p-3} \big[\big( 	(p-2 )\partial_{\xi'}\tilde{u}_0 L_{\tilde{g}_{0}} \tilde{u}_0^{p} +p \tilde{u}_0  L_{\tilde{g}_{0}} ( \tilde{u}_0^{p-1} \partial_{\xi'}\tilde{u}_0) \big)\phi + ( \tilde{u}_0  L_{\tilde{g}_{0}}	  \tilde{u}_0^{p}) \partial_{\xi'} \phi \big],
\end{align*}
which can be rearranged as
\begin{align*}
	a^{-1}	\partial_{\xi'}	\mathcal{N}(\phi)=& (p-1)\big[	(\tilde{u}_{0}+\phi) ^{  p-2} L_{\tilde{g}_{0}} ( \tilde{u}_{0}+\phi) ^{ p}-\tilde{u}_{0} ^{  p-2}  L_{\tilde{g}_{0}} \tilde{u}_{0} ^{ p} \big]   \partial_{\xi'} \tilde{u}_{0}  
	\\&  -\big[ (p-1)(p-2) \tilde{u}_{0} ^{  p-3}  \phi	  L_{\tilde{g}_{0}} \tilde{u}_{0} ^{ p} +p(p-1)  \tilde{u}_{0} ^{  p-2}   L_{\tilde{g}_{0}}   (  \tilde{u}_{0} ^{ p-1}  \phi  )\big] \partial_{\xi'} \tilde{u}_{0}
 \\&+p \big[ (\tilde{u}_{0}+\phi) ^{  p-1}   L_{\tilde{g}_{0}}	 \big( ( \tilde{u}_{0}+\phi) ^{ p-1} \partial_{\xi'}\tilde{u}_{0} \big)-  \tilde{u}_{0} ^{  p-1}   L_{\tilde{g}_{0}}	 \big(   \tilde{u}_{0} ^{ p-1} \partial_{\xi'}\tilde{u}_{0} \big) \big]
\\&-p(p-1)  \big[ \tilde{u}_0^{p-2}\phi  L_{\tilde{g}_{0}} (\tilde{u}_0^{p-1}  \partial_{\xi'}\tilde{u}_0 )+  \tilde{u}_0^{p-1} L_{\tilde{g}_{0}} (\tilde{u}_0^{p-2}  \partial_{\xi'}\tilde{u}_0 \phi)\big] 
 \\& + p\big[(  \tilde{u}_{0}+\phi) ^{  p-1 }   L_{\tilde{g}_{0}} \big( ( \tilde{u}_{0}+\phi) ^{ p-1} 	\partial_{\xi'}\phi  \big)-   \tilde{u}_{0} ^{  p-1}    L_{\tilde{g}_{0}}	 (  \tilde{u}_{0} ^{ p-1} 	\partial_{\xi'} \phi ) \big]
 \\& +(p-1)\big[ (\tilde{u}_{0}+\phi) ^{  p-2}   L_{\tilde{g}_{0}} ( \tilde{u}_{0}+\phi) ^{ p}  -   \tilde{u}_{0}  ^{  p-2}   L_{\tilde{g}_{0}}  \tilde{u}_{0} ^{ p}\big]  \partial_{\xi'}\phi  .
\end{align*}
Hence, a straightforward computation shows that
\begin{align*} 
	|	\partial_{\xi'}\mathcal{N} (\phi )|	\leq C\Big\{ &(1+ |y-\xi'| )^{n-11}|\phi |^{2}+(1+ |y-\xi'| )^{n-10}( |\phi | |\partial_{\xi'}\phi |+|\phi | |\partial \phi | ) 
	\\& + (1+ |y-\xi'| )^{ n-9}\big( |\partial_{\xi'}\phi | |\partial \phi | +|\phi | |\partial_{\xi'}\partial \phi |+|\partial\phi | ^{2}+|\phi | |\partial^{2}\phi | )
	\\&+  (1+ |y-\xi'| )^{n-8} (|\partial\phi | |\partial_{\xi'}\partial\phi | +|\partial_{\xi'}\phi | |\partial^{2}\phi | +|\phi | |\partial_{\xi'}\partial^{2}\phi | )\Big\},
 \end{align*}
which yields
\begin{align*}
	&\|\partial_{\xi'}\mathcal{N} (\phi )\|_{ \star\star}\leq C\epsilon^{3-\sigma}\| \phi  \|_{  \star}( \| \phi  \|_{  \star}+\| \partial_{\xi'}\phi  \|_{  \star})\leq C\epsilon^{3-\sigma} (1+\| \partial_{\xi'}\phi  \|_{  \star}),
\end{align*}
using $\|\phi \|_{ \star} \leq C$.

On the other hand, from
\begin{align*}
\partial_{\xi'} \mathcal{R}  =&  P _{\tilde{g}_{0}}(\partial_{\xi'} \tilde{u}_{0})- a\big[ \partial_{\xi'}   \tilde{u}_{0} ^{ p-1}     L_{\tilde{g}_{0}} \tilde{u}_{0} ^{ p} + \tilde{u}_{0} ^{ p-1}    L_{\tilde{g}_{0}}  (  \partial_{\xi'}  \tilde{u}_{0} ^{ p} ) \big],
\end{align*}
we verify, as in the argument for \eqref{R rst}, that
 \begin{align*}  
 	\| \partial_{\xi'} 	\mathcal{R} \|_{  \star \star} \leq C.
 \end{align*}	 
Together with \eqref{R rst} and \eqref{N  phi est}, we derive
\begin{align*}  
	\| \partial_{\xi'}\phi  \|_{  \star} \leq C.
\end{align*}	

An analogous computation gives the corresponding result for differentiation with respect to $\lambda'$.
\end{proof}

Once \eqref{nonlinear c} has been solved, a solution to \eqref{main Q:R=c  tilde g equ} follows, provided that $(\xi',\lambda')$ satisfies
\begin{align*}
 c_{i}(\xi',\lambda')=0, \ \ \text{for}\  \  i=0,1,\ldots,n.
\end{align*}
To proceed, we introduce a variational formulation. Consider the energy functional
\begin{align*}  
	\mathcal{E}_{\tilde{g}_{0}}(v):= \int _{\mathbb{R}^{n}}  v P_{\tilde{g}_{0}}v  - \frac{ (n+2 )(n-4 )^2}{4(n-2 )}  v ^{  \frac{n-2}{n-4}}     L_{\tilde{g}_{0}} v ^{  \frac{n-2}{n-4}}  .
\end{align*}
Associated with this energy, we define a reduced functional on the parameter space by
\begin{align*}
	\mathcal{F}_{\tilde{g}_{0}}(\xi',\lambda'):=\mathcal{E}_{\tilde{g}_{0}}(v_{\xi',\lambda'})=\mathcal{E}_{\tilde{g}_{0}}\big(( \tilde{u}_{0})_{\xi',\lambda'}+\phi_{\xi',\lambda'}\big),
\end{align*}
where $v_{\xi',\lambda'}=( \tilde{u}_{0})_{\xi',\lambda'}+\phi_{\xi',\lambda'}$ and $\phi_{\xi',\lambda'}$ is the solution of \eqref{nonlinear c}.

More precisely,
\begin{align*}
 \mathcal{F}_{\tilde{g}_{0}}(\xi',\lambda')=\int _{\mathbb{R}^{n}}  v_{\xi',\lambda'}P_{\tilde{g}_{0}}v_{\xi',\lambda'} - \frac{ (n+2 )(n-4 )^2}{4(n-2 )}  v_{\xi',\lambda'}^{  \frac{n-2}{n-4}}     L_{\tilde{g}_{0}} v_{\xi',\lambda'}^{  \frac{n-2}{n-4}}.
\end{align*}

The following result shows that solving \eqref{main Q:R=c  tilde g equ} is equivalent to finding a critical point of $\mathcal{F}_{\tilde{g}_{0}}$.
\begin{prop}\label{equvalence between solution and critical point}
	$v_{\xi',\lambda'}$ solves \eqref{main Q:R=c  tilde g equ} if and only if $(\xi',\lambda')$ is a critical point of $\mathcal{F}_{\tilde{g}_{0}}$.
\end{prop}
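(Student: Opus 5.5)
The plan is the standard Lyapunov--Schmidt argument: compute the derivative of $\mathcal{F}_{\tilde g_0}$ through the equation satisfied by $v=v_{\xi',\lambda'}$, and then show that the resulting linear system for the $c_i$ is nondegenerate.

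\emph{Step 1: the first variation.} Since $P_{\tilde g_0}$ is formally self-adjoint and $L_{\tilde g_0}$ is self-adjoint, the first variation of $\mathcal{E}_{\tilde g_0}$ in a direction $\psi$ is
\[
D\mathcal{E}_{\tilde g_0}(v)[\psi]=2\int_{\mathbb{R}^n}\Big(P_{\tilde g_0}v-\frac{(n+2)(n-4)}{4}v^{\frac{2}{n-4}}L_{\tilde g_0}v^{\frac{n-2}{n-4}}\Big)\psi .
\]
Here the constant $\frac{(n+2)(n-4)^2}{4(n-2)}\cdot 2\cdot\frac{n-2}{n-4}$ reproduces exactly the constant $\frac{(n+2)(n-4)}{4}$ in \eqref{main Q:R=c  tilde g equ}. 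Because $v=\tilde u_0+\phi$ solves \eqref{nonlinear c}, the bracket equals $\sum_i c_i\chi Z_i$.

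\emph{Step 2: differentiating $\mathcal{F}_{\tilde g_0}$.} The map $(\xi',\lambda')\mapsto v_{\xi',\lambda'}$ is $C^1$ by Proposition \ref{derivative of phi-estimates}, so the chain rule gives, for each parameter direction $\partial_l\in\{\partial_{\lambda'},\partial_{\xi'_1},\dots,\partial_{\xi'_n}\}$,
\[
\partial_l\mathcal{F}_{\tilde g_0}=2\sum_{i=0}^n c_i\int_{\mathbb{R}^n}\chi Z_i\,(Z_l+\partial_l\phi).
\]
Differentiating the orthogonality condition $\int\phi\chi Z_i=0$ gives $\int\chi Z_i\,\partial_l\phi=-\int\phi\,\partial_l(\chi Z_i)$. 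Hence
\[
\partial_l\mathcal{F}_{\tilde g_0}=2\sum_i c_i\big(M_{il}+E_{il}\big),\qquad M_{il}=\int\chi Z_iZ_l,\qquad E_{il}=-\int\phi\,\partial_l(\chi Z_i).
\]

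\emph{Step 3: the two implications.} If $v$ solves \eqref{main Q:R=c  tilde g equ}, then all $c_i=0$, and the formula gives $\nabla\mathcal{F}_{\tilde g_0}=0$. Conversely, suppose $\nabla\mathcal{F}_{\tilde g_0}=0$. It then suffices to show that the matrix $M+E$ is invertible, since this forces $c=0$, so that $\sum_i c_i\chi Z_i=0$ and $v$ solves the equation. By the choice of $r_0$, $M$ is invertible, with bounds uniform over $\Lambda$. For $E$, Proposition \ref{nonlinear existence} gives $\|\phi\|_\star\le\beta$. The definition of $\|\cdot\|_\star$ then gives $|\phi(y)|\le\beta\big(\mu\epsilon^{10}+\alpha(\epsilon/\rho)^{n-4}\big)$ on the support of $\chi$, which has radius $r_0+1$. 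Since $|\partial_l(\chi Z_i)|\le C$, this yields
\[
|E_{il}|\le C\beta\big(\mu\epsilon^{10}+\alpha(\epsilon/\rho)^{n-4}\big),
\]
with $\epsilon^{9}$ in place of $\epsilon^{10}$ when $n=25$. This is small for $\epsilon$ small, so $M+E$ is invertible by a Neumann series argument.

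The main delicate point is Step 1 together with the legitimacy of Step 2. One must check that $\mathcal{E}_{\tilde g_0}$ is $C^1$ along the family. The term $v^{\frac{n-2}{n-4}}L_{\tilde g_0}v^{\frac{n-2}{n-4}}$ involves a noninteger power, so this requires positivity of $v$ on compact sets together with the decay encoded in $\|\cdot\|_\star$. Positivity holds because $|\phi|\ll\tilde u_0$: near $\xi'$ this follows from the smallness above, and far away from $|\phi|\le\alpha(1+|y-\xi'|)^{4-n}$ with $\alpha$ small. One must also justify the integrations by parts on $\mathbb{R}^n$, which I would do using the decay $|\partial^i v|\lesssim(1+|y|)^{4-n-i}$. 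With these, the boundary terms vanish and the first-variation formula holds.
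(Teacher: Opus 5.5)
Your proof is correct and follows the same route as the paper: derive $\partial_l\mathcal{F}_{\tilde g_0}=2\sum_i c_i\int\chi Z_i\,(\partial_l\tilde u_0+\partial_l\phi)$ from the equation in \eqref{nonlinear c}, and then use near-diagonal dominance of the coefficient matrix to force $c_i=0$. The only difference is in bounding the perturbation: the paper asserts $\partial_{(\xi',\lambda')}\phi=o(1)$ (implicitly via Proposition \ref{derivative of phi-estimates}), whereas you move the derivative onto $\chi Z_i$ using the differentiated orthogonality condition, which needs only the $C^0$ bound from $\|\phi\|_\star\le\beta$.
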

\begin{proof}
From equation
\begin{align*} 
  P _{\tilde{g}_{0}} (\tilde{u}_{0}+\phi)- \frac{ (n+2 )(n-4 )}{4 }   (\tilde{u}_{0}+\phi)^{  \frac{ 2}{n-4}}L_{\tilde{g}_{0}} (\tilde{u}_{0}+\phi)^{  \frac{n-2}{n-4}} =\sum_{i=0}^{n} c_{i}\chi Z_{i},
\end{align*}	  
a straightforward computation yields
\begin{align*}  
	&\partial_{(\xi',\lambda') }	\mathcal{F}(\xi',\lambda') 
\\=&2\int _{\mathbb{R}^{n}}\big(\partial_{(\xi',\lambda') } \tilde{u}_{0}+\partial_{(\xi',\lambda') } \phi\big) \big( P _{\tilde{g}_{0}} (\tilde{u}_{0}+\phi)- \frac{ (n+2 )(n-4 )}{4 }   (\tilde{u}_{0}+\phi)^{  \frac{ 2}{n-4}}L_{\tilde{g}_{0}} (\tilde{u}_{0}+\phi)^{  \frac{n-2}{n-4}}\big) 
 \\=& 2 \sum_{i=0}^{n} c_{i} \int _{\mathbb{R}^{n}}\chi Z_{i}\big(\partial_{(\xi',\lambda') } \tilde{u}_{0}+\partial_{(\xi',\lambda') } \phi\big).  
\end{align*}   
This proves the necessity.

Now suppose that $\partial_{(\xi',\lambda')}\mathcal{F}_{\tilde{g}_{0}}(\xi',\lambda')=0$. Then
\begin{align}\label{c sys=0}
	\sum_{i=0}^{n} c_{i} \int _{\mathbb{R}^{n}}\chi Z_{i}\big(\partial_{(\xi',\lambda') } \tilde{u}_{0}+\partial_{(\xi',\lambda') } \phi\big)=0.
\end{align}  
Observing that
\begin{align*} 
\left\{ \begin{aligned}  
\partial_{\xi'} \tilde{u}_{0}+\partial_{\xi'} \phi&=Z_{i}+o(1), \quad  i=1,\ldots,n,
\\ \partial_{\lambda'} \tilde{u}_{0}+\partial_{\lambda'} \phi&=Z_{0}+o(1),
\end{aligned} \right.	 
\end{align*} 	
the system  \eqref{c sys=0} is diagonally dominant, which implies $c_{i } (\xi',\lambda') =0$ for $ i=0,1,\ldots,n$.
\end{proof}

\section{Energy expansion}\label{Sect. Energy expansion}   	
In this section, we establish the energy expansion for $\mathcal{F}_{\tilde{g}_{0}}(\xi',\lambda')$, namely Proposition \ref{energy expansion}.
To this end, we need a sharper estimate of $\phi_{\xi',\lambda'}$. Throughout this section, we continue to write $\phi$ for $\phi_{\xi',\lambda'}$. We therefore study the problem
 \begin{align}\label{w equ}
 	\left\{\begin{aligned} 
	&   \mathcal{L} _{\tilde{g}_{0}}w 
  =   -	\mathcal{R}_{1}(y) + \sum_{i=0}^n c_{i}\chi Z_{i}   \quad \text{in}  \ \mathbb{R}^{n},
  		\\  &  \int _{\mathbb{R}^{n}}w \chi Z_{i}=0, \quad i=0,1,\ldots,n,   
  \end{aligned}\right. 
\end{align}	 
where, for  $n\geq 26$,
\begin{equation}\label{R1}   
\begin{aligned}   
		\mathcal{R}_{1}(y)= - \mu \epsilon^{10}\hat{\chi} \Big\{ &2\bar{H} _{ij} \partial_{ijss}  \tilde{u}_{0} +2\partial_{s}\bar{H} _{ij} \partial_{ijs}  \tilde{u}_{0}+\partial_{ss }\bar{H} _{ij} \partial_{ij}  \tilde{u}_{0}
  -\frac{b_{n}}{2}\partial_{ss }\bar{H} _{ij} \partial_{ij}  \tilde{u}_{0}-\frac{b_{n}}{2}\partial_{jss }\bar{H} _{ij} \partial_{i }  \tilde{u}_{0} 
	\\&+\frac{ (n+2 )(n-4 )}{4} \tilde{u}_{0} ^{  \frac{2}{n-4}}   \bar{H}_{ij} \partial_{ij}(\tilde{u}_{0} ^{  \frac{n-2}{n-4}}) \Big\}.   
\end{aligned}
\end{equation}	 
For $n=25$, the factor $\epsilon^{10}$ in \eqref{R1} should be replaced by $\epsilon^{9}$.
Here $\hat{\chi}$ denotes a cut-off function satisfying $\hat{\chi}(y)=1$ for $|y|\le \frac{\rho}{\epsilon}$ and $\hat{\chi}(y)=0$ for $|y|\ge \frac{2\rho}{\epsilon}$. For the reader's convenience, we note that the terms in $\mathcal{R}_{1}$ come from the first lower-order terms in $P_{\tilde{g}_{0}}\tilde{u}_{0}-\Delta^{2}\tilde{u}_{0}$ and in $\frac{ (n+2 )(n-4 )}{4}\tilde{u}_{0} ^{  \frac{2}{n-4}}  ( \Delta_{\tilde{g}_{0}} \tilde{u}_{0}^{  \frac{n-2}{n-4}}- \Delta \tilde{u}_{0}  ^{  \frac{n-2}{n-4}}   -\frac{n-2}{4(n-1 )}R_{\tilde{g}_{0}} \tilde{u}_{0}^{  \frac{n-2}{n-4}}   )$.

Moreover, we have the estimate
 \begin{align}\label{est R1}   
	|\mathcal{R}_{1}(y)|   
 &\leq \left\{\begin{aligned} 
	&C \begin{cases}
	 \frac{\mu \epsilon^{10}}{ \left(1+\left|y-\xi^{\prime}\right|\right)^{n-10}}, & n \geq 26, 
		\\	     \frac{\mu  \epsilon^{9}}{ \left(1+\left|y-\xi^{\prime}\right|\right)^{n-9}}, & n = 25.
	\end{cases}  \quad\quad \text{for} \ 
	\left|y \right| \leq \frac{\rho}{\epsilon},   
	\\  &  C \frac{\alpha \epsilon}  { \left(1+\left|y-\xi^{\prime}\right|\right)^{n-1}} \ \quad   \quad \quad \quad \quad  \quad\text{for} \   \left|y \right| > \frac{\rho}{\epsilon}.
\end{aligned}\right. 
\end{align}
By Proposition \ref{unique sol linear equ zeta}, there exists a solution $w$ to \eqref{w equ}.

Consider the following two norms:
\begin{align*}
	\|w\|_{\star}' =\left\{
\begin{aligned}
	\sup _{y \in \mathbb{R}^n} \sum_{i=0}^2 \frac{\left(1+\left|y-\xi^{\prime}\right|\right)^{n-14+i}  }{\mu \epsilon^{10}}|\partial^i  w(y)|,\quad & n \geq 26, 
	\\	  \sup _{y \in \mathbb{R}^n} \sum_{i=0}^2 \frac{\left(1+\left|y-\xi^{\prime}\right|\right)^{n-13+i}  }{\mu \epsilon^{9}}|\partial^i  w(y)| , \quad& n = 25,
\end{aligned}  	\right.
\end{align*}
and
\begin{align*}
	\|  \mathcal{R}_{1} \|_{{\star}{\star}}' =\left\{
	\begin{aligned}
		 \sup _{y \in \mathbb{R}^n} \frac{\left(1+\left|y-\xi^{\prime}\right|\right)^{n-10}  }{\mu \epsilon^{10}}|   \mathcal{R}_{1}(y)|,\quad & n \geq 26, 
		 	\\	  \sup _{y \in \mathbb{R}^n} \frac{\left(1+\left|y-\xi^{\prime}\right|\right)^{n-9}  }{\mu \epsilon^{9}}|   \mathcal{R}_{1}(y)|, \quad& n = 25.
\end{aligned}  	\right.		 	
\end{align*}
    
By an argument similar to that in Proposition \ref{unique sol linear equ zeta}, Problem \eqref{w equ} admits a unique solution $w$ satisfying
\begin{align*}
		\|w\|_{\star}'\leq C\| 	\mathcal{R}_{1} \|_{{\star}{\star}}'  \leq C.
\end{align*}  

In a similar manner, for $n\geq 26$ we define 
\begin{align*}
	\|\phi_{1} \|_{\star}''= 
&\sum_{i=0}^2\left[\frac{1}{\frac{1}{(1+|y-\xi^{\prime}|)^i}\left(\frac{\mu \epsilon^{20}}{\left(1+\left|y-\xi^{\prime}\right|\right)^{n-24}}+\alpha (\frac{\epsilon}{\rho})^{n-4}\right) }+\frac{\left(1+\left|y-\xi^{\prime}\right|\right)^{n-4+i}}{\alpha}\right] |\partial^i \phi_1(y)|,
\end{align*}
and
\begin{align*}
	\|  \phi_{2}\|_{{\star}{\star}}''= & \sup _{y \in \mathbb{R}^n}\bigg[\frac{\chi_{\left\{\left|y-\xi^{\prime}\right| \leq \frac{\rho}{\epsilon}\right\}}\left(1+\left|y-\xi^{\prime}\right|\right)^{n-20}}{\mu^{2} \epsilon^{20}}+\frac{\chi_{\left\{\frac{\rho}{\epsilon} \leq\left|y-\xi^{\prime}\right| \leq \frac{1}{\epsilon}\right\}}\left(1+\left|y-\xi^{\prime}\right|\right)^{n-1}}{\alpha \epsilon} \\
	&\quad \quad \quad +\frac{\chi_{\{|y-\xi^{\prime}| \geq \frac{1}{\epsilon}\}} \left(1+\left|y-\xi^{\prime}\right|\right)^{n+\sigma}}{\alpha}\bigg]| \phi_{2}(y)|. 
\end{align*} 
For $n = 25$, the norms are defined similarly, with the following replacements: in $\|\phi_{1} \|_{\star}''$, $\frac{\mu^{2} \epsilon^{20}}{\left(1+\left|y-\xi^{\prime}\right|\right)^{n-24 }}$ is replaced by $\frac{\mu^{2} \epsilon^{18}}{\left(1+\left|y-\xi^{\prime}\right|\right)^{n-22 }}$, and in $\| \phi_{2}\|_{{\star}{\star}}''$, $\frac{\chi_{\left\{|y | \leq \frac{\rho}{\epsilon}\right\}}\left(1+\left|y-\xi^{\prime}\right|\right)^{n-20}}{\mu^{2} \epsilon^{20}}$ is replaced by $\frac{\chi_{\left\{|y | \leq \frac{\rho}{\epsilon}\right\}}\left(1+\left|y-\xi^{\prime}\right|\right)^{n-18}}{\mu^{2} \epsilon^{18}}$.
    
\begin{lem}
The following estimate holds:
 \begin{align*}
 	\|\phi-w\|_{\star}''\leq C.
 \end{align*}
\end{lem}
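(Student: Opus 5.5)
Set $\psi:=\phi-w$. Subtracting \eqref{w equ} from \eqref{nonlinear c}, $\psi$ satisfies a problem of the form \eqref{linear equ zeta z}:
\begin{align*}
\mathcal{L}_{\tilde g_0}\psi=\mathcal{N}(\phi)-(\mathcal{R}-\mathcal{R}_1)+\sum_{i=0}^n \tilde c_i\chi Z_i,\qquad \int_{\mathbb{R}^n}\psi\chi Z_i=0,
\end{align*}
where $\tilde c_i$ is the difference of the two families of constants. The plan is to rerun the a priori estimate of Lemma \ref{est linear equ zeta} with the pair of norms $\|\cdot\|_\star''$, $\|\cdot\|_{\star\star}''$, which have the same structure as $\|\cdot\|_\star$, $\|\cdot\|_{\star\star}$ but with the inner weight $\mu\epsilon^{10}(1+|y-\xi'|)^{10-n}$ replaced by $\mu^2\epsilon^{20}(1+|y-\xi'|)^{20-n}$. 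Running the blow-up/Green-function argument of Lemma \ref{est linear equ zeta} in these norms gives $\|\psi\|_\star''\le C\|\mathcal{N}(\phi)-(\mathcal{R}-\mathcal{R}_1)\|_{\star\star}''$. Step 1 of that argument, the Liouville step via Theorem \ref{eigenvalue thm}, is unchanged. In Step 2 the convolution inequalities \eqref{ineq1}--\eqref{ineq2} apply with exponent $s=4$, $t=20$, which requires $n>24$. This is exactly where $n\ge 25$ enters, and for $n=25$ one uses the $\epsilon^{18}$, $n-22$ variants instead.

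It then remains to show that the right-hand side is bounded in $\|\cdot\|_{\star\star}''$, and I would treat its two parts separately.

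\textbf{(a) The term $\mathcal{R}-\mathcal{R}_1$.} Since $\mathcal{R}_1$ collects precisely the terms of $\mathcal{R}$ that are linear in $\tilde h$ on $\{|y|\le\rho/\epsilon\}$, the remainder there is quadratic in $\tilde h$. This includes the contribution of $R_{\tilde g_0}$, which is $O(|\partial\tilde h|^2+|\tilde h||\partial^2\tilde h|)$ because $\partial_i\partial_j\tilde h_{ij}=0$ and $\tilde h$ is trace-free. It also includes the $O(|\tilde h|^2)$ parts of $\tilde g_0^{ij}-\delta_{ij}$ in $P_{\tilde g_0}$ and $\Delta_{\tilde g_0}$. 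Using $|\partial^k\tilde h|\le C\mu\epsilon^{10}(1+|y|)^{10-k}$ together with the decay of $\tilde u_0$, each such term is bounded by $C\mu^2\epsilon^{20}(1+|y-\xi'|)^{20-n}$. On $\{|y|\ge\rho/\epsilon\}$ no splitting is needed: both $\mathcal{R}$ and $\mathcal{R}_1$ obey the outer bounds already proved in \eqref{R rst} and \eqref{est R1}, and these match the outer parts of $\|\cdot\|_{\star\star}''$.

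\textbf{(b) The term $\mathcal{N}(\phi)$.} Here I would use \eqref{N  phi est1} together with $\|\phi\|_\star\le\beta$. In the inner region $|\partial^i\phi|\lesssim \mu\epsilon^{10}(1+|y|)^{14-n-i}+\alpha(\epsilon/\rho)^{n-4}(1+|y|)^{-i}$. The quadratic contribution is then of order $\mu^2\epsilon^{20}(1+|y|)^{n-10}(1+|y|)^{28-2n}=\mu^2\epsilon^{20}(1+|y|)^{18-n}$, which is better than needed.

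The main obstacle is the cross and $\alpha$-squared terms coming from the weight $(1+|y|)^{n-10}$ multiplied by $\alpha^2(\epsilon/\rho)^{2(n-4)}$. To absorb these, I would use $(1+|y|)\le C\rho/\epsilon$ in the inner region. This gives the bound $\alpha^2(\epsilon/\rho)^{n+2}$, which is dominated by the $\alpha(\epsilon/\rho)^{n-4}$ part of the $\|\cdot\|''_\star$ weight after applying $T$. If necessary I would instead adjust the choice of $\rho$ relative to $\epsilon$, for instance by choosing $\mu$ suitably so that $\alpha(\epsilon/\rho)^{n-4}\lesssim\mu^2\epsilon^{20}(\rho/\epsilon)^{20-n}$. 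The outer regions are handled exactly as in \eqref{N  phi est}.

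Finally, the $\tilde c_i$ are controlled as in \eqref{c_i est 26} by testing the equation against $\bar\chi Z_j$, and they contribute only at the same order. Combining (a), (b) and this bound on $\tilde c_i$ gives $\|\phi-w\|_\star''\le C$.
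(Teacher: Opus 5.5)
Your proposal is correct and is essentially the paper's proof. It uses the same equation for $\phi-w$ with right-hand side $-\mathcal{R}_2+\mathcal{N}(\phi)+\sum_i c_i\chi Z_i$ (your $\mathcal{R}-\mathcal{R}_1$ is exactly the paper's $\mathcal{R}_2$), reruns the argument of Lemma~\ref{est linear equ zeta} in the $''$-norms, bounds $\mathcal{R}_2$ by the quadratic-in-$\tilde h$ estimate $\mu^2\epsilon^{20}(1+|y-\xi'|)^{20-n}$ inside $B_{\rho/\epsilon}$ with the old bounds outside, and bounds $\mathcal{N}(\phi)$ pointwise using $\|\phi\|_\star\le C$.

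The only difference is in the inner piece $\alpha^2(\epsilon/\rho)^{n+2}$ of $\mathcal{N}(\phi)$: you absorb it after the Green representation into the $\alpha(\epsilon/\rho)^{n-4}$ part of $\|\cdot\|_\star''$, whereas the paper just asserts $\|\mathcal{N}(\phi)\|_{\star\star}''\le C$. Your route works, but drop your fallback condition $\alpha(\epsilon/\rho)^{n-4}\lesssim\mu^2\epsilon^{20}(\rho/\epsilon)^{20-n}$: it amounts to $\alpha\lesssim\mu^2\epsilon^4\rho^{16}$, which cannot hold as $\epsilon\to0$.
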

\begin{proof}
We only present the case $n\geq 26$; the case $n=25$ is entirely analogous.

We observe that $\phi-w$ satisfies
\begin{align}\label{phi-w equ}
		\left\{\begin{aligned} 
			&  \mathcal{L} _{\tilde{g}_{0}}(\phi-w ) 
			=   -	\mathcal{R}_{2}(y) +	\mathcal{N}(\phi)+\sum_{i} c_{i}\chi Z_{i}   \quad \text{in}  \ \mathbb{R}^{n},
			\\  &  \int _{\mathbb{R}^{n}}(\phi-w ) \chi Z_{i}=0, \quad i=0,1,\ldots,n ,   
		\end{aligned}\right. 
\end{align}	
where
\begin{equation}\label{R2}   
\begin{aligned}  
 \mathcal{R}_{2}(y)= & P_{\tilde{g}_{0}}\tilde{u}_{0}-\Delta^{2} \tilde{u}_{0}
	+ \frac{ (n+2 )(n-4 )}{4}\tilde{u}_{0} ^{  \frac{2}{n-4}} \big( \Delta_{\tilde{g}_{0}} \tilde{u}_{0}^{  \frac{n-2}{n-4}}- \Delta \tilde{u}_{0}  ^{  \frac{n-2}{n-4}}   -\frac{n-2}{4(n-1 )}R_{\tilde{g}_{0}} \tilde{u}_{0}^{  \frac{n-2}{n-4}}   \big) 
	\\&+\mu \epsilon^{10}\hat{\chi} \Big\{ 2\bar{H} _{ij} \partial_{ijss}  \tilde{u}_{0} +2\partial_{s}\bar{H} _{ij} \partial_{ijs}  \tilde{u}_{0}+\partial_{ss }\bar{H} _{ij} \partial_{ij}  \tilde{u}_{0}  
	\\& \quad\quad\quad\quad-\frac{b_{n}}{2}\partial_{ss }\bar{H} _{ij} \partial_{ij}  \tilde{u}_{0}-\frac{b_{n}}{2}\partial_{jss }\bar{H} _{ij} \partial_{i }  \tilde{u}_{0}
 +  \frac{ (n+2 )(n-4 )}{4} \tilde{u}_{0} ^{  \frac{2}{n-4}}   \bar{H}_{ij} \partial_{ij}(\tilde{u}_{0} ^{  \frac{n-2}{n-4}}) \Big\},   
\end{aligned}
\end{equation}
in view of  \eqref{equ tilde u0}.

By \cite[Lemmas 4.6--4.8]{WeiZhao2013}, we obtain
\begin{align*} 
	& \Big| P_{\tilde{g}_{0}} \tilde{u} _{0} -\Delta ^{2}\tilde{u} _{0}+\mu \epsilon^{10}\hat{\chi} \big( 2\bar{H} _{ij} \partial_{ijss}  \tilde{u}_{0} +2\partial_{s}\bar{H} _{ij} \partial_{ijs}  \tilde{u}_{0}+\partial_{ss }\bar{H} _{ij} \partial_{ij}  \tilde{u}_{0}
	\\& \quad \quad \quad \quad\quad\quad\quad\quad\quad\quad-\frac{b_{n}}{2}\partial_{ss }\bar{H} _{ij} \partial_{ij}  \tilde{u}_{0}-\frac{b_{n}}{2}\partial_{jss }\bar{H} _{ij} \partial_{i }  \tilde{u}_{0}\big)\Big |	
	\\&\leq \left\{\begin{aligned} 
		&C 
			\frac{\mu^{2} \epsilon^{20}}{ \left(1+\left|y-\xi^{\prime}\right|\right)^{n-20}} 
        \quad\quad \text{for} \ 
		\left|y \right| \leq \frac{\rho}{\epsilon},   
		\\  & C \frac{\alpha \epsilon}  { \left(1+\left|y-\xi^{\prime}\right|\right)^{n-1}}  \quad \quad  \ \text{for} \   \left|y \right| > \frac{\rho}{\epsilon}.
	\end{aligned}\right. 
\end{align*}	

Following the approach in \cite[Proposition 5]{BrendleMarques2009}, we also obtain
\begin{align*}   
	& \big|\tilde{u}_{0} ^{  \frac{2}{n-4}} \big( \Delta_{\tilde{g}_{0}} \tilde{u}_{0}^{  \frac{n-2}{n-4}}- \Delta \tilde{u}_{0}  ^{  \frac{n-2}{n-4}}   -\frac{n-2}{4(n-1 )}R_{\tilde{g}_{0}} \tilde{u}_{0}^{  \frac{n-2}{n-4}}   
	+\mu \epsilon^{10}\hat{\chi} \bar{H}_{ij} \partial_{ij}(\tilde{u}_{0} ^{  \frac{n-2}{n-4}}) \big)\big|
	\\=&\tilde{u}_{0} ^{  \frac{2}{n-4}} \Big| \partial_{i} \big[(\tilde{g}_{0} ^{ij}-\delta_{ij}+\tilde{h}_{ij}) \partial_{j}\tilde{u}_{0}^{  \frac{n-2}{n-4}}\big] -\frac{n-2}{4(n-1 )}R_{\tilde{g}_{0}} \tilde{u}_{0}^{  \frac{n-2}{n-4}}     \Big|
	\\\leq &\left\{\begin{aligned} 
		&C \frac{\mu^{2} \epsilon^{20}}{ \left(1+\left|y-\xi^{\prime}\right|\right)^{n-18}}, 
        \quad\quad \text{for} \ 
		\left|y \right| \leq \frac{\rho}{\epsilon},   
		\\  & C \frac{\alpha \epsilon}  { \left(1+\left|y-\xi^{\prime}\right|\right)^{n }} \ \quad  \  \quad  \quad\text{for} \   \left|y \right| > \frac{\rho}{\epsilon}.
	\end{aligned}\right.  
\end{align*} 	 
Together, these estimates yield
	\begin{equation}\label{est R2}   
		|\mathcal{R}_{2}(y)|	
		 \leq \left\{\begin{aligned} 
			&C  
				\frac{\mu^{2} \epsilon^{20}}{ \left(1+\left|y-\xi^{\prime}\right|\right)^{n-20}}, 
            \quad\quad \text{for} \ 
			\left|y \right| \leq \frac{\rho}{\epsilon},   
			\\  & C \frac{\alpha \epsilon}  { \left(1+\left|y-\xi^{\prime}\right|\right)^{n-1}} \   \quad  \ \quad\text{for} \   \left|y \right| > \frac{\rho}{\epsilon}.
		\end{aligned}\right. 
\end{equation}  
Combining \eqref{N  phi est1} with the estimate $\|\phi \|_{\star} \leq C$, we have
\begin{align*} 
	|\mathcal{N} (\phi )|
	 &\leq \left\{\begin{aligned} 
		&C \frac{\mu^{2} \epsilon^{20}}{ \left(1+\left|y-\xi^{\prime}\right|\right)^{n-18}}+\alpha( \frac{\epsilon}{\rho})^{n+2}, \quad\ \text{for} \ 
		\left|y \right| \leq \frac{\rho}{\epsilon},   
		\\  & C \frac{\alpha }  { \left(1+\left|y-\xi^{\prime}\right|\right)^{n+2 }}  \  \quad\quad\text{for} \   \left|y \right| > \frac{\rho}{\epsilon}.
	\end{aligned}\right. 
\end{align*}	
Arguing as in the proof of Lemma \ref{est linear equ zeta}, we deduce that
\begin{align*}
\|\phi-w\|_{\star}''\leq C(\|	\mathcal{R}_{2}\|_{\star\star}''+\|	\mathcal{N}(\phi )\|_{\star\star}'')	\leq C.
\end{align*}  
\end{proof} 

With the above estimates in place, we obtain the desired energy expansion. For notational simplicity, we define
 \begin{align}\label{QR energy}
 \mathcal{E}_{0}
 :=  2^{n-3}n(n+2)(n-4 ) \int _{\mathbb{R}^{n}}  \Big( \frac{1}{1+|y|^{2}}\Big)^{n}dy = 2^{n-3}n(n+2)(n-4 )   \frac{ \pi^{\frac{n}{2}}\Gamma(\frac{n}{2})}{\Gamma(n)} .
 \end{align}   
 \begin{prop} \label{energy expansion}
Let $n\geq 25$. Then the energy admits the following expansion:
 \begin{align*} 
 	\mathcal{F}_{\tilde{g}_{0}}(\xi',\lambda') =& \mathcal{E} _{0}  +\mathcal{E} _{1}(\xi',\lambda')  -\frac{ (n+2 )(n-4 )^2}{4(n-2 )}\mathcal{E} _{2} (\xi',\lambda')  
 	\\&- \int _{\mathbb{R}^{n}}  \bigg\{  2\tilde{h}  _{ij} \partial_{ijss}  \tilde{u}_{0} +2\partial_{s}\tilde{h} _{ij} \partial_{ijs}  \tilde{u}_{0}+\partial_{ss }\tilde{h}  _{ij} \partial_{ij}  \tilde{u}_{0} -\frac{b_{n}}{2}\partial_{ss }\tilde{h} _{ij} \partial_{ij}  \tilde{u}_{0}
 	\\&\quad\quad\quad\quad-\frac{b_{n}}{2}\partial_{jss }\tilde{h}  _{ij} \partial_{i }  \tilde{u}_{0}
  -  \frac{ (n+2 )(n-4 )}{4}  \tilde{u}_{0} ^{  \frac{2}{n-4}}   \tilde{h} _{ij} \partial_{ij}(\tilde{u}_{0} ^{  \frac{n-2}{n-4}}) \bigg\}{w}
 	\\& +
 	\begin{cases}
 	O( \mu ^{3}\epsilon^{\frac{20n}{n-1}})+O\big(\mu ^{\frac{2 (n-2)}{n-4}}\epsilon^{\frac{20(n-2)}{n-4}}\big) +O\big(\alpha(\frac{\epsilon} {\rho} )^{n-4} \big)
    , & n \geq 26, \\[0.2em]
 	\\	  O(\mu ^{3}\epsilon^{\frac{18n}{n-1}})+O\big(\mu ^{\frac{2 (n-2)}{n-4}}\epsilon^{\frac{18(n-2)}{n-4}}\big)+O\big(\alpha(\frac{\epsilon} {\rho} )^{n-4} \big), & n = 25,
 	\end{cases} 
 \end{align*}  
where
 \begin{align*} 
 \mathcal{E} _{1}(\xi',\lambda') =&   - \int _{\mathbb{R}^{n}}\sum_{i,j,k,l}\tilde{h} _{il}\tilde{h} _{jl}\partial_{ikk}\tilde{u}_{0} \partial_{j}\tilde{u}_{0} +  \int _{\mathbb{R}^{n} }\big(\sum_{i,j}\tilde{h} _{ij} \partial_{ij}\tilde{u}_{0}   \big)^{2}
	\\ & -\frac{a_{n}}{4} \int _{\mathbb{R}^{n}}\sum_{i,k,l,m} (\partial _{l}\tilde{h} _{mk})^{2} (\partial_{i }\tilde{u}_{0}  )^{2}
	-\frac{b_{n}}{4} \int _{\mathbb{R}^{n}}\sum_{i,j,m,s}  \partial _{j}\tilde{h} _{ms}  \partial _{i}\tilde{h} _{sm} \partial_{i }\tilde{u}_{0} \partial_{j}\tilde{u}_{0}  
	\\& -\frac{b_{n}}{2} \int _{\mathbb{R}^{n}}\sum_{i,j,m,s}(\tilde{h} _{ms}  \partial _{s}\tilde{h} _{ij}  -  \tilde{h} _{si} \partial _{s}\tilde{h} _{mj} +\tilde{h} _{sj}\partial _{i}\tilde{h} _{ms} -\tilde{h} _{ms}\partial _{i}\tilde{h} _{sj})\partial_{m}(\partial_{i }\tilde{u}_{0} \partial_{j}\tilde{u}_{0}  )
	\\ & +\frac{b_{n}}{2} \int _{\mathbb{R}^{n}}\sum_{i,j,m,s} \tilde{h} _{is}(\partial _{mm}\tilde{h} _{js})   \partial_{i }\tilde{u}_{0}  \partial_{j}\tilde{u}_{0} 
	\\ & +\frac{n-4}{8(n-1 )} \int _{\mathbb{R}^{n}}\sum_{i,k,l,m}  [(\partial _{il}\tilde{h} _{mk})^{2}+ \partial _{l}\tilde{h} _{mk} \partial _{iil}\tilde{h} _{mk} ]\tilde{u}_{0}^{2}
	\\ & -\frac{n-4}{4(n-2 )^{2}} \int _{\mathbb{R}^{n} }\sum_{i,j,m,s}   (\partial _{mm}\tilde{h} _{ij}  \partial _{ss}\tilde{h} _{ij}  )\tilde{u}_{0}^{2} , 
\end{align*}  
and
 \begin{align*} 
 \mathcal{E} _{2}(\xi',\lambda') =&\frac{1}{2}\int _{B_{\frac{\rho}{\epsilon}}}\sum_{i,k,l }  \tilde{h} _{il}\tilde{h} _{kl}\partial_{i} (\tilde{u}_{0} ^{  \frac{n-2}{n-4}}   )\partial_{k} (\tilde{u}_{0} ^{  \frac{n-2}{n-4}}  )  -\frac{n-2}{16(n-1)}\int _{B_{\frac{\rho}{\epsilon}}}\sum_{i,k,l }  (\partial_{l}\tilde{h} _{ik})^{2}  (\tilde{u}_{0} ^{  \frac{n-2}{n-4}}  )^{2} .
\end{align*}    
\end{prop}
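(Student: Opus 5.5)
The plan is to expand $\mathcal{F}_{\tilde g_0}(\xi',\lambda')=\mathcal{E}_{\tilde g_0}(\tilde u_0+\phi)$ in two directions. The first is in powers of $\tilde h$ (up to second order) for the terms that do not involve $\phi$. The second is in powers of $\phi$ (up to second order), with $\phi$ replaced by its main part $w$ from \eqref{w equ} and the difference controlled by the lemma $\|\phi-w\|_\star''\le C$. I only treat $n\ge 26$; for $n=25$ the same bookkeeping applies with $\epsilon^{10}$ replaced by $\epsilon^{9}$.

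\emph{Step 1: the Taylor expansion in $\phi$.} Write
\[
\mathcal{E}_{\tilde g_0}(\tilde u_0+\phi)=\mathcal{E}_{\tilde g_0}(\tilde u_0)+2\int \mathcal{R}\,\phi+\int \phi\,\mathcal{L}_{\tilde g_0}\phi+O\Big(\int |\text{cubic terms in }\phi|\Big).
\]
The linear term is $2\int \mathcal{R}\phi$, because $\mathcal{R}$ is exactly the first variation $P_{\tilde g_0}\tilde u_0-a\tilde u_0^{p-1}L_{\tilde g_0}\tilde u_0^{p}$ (the factor $\frac{(n-4)^2}{n-2}$ in the energy makes this match). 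The quadratic form is $\langle\mathcal{L}_{\tilde g_0}\phi,\phi\rangle$.

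Next I test the equation \eqref{nonlinear c} against $\phi$ and use the orthogonality $\int\phi\chi Z_i=0$. This gives $\int\phi\mathcal{L}_{\tilde g_0}\phi=-\int\mathcal{R}\phi+\int\mathcal{N}(\phi)\phi$, so
\[
\mathcal{F}=\mathcal{E}_{\tilde g_0}(\tilde u_0)+\int\mathcal{R}\phi+O\Big(\int|\mathcal{N}(\phi)\phi|+\text{cubic}\Big).
\]
Splitting $\mathcal{R}=\mathcal{R}_1+\mathcal{R}_2$ and $\phi=w+(\phi-w)$, the main contribution is $\int\mathcal{R}_1 w$. This is exactly the displayed integral against $w$: $\mathcal{R}_1$ is built from $\bar H$ with $\hat\chi$, which equals $\tilde h$ on $B_{\rho/\epsilon}$, and the cutoff costs only $O(\alpha(\epsilon/\rho)^{n-4})$.

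The cross terms are estimated with the weighted norms:
\begin{itemize}
\item $\int\mathcal{R}_2 w$ uses $\mu^2\epsilon^{20}(1+|y|)^{20-n}\cdot\mu\epsilon^{10}(1+|y|)^{14-n}$;
\item $\int\mathcal{R}_1(\phi-w)$ uses \eqref{est R1} and the $\|\cdot\|''_\star$ bound.
\end{itemize}
Both are integrated over $|y|\le\rho/\epsilon$, and the outer region gives $O(\alpha(\epsilon/\rho)^{n-4})$. The cubic and $\mathcal{N}$ terms come from \eqref{N  phi est1}.

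\emph{Step 2: expanding $\mathcal{E}_{\tilde g_0}(\tilde u_0)$ to second order in $\tilde h$.} Since $\det\tilde g_0=1$, I expand $\tilde g_0^{ij}=\delta_{ij}-\tilde h_{ij}+\frac12\tilde h_{ik}\tilde h_{kj}+O(|\tilde h|^3)$. I then use the curvature expansions of \cite[Section 4]{WeiZhao2013}: $R_{\tilde g_0}$ and $\mathrm{Ric}_{\tilde g_0}$ vanish to first order because $\partial_i\tilde h_{ij}=0$ for the $H$-part, and $x_i\tilde h_{ij}=0$.

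For the Paneitz part $\int\tilde u_0P_{\tilde g_0}\tilde u_0$, the zeroth order term combines with the second part to give $\mathcal{E}_0$. The first-order term becomes, after integration by parts, the $\tilde h$-linear pieces of the displayed integral. The second-order terms give $\mathcal{E}_1$, reproducing the computation of \cite{WeiZhao2013}.

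For the $L$-part $\int v^pL_{\tilde g_0}v^p$ with $v=\tilde u_0$:
\begin{itemize}
\item the first-order term is $\int\tilde h_{ij}\partial_i\tilde u_0^p\partial_j\tilde u_0^p$. It vanishes pointwise since $\partial_i\tilde u_0^p\propto(y-\xi')_i$ only if $\xi'=0$, so I instead keep it and combine it with the linear $\tilde h$-terms, which yields the $\tilde u_0^{2/(n-4)}\tilde h_{ij}\partial_{ij}\tilde u_0^p$ entry in the integral against $w$ after integration by parts;
\item the quadratic terms, namely $\frac12\tilde h_{il}\tilde h_{kl}\partial_i\tilde u_0^p\partial_k\tilde u_0^p$ and the scalar curvature term $R_{\tilde g_0}=-\frac14|\partial\tilde h|^2+\dots$ up to divergences, give $\mathcal{E}_2$.
\end{itemize}
In both parts, the regions outside $B_{\rho/\epsilon}$ contribute $O(\alpha(\epsilon/\rho)^{n-4})$, and the cubic terms in $\tilde h$ contribute $O(\mu^3\epsilon^{30}\int(1+|y|)^{\dots})$.

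\emph{Main obstacle: the error exponents.} The terms $O(\mu^3\epsilon^{20n/(n-1)})$ and $O(\mu^{2(n-2)/(n-4)}\epsilon^{20(n-2)/(n-4)})$ arise because the weights grow, e.g. $|\tilde h|\lesssim\mu\epsilon^{10}|y|^{10}$, so the integrals are not uniformly convergent. I would split at a radius $|y-\xi'|=r$, bound the inner region by the pointwise estimates and the outer region by the decay of $\tilde u_0$, and then optimize in $r$. For example, the cubic term gives $\mu^3\epsilon^{30}r^{30-n+\dots}$ against a tail, and optimizing produces the $\frac{n}{n-1}$ exponent. Similarly, the non-smooth nonlinearity $v^{p}$ with $p-1=\frac{2}{n-4}$ prevents a cubic Taylor bound where $|\phi|\gtrsim\tilde u_0$, so in that region I would use $|v^p-\tilde u_0^p-\dots|\lesssim|\phi|^{p}$. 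This yields the power $\frac{2(n-2)}{n-4}$. The careful choice of these splitting radii is where I expect most of the work.
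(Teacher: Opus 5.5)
\textbf{There is a genuine gap in Step 2: the terms of $\mathcal{E}_{\tilde g_0}(\tilde u_0)$ that are linear in $\tilde h$ are mishandled.}

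Your Step 1 is in substance the paper's Step 1. Testing \eqref{nonlinear c} against $\phi$ and using $\int\phi\chi Z_i=0$ reduces $\mathcal{E}_{\tilde g_0}(\tilde u_0+\phi)-\mathcal{E}_{\tilde g_0}(\tilde u_0)$ to $\int\mathcal{R}\phi$ plus a remainder that is at least cubic in $\phi$. The splittings $\mathcal{R}=\mathcal{R}_1+\mathcal{R}_2$ and $\phi=w+(\phi-w)$ then isolate $\int_{B_{\rho/\epsilon}}\mathcal{R}_1 w$, which is the displayed integral against $w$.

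In Step 2 you propose that the linear part of $\int\tilde u_0P_{\tilde g_0}\tilde u_0$, and the term $\int\tilde h_{ij}\partial_i\tilde u_0^{p}\partial_j\tilde u_0^{p}$ from the $L$-part, ``combine'' into the integral against $w$. This cannot work, for three reasons:
\begin{itemize}
\item Those terms contain no $w$, and their natural size is $\mu\epsilon^{10}$.
\item The integral against $w$ is quadratic in $\tilde h$: since $\|w\|'_\star\le C$ gives $|w|\le C\mu\epsilon^{10}(1+|y-\xi'|)^{14-n}$, it has size $\mu^2\epsilon^{20}$.
\item That integral was already produced once in Step 1, so you would be counting it twice.
\end{itemize}
Worse, any $O(\mu\epsilon^{10})$ term that survived would swamp the whole quadratic expansion. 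In particular, the relation $\mathcal{F}_{\tilde g_0}=\mathcal{E}_0+\mu^2\epsilon^{20}F+\dots$ used in Section \ref{Sect. pf main thm} would fail.

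\textbf{The missing idea.} Every linear-in-$\tilde h$ contribution to $\mathcal{E}_{\tilde g_0}(\tilde u_0)$ vanishes up to $O(\alpha(\epsilon/\rho)^{n-4})$. This holds even though, as you observed, these contributions do not vanish pointwise when $\xi'\neq 0$: one has $\tilde h_{ij}(y-\xi')_i(y-\xi')_j=\tilde h_{ij}\xi'_i\xi'_j$. The mechanism combines two facts:
\begin{itemize}
\item $\tilde h$ is trace-free everywhere and divergence-free on $B_{\rho/\epsilon}$, because $x_iH_{ij}=0$ and $\partial_iH_{ij}=0$;
\item the bubble has radial structure.
\end{itemize}
For the $L$-part, let $V:=\tilde u_0^{\frac{n-2}{n-4}}$, which is a Yamabe bubble. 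Then $\partial_iV\partial_kV-\frac{n-2}{4(n-1)}\partial_i\partial_k(V^2)$ is a multiple of $\delta_{ik}$. Since $\tilde h$ is trace-free, the $\delta_{ik}$ part drops out, and integrating by parts twice gives
\[
\int_{\mathbb{R}^n}\tilde h_{ik}\,\partial_iV\,\partial_kV=\frac{n-2}{4(n-1)}\int_{\mathbb{R}^n}\partial_i\partial_k\tilde h_{ik}\,V^2=O\Big(\alpha\epsilon^2\big(\tfrac{\epsilon}{\rho}\big)^{n-4}\Big).
\]
This is exactly how Proposition \ref{Brencdle-Marques 4 poly} is proved.

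For the Paneitz part, the same device applies. Write any $E(r)(y-\xi')_i(y-\xi')_j$, with $r=|y-\xi'|$, as $\partial_{ij}K-\frac{K'(r)}{r}\delta_{ij}$ for a radial $K$, and integrate by parts onto $\tilde h$. This disposes of the linear terms of $\int\tilde u_0P_{\tilde g_0}\tilde u_0$, including the contribution of the linearized Ricci tensor $-\frac12\Delta\tilde h_{ij}$, which is itself trace-free and divergence-free. This cancellation is part of what the paper imports from \cite{WeiZhao2013}.

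Only after this cancellation does $\mathcal{E}_{\tilde g_0}(\tilde u_0)$ reduce to $\mathcal{E}_0+\mathcal{E}_1-\frac{(n+2)(n-4)^2}{4(n-2)}\mathcal{E}_2$ plus errors. The $w$-integral then comes solely from Step 1.

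\textbf{A minor point.} $\|\phi\|_\star\le C$ already gives $|\phi|\le C\alpha\tilde u_0$ everywhere, so there is no region where $|\phi|\gtrsim\tilde u_0$ and no splitting is needed. The paper bounds the cubic remainder by $\int|\phi|^{\frac{2(n-2)}{n-4}}$, using only $|\phi|/(\tilde u_0+\theta\phi)\le C$ and $\frac{2(n-2)}{n-4}<3$.
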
 
\begin{proof}
		To evaluate $\mathcal{E}_{\tilde{g}_{0}}(\tilde{u}_{0}+\phi)$, we separate the computation into
	\begin{align*}
		\mathcal{E}_{\tilde{g}_{0}}(\tilde{u}_{0}+\phi)-\mathcal{E}_{\tilde{g}_{0}}(\tilde{u}_{0}) \quad \text{and} \quad  \mathcal{E}_{\tilde{g}_{0}}(\tilde{u}_{0}).
	\end{align*}

\textit{Step 1.}	We first address the difference  $\mathcal{E}_{\tilde{g}_{0}}(\tilde{u}_{0}+\phi)-\mathcal{E}_{\tilde{g}_{0}}(\tilde{u}_{0})$. 	

Since $\tilde{u}_0+\phi$ solves equation \eqref{nonlinear c},  we obtain
\begin{align} \label{phi P} 
	&\int _{\mathbb{R}^{n}} \phi P_{\tilde{g}_{0}}(\tilde{u}_{0}+\phi) - \frac{ (n+2 )(n-4 )}{4}(\tilde{u}_{0}+\phi) ^{  \frac{ 2}{n-4}}   \phi   L_{\tilde{g}_{0}}(\tilde{u}_{0}+\phi)^{  \frac{n-2}{n-4}}   
	= 0.
\end{align}  
We now compute
 \begin{equation} \label{Eg0 u0+phi est}  
	\begin{aligned} 
		&\mathcal{E}_{\tilde{g}_{0}}(\tilde{u}_{0}+\phi) \\
		=&\int _{\mathbb{R}^{n}}   (\tilde{u}_{0}+\phi)P_{\tilde{g}_{0}}(\tilde{u}_{0}+\phi) - \frac{ (n+2 )(n-4 )^2}{4(n-2 )}  (\tilde{u}_{0}+\phi) ^{  \frac{n-2}{n-4}}     L_{\tilde{g}_{0}}(\tilde{u}_{0}+\phi)^{  \frac{n-2}{n-4}}    \\
		=&\int _{\mathbb{R}^{n}}  \tilde{u}_{0} P_{\tilde{g}_{0}} \tilde{u}_{0}  + \tilde{u}_{0} P_{\tilde{g}_{0}}  \phi  + \phi P_{\tilde{g}_{0}}(\tilde{u}_{0}+\phi) - \frac{ (n+2 )(n-4 )^2}{4(n-2 )}  (\tilde{u}_{0}+\phi) ^{  \frac{n-2}{n-4}}     L_{\tilde{g}_{0}}(\tilde{u}_{0}+\phi)^{  \frac{n-2}{n-4}} \\
		\overset{\eqref{phi P}}{=}&\mathcal{E}_{\tilde{g}_0}(\tilde{u}_{0})+\frac{ (n+2 )(n-4 )^2}{4(n-2 )} \int _{\mathbb{R}^{n}}\tilde{u}_{0} ^{  \frac{n-2}{n-4}}     L_{\tilde{g}_0}\tilde{u}_{0}^{ \frac{n-2}{n-4}}-  (\tilde{u}_{0}+\phi) ^{  \frac{n-2}{n-4}}     L_{\tilde{g}_0}(\tilde{u}_{0}+\phi)^{  \frac{n-2}{n-4}}
	 \\
		&+\int \phi P_{\tilde{g}_0}\tilde{u}_{0} +\frac{ (n+2 )(n-4 )}{4} \int _{\mathbb{R}^{n}} (\tilde{u}_{0}+\phi) ^{  \frac{ 2}{n-4}}   \phi   L_{\tilde{g}_0}(\tilde{u}_{0}+\phi)^{  \frac{n-2}{n-4}} 
		\\	= \colon &\mathcal{E}_{\tilde{g}_0}(\tilde{u}_{0})+\int _{\mathbb{R}^{n}} \phi  \mathcal{R}+\frac{ (n+2 )(n-4 )}{4}I,   
\end{aligned}
\end{equation}
where we recall that $\mathcal{R}=P_{\tilde{g}_0}\tilde{u}_0-\frac{(n+2)(n-4)}{4}\tilde{u}_{0}^{\frac{2}{n-4}}\,L_{\tilde{g}_0}\tilde{u}_{0}^{\frac{n-2}{n-4}}$, and
		\begin{align*}
		I:=& \int _{\mathbb{R}^{n}}\frac{ n-4  }{ n-2  } \big[ \tilde{u}_{0} ^{  \frac{n-2}{n-4}}     L_{\tilde{g}_0}\tilde{u}_{0}^{ \frac{n-2}{n-4}}
		-   (\tilde{u}_{0}+\phi) ^{  \frac{n-2}{n-4}}     L_{\tilde{g}_0}(\tilde{u}_{0}+\phi)^{  \frac{n-2}{n-4}}\big]\\
		& +  \int _{\mathbb{R}^{n}} (\tilde{u}_{0}+\phi) ^{  \frac{ 2}{n-4}}   \phi   L_{\tilde{g}_0}(\tilde{u}_{0}+\phi)^{  \frac{n-2}{n-4}}+  \tilde{u}_{0} ^{  \frac{ 2}{n-4}}  \phi  L_{\tilde{g}_0} \tilde{u}_{0} ^{  \frac{n-2}{n-4}}  
		\\ =&\int _{\mathbb{R}^{n}} \frac{ 2 }{ n-2  } \big[ (\tilde{u}_{0}+\phi) ^{  \frac{n-2}{n-4}}     L_{\tilde{g}_0}(\tilde{u}_{0}+\phi)^{  \frac{n-2}{n-4}}- \tilde{u}_{0} ^{  \frac{n-2}{n-4}}     L_{\tilde{g}_0}\tilde{u}_{0}^{ \frac{n-2}{n-4}}  \big]\\
		& -  \int _{\mathbb{R}^{n}}\big[  \tilde{u}_{0} (\tilde{u}_{0}+\phi) ^{  \frac{ 2}{n-4}}   L_{\tilde{g}_0}(\tilde{u}_{0}+\phi)^{  \frac{n-2}{n-4}}- (\tilde{u}_{0}+\phi)  \tilde{u}_{0} ^{  \frac{ 2}{n-4}}  L_{\tilde{g}_0} \tilde{u}_{0} ^{  \frac{n-2}{n-4}} \big] .
	\end{align*} 
	
We now estimate $\int _{\mathbb{R}^{n}} \mathcal{R}\,\phi$.
Observing that $\mathcal{R}=\mathcal{R}_{1}+\mathcal{R}_{2}$, where $\mathcal{R}_{1}$ and $\mathcal{R}_{2}$ are defined in \eqref{R1} and \eqref{R2}, respectively, we obtain
	\begin{align}\label{Rphi est}
		\int _{\mathbb{R}^{n}} \mathcal{R} \phi 
		=   \int _{\mathbb{R}^{n}}  \mathcal{R}_{2}  \phi + \mathcal{R}_{1} ( \phi -w)+ \mathcal{R}_{1} w.
	\end{align}  

We only present the case $n\geq 26$; the case $n=25$ is entirely analogous.

Using \eqref{est R1} and \eqref{est R2}, together with $\|\phi-w\|_{\star}''\leq C$, $\|w\|_{\star}'\leq C$, and $\|\phi\|_{\star}\le C$, and following an argument analogous to that in \cite[Proposition 8.2]{WeiZhao2013}, we obtain, for $n\geq 26$,
	\begin{align*}
		\Big | \int _{\mathbb{R}^{n}}  \mathcal{R}_{2}  \phi \Big| 
		\leq   C \mu^3 \varepsilon^{22} \rho^8|\log \epsilon|+C \alpha^2 \rho\left(\frac{\epsilon}{\rho}\right)^{n-4},
	\end{align*}  
	\begin{align*}
		\Big | \int _{\mathbb{R}^{n}}\mathcal{R}_{1} ( \phi -w) \Big| 
		\leq   C \mu^3 \varepsilon^{22} \rho^8|\log \epsilon|+C \alpha^2 \rho\left(\frac{\epsilon}{\rho}\right)^{n-4},
	\end{align*}   
	and 
	\begin{align*}
		\int _{\mathbb{R}^{n}}\mathcal{R}_{1} w  
		= \int _{ |y|\leq \frac{\rho}{\epsilon}}\mathcal{R}_{1} w  + O\Big(\alpha\mu\rho^{10}\big(\frac{\epsilon}{\rho}\big)^{n-4} \Big) .
	\end{align*}   
Consequently, we have
	\begin{align}\label{formula1}
		&\int _{\mathbb{R}^{n}} \mathcal{R} \phi 
		=	\int _{ |y|\leq \frac{\rho}{\epsilon}}\mathcal{R}_{1} w  +O(\mu^3 \varepsilon^{22} \rho^8|\log \varepsilon| ) + O\Big( \alpha^2 \rho\big(\frac{\epsilon}{\rho}\big)^{n-4} \Big) . 
	\end{align} 

We proceed to analyze $I$. 

\textit{Claim:}
\begin{equation}\label{E I est}   
	\begin{aligned}
		I= &\int _{\mathbb{R}^{n}} \frac{ 2 }{ n-2  } \big[ (\tilde{u}_{0}+\phi) ^{  \frac{n-2}{n-4}}     L_{\tilde{g}_0}(\tilde{u}_{0}+\phi)^{  \frac{n-2}{n-4}}- \tilde{u}_{0} ^{  \frac{n-2}{n-4}}     L_{\tilde{g}_0}\tilde{u}_{0}^{ \frac{n-2}{n-4}}  \big]\\
		& -  \int _{\mathbb{R}^{n}}\big[  \tilde{u}_{0} (\tilde{u}_{0}+\phi) ^{  \frac{ 2}{n-4}}   L_{\tilde{g}_0}(\tilde{u}_{0}+\phi)^{  \frac{n-2}{n-4}}- (\tilde{u}_{0}+\phi)  \tilde{u}_{0} ^{  \frac{ 2}{n-4}}  L_{\tilde{g}_0} \tilde{u}_{0} ^{  \frac{n-2}{n-4}} \big]  
	\\=&O( \mu^{\frac{2 (n-2)}{n-4}}  \epsilon^{\frac{20(n-2)}{n-4}}  )+O\Big( \alpha   \big(\frac{\epsilon}{\rho}\big)^{n -2} \Big) .
\end{aligned}
\end{equation}
Indeed,  since $\|\phi\|_{\star}\leq C$, we have the pointwise estimate
 \begin{align*}
 \Big|\frac{ 2 }{ n-2  } \big[ (\tilde{u}_{0}+\phi) ^{  \frac{2(n-2)}{n-4}}    -  \tilde{u}_{0}^{ \frac{2(n-2)}{n-4}}  \big] 
 -   \big[  \tilde{u}_{0} (\tilde{u}_{0}+\phi) ^{  \frac{ n}{n-4}}   - (\tilde{u}_{0}+\phi) \tilde{u}_{0} ^{  \frac{n }{n-4}} \big]  \Big|
	 \leq   C  |\phi|^{  \frac{2(n-2)}{n-4}}.
\end{align*}
Moreover, combining this with the fact that $R_ {\tilde{g}_{0}}  = O(\alpha \epsilon^2)$ and $\|\phi\|_{\star}\leq C$ again, we deduce that
 \begin{align}\label{Rterm}
	&	\Big|  \int _{\mathbb{R}^{n}}  \frac{n-2}{4(n-1)}R_{\tilde{g}_0} \Big\{ \ \frac{ 2 }{ n-2  } \big[ (\tilde{u}_{0}+\phi) ^{  \frac{2(n-2)}{n-4}}    -  \tilde{u}_{0}^{ \frac{2(n-2)}{n-4}}  \big]
		 -   \big[  \tilde{u}_{0} (\tilde{u}_{0}+\phi) ^{  \frac{ n}{n-4}}   - (\tilde{u}_{0}+\phi) \tilde{u}_{0} ^{  \frac{n }{n-4}} \big]  \Big\} \Big| \nonumber
 \\=&O( \alpha  \mu^{\frac{2 (n-2)}{n-4}}  \epsilon^{\frac{20(n-2)}{n-4}+2}  ) 
           + O\Big( \alpha^2 \epsilon  \big(\frac{\epsilon}{\rho}\big)^{n -4} \Big)   .
\end{align}

It now remains to establish that
\begin{equation}\label{expansion II est}   
	\begin{aligned}
 II:=&\int _{\mathbb{R}^{n}} \frac{ 2 }{ n-2  } \big[ (\tilde{u}_{0}+\phi) ^{  \frac{n-2}{n-4}}     \Delta_{\tilde{g}_0}(\tilde{u}_{0}+\phi)^{  \frac{n-2}{n-4}}- \tilde{u}_{0} ^{  \frac{n-2}{n-4}}      \Delta_{\tilde{g}_0}\tilde{u}_{0}^{ \frac{n-2}{n-4}}  \big]\\
& -  \int _{\mathbb{R}^{n}}\big[  \tilde{u}_{0} (\tilde{u}_{0}+\phi) ^{  \frac{ 2}{n-4}}    \Delta_{\tilde{g}_0}(\tilde{u}_{0}+\phi)^{  \frac{n-2}{n-4}}- (\tilde{u}_{0}+\phi)  \tilde{u}_{0} ^{  \frac{ 2}{n-4}}   \Delta_{\tilde{g}_0} \tilde{u}_{0} ^{  \frac{n-2}{n-4}} \big]  
\\=&O( \mu^{\frac{2 (n-2)}{n-4}}  \epsilon^{\frac{20(n-2)}{n-4}}  ) 
 +O\Big( \alpha   \big(\frac{\epsilon}{\rho}\big)^{n -4} \Big).
\end{aligned}
\end{equation}

Once  \eqref{expansion II est}  is obtained, together with \eqref{Rterm}, we obtain the {\textit{Claim}}.

Let $p=\frac{n-2}{n-4}$. For $t\in \mathbb{R} $,  we define
\begin{align*}
	S(t):=&\frac{ p-1 }{ p } \int_{ \mathbb{R}^{n } } \big[ (\tilde{u}_{0}+t\phi) ^{  p}      \Delta_{\tilde{g}_0}(\tilde{u}_{0}+t\phi)^{ p}- \tilde{u}_{0} ^{  p}     \Delta_{\tilde{g}_0}\tilde{u}_{0}^{p}  \big]\\
	& - \int_{ \mathbb{R}^{n } }  \big[  \tilde{u}_{0} (\tilde{u}_{0}+t\phi) ^{ p-1}    \Delta_{\tilde{g}_0}(\tilde{u}_{0}+t\phi)^{ p}- (\tilde{u}_{0}+t\phi)  \tilde{u}_{0} ^{ p-1}   \Delta_{\tilde{g}_0} \tilde{u}_{0} ^{p} \big] . 
\end{align*}  
Obviously,  $II= S(1)$ and $S(0)=0$. 

Differentiating $S(t)$ with respect to $t$ and integrating by parts, we obtain the following:
\begin{align*}
	S'(t)=& (p-1)\int_{ \mathbb{R}^{n } }  [ 2(\tilde{u}_{0}+t\phi) ^{ p-1}   - \tilde{u}_{0}(\tilde{u}_{0} +t\phi) ^{p-2} ]  
	\phi   \Delta_{\tilde{g}_0} (\tilde{u}_{0}+t\phi)^{ p}  
		\\&-p  \int_{ \mathbb{R}^{n } }(\tilde{u}_{0}+t\phi)^{ p-1} \phi  \Delta_{\tilde{g}_0}[\tilde{u}_{0}(\tilde{u}_{0}+t\phi) ^{ p-1}   ]
	+   \int_{ \mathbb{R}^{n } } \tilde{u}_{0}  ^{p-1}   \phi   \Delta_{\tilde{g}_0} \tilde{u}_{0} ^{ p} ,
\end{align*}   
hence $S'(0)=0$.

Differentiating $S(t)$ twice with respect to $t$, we obtain
\begin{align*}
	S''(t) =  & (p-1)\int_{ \mathbb{R}^{n } }  \big[ 3p\tilde{u}_{0}+(4p-2)t\phi\big]  	 (\tilde{u}_{0} +t\phi) ^{  p-3}\phi^{2}   \Delta_{\tilde{g}_0} (\tilde{u}_{0}+t\phi)^{  p} 	\\&-3p(p-1) \int_{ \mathbb{R}^{n } } (\tilde{u}_{0}+t\phi)^{  p-2} \phi^{2}  \Delta_{\tilde{g}_0}[\tilde{u}_{0}(\tilde{u}_{0}+t\phi) ^{  p-1}   ],
\end{align*}  
so $S''(0)=0$.

Therefore, for some $\theta\in(0,1)$,
\begin{align*}
II=& S(1)=\frac{1}{6} S'''(\theta) .
\end{align*}  
Next, we compute
 \begin{align*}
	S'''(\theta)= &  (p-1) (p-2)\int_{ \mathbb{R}^{n } }   [(3p+1)\tilde{u}_{0}+(4p-2) \theta\phi](\tilde{u}_{0}+\theta\phi) ^{ p-4}   	\phi^{3}   \Delta_{\tilde{g}_0} (\tilde{u}_{0}+\theta\phi)^{ p}
	\\& -3p(p-1)(p-2)  \int_{ \mathbb{R}^{n } }    (\tilde{u}_{0}+\theta\phi) ^{ p-3}   	\phi^{3}  \Delta_{\tilde{g}_0} [\tilde{u}_{0} (\tilde{u}_{0}+\theta\phi)^{  p-1} ] 
		\\&+p(p-1) \int_{ \mathbb{R}^{n } }     [ 3p \tilde{u}_{0}+(4p-2) \theta\phi](\tilde{u}_{0}+\theta\phi) ^{ p-3}   	\phi^{2}  \Delta_{\tilde{g}_0} [ (\tilde{u}_{0}+\theta\phi)^{  p-1}\phi ] 
	\\& -3p(p-1) ^{2}  \int_{ \mathbb{R}^{n } }    (\tilde{u}_{0}+\theta\phi) ^{ p-2}   	\phi^{2}  \Delta_{\tilde{g}_0} [\tilde{u}_{0} \phi(\tilde{u}_{0}+\theta\phi)^{  p-2} ] 
	\\=&:II_{1}+II_{2}+II_{3}+II_{4}.
\end{align*} 
Since $\|\phi\|_{\star}\leq C$, we can estimate each term as follows:
 \begin{align*}
|II_{1}| &\leq C\int_{ \mathbb{R}^{n } }   |	\phi|^{2p} \Big| \frac{ (3p+1)\tilde{u}_{0}+(4p-2) \theta\phi }{\tilde{u}_{0}+\theta\phi  }\Big|  |  \tilde{u}_{0}+\theta\phi |^{ p-3}     |\phi|^{3-2p} |\Delta_{\tilde{g}_0} (\tilde{u}_{0}+\theta\phi)^{ p}| 
\\&\leq C\int_{ \mathbb{R}^{n } }  |	\phi|^{2p} |  \tilde{u}_{0}+\theta\phi |^{ p-3}     |\phi|^{3-2p}  |  \tilde{u}_{0}+\theta\phi |^{ p }   =C\int_{ \mathbb{R}^{n } }  |	\phi|^{2p} |\frac{\phi}{ \tilde{u}_{0}+\theta\phi }|^{3-2p} 
 \leq C\int_{ \mathbb{R}^{n } }  |	\phi|^{2p},
\end{align*} 
 \begin{align*}
	|II_{2}| &\leq C\int_{ \mathbb{R}^{n } } \big | (\tilde{u}_{0}+\theta\phi) ^{ p-3}   	\phi^{3}  \Delta_{\tilde{g}_0} [\tilde{u}_{0} (\tilde{u}_{0}+\theta\phi)^{  p-1} ]\big|
\\&\leq C\int_{ \mathbb{R}^{n } }  |\tilde{u}_{0}+\theta\phi |^{ p-3}     |\phi|^{3 }| \tilde{u}_{0}| |  \tilde{u}_{0}+\theta\phi |^{ p -1} 
\\&\leq C\int_{ \mathbb{R}^{n } }  |\tilde{u}_{0}+\theta\phi |^{ 2p-3}     |\phi|^{3 }  
 =C \int_{ \mathbb{R}^{n } } |	\phi|^{2p} |\frac{\phi}{ \tilde{u}_{0}+\theta\phi }|^{3-2p} \leq C\int_{ \mathbb{R}^{n } }  |	\phi|^{2p},
\end{align*}  
 \begin{align*}
	|II_{3}| &\leq C\int_{ \mathbb{R}^{n } }  \big|[ 3p \tilde{u}_{0}+(4p-2) \theta\phi](\tilde{u}_{0}+\theta\phi) ^{ p-3}   	\phi^{2}  \Delta_{\tilde{g}_0} [ (\tilde{u}_{0}+\theta\phi)^{  p-1}\phi  ]\big| 
	\\&\leq C\int_{ \mathbb{R}^{n } }  |3p \tilde{u}_{0}+(4p-2) \theta\phi||\tilde{u}_{0}+\theta\phi| ^{ p-3}   	\phi^{2}  
	\\&\quad\quad\quad\quad\cdot\big[ |\partial^{2}(\tilde{u}_{0}+\theta\phi)^{  p-1}||\phi|+2|\partial (\tilde{u}_{0}+\theta\phi)^{  p-1}||\partial\phi| +|\tilde{u}_{0}+\theta\phi|^{  p-1}|\partial^{2}\phi | ]\big] 
	\\&\leq  C\int_{ \mathbb{R}^{n } }  \big| \frac{ 3p \tilde{u}_{0}+(4p-2) \theta\phi }{\tilde{u}_{0}+\theta\phi}\big||\tilde{u}_{0}+\theta\phi| ^{ 2p-3}   |	\phi|^{2} \big(|\phi|+ |\partial\phi| + |\partial^{2}\phi | \big) 
		\\&\leq  C\int_{ \mathbb{R}^{n } }  |\tilde{u}_{0}+\theta\phi| ^{ 2p-3}   |	\phi|^{2} \big(|\phi|+ |\partial\phi| + |\partial^{2}\phi | \big) 
	\\&\leq C\int_{ \mathbb{R}^{n } }    |	\phi|^{2p} \big|\frac{\phi}{ \tilde{u}_{0}+\theta\phi } \big|^{2-2p} \Big[ \big|\frac{\phi}{ \tilde{u}_{0}+\theta\phi }\big|+  \big|\frac{\partial\phi}{ \tilde{u}_{0}+\theta\phi }\big|+  \big|\frac{\partial^{2}\phi}{ \tilde{u}_{0}+\theta\phi }\big| \Big] 
    \\&\leq O( \mu^{\frac{2 (n-2)}{n-4}}  \epsilon^{\frac{20(n-2)}{n-4}}  )  +O\Big( \alpha   \big(\frac{\epsilon}{\rho}\big)^{n -4} \Big),  
\end{align*} 
and
 \begin{align*}
	|II_{4}| &\leq C\int_{ \mathbb{R}^{n } }  \big| (\tilde{u}_{0}+\theta\phi) ^{ p-2}   	\phi^{2}  \Delta_{\tilde{g}_0} [\tilde{u}_{0} (\tilde{u}_{0}+\theta\phi)^{  p-2} \phi] \big| 
	\\&\leq C\int_{ \mathbb{R}^{n } }   |\tilde{u}_{0}+\theta\phi| ^{ p-2}   |\phi|^{2}   \cdot\big\{ |\partial^{2}[\tilde{u}_{0} (\tilde{u}_{0}+\theta\phi)^{  p-2}]|\phi|+2|\partial [\tilde{u}_{0} (\tilde{u}_{0}+\theta\phi)^{  p-2}]||\partial\phi|
\\&\quad\quad\quad\quad \quad\quad\quad\quad\quad\quad\quad\quad \quad  +|\tilde{u}_{0}||\tilde{u}_{0}+\theta\phi|^{  p-2}|\partial^{2}\phi | ]\big\} 
\\&\leq  C\int_{ \mathbb{R}^{n } }  |\tilde{u}_{0}| |\tilde{u}_{0}+\theta\phi| ^{ 2p-4}   |	\phi|^{2} \big(|\phi|+ |\partial\phi| + |\partial^{2}\phi | \big)  
\\&\leq C\int_{ \mathbb{R}^{n } }    |	\phi|^{2p} \big|\frac{\phi}{ \tilde{u}_{0}+\theta\phi } \big|^{2-2p}\big|\frac{ \tilde{u}_{0}}{  \tilde{u}_{0}+\theta\phi } \big|  \Big[ \big|\frac{\phi}{ \tilde{u}_{0}+\theta\phi }\big|+  \big|\frac{\partial\phi}{ \tilde{u}_{0}+\theta\phi }\big|+  \big|\frac{\partial^{2}\phi}{ \tilde{u}_{0}+\theta\phi }\big| \Big]
 \\&\leq O( \mu^{\frac{2 (n-2)}{n-4}}  \epsilon^{\frac{20(n-2)}{n-4}}  )  +O\Big( \alpha   \big(\frac{\epsilon}{\rho}\big)^{n -4} \Big). 
\end{align*} 
Here we used that $2p=\frac{2(n-2)}{n-4}<3$. Putting these estimates together, we obtain \eqref{expansion II est}.
  
Therefore, combining \eqref{Eg0 u0+phi est}--\eqref{E I est}, we obtain
 \begin{align} \label{Ev-Eu0}
	 \mathcal{E}_{\tilde{g}_{0}}(\tilde{u}_{0}+\phi) 
 =  \mathcal{E}_{\tilde{g}_0}(\tilde{u}_{0})+ \int _{\mathbb R^n}\mathcal{R}_{1} w +O(\mu^3 \epsilon^{ \frac{20n}{n-1}}   ) +O( \mu^{\frac{2 (n-2)}{n-4}}  \epsilon^{\frac{20(n-2)}{n-4}}  ) + O\Big( \alpha   \big(\frac{\epsilon}{\rho}\big)^{n-4} \Big) . 
\end{align}	 

\textit{Step 2.} We next compute the term $\mathcal{E}_{\tilde{g}_0}(\tilde{u}_{0})$, obtaining
 \begin{align*}  
	\mathcal{E}_{\tilde{g}_{0}}(\tilde{u}_{0})=& \int _{\mathbb{R}^{n}}  \tilde{u}_{0} P_{\tilde{g}_{0}}\tilde{u}_{0} -  \frac{ (n+2 )(n-4 )^2}{4(n-2 )} \tilde{u}_{0} ^{  \frac{n-2}{n-4}}     L_{\tilde{g}_{0}} \tilde{u}_{0} ^{  \frac{n-2}{n-4}}    
\\=& \int _{\mathbb{R}^{n}}  \tilde{u}_{0} P_{\tilde{g}_{0}}\tilde{u}_{0} -  \tilde{u}_{0} \Delta ^{2}\tilde{u}_{0} 
	\\ &   -\frac{ (n+2 )(n-4 )^2}{4(n-2 )} \big[ \int _{\mathbb{R}^{n}} \tilde{u}_{0} ^{  \frac{n-2}{n-4}}     L_{\tilde{g}_{0}} \tilde{u}_{0} ^{  \frac{n-2}{n-4}}   -\tilde{u}_{0} ^{  \frac{n-2}{n-4}} (-\Delta \tilde{u}_{0} ^{  \frac{n-2}{n-4}} )\big] 
	\\ &   +  \frac{ (n+2 )(n-4 )}{2(n-2) }  \int _{\mathbb{R}^{n}} \tilde{u}_{0} ^{  \frac{n-2}{n-4}}  (-\Delta \tilde{u}_{0} ^{  \frac{n-2}{n-4}} ) 	
		\\=:&I_{0,1}-\frac{ (n+2 )(n-4 )^2}{4(n-2 )} I_{0,2} +  I_{0,3}.
\end{align*}
Using Lemmas 4.11, 4.13, 4.14, and 4.15 in \cite{WeiZhao2013}, we deduce that
\begin{equation}\label{bigstar}   
	\begin{aligned}
	I_{0,1} = \mathcal{E} _{1}(\xi',\lambda') 
	 +O(\mu ^{3}\epsilon^{\frac{20n}{n-1}})+O\big(\alpha(\frac{\epsilon}{\rho}  )^{n-4} \big).
\end{aligned}
\end{equation}
Moreover, \cite[Proposition 11]{BrendleMarques2009}, together with the fact that $\bar{H}_{ij}(y)=f(|y|^{2})H_{ij}(y)$ where $f(|y|^{2})$ is a fourth-degree polynomial, ensures that
\begin{equation}\label{spadesuit}   
	\begin{aligned}
	I_{0,2}=& \mathcal{E} _{2}(\xi',\lambda') + O( \mu^3 \varepsilon^{23} \rho^ 7|\log \epsilon|)+O\Big(\alpha  (\frac{\epsilon}{\rho} )^{n-4} \Big).
\end{aligned}
\end{equation}
(see also Proposition \ref{Brencdle-Marques 4 poly} in Appendix \ref{Appendix A}).
 
For $I_{0,3}$, we obtain
\begin{equation}\label{clubsuit}   
	\begin{aligned}
	I_{0,3}	 
	=&\frac{ (n+2 )(n-4 )}{2(n-2) } \int _{\mathbb{R}^{n}} \tilde{u}_{0} ^{  \frac{n-2}{n-4}}  (-\Delta \tilde{u}_{0} ^{  \frac{n-2}{n-4}} ) 	  
  = \frac{ n(n+2 )(n-4 )}{8 } \int _{\mathbb{R}^{n}}     \tilde{u}_{0} ^{  \frac{2n}{n-4} }  	  
		\\ =&  2^{n-3}n(n+2)(n-4 )    \int _{\mathbb{R}^{n}}  \Big( \frac{1}{1+|y|^{2}}\Big)^{n}dy .
\end{aligned}
\end{equation}

Finally, collecting \eqref{Ev-Eu0}--\eqref{clubsuit}, we conclude that the desired expansion holds. This completes the proof.
\end{proof}

\section{Proof of Theorem \ref{main thm}}  \label{Sect. pf main thm} 
We first establish the existence of a critical point of the function $F$ defined below. Since $F$ is close to $\mathcal{F}_{\tilde{g}_{0}}(\xi',\lambda')$ and we choose the parameters $\epsilon$, $\rho$, and $\mu$ appropriately, this yields a critical point of $\mathcal{F}_{\tilde{g}_{0}}(\xi',\lambda')$ and hence a solution by Proposition \ref{equvalence between solution and critical point}.
 
From the expression of $\mathcal{F}_{\tilde{g}_{0}}(\xi',\lambda')$ in Proposition \ref{energy expansion}, recalling that $\tilde h_{ij}(y)=\mu \epsilon^{10}\bar H_{ij}(y)=\mu \epsilon^{10}f(|x|^{2})H_{ij}(x)$ for $n\geq 26$, and  $\tilde h_{ij}(y)=\mu \epsilon^{9}\bar H_{ij}(y)=\mu \epsilon^{9}f(|x|^{2})H_{ij}(x)$ for $n=25$,  we  study the function  $F:\mathbb{R}^{n}\times (0,\infty)\rightarrow  \mathbb{R}$, defined as
\begin{align*}  
F(\xi',\lambda')    
	 :=& - \int _{\mathbb{R}^{n}}\sum_{i,j,k,l}\bar{H}_{il}\bar{H}_{jl}\partial_{ikk}\tilde{u}_{0} \partial_{j}\tilde{u}_{0} +  \int _{\mathbb{R}^{n} }\big(\sum_{i,j}\bar{H}_{ij} \partial_{ij}\tilde{u}_{0}   \big)^{2}
	\\ & -\frac{a_{n}}{4} \int _{\mathbb{R}^{n}}\sum_{i,k,l,m} (\partial _{l}\bar{H}_{mk})^{2} (\partial_{i }\tilde{u}_{0}  )^{2}
	  -\frac{b_{n}}{4} \int _{\mathbb{R}^{n}}\sum_{i,j,m,s}  \partial _{j}\bar{H}_{ms}  \partial _{i}\bar{H}_{sm} \partial_{i }\tilde{u}_{0} \partial_{j}\tilde{u}_{0}  
	\\& -\frac{b_{n}}{2} \int _{\mathbb{R}^{n}}\sum_{i,j,m,s}(\bar{H}_{ms}  \partial _{s}\bar{H}_{ij}  -  \bar{H}_{si} \partial _{s}\bar{H}_{mj} +\bar{H}_{sj}\partial _{i}\bar{H}_{ms} -\bar{H}_{ms}\partial _{i}\bar{H}_{sj})\partial_{m}(\partial_{i }\tilde{u}_{0} \partial_{j}\tilde{u}_{0}  )
	\\ & +\frac{b_{n}}{2} \int _{\mathbb{R}^{n}}\sum_{i,j,m,s} \bar{H}_{is}(\partial _{mm}\bar{H}_{js})  \partial_{i }\tilde{u}_{0}  \partial_{j}\tilde{u}_{0} 
	\\ & +\frac{n-4}{8(n-1 )} \int _{\mathbb{R}^{n}}\sum_{i,k,l,m}  [(\partial _{il}\bar{H}_{mk})^{2}+ \partial _{l}\bar{H}_{mk} \partial _{iil}\bar{H}_{mk} ]\tilde{u}_{0}^{2}
	\\ & -\frac{n-4}{4(n-2 )^{2}} \int _{\mathbb{R}^{n} }\sum_{i,j,m,s}   (\partial _{mm}\bar{H}_{ij}  \partial _{ss}\bar{H}_{ij}  )\tilde{u}_{0}^{2} 
\\&  -\frac{ (n+2 )(n-4 )^2}{4(n-2 )} \bigg\{ \frac{1}{2}\int _{\mathbb{R}^{n}}\sum_{i,k,l }  \bar{H}_{il}\bar{H}_{kl}\partial_{i} (\tilde{u}_{0} ^{  \frac{n-2}{n-4}}   )\partial_{k} (\tilde{u}_{0} ^{  \frac{n-2}{n-4}}  )
	\\&\quad\quad\quad  \quad\quad\quad\quad\quad\quad -\frac{n-2}{16(n-1)}\int _{\mathbb{R}^{n}}\sum_{i,k,l }  (\partial_{l}\bar{H}_{ik})^{2}  (\tilde{u}_{0} ^{  \frac{n-2}{n-4}}  )^{2} \bigg\}	  	 
	\\&- \int _{\mathbb{R}^{n}}  \bigg\{  2\bar{H} _{ij} \partial_{ijss}  \tilde{u}_{0} +2\partial_{s}\bar{H} _{ij} \partial_{ijs}  \tilde{u}_{0}+\partial_{ss }\bar{H} _{ij} \partial_{ij}  \tilde{u}_{0}
 -\frac{b_{n}}{2}\partial_{ss }\bar{H} _{ij} \partial_{ij}  \tilde{u}_{0}-\frac{b_{n}}{2}\partial_{jss }\bar{H} _{ij} \partial_{i }  \tilde{u}_{0}
	\\&\quad\quad\quad- \frac{ (n+2 )(n-4 )}{4}  \tilde{u}_{0} ^{  \frac{2}{n-4}}   \bar{H}_{ij} \partial_{ij}(\tilde{u}_{0} ^{  \frac{n-2}{n-4}}) \bigg\}\bar{w},
\end{align*}
where $\bar{w}=\frac{w}{\mu\epsilon^{10}}$ for $n\geq 26$ and $\bar{w}=\frac{w}{\mu\epsilon^{9}}$ for $n=25$, and where $w$ is defined in \eqref{w equ}.

Since $\bar{H}_{ik}$ is symmetric, i.e., $\bar{H}_{ik}(-y)=\bar{H}_{ik}(y)$, we obtain the following proposition.
 \begin{prop} 
 	The function $F$ satisfies $F(\xi',\lambda')=F(-\xi',\lambda')$ for all $(\xi',\lambda')\in\mathbb{R}^{n}\times(0,+\infty)$. Consequently, $\frac{\partial}{\partial\xi'}F(0,\lambda')=0$ and $\frac{\partial^{2}}{\partial\xi'\partial\lambda'}F(0,\lambda')=0$ for all $\lambda'>0$.
 \end{prop}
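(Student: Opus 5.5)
The plan is to prove the symmetry by a change of variables $y\mapsto -y$ in every integral defining $F$, using that $\bar H_{ij}(-y)=\bar H_{ij}(y)$ (each $H_{ij}$ is a quadratic form and $f(|y|^2)$ is radial). Write $\tilde u_0=\tilde u_0^{(\xi',\lambda')}$ and note that $\tilde u_0^{(-\xi',\lambda')}(y)=\tilde u_0^{(\xi',\lambda')}(-y)$.

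\textbf{Explicit terms.} Under the reflection $R(y)=-y$, derivatives behave as follows. Each $\partial$ on $\tilde u_0$ produces a factor $-1$, and each $\partial$ on $\bar H$ likewise produces $-1$, since $\partial^k\bar H$ is even or odd according to the parity of $k$. Counting the total number of derivatives in each integrand of $F$, it is always even. For example, $\bar H\bar H\,\partial^3\tilde u_0\,\partial\tilde u_0$ has $4$, $(\partial\bar H)^2(\partial\tilde u_0)^2$ has $4$, $\bar H\partial\bar H\,\partial(\partial\tilde u_0\partial\tilde u_0)$ has $4$, $[(\partial^2\bar H)^2+\partial\bar H\,\partial^3\bar H]\tilde u_0^2$ has $4$, and the $\mathcal E_2$-type terms have $2$ or $2$. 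So after substituting $y\to -y$, each explicit integral evaluated at $(-\xi',\lambda')$ equals the same integral at $(\xi',\lambda')$.

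\textbf{The term involving $\bar w$.} The term $\int\{\cdots\}\bar w$ needs the additional fact that $\bar w_{(-\xi',\lambda')}(y)=\bar w_{(\xi',\lambda')}(-y)$. I would prove this from the uniqueness in \eqref{w equ}, noting that the leading-order operator used to define $\bar w$ is invariant under $R$. This step is the main obstacle, because the full $\mathcal L_{\tilde g_0}$ contains $\tilde g_0$. However, $\tilde g_0$ is itself even near the origin, since $\tilde h$ is built from $\bar H$, and the cutoffs $\hat\chi$ and $\chi$ can be chosen radial, or centered compatibly, so that they transform correctly. The steps are:
\begin{enumerate}
\item The source $\mathcal R_1$ at $(-\xi',\lambda')$ is $\mathcal R_1(\cdot)$ at $(\xi',\lambda')$ composed with $R$. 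Each of its terms has an even total derivative count, by the same parity count as above.
\item The kernel elements transform as $Z_0\circ R\mapsto Z_0$ and $Z_j\circ R\mapsto -Z_j$.
\item The orthogonality conditions are therefore preserved, and the constants $c_i$ change only by signs.
\item By the uniqueness of Proposition \ref{unique sol linear equ zeta}, $w\circ R$ is the solution for $(-\xi',\lambda')$.
\end{enumerate}
If the global $g_0$ fails to be even away from $B_\rho$, I would instead work with the limiting definition of $\bar w$ as the solution of the flat-space problem with source $\mathcal R_1/(\mu\epsilon^{10})$, which is exactly what enters $F$. This flat problem is manifestly $R$-equivariant. The resulting integrand $\{\cdots\}\bar w$ is then even after the change of variables.

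\textbf{Consequences.} From $F(\xi',\lambda')=F(-\xi',\lambda')$, differentiating in $\xi'$ gives $\nabla_{\xi'}F(\xi',\lambda')=-\nabla_{\xi'}F(-\xi',\lambda')$. At $\xi'=0$ this yields $\nabla_{\xi'}F(0,\lambda')=0$ for every $\lambda'$. Differentiating this identity in $\lambda'$ then gives $\partial^2_{\xi'\lambda'}F(0,\lambda')=0$. The smoothness of $F$ needed for these differentiations follows from Proposition \ref{derivative of phi-estimates} and the explicit dependence of $\tilde u_0$ on the parameters.
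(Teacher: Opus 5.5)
Your proof is correct and follows the paper's argument, with the details filled in. The paper only invokes $\bar H_{ij}(-y)=\bar H_{ij}(y)$ and the substitution $y\mapsto -y$; your parity count and your uniqueness argument for $\bar w_{(-\xi',\lambda')}(y)=\bar w_{(\xi',\lambda')}(-y)$ (with $Z_0$ invariant and $Z_j$, $c_j$ changing sign) supply what it leaves implicit.

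Your fallback branch is the one that actually applies, because the glued $g_0$ is not even about the centers $x_N$. Exact evenness therefore holds for the $\epsilon$-independent flat-space $\bar w$, with uniqueness from Theorem~\ref{eigenvalue thm}; this is also the only reading under which $F$ depends on $(\xi',\lambda')$ alone. Differentiability of $F$ then comes from differentiating that linear problem as in Proposition~\ref{linear equ zeta C1}, not from Proposition~\ref{derivative of phi-estimates}, which concerns $\phi$.
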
 

Using \eqref{w equ} for $w$, we note that if $\xi'=0$, then by the properties of $\bar{H}_{ij}$ we have $\mathcal{R}_{1}(y)=0$. Hence, together with Theorem \ref{eigenvalue thm}, $\bar{w}|_{\xi'=0}(y)=0$ for all $y\in\mathbb{R}^{n}$.
 
For simplicity, we use $F_{Q}$ and $F_R$ to denote the $F$ defined in \cite[Section 9]{WeiZhao2013} and \cite[Section 4]{BrendleMarques2009}, respectively; these represent the corresponding reduced energy functionals for $Q$-curvature and scalar curvature.
Since
\begin{align*}
	\tilde{u}_{0} ^{  \frac{n-2}{n-4}}  =2^{\frac{n-2}{2}}\big( \frac{\lambda'}{\lambda'^{2} +|y-\xi' |^{2}}\big)^{\frac{n-2}{2}},
\end{align*}
we have
\begin{equation}
    F(0,\lambda')=2^{n-4}F_Q(0,\lambda')-\frac{(n+2)(n-4)^2}{4(n-2)}2^{n-2}F_R(0,\lambda'). 
\end{equation}
The detailed expressions of $F(0,\lambda')$ and its Hessian at $(0,\lambda')$ are given in Propositions \ref{F(0,lambda)} and \ref{Hessian F(0,lambda)}.

\begin{prop}\label{summary}
For $n\ge 25$, there exists a smooth function $f:[0,+\infty)\rightarrow \mathbb{R}$ such that the corresponding function $F(\xi',\lambda')$ has a strict local negative minimum at $(0,1)$. In particular,
\begin{equation}\label{f for all dim}
f(s):=
\begin{cases}
 \tau_n-8126s+1662s^{2}-98s^{3}+ s^{4} , & n\ge 26,\\[4pt]
 1028(s+1)^{1/2}-10 (s+1)^{3/2}-\frac{121}{6}(s+1)^{5/2} +(s+1)^{7/2} +\tau_{25}(s+1)^{-1/2}, & n=25.
\end{cases}
\end{equation}
where $\tau_n$ and $\tau_{25}$ are constants determined in Lemmas \ref{fourth lem} and \ref{3.5 lem}, respectively.
\end{prop}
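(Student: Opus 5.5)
The proof reduces to three explicit one-variable computations: $F(0,1)$, the $\lambda'$-Hessian entry, and the $\xi'$-Hessian block. Since $F(\xi',\lambda')=F(-\xi',\lambda')$, the gradient in $\xi'$ and the mixed derivatives $\partial^2_{\xi'\lambda'}F$ vanish at $(0,\lambda')$. So the Hessian at $(0,1)$ is block diagonal, with one block $\partial^2_{\lambda'\lambda'}F(0,1)$ and one block $\partial^2_{\xi'_a\xi'_b}F(0,1)$. A strict local negative minimum at $(0,1)$ then follows from four facts: $F(0,1)<0$, $\partial_{\lambda'}F(0,1)=0$, $\partial^2_{\lambda'\lambda'}F(0,1)>0$, and positive definiteness of the $\xi'$-block.

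For the $\xi'=0$ slice, use $\bar w|_{\xi'=0}\equiv 0$, so the last integral in $F$ drops out. Then apply the decomposition
\[
F(0,\lambda')=2^{n-4}F_Q(0,\lambda')-\tfrac{(n+2)(n-4)^2}{4(n-2)}2^{n-2}F_R(0,\lambda').
\]
Averaging $H_{ij}$ over the sphere with the Weyl symmetries, as in \cite{BrendleMarques2009,WeiZhao2013}, each term reduces to a constant multiple of $\sum(W_{ikjl}+W_{iljk})^2$ times a radial integral $\int_0^\infty f(r^2)^2$, $f f'$, $f'^2$, $ff''$ against powers of $r$ and $(\lambda'^2+r^2)^{-k}$. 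For the quartic $f$ these are Beta integrals, giving $F(0,\lambda')$ as an explicit rational-coefficient polynomial in $\lambda'^2$ and $\tau$. I would compute $F(0,1)$ and $\partial_{\lambda'}F(0,1)$ as quadratic functions of $\tau$, and choose $\tau_n$ to solve $\partial_{\lambda'}F(0,1)=0$; presumably this is what Lemma \ref{fourth lem} records. Then I would check the signs of $F(0,1)$ and $\partial^2_{\lambda'\lambda'}F(0,1)$ as functions of $n$, the second being the $\lambda'\lambda'$ entry of the Hessian from Proposition \ref{Hessian F(0,lambda)}. For $n=25$ the half-integer powers $(s+1)^{k/2}$ make the integrands combine with $(1+r^2)^{-m}$ into exact Beta functions, so the same procedure works with $\tau_{25}$ fixed by Lemma \ref{3.5 lem}.

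The $\xi'$-block needs second derivatives of $F$ in $\xi'$ at $0$, including the $\bar w$ term. Since $\bar w$ vanishes at $\xi'=0$ and depends linearly on $\mathcal R_1$, which is $O(|\xi'|)$, the $\bar w$ contribution to the Hessian is quadratic. It equals $-\int \partial_{\xi'}\mathcal R_1\,\partial_{\xi'}\bar w$, where $\partial_{\xi'}\bar w$ solves $\mathcal L\,\partial_{\xi'}\bar w=-\partial_{\xi'}\mathcal R_1$ under the orthogonality conditions; this uses Theorem \ref{eigenvalue thm}. Following \cite{BrendleMarques2009}, I would decompose $\partial_{\xi'}\mathcal R_1$ into spherical harmonics of low degree. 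On each such mode the linearized operator $\mathcal L$ becomes a radial ODE, and I would solve it explicitly, or bound it by a coercivity estimate on that mode, to show that this contribution only helps (it enters with a negative sign in the quadratic form, so dropping it gives a lower bound). The remaining explicit quadratic form in $\xi'$ is again a radial integral in $f$. By the $O(n)$-type symmetry after averaging, it equals $c_n\sum_a\xi_a'^2\cdot\sum(W+W)^2$, possibly plus a $W$-contraction term that is handled by the same Cauchy–Schwarz reduction as in \cite{BrendleMarques2009}. I then need $c_n>0$.

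The main obstacle is the sign verification in the borderline case $n=25$, and ensuring positivity of $c_n$ uniformly for all $n\ge 26$. Each check becomes a rational function of $n$, or a polynomial in $n$ after clearing Gamma-function ratios. I would verify large $n$ by asymptotics in $n$, and the finitely many small $n$ by direct evaluation. I expect the competition between the $Q$ part and the subtracted $R$ part to be tight exactly at $n=25$; this is why the fractional $f$ is needed there.
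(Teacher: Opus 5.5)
Your outline follows the paper's proof. Evenness of $F$ in $\xi'$ makes the Hessian at $(0,\lambda')$ block diagonal. $F(0,\lambda')$ and $\partial^2_{\xi'\xi'}F(0,\lambda')$ become explicit radial integrals in $f$ times Weyl contractions, $\tau_n$ is a root of $\partial_{\lambda'}F(0,1)=0$, and the remaining signs are checked. The step that fails as written is your treatment of the $\bar w$-term in the $\xi'$-block. You take $\mathcal R_1=O(|\xi'|)$ and argue that the resulting quadratic contribution ``only helps''. In fact $\mathcal R_1$ vanishes to \emph{second} order in $\xi'$:
\begin{itemize}
\item For $\psi=\psi(|y-\xi'|^2)$, the identities $\bar H_{ii}=0$, $\bar H_{ij}y_j=0$, $\partial_i\bar H_{ij}=0$ give $\bar H_{ij}\partial_{ij}\psi=4\psi''(|y-\xi'|^2)\,\bar H_{ij}\xi'_i\xi'_j$.
\item The same identities, together with Euler's relation $y_s\partial_sH_{ij}=2H_{ij}$, give $\partial_s\bar H_{ij}\,\partial_{ijs}\psi=O(|\xi'|^2)$.
\item $\Delta\bar H_{ij}=\big((2n+8)f'+4|y|^2f''\big)H_{ij}$ has the same structure. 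Hence $\Delta\bar H_{ij}\,\partial_{ij}\tilde u_0=O(|\xi'|^2)$ and $\partial_j\Delta\bar H_{ij}=0$.
\end{itemize}
So $\bar w=O(|\xi'|^2)$, the $\bar w$-term of $F$ is $O(|\xi'|^4)$, and it contributes nothing to $\partial^2_{\xi'\xi'}F(0,\lambda')$. This is how the paper disposes of it (via the computation in \cite{Brendle2008} for the $R$-part and \cite{WeiZhao2013} for the $Q$-part). No spectral analysis of $\mathcal L$ is required.

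Your fallback justification also has the sign reversed, so it would not have worked had a linear part been present. The $\bar w$-term comes from $\int\mathcal R_1 w$ in \eqref{formula1}. Since $\mathcal L w=-\mathcal R_1+\sum_i c_i\chi Z_i$ with $\int w\chi Z_i=0$, one has $\int\mathcal R_1 w=-\int w\,\mathcal L w$: the reduction correction lowers the energy. A genuine linear part of $\mathcal R_1$ would therefore contribute $-2\int \partial_{\xi'_p}\bar w\,\mathcal L\,\partial_{\xi'_q}\bar w$ to the Hessian. That is negative semidefinite wherever $\mathcal L$ is coercive, so discarding it gives an upper bound on the Hessian, not a lower bound. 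Moreover, $\mathcal L$ is not coercive everywhere: it is negative on $U$, because $\mu_1=1<\frac{n}{n-4}$.

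One smaller point belongs in your sign checks. A real root $\tau_n$ of $\partial_{\lambda'}F(0,1)=0$ exists only if the discriminant $B_n^2-4A_nC_n$ is positive. The paper verifies this for $n\ge 26$ (its largest real root is about $25.21$), and you must do the same before choosing $\tau_n$.
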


For readability, we postpone the proof of Proposition \ref{summary} to Appendix \ref{Appendix A}, where we divide Proposition \ref{f for all dim} into two propositions: Proposition \ref{fourth prop} for $n\ge 26$ and Proposition \ref{3.5 prop} for $n=25$.

We now prove Theorem \ref{main thm} by a gluing method.
\begin{prop}\label{prop h}
	Assume $n\geq 25$. Let $g_{0}$ be a Riemannian metric on $\mathbb{R}^{n}$, smooth for $n\geq 26$ and of class $C^9$ for $n=25$, of the form $g_{0}(x)=\exp(h(x))$,  where $h(x)$ is a trace-free symmetric two-tensor on $\mathbb{R}^{n}$ satisfying  $h(x)=0$ for $|x|\geq 1$, 
	\begin{align*}  
		|h(x)|+|\partial h(x)|+|\partial^{2} h(x)|+|\partial^{3} h(x)|+|\partial^{4} h(x)|\leq \alpha
	\end{align*} 
	for all $x\in\mathbb{R}^{n}$, and 
	\begin{align*}  
		h_{ij}(x)=\mu \epsilon^{8}f(\epsilon^{-2}|x|^{2})H_{ij}(x), \ \text{for} \ n \geq 26,
	\end{align*} 
	and 
	\begin{align*}  
		h_{ij}(x)=\mu \epsilon^{7}f(\epsilon^{-2}|x|^{2})H_{ij}(x), \ \text{for} \ n =25,
	\end{align*} 
for $|x|\leq \rho$, where $f$ is defined in \eqref{f for all dim}.
    
If $\alpha$ and $\rho^{4-n}\mu^{-2}\epsilon^{n-24}$ are sufficiently small, then there exists a positive solution $u(x)$ to
	\begin{gather*} 
		P _{g_{0}}u=\frac{ (n+2 )(n-4 )}{4}  u^{  \frac{2}{n-4}}    L_{g_{0}}u^{  \frac{n-2}{n-4}}    \ \ \text{in}\  \mathbb{R}^{n},
		\\ \int_{\mathbb{R}^{n} }  u^{  \frac{2n}{n-4}} <2^{n}\int_{\mathbb{R}^{n} }  \left( \frac{1}{1+|x|^{2}}\right) ^{n},
		\\\sup_{|x|\leq \epsilon} u \geq C\epsilon^{ - \frac{n-4}{2}} .
	\end{gather*}	   
\end{prop}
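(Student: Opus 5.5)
The plan follows the Brendle--Marques / Wei--Zhao scheme: find a critical point of the reduced functional $\mathcal{F}_{\tilde g_0}$ near the strict local minimum of $F$ from Proposition \ref{summary}, then undo the rescaling. Work in $n\ge 26$; for $n=25$ replace $\epsilon^{10}$ by $\epsilon^9$ throughout.

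\textbf{Step 1: relate $\mathcal{F}_{\tilde g_0}$ to $F$.} By Proposition \ref{energy expansion}, for $(\xi',\lambda')\in\Lambda$,
\begin{align*}
\mathcal{F}_{\tilde g_0}(\xi',\lambda')=\mathcal{E}_0+\mu^2\epsilon^{20}F(\xi',\lambda')+\text{Err}.
\end{align*}
This uses $\tilde h=\mu\epsilon^{10}\bar H$ and $w=\mu\epsilon^{10}\bar w$. I also need that $\mathcal{E}_2$ is integrated over $B_{\rho/\epsilon}$ while $F$ uses all of $\mathbb{R}^n$. The tail outside $B_{\rho/\epsilon}$ is controlled by the decay $|\bar H|\lesssim |y|^{10}$ against $\tilde u_0^{2(n-2)/(n-4)}\sim|y|^{-2(n-2)}$, so it is $O(\mu^2\epsilon^{20}(\epsilon/\rho)^{n-24})$. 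Hence
\begin{align*}
|\text{Err}|\le C\big(\mu^3\epsilon^{20n/(n-1)}+\mu^{2(n-2)/(n-4)}\epsilon^{20(n-2)/(n-4)}+\alpha(\epsilon/\rho)^{n-4}+\mu^2\epsilon^{20}(\epsilon/\rho)^{n-24}\big).
\end{align*}
Dividing by $\mu^2\epsilon^{20}$, each term tends to zero:
\begin{itemize}
\item the first two because their $\epsilon$-exponents exceed $20$ and $\mu\le1$;
\item the last two precisely because $\rho^{4-n}\mu^{-2}\epsilon^{n-24}\to0$ (for $n=25$ with the corresponding exponent $18$).
\end{itemize}
Thus $\mu^{-2}\epsilon^{-20}(\mathcal{F}_{\tilde g_0}-\mathcal{E}_0)\to F$ uniformly on $\Lambda$. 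Only $C^0$ closeness is needed, because the minimum is strict.

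\textbf{Step 2: produce a critical point.} By Proposition \ref{summary}, $(0,1)$ is a strict local minimum of $F$ with $F(0,1)<0$. Choose a small closed ball $\overline{B}\subset\Lambda$ around $(0,1)$ with $\min_{\partial B}F>F(0,1)+\delta$. For parameters small enough, Step 1 gives $\min_{\partial B}\mathcal{F}>\mathcal{F}(0,1)$. Then $\mathcal{F}_{\tilde g_0}$, which is $C^1$ by Proposition \ref{derivative of phi-estimates}, attains an interior minimum $(\xi'_*,\lambda'_*)\in B$.

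Proposition \ref{equvalence between solution and critical point} then shows that $v=\tilde u_0+\phi$ solves \eqref{main Q:R=c  tilde g equ}. Setting $u(x)=\epsilon^{-(n-4)/2}v(x/\epsilon)$ solves the original equation.

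\textbf{Step 3: check the remaining properties.}
\begin{itemize}
\item \emph{Positivity and blow-up.} We have $|\phi|\le\|\phi\|_\star\big(\mu\epsilon^{10}(1+|y|)^{14-n}+\alpha(\epsilon/\rho)^{n-4}\big)\wedge \alpha(1+|y|)^{4-n}$. This is small relative to $\tilde u_0\sim(1+|y|)^{4-n}$ once $\alpha$ is small and $\epsilon$ is small, so $v>0$. Also $v(\xi'_*)\ge c$, and since $|\xi'_*|\le1$ this gives $\sup_{|x|\le\epsilon}u\ge C\epsilon^{-(n-4)/2}$.
\item \emph{Volume bound.} Test equation \eqref{main Q:R=c  tilde g equ} against $v$. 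Using $\mathcal{F}=\mathcal{E}_0+\mu^2\epsilon^{20}F+o(\mu^2\epsilon^{20})$ with $F(\xi'_*,\lambda'_*)<0$, the energy is strictly below $\mathcal{E}_0$. I expect this to translate into $\int v^{2n/(n-4)}<\int \tilde u_0^{2n/(n-4)}=2^n\int(1+|x|^2)^{-n}$. The route is to write $\int u^{2n/(n-4)}$ as $\mathcal{E}$ times an explicit constant plus curvature terms of the same $o(\mu^2\epsilon^{20})$ order, comparing with identity \eqref{clubsuit}.
\end{itemize}

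\textbf{Main obstacle.} The hard step is this last comparison between the volume and the energy. The equation is not of pure power type, so $\int vP v$ equals a multiple of $\int v^{p}L v^{p}$ rather than of $\int v^{2n/(n-4)}$. One must show that
\begin{align*}
\int v^{p}L_{\tilde g_0}v^{p}-\tfrac{n(n-2)}{4}\int v^{2n/(n-4)}
\end{align*}
does not destroy the sign at order $\mu^2\epsilon^{20}$. I would try a Pohozaev/Sobolev-type argument: use the sharp Sobolev inequality for $L$ on $\mathbb{R}^n$ together with the identity $\int vPv=\tfrac{(n+2)(n-4)}{4}\int v^pLv^p$. Failing that, I would expand $\int v^{2n/(n-4)}$ directly, exactly as in Step 1, since all correction terms are again quadratic in $\tilde h$.
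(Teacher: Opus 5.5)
Up to the volume inequality, your argument is the paper's. The paper chooses an open $\Omega\ni(0,1)$ with $F(0,1)<\inf_{\partial\Omega}F<0$, uses Proposition~\ref{energy expansion} and the smallness of $\rho^{4-n}\mu^{-2}\epsilon^{n-24}$ to get $\mathcal{F}_{\tilde g_0}(0,1)<\inf_{\partial\Omega}\mathcal{F}_{\tilde g_0}<\mathcal{E}_0$, and takes an interior minimizer. It then applies Proposition~\ref{equvalence between solution and critical point} and obtains positivity from $|\phi|\le C\alpha(1+|y-\xi'|)^{4-n}\le C\alpha\tilde u_0$. Your error bookkeeping (including the tails outside $B_{\rho/\epsilon}$) and your blow-up bound via $v(\xi'_*)\ge c$ are, if anything, more explicit.

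The genuine gap is the inequality $\int u^{\frac{2n}{n-4}}<2^n\int(1+|x|^2)^{-n}$, which you leave open, and your diagnosis of why it is hard is correct. Testing the rescaled equation with $v$ gives $\int vP_{\tilde g_0}v=\frac{(n+2)(n-4)}{4}\int v^pL_{\tilde g_0}v^p$ with $p=\frac{n-2}{n-4}$. Hence $\mathcal{F}_{\tilde g_0}=\frac{(n+2)(n-4)}{2(n-2)}\int v^pL_{\tilde g_0}v^p$. So $\mathcal{F}_{\tilde g_0}<\mathcal{E}_0$ is exactly the total-scalar-curvature bound $\int v^pL_{\tilde g_0}v^p<\int\tilde u_0^p(-\Delta\tilde u_0^p)$. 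This is equivalent to $\mathcal{I}_{4,2}<Y_{4,2}(\mathbb{S}^n)$, which is what Theorem~\ref{main thm}(iii) uses, but it says nothing directly about $\int v^{\frac{2n}{n-4}}$.

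Neither of your fallbacks closes this without new input.

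\emph{Sobolev route.} It needs $\int wL_{\tilde g_0}w\ge S\|w\|^2_{L^{2n/(n-2)}}$ for $w=v^p$, with $S$ the sharp Euclidean constant. This is not available: by Aubin's theorem, a non-conformally-flat class in dimension $n\ge 6$ has Yamabe constant strictly below the round one. Moreover, for your near-bubble $w$ the quotient $\int wL_{\tilde g_0}w/\|w\|^2$ differs from $S$ by terms of order $\mu^2\epsilon^{20}$. That is the same order as the energy gain, and $F$ does not control its sign.

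\emph{Direct expansion.} Expanding $\int(\tilde u_0+\phi)^{\frac{2n}{n-4}}$ (the volume form is Euclidean since $\det g_0=1$) does reduce to order $\mu^2\epsilon^{20}$, but with a different coefficient. Let $\mathcal{L}_0$ denote $\mathcal{L}_{\tilde g_0}$ with the Euclidean metric, so that $\tilde u_0^{\frac{n+4}{n-4}}=-\frac{4}{n(n^2-4)}\mathcal{L}_0\tilde u_0$. Then the linear term $\frac{2n}{n-4}\int\tilde u_0^{\frac{n+4}{n-4}}\phi$ produces $\int\tilde u_0\mathcal{R}=I_{0,1}-\frac{(n+2)(n-4)}{4}I_{0,2}$. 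The energy instead contains $I_{0,1}-\frac{(n+2)(n-4)^2}{4(n-2)}I_{0,2}$. Adding the terms quadratic in $\phi$ and those coming from $(\mathcal{L}_{\tilde g_0}-\mathcal{L}_0)\phi$, you get a new quadratic form in $(\bar H,\bar w)$. Its sign at $(\xi'_*,\lambda'_*)$ would need its own computation of the type in Appendix~\ref{Appendix A}; it does not follow from $F<0$.

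The paper's proof does not supply this step either. It stops at $\mathcal{F}_{\tilde g_0}(\bar\xi',\bar\lambda')<\mathcal{E}_0$ plus positivity and then declares the solution found. The volume bound appears to be inherited from the pure-power settings of Brendle--Marques and Wei--Zhao, where the energy of a solution is a fixed multiple of its volume. So your proposal establishes everything the paper's proof actually establishes, and the remaining inequality is unproved in both.
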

\begin{proof}
From Propositions \ref{fourth prop} and \ref{3.5 prop}, it follows that the function $F(\xi',\lambda')$ attains a strict local minimum at $(0,1)$, with $F(0,1)<0$. Thus, there exists an open set $\Omega\subset\Lambda$ containing $(0,1)$ such that
\begin{align*}
	F(0, 1)<\inf_{(\xi',\lambda')\in\partial \Omega} F(\xi',\lambda')<0.
\end{align*}
By Proposition \ref{energy expansion}, we obtain the following. For $n\geq 26$,
\begin{align*}  
		\mathcal{F} (\xi',\lambda')=\mathcal{E}_{0} +\mu^{2}\epsilon^{20}F (\xi',\lambda')+O(\mu ^{3}\epsilon^{\frac{20n}{n-1}} + \mu ^{\frac{2 (n-2)}{n-4}}\epsilon^{\frac{20(n-2)}{n-4}})+O\big(\alpha(\frac{\epsilon}{\rho}  )^{n-4} \big),
\end{align*}
and for $n=25$,
\begin{align*}  
	\mathcal{F} (\xi',\lambda')=\mathcal{E}_{0} +\mu^{2}\epsilon^{18}F (\xi',\lambda')+ O(\mu ^{3}\epsilon^{\frac{18n}{n-1}} + \mu ^{\frac{2 (n-2)}{n-4}}\epsilon^{\frac{18(n-2)}{n-4}})+O\big(\alpha(\frac{\epsilon} {\rho} )^{n-4} \big).
\end{align*}
Therefore, if $\rho^{4-n}\mu^{-2}\epsilon^{n-24}$ is sufficiently small, then
\begin{align*}
	\mathcal{F}_{ \tilde{g}_{0}}(0, 1)<\inf_{(\xi',\lambda')\in\partial \Omega} 	\mathcal{F}_{ \tilde{g}_{0}}(\xi',\lambda')<\mathcal{E}_{0}.
\end{align*}
Hence, there exists $(\bar{\xi}',\bar{\lambda}')\in\Omega$ satisfying
 \begin{align*}
 	\mathcal{F}_{ \tilde{g}_{0}}(\bar{\xi}',\bar{\lambda}')=\inf_{(\xi',\lambda')\in \Omega} 	\mathcal{F}_{ \tilde{g}_{0}}(\xi',\lambda')<\mathcal{E}_{0}.
 \end{align*}
Thus, by Proposition \ref{equvalence between solution and critical point}, we conclude that $\tilde{u}_{0}(y)+\phi(y)$ is a solution of \eqref{main Q:R=c  tilde g equ} with $\|\phi\|_{\star}\leq C$. From the definition of $\|\cdot\|_{\star}$, we have
\begin{align*}
|\phi(y)|\leq C\frac{\alpha}{(1+|y-\xi'|)^{n-4}}\leq C\alpha\tilde{u}_{0}(y),
\end{align*}
which implies that $\tilde{u}_{0}(y)+\phi(y)>0$ for sufficiently small $\alpha$. Reverting the scaling $y=\epsilon^{-1}x$, we obtain $u_{0}(x)+\epsilon^{-\frac{n-4}{2}}\phi(\epsilon^{-1}x)$ as the desired positive solution.
\end{proof}
\begin{thm}
Assume $n\geq 25$. Then there exists a Riemannian metric $g_{0}$ on $\mathbb{R}^{n}$, smooth for $n\geq 26$ and of class $C^9$ but not $C^{10}$ for $n=25$, with the following properties:
	\begin{enumerate}[label=\textup{(\roman*)}]
	\item $(g_{0})_{ij}(x)=\delta_{ij}$ for $|x|\geq \frac{1}{2}$,
	\item $g_{0}$ is not conformally flat,
	\item There exists a sequence of positive solutions $u_{k}$ $(k\in\mathbb{N})$ such that
	\begin{gather*} 
		P _{g_{0}}u_{k}=\frac{ (n+2 )(n-4 )}{4}  u_{k}^{  \frac{2}{n-4}}    L_{g_{0}}u_{k}^{  \frac{n-2}{n-4}}    \ \ \text{in}\  \mathbb{R}^{n},
		\\\int_{\mathbb{R}^{n} }  u_{k}^{  \frac{2n}{n-4}} <2^{n}\int_{\mathbb{R}^{n} }  \left( \frac{1}{1+|x|^{2}}\right) ^{n},
		\\\sup_{|x|\leq 1} u_{k} \rightarrow \infty\  \text{as}\ k  \rightarrow \infty .
	\end{gather*}	
\end{enumerate}
\end{thm}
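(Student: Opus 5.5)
The construction follows Brendle and Brendle--Marques: we glue infinitely many copies of the local model of Proposition \ref{prop h} at disjoint, shrinking balls accumulating at the origin. Choose a sequence of points $x_N\in B_{1/2}$ with $x_N\to 0$ and radii $\rho_N\to 0$ such that the balls $B_{\rho_N}(x_N)$ are pairwise disjoint and contained in $B_{1/2}$. Choose also sequences $\epsilon_N\le\rho_N$ and $\mu_N\le 1$ so that
\begin{align*}
\rho_N^{4-n}\mu_N^{-2}\epsilon_N^{n-24}\to 0 .
\end{align*}
For instance, take $\rho_N=2^{-N-3}$, $\mu_N=1/N$, and $\epsilon_N=2^{-aN}$ with $a$ large. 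Define
\begin{align*}
h(x)=\sum_N \eta\Big(\frac{|x-x_N|}{\rho_N}\Big)\,\mu_N\epsilon_N^{8}\, f\big(\epsilon_N^{-2}|x-x_N|^2\big)\,H(x-x_N)
\end{align*}
for $n\ge 26$, with $\epsilon_N^7$ in place of $\epsilon_N^8$ for $n=25$. Here $\eta$ is a cutoff equal to $1$ on $[0,1/2]$ and vanishing on $[1,\infty)$. Set $g_0=\exp(h)$. Property (i) is then immediate. Property (ii) holds because on $B_{\rho_N/2}(x_N)$ the Weyl tensor of $g_0$ at $x_N$ is a nonzero multiple of $\mu_N\epsilon_N^6 f(0)W$, computed from the quadratic part of $h$; we need $f(0)\ne0$, which holds for the chosen $\tau$. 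Alternatively, one can use the fact that $W\neq 0$ forces a nonvanishing Weyl tensor nearby.

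The first key step is regularity. On each annulus the $C^k$ norm of the $N$-th piece is bounded by $C\mu_N\epsilon_N^{8}\rho_N^{-k}\sup|\partial^j(f(\epsilon_N^{-2}r^2)r^2)|$. For $n\ge26$, $f$ is a quartic, so the piece is bounded by
\begin{align*}
C\mu_N\epsilon_N^{8}\big(\epsilon_N^{-8}\rho_N^{10}\big)\rho_N^{-k}\le C\mu_N\rho_N^{10-k}\quad\text{for }k\le 10,
\end{align*}
and by $C\mu_N\epsilon_N^{8-k+2}\cdot\ldots$ for higher derivatives. For a fixed $k$ one therefore needs $\mu_N\rho_N^{10-k}$-type quantities to be summable and tending to $0$. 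Since every derivative of the polynomial model decays polynomially in $\rho_N$ while $\mu_N$ can be chosen to decay faster than any power of $\rho_N$ (e.g.\ $\mu_N=e^{-N^2}$, with $\epsilon_N$ adjusted to keep the smallness condition), one obtains $C^\infty$ at the origin and $C^\infty$ elsewhere. This also ensures the hypothesis $\|h\|_{C^4}\le\alpha$ for $N$ large after discarding finitely many terms.

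For $n=25$ the top term $(s+1)^{7/2}$ in $f$ produces
\begin{align*}
\mu_N\epsilon_N^{7}\,\epsilon_N^{-7}(\epsilon_N^2+r^2)^{7/2}r^2
\end{align*}
inside $B_{\rho_N}(x_N)$, a function homogeneous of degree $9$ up to the smoothing scale $\epsilon_N$. Its ninth derivatives are bounded by $C\mu_N$, so they converge uniformly to $0$ as $N\to\infty$, giving $C^9$ at $0$. Its tenth derivatives, however, are of size $\mu_N\epsilon_N^{-1}$ near $x_N$, which is unbounded because $\mu_N\epsilon_N^{-1}\to\infty$ is forced by the smallness condition ($\mu_N^2\gg\epsilon_N^{n-24}\rho_N^{4-n}$ with $n-24=1$). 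Hence $g_0\notin C^{10}$. I expect this dichotomy, together with the verification that the $\alpha$-smallness of the $C^4$ norm survives, to be the delicate bookkeeping step.

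With $g_0$ fixed, for each $N$ the metric agrees near $B_{\rho_N}(x_N)$ with the translated model of Proposition \ref{prop h}, and it satisfies the global bound $\|h\|_{C^4}\le\alpha$. The proof of Proposition \ref{prop h} uses only the exact form of $h$ on $B_\rho$ together with this global bound; the contributions outside $B_\rho$ enter only through the $O(\alpha(\epsilon/\rho)^{n-4})$ errors. Applying it after translating $x_N$ to the origin yields a solution $u_N$ with $\sup_{B_{\epsilon_N}(x_N)}u_N\ge C\epsilon_N^{-(n-4)/2}\to\infty$ and the stated energy bound, which proves (iii).
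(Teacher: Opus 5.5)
Your proposal is correct and is essentially the paper's argument: glue translated copies of the local model on disjoint shrinking balls accumulating at the origin, then apply Proposition~\ref{prop h} at each $x_N$. The paper takes $x_N=(1/N,0,\ldots,0)$, $\rho=\frac{1}{4N^2}$, $\mu=2^{-N}$, $\epsilon=2^{-8N}$, so polynomially small $\rho_N$ against exponentially small $\mu_N$ gives $C^\infty$ for $n\ge 26$, and $\mu_N\epsilon_N^{-1}=2^{7N}\to\infty$ rules out $C^{10}$ for $n=25$. With your exponentially small $\rho_N$ you do need the super-exponential $\mu_N$ rather than $\mu_N=1/N$ when $n\ge 26$, you should apply the proposition with radius $\rho_N/2$ (where $\eta\equiv 1$), and the Weyl prefactor is $\mu_N\epsilon_N^{8}$, resp.\ $\mu_N\epsilon_N^{7}$, not $\epsilon_N^{6}$; otherwise your non-conformal-flatness and $C^9\setminus C^{10}$ checks correctly fill in details the paper only asserts.
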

\begin{proof}
Let $\eta: \mathbb{R}\to\mathbb{R}$ be a smooth cut-off function satisfying $\eta(r)=1$ for $r\leq 1$ and $\eta(r)=0$ for $r\geq 2$. We then define a trace-free symmetric $2$-tensor on $\mathbb{R}^{n}$ by
\begin{align*}
h_{ij}(x)=\sum_{N=N_{0}}^{\infty}\eta(4N^{2}|x-x_{N}|)2^{-65N}f_1(2^{16N}|x-x_{N}|)H_{ij}(x-x_{N}), \ \text{for} \ n \geq 26,
\end{align*}
and 
 \begin{align*}
	h_{ij}(x)=\sum_{N=N_{0}}^{\infty}\eta(4N^{2}|x-x_{N}|^2)2^{-57N}f_{2}(2^{16N}|x-x_{N}|^2)H_{ij}(x-x_{N}), \ \text{for} \ n =25,
\end{align*}
where $x_{N}=(\frac{1}{N},0,\ldots,0)\in\mathbb{R}^{n}$. For $n\geq 26$, the tensor $h$ is smooth. In dimension $n=25$, it belongs to $C^9\setminus C^{10}$. Consequently, the metric $g_0=\exp(h)$ is smooth for $n\geq 26$ and of class $C^9$ but not $C^{10}$ for $n=25$.

Let $\alpha$ be the constant introduced in Proposition \ref{prop h}. If $N_{0}$ is sufficiently large, then $|h|+|\partial h|+|\partial^{2}h|+|\partial^{3}h|+|\partial^{4}h|\leq \alpha$ for all $x\in\mathbb{R}^{n}$ and $h(x)=0$ for $|x|\geq \frac{1}{2}$. Furthermore, for $N\geq N_{0}$ and $|x-x_{N}|\leq \frac{1}{4N^{2}}$, we have
 \begin{align*}
	h_{ij}(x)= 2^{-65N}f_1(2^{16N}|x-x_{N}|^2)H_{ij}(x-x_{N}), \ \text{for} \ n \geq 26,
\end{align*}	 
and 
 \begin{align*}
	h_{ij}(x)= 2^{-57N}f_2(2^{16N}|x-x_{N}|^2)H_{ij}(x-x_{N}), \ \text{for} \ n =25.
\end{align*}	 
Applying Proposition \ref{prop h} with $\mu=2^{-N}$, $\epsilon=2^{-8N}$, and $\rho=\frac{1}{4N^{2}}$ for each $x_{N}$, the assertion follows immediately.
\end{proof}

\appendix
\section{}\label{Appendix A} 
\subsection{Explicit formula of derivatives of \texorpdfstring{$F$}{F}}\label{Sect. Explicit formula}
By applying \cite[Proposition 17]{BrendleMarques2009} and \cite[Proposition 9.2]{WeiZhao2013}, we derive the following expression for $F(0,\lambda')$.
\begin{prop} \label{F(0,lambda)} 
We have
	\begin{align*}  
	&2^{4-n }	F(0,\lambda')    
	\\	=& \frac{n-4}{2}|\mathbb{S}^{n-1}| \sum_{ i,j,k,l}(W_{ikjl}+W_{iljk})^{2}
	\\&\cdot \biggl\{\lambda'^{n-4}\Big[-\frac{a_{n}(n-4)}{n(n+2 )}\int _{0}^{\infty} [ r^{2} f'(r^{2}) ^{2}+2f(r^{2}) f'(r^{2}) ] \frac{r^{n+5}}{(\lambda'^{2}+r^{2})^{n-2}} dr
	\\&-\frac{a_{n}(n-4)}{2n}\int _{0}^{\infty}  f(r^{2})^{2}   \frac{r^{n+3}}{(\lambda'^{2}+r^{2})^{n-2}} dr 
	\\&-\frac{b_{n}(n-4)}{n(n+2)}\int _{0}^{\infty}  [r^{2}f'(r^{2})+f(r^{2})]^{2} \frac{r^{n+3}}{(\lambda'^{2}+r^{2})^{n-2}}dr  
	\\&+\frac{1}{2n(n-1 )(n+2)}\int _{0}^{\infty}         \big[ 3(n+8)f'(r^{2})^{2}+2(n+8)f(r^{2}) f''(r^{2})    	\\&\quad\quad\quad\quad\quad\quad\quad\quad\quad\quad+2(n+18)  r^{2}f'(r^{2})f''(r^{2})+4r^{4}  f''(r^{2})^{2}
\\&\quad\quad\quad\quad\quad\quad\quad\quad\quad\quad+4r^{2}f(r^{2})  f'''(r^{2})+4r^{4}f'(r^{2})  f'''(r^{2}) \big]\frac{r^{n+3}}{(\lambda'^{2}+r^{2})^{n-4}} dr 
	\\&+\frac{1}{2n(n-1 ) }\int _{0}^{\infty}         \big[ 4r^{2}f'(r^{2})^{2}+ (n+8)f(r^{2}) f'(r^{2}) +2r^{2}     f(r^{2}) f''(r^{2})\big]\frac{r^{n+1}}{(\lambda'^{2}+r^{2})^{n-4}} dr 
	\\& + \frac{1}{4(n-1)} \int _{0}^{\infty}      f(r^{2})^{2}\frac{r^{n-1}}{(\lambda'^{2}+r^{2})^{n-4}} dr
	\\& -\frac{1}{ n(n-2 )^{2}(n+2)}\int _{0}^{\infty}    \big[(n+4) f'(r^{2})+2r^{2}  f''(r^{2})   \big]^{2}\frac{r^{n+3}}{(\lambda'^{2}+r^{2})^{n-4}} dr 
	\Big]
	\\&+ \frac{ (n-4) }{8 n(n-1 )}\lambda'^{n-2}
	\int _{0}^{\infty}\frac{r^{n+1}}{(\lambda'^{2}+r^{2})^{n-2}}
	\big[ (n+2)f(r^{2}) ^{2}+4r^{2}f(r^{2})f'(r^{2})+2r^{4}f'(r^{2})^{2}   \big]dr \biggr\}.	 
\end{align*}
\end{prop}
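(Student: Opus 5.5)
The plan is to use the decomposition already recorded in the text,
\[
F(0,\lambda')=2^{n-4}F_Q(0,\lambda')-\frac{(n+2)(n-4)^2}{4(n-2)}\,2^{n-2}F_R(0,\lambda').
\]
This reduces the proposition to importing the two known closed forms: \cite[Proposition 9.2]{WeiZhao2013} for $F_Q(0,\lambda')$ and \cite[Proposition 17]{BrendleMarques2009} for $F_R(0,\lambda')$. The justification is that at $\xi'=0$ we have $\mathcal{R}_1\equiv 0$, so $\bar w\equiv 0$ by Theorem~\ref{eigenvalue thm}. Hence the last (linear-in-$\bar w$) line of $F$ drops out. What remains is the quadratic part coming from $\mathcal{E}_1$, which coincides term by term with the $Q$-curvature reduced energy for the bubble $\tilde u_0$. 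The other piece is the $\mathcal{E}_2$ part, which is the scalar-curvature reduced energy evaluated on $\tilde u_0^{\frac{n-2}{n-4}}=2^{\frac{n-2}{2}}\big(\frac{\lambda'}{\lambda'^2+|y|^2}\big)^{\frac{n-2}{2}}$. The normalizations are where the prefactors come from. The bubble here is $\alpha_n=2^{\frac{n-4}{2}}$ times the unnormalized one of \cite{WeiZhao2013}, and quadratic expressions pick up $2^{n-4}$. Similarly, the scalar bubble carries $2^{\frac{n-2}{2}}$, giving $2^{n-2}$.

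The steps are as follows. First, in each integrand substitute $\bar H_{ij}(y)=f(|y|^2)H_{ij}(y)$ with $H_{ij}=W_{ipjq}y_py_q$ and $\tilde u_0$ radial. Next, reduce angular integrals using the standard identities: $\int_{\mathbb S^{n-1}}H_{ij}H_{ij}$ and the related contractions are multiples of $\sum(W_{ikjl}+W_{iljk})^2\,r^4|\mathbb S^{n-1}|/(n(n+2))$, together with $x_iH_{ij}=0$, $\partial_iH_{ij}=0$, and trace-freeness. The remaining radial integrals then have the form $\int_0^\infty(\text{polynomial in }f,f',f'',f''', r^2)\,r^{m}(\lambda'^2+r^2)^{-k}\,dr$.

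The Wei–Zhao formula supplies the bracketed terms multiplied by $\lambda'^{n-4}$. These are the $a_n$, $b_n$ terms and the $\frac{1}{n-1}$, $\frac{1}{(n-2)^2}$ terms arising from $Q$, with denominators $(\lambda'^2+r^2)^{n-2}$ or $(\lambda'^2+r^2)^{n-4}$. The Brendle–Marques formula supplies the final $\lambda'^{n-2}$ term with $(n+2)f^2+4r^2ff'+2r^4f'^2$. In that term the combined constant $\frac{(n+2)(n-4)^2}{4(n-2)}\cdot 4\cdot(\ldots)$ must simplify to $\frac{n-4}{8n(n-1)}$ after factoring out $\frac{n-4}{2}|\mathbb S^{n-1}|\sum(W+W)^2$ and $2^{n-4}$.

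The main obstacle is the bookkeeping of constants and signs. Concretely, this means checking the claimed identification of $\mathcal{E}_1$ with the Wei–Zhao energy for the general $f$ used here, rather than only their specific quartic, since their derivation holds for any smooth radial $f$. It also means verifying that the $\mathcal{E}_2$ coefficient combined with the Brendle–Marques constant gives precisely the stated factor. If the references' formulas are stated only for polynomial $f$, I would re-derive the needed integrations by parts directly. These are valid for the $n=25$ function \eqref{3.5th-poly} as well, because it grows like $r^7$ and $n$ is large enough for all integrals to converge.
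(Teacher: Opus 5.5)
Your proposal is correct and follows the paper's own route. At $\xi'=0$ one has $\mathcal{R}_1\equiv 0$ and hence $\bar w\equiv 0$, so $F(0,\lambda')=2^{n-4}F_Q(0,\lambda')-\frac{(n+2)(n-4)^2}{4(n-2)}2^{n-2}F_R(0,\lambda')$, and the stated formula is then imported from \cite[Proposition 9.2]{WeiZhao2013} and \cite[Proposition 17]{BrendleMarques2009}. Your constant check for the last term also works out: $F_R(0,\lambda')=-\frac{n-2}{16n(n-1)(n+2)}|\mathbb{S}^{n-1}|\sum(W_{ikjl}+W_{iljk})^2\lambda'^{n-2}\int_0^\infty(\cdots)\,dr$, which yields exactly the factor $\frac{n-4}{8n(n-1)}$.
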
 

We now show that $(0,1)$ is a strict local minimum of $F$. To this end, we compute the Hessian matrix of $F$ at $(0,\lambda')$.
\begin{prop}\label{Hessian F(0,lambda)} 
The second derivatives of $F$ at $(0,\lambda')$ are given by
 \begin{align*}   
	&2^{4-n }\frac{\partial^{2}}{\partial\xi '_{p}  \partial\xi '_{q} }	F(0,\lambda')
	\\=&  \frac{(n-4)^{2}}{n(n+2)}|\mathbb{S}^{n-1}| 
 \cdot\bigg\{  \sum_{ i,j,k  }(W_{ikjp}+W_{ipjk})(W_{ikjq}+W_{iqjk})\tilde{J}_{1} (\lambda')
	  +  \sum_{ i,j,k,l  }(W_{ikjl}+W_{iljk}) ^{2} \delta_{pq}\tilde{J}_{2} (\lambda')\bigg\},
\end{align*}   
where:
\begin{align*} 	 
\tilde{J}_{1} (\lambda' )=&  \lambda'^{n-4}\biggl\{- (n-2) \int _{0}^{\infty}  f(r^{2}) ^{2} \big[  \frac{nr^{n+5}}{(\lambda'^{2}+r^{2})^{n }}-\frac{(n+2)r^{n+3}}{(\lambda'^{2}+r^{2})^{n-1}}  \big] dr   
	\\&- \frac{8a_{n}(n-2)}{n+4}\int _{0}^{\infty} \big[r^{2}  f'(r^{2})^{2}+2f(r^{2}) f'(r^{2})  \big]  \big[  \frac{(n-1)r^{n+7}}{(\lambda'^{2}+r^{2})^{n }}-\frac{2r^{n+5}}{(\lambda'^{2}+r^{2})^{n-1}}  \big]    dr
	\\&-2a_{n}(n-2)  \int _{0}^{\infty} f (r^{2})^{2}  \big[  \frac{(n-1)r^{n+5}}{(\lambda'^{2}+r^{2})^{n }}-\frac{2r^{n+3}}{(\lambda'^{2}+r^{2})^{n-1}}  \big]    dr
	\\&-\frac{8b_{n}(n-1)(n-2) }{n+4} \int _{0}^{\infty} \big[r^{2}f' (r^{2}) +f(r^{2})\big]^{2}    \frac{ r^{n+5}}{(\lambda'^{2}+r^{2})^{n }}    dr
	\\&+\frac{16b_{n} (n-2) }{n+4} \int _{0}^{\infty} \big[r^{2}f' (r^{2}) ^{2}+f(r^{2})f'(r^{2})\big]    \frac{ r^{n+5}}{(\lambda'^{2}+r^{2})^{n -1}}    dr
	\\&+4b_{n} (n-2)  \int _{0}^{\infty} \big[r^{2}f(r^{2})f' (r^{2})  +f(r^{2}) ^{2} \big]    \frac{ r^{n+3}}{(\lambda'^{2}+r^{2})^{n -1}}    dr
	\\&-\frac{4b_{n}}{n+4}  \int _{0}^{\infty}  f' (r^{2}) ^{2}   \frac{ r^{n+5}}{(\lambda'^{2}+r^{2})^{n -2}}    dr
	\\&-\frac{ b_{n}(n+2)}{2}  \int _{0}^{\infty}  f (r^{2}) ^{2}   \frac{ r^{n+1}}{(\lambda'^{2}+r^{2})^{n -2}}    dr
	\\&-2b_{n}  \int _{0}^{\infty}  f (r^{2})  f' (r^{2})     \frac{ r^{n+3}}{(\lambda'^{2}+r^{2})^{n -2}}    dr
	\\&+b_{n}  \int _{0}^{\infty}\big[  (n+4) f (r^{2})  f' (r^{2})+2r^{2} f  (r^{2})  f'' (r^{2}) \big]    \frac{ r^{n+3}}{(\lambda'^{2}+r^{2})^{n -2}}    dr
	\\&+\frac{4(n-3)}{(n-1)(n+4)}   \int _{0}^{\infty}\big[  3(n+8) f' (r^{2})^{2}+4r^{4}   f''(r^{2}) ^{2}+2(n+18)r^{2} f' (r^{2})  f'' (r^{2}) 
	\\&\quad \quad \quad \quad \quad \quad \quad \quad \quad +2(n+ 8) f  (r^{2})  f'' (r^{2})+4r^{2} f  (r^{2})  f''' (r^{2}) 
	\\&\quad \quad \quad \quad \quad \quad \quad \quad \quad +4r^{4}f'  (r^{2})  f''' (r^{2})  \big]    \frac{ r^{n+5}}{(\lambda'^{2}+r^{2})^{n -2}}    dr 
	\\&+\frac{2(n-3)}{ n-1 } \int _{0}^{\infty}\big[ 4r^{2} f' (r^{2})^{2}+ (n+8)f( r^{2}  ) f' (r^{2}) +2 r^{2} f  (r^{2})  f'' (r^{2})  \big]     \cdot\frac{ r^{n+3}}{(\lambda'^{2}+r^{2})^{n -2}}    dr 
	\\&-\frac{8(n-3)}{  (n-2  )^{2}(n+4)}  \int _{0}^{\infty}\big[ (n+4) f' (r^{2}) +2 r^{2} f'' (r^{2})   \big]^{2}    \cdot\frac{ r^{n+5}}{(\lambda'^{2}+r^{2})^{n -2}}    dr \bigg\}
	\\&+  \frac{2  (n+2)(n-2)}{ (n+4)} \lambda'^{n-2}  
	\cdot \int _{0}^{\infty}  \big[ 2 f (r^{2}) f'  (r^{2})  +   r^{2}   f ' (r^{2}) ^{2}\big] \cdot   \frac{  r^{n+5}}{(\lambda'^{2}+r^{2})^{n  }}   dr ,
\end{align*}  
and
\begin{align*}  
\tilde{J}_{2} (\lambda' )=&\lambda'^{n-4}\bigg\{-  \frac{a_{n}}{n+4} \int _{0}^{\infty} [r^{2}f'(r^{2}) ^{2} +2f (r^{2}) f'(r^{2}) ] 
	\\&\quad\quad\quad\quad\quad\quad\cdot\big[  \frac{2 (n-1 )(n-2 )r^{n+7}}{(\lambda'^{2}+r^{2})^{n }}-\frac{(n-2)(n+8)r^{n+5}}{(\lambda'^{2}+r^{2})^{n-1}} +\frac{(n+4)r^{n+3}}{(\lambda'^{2}+r^{2})^{n-2}}  \big] dr   
	\\&-  \frac{a_{n}}{2} \int _{0}^{\infty}  f (r^{2}) ^{2}  \big[  \frac{2 (n-1 )(n-2 )r^{n+5}}{(\lambda'^{2}+r^{2})^{n }}-\frac{(n-2)(n+6)r^{n+3}}{(\lambda'^{2}+r^{2})^{n-1}} +\frac{(n+2)r^{n+1}}{(\lambda'^{2}+r^{2})^{n-2}}  \big] dr  
	\\&-  \frac{b_{n}(n-2)}{n+4} \int _{0}^{\infty}  [r^{2}f'(r^{2})  + f (r^{2})   ] ^{2}  \big[  \frac{2 (n-1 ) r^{n+5}}{(\lambda'^{2}+r^{2})^{n }}-\frac{ (n+4)r^{n+3}}{(\lambda'^{2}+r^{2})^{n-1}}  \big] dr  
	\\&+  \frac{4b_{n}(n-2)}{n+4} \int _{0}^{\infty}  [r^{2}f'(r^{2})^{2}  + f (r^{2})  f' (r^{2})   ]     \frac{ r^{n+5}}{(\lambda'^{2}+r^{2})^{n-1 }} dr  
	\\ &-  \frac{ b_{n} }{n+4} \int _{0}^{\infty}   f'(r^{2})^{2}    \frac{ r^{n+5}}{(\lambda'^{2}+r^{2})^{n-2 }} dr  
	\\&+\frac{1}{2(n-1)(n+4)}   \int _{0}^{\infty}\big[  3(n+8) f' (r^{2})^{2}+4r^{4}   f''(r^{2}) ^{2}+2(n+18)r^{2} f' (r^{2})  f'' (r^{2}) 
	\\&\quad \quad \quad \quad \quad \quad \quad \quad \quad +2(n+ 8) f  (r^{2})  f'' (r^{2})+4r^{2} f  (r^{2})  f''' (r^{2}) 
	\\&\quad \quad \quad \quad \quad \quad \quad \quad \quad +4r^{4}f'  (r^{2})  f''' (r^{2})  \big]  \cdot \big[ \frac{ 2(n-3)r^{n+5}}{(\lambda'^{2}+r^{2})^{n -2}}  -\frac{ (n+4)r^{n+3}}{(\lambda'^{2}+r^{2})^{n -3}}   \big]   dr 
	\\&+\frac{1}{2(n-1) }    \int _{0}^{\infty}\big[ 4r^{2}f'  (r^{2}) ^{2}+  (n+ 8) f  (r^{2})  f'  (r^{2})+2r^{2} f  (r^{2})  f''  (r^{2})] 
	\\&\quad \quad \quad \quad \quad \quad \quad \quad \quad  \cdot \big[ \frac{ 2(n-3)r^{n+3}}{(\lambda'^{2}+r^{2})^{n -2}}  -\frac{ (n+2)r^{n+1}}{(\lambda'^{2}+r^{2})^{n -3}}   \big]   dr 
	\\&+\frac{n+2}{4(n-1) }    \int _{0}^{\infty}  f  (r^{2}) ^{2}   \cdot \big[ \frac{ 2(n-3)r^{n+1}}{(\lambda'^{2}+r^{2})^{n -2}}  -\frac{ nr^{n-1}}{(\lambda'^{2}+r^{2})^{n -3}}   \big]   dr 
	\\&-\frac{1}{ (n-2) ^{2}(n+4)}    \int _{0}^{\infty} \big[  (n+4)  f'  (r^{2})  +  2r^{2}   f''  (r^{2}) \big] ^{2}\cdot \big[ \frac{ 2(n-3)r^{n+5}}{(\lambda'^{2}+r^{2})^{n -2}}  -\frac{  (n+4 )r^{n+3}}{(\lambda'^{2}+r^{2})^{n -3}}   \big]   dr\bigg\}
	\\&+\lambda'^{n-2}\bigg\{\frac{  (n+2)(n-2)}{ 2(n+4)} 
	\cdot \int _{0}^{\infty}   \big[ 2 f (r^{2}) f'  (r^{2})  +   r^{2}   f ' (r^{2}) ^{2}\big] \cdot   \frac{  r^{n+5}}{(\lambda'^{2}+r^{2})^{n  }}   dr 
	\\&\quad \quad \quad \quad - \frac{ (n+2) (n-2)}{4 (n-1)} 
	\cdot \int _{0}^{\infty}     f ' (r^{2}) ^{2}  \cdot   \frac{  r^{n+5}}{(\lambda'^{2}+r^{2})^{n-1  }}   dr  \bigg\} .
\end{align*}  	 
\end{prop}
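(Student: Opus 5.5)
The plan is to differentiate the definition of $F$ twice in $\xi'$ at $\xi'=0$, term by term, and evaluate everything in radial coordinates. By the decomposition $F=2^{n-4}F_Q-\frac{(n+2)(n-4)^2}{4(n-2)}2^{n-2}F_R$, which holds because $\tilde u_0^{\frac{n-2}{n-4}}$ is again a standard bubble with exponent $\frac{n-2}{2}$, the computation splits into two groups of terms.

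\emph{The fourth-order part.} This is the part coming from $\mathcal{E}_1$ and the $\bar w$-term. I take the Hessian formula at $(0,\lambda')$ from \cite[Proposition 9.3]{WeiZhao2013}, which is the analogue for $Q$-curvature, and rescale it by the factor $2^{n-4}$. That formula was derived for a general radial profile $f$, so only the bookkeeping constants change. This produces the $\lambda'^{n-4}$ bracket in $\tilde J_1$ and $\tilde J_2$.

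\emph{The second-order part.} This is the part coming from $\mathcal{E}_2$. I use \cite[Proposition 21]{BrendleMarques2009} with exponent $\frac{n-2}{2}$ and multiply by the prefactor $-\frac{(n+2)(n-4)^2}{4(n-2)}2^{n-2}$. This produces the $\lambda'^{n-2}$ terms.

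For the $\bar w$-term specifically, I argue as follows. $\bar w$ solves $\mathcal{L}\bar w=-\bar{\mathcal R}_1+\sum c_i\chi Z_i$, and $\bar{\mathcal R}_1$ vanishes at $\xi'=0$, so $\bar w|_{\xi'=0}=0$. Hence the second $\xi'$-derivative of $\int \bar{\mathcal R}_1\bar w$ at $0$ equals
\[
2\int \partial_{\xi'_p}\bar{\mathcal R}_1\,\partial_{\xi'_q}\bar w .
\]
Here $\partial_{\xi'_q}\bar w$ solves the linearized problem with right-hand side $-\partial_{\xi'_q}\bar{\mathcal R}_1$, which is invertible modulo the kernel by Theorem \ref{eigenvalue thm}. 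I would handle this term by following the Wei--Zhao computation, but with the modified linear operator $\mathcal L$. I expect the new $Q/R$ nonlinearity to change the source term only through the piece
\[
\frac{(n+2)(n-4)}{4}\tilde u_0^{\frac{2}{n-4}}\bar H_{ij}\partial_{ij}\tilde u_0^{\frac{n-2}{n-4}},
\]
which is proportional to $\bar H_{ij}x_ix_j$ times radial factors. By the transversality $x_iH_{ij}=0$, this piece vanishes at $\xi'=0$, and its $\xi'$-derivative has an explicit radial form.

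To reduce the integrals, I use the angular identities
\[
\int_{\mathbb S^{n-1}}x_ax_bx_cx_d
\]
together with the symmetries of $W$, namely trace-freeness and the contractions $x_iH_{ij}=0$ and $\partial_iH_{ij}=0$. These collapse every tensor contraction to the two invariants
\[
\sum_{i,j,k}(W_{ikjp}+W_{ipjk})(W_{ikjq}+W_{iqjk}) \quad\text{and}\quad \delta_{pq}\sum_{i,j,k,l}(W_{ikjl}+W_{iljk})^2 .
\]
Differentiating the bubble in $\xi'$ generates the factors $\frac{(n-4)y_p}{\lambda'^2+|y|^2}$. After the angular integration, these produce the denominators $(\lambda'^2+r^2)^{n}$, $(\lambda'^2+r^2)^{n-1}$ and $(\lambda'^2+r^2)^{n-2}$, and the overall constant $\frac{(n-4)^2}{n(n+2)}|\mathbb S^{n-1}|$.

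The main obstacle I expect is the $\bar w$ contribution. Its treatment depends on explicitly inverting the new linear operator on the mode generated by $\partial_{\xi'}\bar{\mathcal R}_1$, and I cannot simply quote the $Q$-curvature result for this. I would first try to show that the added $Q/R$ terms annihilate this mode, so that the correction reduces to the one already computed by Wei--Zhao. Failing that, I would expand in spherical harmonics of degree $1$ and $3$ and solve the resulting radial ODEs.
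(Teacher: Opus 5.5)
\textbf{Gap: the $\bar w$-term.} Your handling of the $\mathcal{E}_1$- and $\mathcal{E}_2$-parts is the paper's route: the Wei--Zhao Hessian rescaled by $2^{n-4}$, plus the Brendle--Marques Hessian multiplied by $-\frac{(n+2)(n-4)^2}{4(n-2)}2^{n-2}$. The problem is the $\bar w$-term. You leave it as an open computation (invert $\mathcal L$, or expand in degree $1$ and $3$ harmonics and solve radial ODEs), and you even suggest it may be nonzero. As written, the proposal therefore does not establish the formula. Note also that the splitting $F=2^{n-4}F_Q-\dots F_R$ is not literally true for the $\bar w$-term, since $\bar w$ solves a different equation with a different source. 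The splitting becomes usable for the Hessian only once this term is shown to drop out.

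\textbf{Missing idea.} The source multiplying $\bar w$ vanishes to \emph{second} order in $\xi'$ at $\xi'=0$, not just to first order. For the new $Q/R$ piece this follows because $\bar H_{ij}$ is trace-free and $y_i\bar H_{ij}(y)=0$, so
\[
\bar H_{ij}(y)(y-\xi')_i(y-\xi')_j=\bar H_{ij}(y)\,\xi'_i\xi'_j .
\]
Hence $\tilde u_0^{\frac{2}{n-4}}\bar H_{ij}\partial_{ij}\tilde u_0^{\frac{n-2}{n-4}}$ is a radial factor in $y-\xi'$ times $\sum_{i,j}\bar H_{ij}(y)\xi'_i\xi'_j$. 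Its first $\xi'$-derivative at $0$ is zero, contrary to your remark that it ``has an explicit radial form.''

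The fourth-order pieces behave the same way, which is the content of the Wei--Zhao Lemma 9.15 argument the paper invokes. One uses $\partial_i\bar H_{ij}=0$, $\Delta\bar H_{ij}=2\big((n+4)f'+2|y|^2f''\big)H_{ij}$, and $\partial_j\Delta\bar H_{ij}=0$. One also uses $\partial_s\bar H_{ij}(y)\,y_iy_j=0$, obtained by differentiating $\bar H_{ij}y_iy_j\equiv 0$. These reduce $\bar H_{ij}\partial_{ij}\Delta\tilde u_0$, $\partial_s\bar H_{ij}\partial_{ijs}\tilde u_0$ and $\Delta\bar H_{ij}\partial_{ij}\tilde u_0$ to $O(|\xi'|^2)$, and kill $\partial_j\Delta\bar H_{ij}\,\partial_i\tilde u_0$ identically.

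\textbf{Conclusion.} Since $\bar w|_{\xi'=0}=0$, the integrand of the $\bar w$-term is $O(|\xi'|^3)$, so its Hessian at $(0,\lambda')$ vanishes. Your symmetrized cross term $\int\partial_{\xi'_p}\bar{\mathcal R}_1\,\partial_{\xi'_q}\bar w$ is therefore zero. No inversion of the modified operator is needed, and there is no Wei--Zhao ``correction'' to reduce to: the corresponding $w$-terms in the $Q$- and scalar-curvature reductions drop out for the same algebraic reason. With this observation the Hessian is simply the rescaled $\mathcal{E}_1$-contribution plus the rescaled $\mathcal{E}_2$-contribution, which is exactly how the paper concludes.
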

\begin{proof}
To begin with, from the proof of \cite[Proposition 21]{Brendle2008}, by the algebraic properties of $\bar H_{ij}$,  we know that
\begin{align*}  
   \tilde{u}_{0} ^{  \frac{2}{n-4}}  \bar{H}_{ij} \partial_{ij} \tilde{u}_{0} ^{  \frac{n-2}{n-4}} 
&=  \frac{2}{ \lambda'^2+|y-\xi'|^{2}} n(n-2)\lambda'^{\frac{n-2}{2}}(\lambda'^2+|y-\xi'|^{2})^{-\frac{n+2}{2}}\sum_{ i,j}\bar{H}_{ij}  \xi'_{i}\xi'_{j}
\\&=     2n(n-2)\lambda'^{\frac{n-2}{2}}(\lambda'^2+|y-\xi'|^{2})^{-\frac{n+4}{2}}\sum_{ i,j}\bar{H}_{ij}  \xi'_{i}\xi'_{j},
\end{align*}  
Hence, using the fact that $\bar{w}|_{\xi'=0}=0$, we obtain
\begin{align*}  
	& \frac{\partial^{2}}{\partial\xi'  _{p}  \partial\xi'  _{q} }  \big(   \tilde{u}_{0} ^{  \frac{2}{n-4}}  \bar{H}_{ij} \partial_{ij} \tilde{u}_{0} ^{  \frac{n-2}{n-4}}\bar{w}  \big)|_{\xi'=0}
	=4n(n-2)\lambda'^{\frac{n-2}{2}}(\lambda'^2+|y|^{2})^{-\frac{n+4}{2}} \bar{H}_{pq} \bar{w}  | _{\xi'=0}=0.
\end{align*}  
On the other hand, by the proof of \cite[Lemma 9.15]{WeiZhao2011}, and noting again that $\bar{w}|_{\xi'=0}=0$, we have
 \begin{align*} 
	\frac{\partial^{2}}{\partial\xi'  _{p}  \partial\xi'  _{q} } \big[& \big(  2\bar{H} _{ij} \partial_{ijss}  \tilde{u}_{0} +2\partial_{s}\bar{H} _{ij} \partial_{ijs}  \tilde{u}_{0}+\partial_{ss }\bar{H} _{ij} \partial_{ij}  \tilde{u}_{0}
	\\&-\frac{b_{n}}{2}\partial_{ss }\bar{H} _{ij} \partial_{ij}  \tilde{u}_{0}-\frac{b_{n}}{2}\partial_{jss }\bar{H} _{ij} \partial_{i }  \tilde{u}_{0}  \big)\bar{w}   \big]| _{\xi'=0}=0.
\end{align*}   
Finally, combining \cite[Proposition 20]{BrendleMarques2009} and \cite[Proposition 9.3]{WeiZhao2013} yields the desired expression for the second derivative $\frac{\partial^{2}}{\partial\xi'_{p}\partial\xi'_{q}}F(0,\lambda')$.
\end{proof}

In the next two subsections, we verify that for a suitable $f$, the corresponding $F$ satisfies
\begin{itemize}
\item $F(0,1)<0$,
 \item $\frac{\partial}{\partial \lambda'} F(0,1)=0$,
    \item $\frac{\partial^2}{\partial\lambda'^2}F(0,1)>0$,
    \item $\frac{\partial^2}{\partial \xi_p'\partial \xi_q'}F(0,1)$ is positive definite.
\end{itemize}

 \subsection{Quartic Polynomials and the Case \texorpdfstring{$n\geq 26$}{n>=26}} \label{Sect. n26} 
In this section, we take a step toward our ultimate goal for $n\geq 25$ by showing that, when one uses a quartic polynomial, $F(\xi',\lambda')$ possesses a strict local minimum at $(0,1)$ in all dimensions $n\geq 26$ (Proposition \ref{fourth prop}). This is in contrast with the $Q$--curvature case, where the corresponding dimension bound is $n\geq 25$.

We take
\begin{align*}
	f(s)=\tau-8126s+1662s^{2}-98s^{3}+ s^{4},
\end{align*} 
where $\tau$ is a constant to be determined later.

We point out that for a linear $f(s)$, $F(\xi',\lambda')$ admits a strict local minimum at $(0,1)$ in dimensions $n\geq 52$; the detailed computation is omitted due to its length, and the resulting dimension bound coincides with those in the scalar curvature and $Q$--curvature cases.

Using the computational software \textit{Mathematica}, we carry out the following  calculations.
 \begin{prop}
 	Suppose $n>24$. Then
 	\begin{align*}  
 		2^{4-n}F(0,\lambda')    
 		=& \frac{n-4}{16(n^{2}-4)}|\mathbb{S}^{n-1}| \sum_{i,j,k,l}(W_{ikjl}+W_{iljk})^{2} \frac{\Gamma(\frac{n}{2}-9)\Gamma(\frac{n}{2}+7)}{\Gamma(n+1)}I(\lambda'),
 	\end{align*}
	where:
  \begin{align*} 
		I(\lambda')=&-\frac{\lambda'^{20} (n-4) (n+14) \big(n^6+44 n^5-664 n^4-16704 n^3+35664 n^2-75200 n+76800\big)}{2 (n-24) (n-22) (n-20)}
		\\&+\frac{98 \lambda'^{18} (n-4) \big(n^6+34 n^5-528 n^4-10352 n^3+21936 n^2-45536 n+47360\big)}{(n-22) (n-20)}
		\\&-\frac{\lambda'^{16} (n-4) }{2 (n-20) (n+12)}\big(12928 n^6+329480 n^5-5286448 n^4-77975904 n^3+168522816 n^2
		\\&\quad \quad \quad-353488640 n+371619840\big)
		\\&-\frac{\lambda'^{14} (n-4) }{(n+10) (n+12)}\big(-171002 n^6-3045532 n^5+51789584 n^4+540829920 n^3
		\\&\quad \quad \quad-1189179360 n^2+2480090176 n-2673669888\big)
		\\&-\frac{\lambda'^{12} (n-18) (n-4)}{2 (n+8) (n+10) (n+12)} \big(2 n^6 \tau+4354940 n^6+8 n^5 \tau+49073888 n^5
		-336 n^4 \tau
		\\&\quad \quad \quad-921554688 n^4-1984 n^3 \tau-6219348160 n^3+2592 n^2 \tau+14402028864 n^2
		\\&\quad \quad \quad+7808 n \tau-30185605888 n-5120 \tau+33611444224\big)
		\\&-\frac{\lambda'^{10} (n-18) (n-16) (n-4)}{(n+8) (n+10) (n+12)} \big(-98 n^5 \tau-13505412 n^5+392 n^4 \tau+8624 n^3 \tau
		\\&\quad \quad \quad+1836736032 n^3-9408 n^2 \tau-4321731840 n^2-23520 n \tau+7995203904 n
		\\&\quad \quad \quad+12544 \tau-7779117312\big)
		\\&-\frac{\lambda'^8 (n-18) (n-16) (n-14) (n-4) }{2 (n+6) (n+8) (n+10) (n+12)}\big(3324 n^5 \tau+66031876 n^5-13296 n^4 \tau
		\\&\quad \quad \quad-132063752 n^4-159552 n^3 \tau-4490167568 n^3+265920 n^2 \tau
		\\&\quad \quad \quad+13206375200 n^2+159552 n \tau-27469260416 n+25356240384\big)
		\\&+\frac{8126 \lambda'^6 (n-18) (n-16) (n-14) (n-12) (n-4) (n-2) \big(n^3-6 n^2+4 n-8\big) \tau}{(n+6) (n+8) (n+10) (n+12)}
		\\&-\frac{\lambda'^4 (n-18) (n-16) (n-14) (n-12) (n-10) (n-4) (n-2)^2 \big(n^2-2 n+8\big) \tau^2}{2 (n+4) (n+6) (n+8) (n+10) (n+12)}.
	\end{align*}
 \end{prop}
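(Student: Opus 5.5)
The plan is to start from the general formula for $2^{4-n}F(0,\lambda')$ in Proposition \ref{F(0,lambda)} and substitute the quartic $f(s)=\tau-8126s+1662s^{2}-98s^{3}+s^{4}$. Each integrand then becomes a polynomial in $r^{2}$ times one of the kernels
\[
\frac{r^{n-1+2m}}{(\lambda'^{2}+r^{2})^{n-4}} \quad\text{or}\quad \frac{r^{n-1+2m}}{(\lambda'^{2}+r^{2})^{n-2}},
\]
and $F(0,\lambda')$ reduces to a finite linear combination of such moments. The first step is to expand $f^2$, $ff'$, $(f')^2$, $ff''$, $r^2f'f''$, $r^4(f'')^2$, $r^2ff'''$, $r^4f'f'''$ and $[(n+4)f'+2r^2f'']^2$ as polynomials in $s=r^{2}$. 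Their degrees are at most $8$, and their coefficients are polynomials in $\tau$ of degree at most $2$; only $f^2$ carries $\tau^2$, and the terms with $ff^{(k)}$ are linear in $\tau$.

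Next I would evaluate every moment in closed form with the Beta integral
\[
\int_0^\infty \frac{r^{2a-1}}{(\lambda'^2+r^2)^{b}}\,dr=\frac{\lambda'^{2a-2b}}{2}\,\frac{\Gamma(a)\Gamma(b-a)}{\Gamma(b)},
\]
which is valid for $0<a<b$. The worst case is the $s^8$ term against the $(\lambda'^2+r^2)^{n-4}$ kernel with prefactor $r^{n-1}$. Tracking the exponents, the largest $a$ is $\frac{n}{2}+8$ (from $f^2 r^{n-1}$ with $s^8$), and convergence requires $b-a=\frac n2-12>0$. This is exactly the hypothesis $n>24$.

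After multiplying by the prefactor $\lambda'^{n-4}$ or $\lambda'^{n-2}$, the $s^k$ coefficient contributes a monomial $\lambda'^{2k+4}$. This is why $I(\lambda')$ contains only the even powers $\lambda'^4,\dots,\lambda'^{20}$. I would then factor out the common normalization $\frac{\Gamma(\frac n2-9)\Gamma(\frac n2+7)}{\Gamma(n+1)}$. Each remaining Gamma ratio is rational in $n$, obtained by repeated use of $\Gamma(x+1)=x\Gamma(x)$. For instance, $\Gamma(\frac n2-12)/\Gamma(\frac n2-9)$ yields the factor $\frac{8}{(n-24)(n-22)(n-20)}$ in the $\lambda'^{20}$ coefficient. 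In the same way, the low powers acquire numerator factors $(n-18)(n-16)\cdots$ and denominators $(n+4)\cdots(n+12)$.

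The constants $a_n=\frac{(n-2)^2+4}{2(n-1)(n-2)}$ and $b_n=-\frac{4}{n-2}$ are inserted throughout, and the bracket is multiplied by the prefactor $\frac{n-4}{2}|\mathbb S^{n-1}|\sum(W_{ikjl}+W_{iljk})^2$ from Proposition \ref{F(0,lambda)}. Collecting coefficients of each $\lambda'^{2k}$ then gives the stated expression with overall factor $\frac{n-4}{16(n^2-4)}$.

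The main obstacle is purely the bookkeeping: there are many terms, with products of quartic coefficients ($8126$, $1662$, $98$) against rational functions of $n$. This is where symbolic software (Mathematica) is needed. I would check the result independently in two ways. First, set $\tau$ and the other coefficients to those of a single monomial $f=s^k$ and compare with the known scalar-curvature and $Q$-curvature cases \cite{BrendleMarques2009, WeiZhao2013}. Second, verify numerically at a few values, say $n=26,30$ and $\lambda'=1$, by direct quadrature of the integrals in Proposition \ref{F(0,lambda)}.
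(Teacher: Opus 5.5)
Your plan is correct and is the same as the paper's argument. The paper substitutes the quartic $f$ into Proposition~\ref{F(0,lambda)} and evaluates the resulting moments $\int_0^\infty r^{n-1+2m}(\lambda'^2+r^2)^{-b}\,dr$ in closed form with Mathematica, and your observation that $n>24$ is exactly the convergence condition for the top-degree moments is right. As a check, doing your Beta-integral bookkeeping by hand for the $\lambda'^{20}$ coefficient, where only the leading $s^4$ part of $f$ enters, reproduces the stated $-\frac{(n-4)(n+14)(n^6+44n^5-\cdots)}{2(n-24)(n-22)(n-20)}$ once $\frac{n-4}{16(n^2-4)}\frac{\Gamma(\frac n2-9)\Gamma(\frac n2+7)}{\Gamma(n+1)}$ is factored out.
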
  
 
 \begin{prop}
 	Suppose $n>24$. Then
 	\begin{align*}  
 		&2^{4-n} \frac{\partial^{2}}{\partial\xi '_{p}  \partial\xi '_{q} }	F(0,\lambda')     
 		\\=& \frac{2(n-4)^{2}}{(n+2)(n+4)}|\mathbb{S}^{n-1}|\frac{\Gamma(\frac{n}{2}-7)\Gamma(\frac{n}{2}+5)}{\Gamma(n+1)}
 		\\& \cdot\bigg\{ \sum_{ i,j,k  }(W_{ikjp}+W_{ipjk})(W_{ikjq}+W_{iqjk}) J_{1}(\lambda')
 		+  \sum_{ i,j,k,l}(W_{ikjl}+W_{iljk})^{2}  \delta_{pq}J_{2}(\lambda')\bigg\}  
 	\end{align*}
	where:
\begin{align*}
 &J_{1}(\lambda')
\\= &-\frac{3 \lambda'^{18} (n+10) (n+12) (n+14) (n+16) }{(n-22) (n-20) (n-18) (n-16) (n-2)}\big(n^5-2 n^4-376 n^3+744 n^2-1536 n+1568\big)
\\&+\frac{49 \lambda'^{16} (n+10) (n+12) (n+14) }{2 (n-20) (n-18) (n-16) (n-2)}\big(19 n^5-38 n^4-5548 n^3+11128 n^2-23744 n+24320\big)
\\&-\frac{\lambda'^{14} (n+10) (n+12) }{8 (n-18) (n-16) (n-2)}\big(190596 n^5-381192 n^4-42693504 n^3+85883712 n^2
\\&\quad\quad\quad\quad-186732096 n+192334464\big)
\\&-\frac{\lambda'^{12} (n+10) }{4 (n-16) (n-2)}\big(-1864770 n^5+3729540 n^4+305822280 n^3-609835728 n^2
\\&\quad\quad\quad\quad+1342578432 n-1404059904\big)
\\&-\frac{\lambda'^{10} }{4 (n-2)}\big(4 n^5 \tau+16623412 n^5-8 n^4 \tau-33246824 n^4-448 n^3 \tau-1861822144 n^3+448 n^2 \tau
\\&+3734787520 n^2+1216 n \tau-8515936832 n-640 \tau+9036742784\big)
\\&-\frac{\lambda'^8 (n-14) }{4 (n-2) (n+8)}\big(-294 n^5 \tau-67527060 n^5+588 n^4 \tau+135054120 n^4+19992 n^3 \tau+4591840080 n^3
\\&\quad\quad\quad\quad-30576 n^2 \tau-9615853344 n^2-18816 n \tau+24201698304 n-25930391040\big)
\\&-\frac{\lambda'^6 (n-14) (n-12) }{8 (n-2) (n+6) (n+8)}\big(6648 n^5 \tau+198095628 n^5-13296 n^4 \tau-396191256 n^4-212736 n^3 \tau
\\&\quad\quad\quad\quad -6339060096 n^3+531840 n^2 \tau+15847650240 n^2-531840 n \tau-53882010816 n
\\&\quad\quad\quad\quad  +638208 \tau+57051540864\big)
\\& +\frac{4063 \lambda'^4 (n-14) (n-12) (n-10) (n-2) }{2 (n+6) (n+8)}\big(n^2-2 n+8\big) \tau,
\end{align*}
and
 \begin{align*}
	&J_{2}(\lambda')
	\\= &
-\frac{\lambda'^{18} (n+10) (n+12) (n+14) }{4 (n-22) (n-20) (n-18) (n-16) (n-2)}\big(n^6+42 n^5-512 n^4-13744 n^3+30960 n^2
\\&\quad\quad\quad\quad-62432 n+62464\big)
\\&+\frac{49 \lambda'^{16} (n+10) (n+12) }{8 (n-20) (n-18) (n-16) (n-2)}\big(7 n^6+228 n^5-2816 n^4-58288 n^3+132624 n^2-260480 n
\\&\quad\quad\quad\quad+263680\big)
\\&-\frac{\lambda'^{14} (n+10) }{16 (n-18) (n-16) (n-2)}\big(38784 n^6+952980 n^5-11878992 n^4-182528832 n^3+433691136 n^2
\\&\quad\quad\quad\quad-847209024 n+860398848\big)
\\&-\frac{\lambda'^{12}}{16 (n-16) (n-2)} \big(-855010 n^6-14918160 n^5+189634160 n^4+2024166400 n^3-5095058208 n^2
\\&\quad\quad\quad\quad+9684699392 n-9954548224\big)
\\&-\frac{\lambda'^{10} }{8 (n-2) (n+8)}\big(2 n^6 \tau +4354940 n^6+12 n^5 \tau +49870236 n^5-256 n^4 \tau -652394704 n^4-1568 n^3 \tau \\&\quad\quad\quad\quad-4363896800 n^3+2400 n^2 \tau +12390504576 n^2+4544 n \tau -22942227520 n
\\&\quad\quad\quad\quad-2560 \tau +23896246784\big)
\\&-\frac{\lambda'^8 (n-14) }{16 (n-2) (n+8)}\big(-294 n^5 \tau -40516236 n^5+588 n^4 \tau -27010824 n^4+19992 n^3 \tau +3835537008 n^3
\\&\quad\quad\quad\quad-30576 n^2 \tau -10696286304 n^2-18816 n \tau +17286927360 n-15558234624\big)
\\&-\frac{\lambda'^6 (n-14) (n-12) }{16 (n+6) (n+8)}\big(3324 n^4 \tau +66031876 n^4+66031876 n^3-106368 n^2 \tau -2509211288 n^2
\\&\quad\quad\quad\quad+53184 n \tau +4490167568 n-159552 \tau -6867315104\big)
\\&+\frac{4063 \lambda'^4 (n-14) (n-12) (n-10) (n-2)  }{8 (n+6) (n+8)}\big(4 n^3+5 n^2-50 n+8\big) \tau.
\end{align*} 
 \end{prop}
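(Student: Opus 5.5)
The plan is to obtain the formula by direct substitution into Proposition \ref{Hessian F(0,lambda)}, exactly as the preceding proposition for $F(0,\lambda')$ was obtained from Proposition \ref{F(0,lambda)}. With $f(s)=\tau-8126s+1662s^{2}-98s^{3}+s^{4}$, each integrand of $\tilde J_1$ and $\tilde J_2$ becomes a polynomial in $r^2$ times $r^{n+m}(\lambda'^2+r^2)^{-k}$ for various $m,k$. The identities $f'''$, $f''$, $f'$ are explicit polynomials of degree $1,2,3$. The products such as $f^2$, $ff'$, $r^4f''^2$, $r^4f'f'''$ are polynomials of degree at most $8$ in $s=r^2$, with coefficients that are at most quadratic in $\tau$. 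Since $\tau$ enters only through $f$ itself, and $f$ appears linearly in the $f''',f'',f'$ combinations, the $\tau^2$ terms come only from the $f^2$ integrals.

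The single integral formula needed is the Beta identity
\begin{align*}
\int_0^\infty \frac{r^{2j+n+m}}{(\lambda'^2+r^2)^{k}}\,dr=\frac{1}{2}\lambda'^{\,2j+n+m+1-2k}\,\frac{\Gamma(\frac{2j+n+m+1}{2})\Gamma(k-\frac{2j+n+m+1}{2})}{\Gamma(k)},
\end{align*}
valid when $2k>2j+n+m+1$. I will first check that every term converges. The worst term is $s^8$ in $f^2$ against $r^{n-1}(\lambda'^2+r^2)^{-(n-3)}$, i.e. $r^{n+15}/(\lambda'^2+r^2)^{n-3}$, which requires $2(n-3)>n+16$, i.e. $n>22$; other terms give $n>24$ at worst. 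This matches the hypothesis $n>24$ and the factors $(n-22)$ appearing in the denominators. Because $n$ is generic, each Gamma quotient is then reduced to a fixed reference quotient $\Gamma(\frac n2-7)\Gamma(\frac n2+5)/\Gamma(n+1)$ using $\Gamma(x+1)=x\Gamma(x)$. This reduction produces the rational factors $(n-2l)^{\pm1}$ and $(n+2l)^{\pm1}$.

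Homogeneity fixes the powers of $\lambda'$. A monomial $s^j$ in the integrand yields $\lambda'^{n-4}\cdot\lambda'^{2j+n+m+1-2k}$, or $\lambda'^{n-2}\cdot(\dots)$ for the last groups. After collecting terms, the $\lambda'$-powers range over $\lambda'^4,\dots,\lambda'^{18}$, with the $\tau$-dependence landing only in the low powers $\lambda'^{4},\dots,\lambda'^{10}$, consistent with the stated $J_1,J_2$. I will sum the contributions power by power, separately for the coefficient of $\sum_{i,j,k}(W_{ikjp}+W_{ipjk})(W_{ikjq}+W_{iqjk})$ (giving $J_1$) and of $\delta_{pq}\sum(W_{ikjl}+W_{iljk})^2$ (giving $J_2$). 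I will then factor out $\frac{2(n-4)^2}{(n+2)(n+4)}|\mathbb S^{n-1}|$ against the prefactor $\frac{(n-4)^2}{n(n+2)}|\mathbb S^{n-1}|$ of Proposition \ref{Hessian F(0,lambda)}.

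The main obstacle is purely the bookkeeping. There are dozens of integrals, each expanding into up to nine monomials, and the results must be simplified into the rational functions of $n$ shown. I would do this symbolically, as the authors did, using Mathematica. As a consistency check, I would verify that the same routine reproduces the already stated $I(\lambda')$ from Proposition \ref{F(0,lambda)}. I would also verify that $\frac{d}{d\lambda'}$-type relations hold between the pieces, for instance that the $\tau^2$ terms cancel in $J_1,J_2$; this is expected because $ff'$ and $f^2$ combinations with $\tau^2$ should integrate to zero after the reduction. Any discrepancy there would signal a transcription error.
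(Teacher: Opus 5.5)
Your plan matches the paper's proof: the paper simply substitutes the quartic $f$ into the general Hessian formula of Proposition \ref{Hessian F(0,lambda)}, lets Mathematica evaluate the Beta-type integrals, and simplifies the resulting rational functions of $n$, with no further written argument. One harmless slip in your convergence remark: every integral in the Hessian formula already converges for $n>22$, which is why $(n-22)$ is the worst denominator; the threshold $n>24$ comes instead from terms such as $f(r^2)^2 r^{n-1}(\lambda'^2+r^2)^{4-n}$ in Proposition \ref{F(0,lambda)}. Your predicted cancellation of the $\tau^2$ terms, which all sit at order $\lambda'^2$, does in fact occur.
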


 \begin{lem}\label{fourth lem}
	Let $n\geq 26$. Then there exists $\tau_{n}\in\mathbb{R}$ such that $I(1)<0$, $I'(1)=0$, $I''(1)>0$, $J_{1}(1)>0$, and $J_{2}(1)>0$.
 \end{lem}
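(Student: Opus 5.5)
The plan is to treat the statement as an explicit algebraic problem in the two variables $(n,\tau)$, using the closed forms for $I(\lambda')$, $J_1(\lambda')$ and $J_2(\lambda')$ from the two preceding propositions. The key structural fact is that $\tau$ enters $I(\lambda')$ at most quadratically, and only through the coefficients of $\lambda'^{12},\dots,\lambda'^4$. It enters $J_1(\lambda')$ and $J_2(\lambda')$ only linearly.

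First, I would impose $I'(1)=0$. Differentiating term by term gives
\[
I'(1)=\sum_k k\,c_k(n,\tau),
\]
where $c_k$ is the coefficient of $\lambda'^k$. This is a quadratic equation $A(n)\tau^2+B(n)\tau+C(n)=0$. Its leading coefficient is
\[
A(n)=-\frac{2(n-18)(n-16)(n-14)(n-12)(n-10)(n-4)(n-2)^2(n^2-2n+8)}{(n+4)(n+6)(n+8)(n+10)(n+12)},
\]
coming from the $\lambda'^4$ term plus contributions from the $\lambda'^{12},\lambda'^{10},\lambda'^8$ terms. I would define $\tau_n$ as an explicit root of this quadratic, choosing the branch suggested by numerics at $n=26$. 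Because the coefficients of $f$ were tuned to the scale $\tau$ of order $10^4$, I expect this to be the root near a fixed value.

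To establish that the root is real, I would clear denominators by multiplying by $(n-24)(n-22)(n-20)(n+4)\cdots(n+12)$, which is positive for $n\ge 26$. I would then show that the discriminant $B^2-4AC$, a polynomial in $n$, is positive for $n\ge 26$. The method is to substitute $n=26+m$ and check that all coefficients in $m$ are nonnegative, with a positive constant term. This substitution trick is the standard way I would verify each sign condition, and all of this is carried out with Mathematica, as in the paper.

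With $\tau=\tau_n$ fixed, I would evaluate the remaining quantities $I(1)$, $I''(1)$, $J_1(1)$ and $J_2(1)$ as functions of $n$. Since $\tau_n$ involves a square root $\sqrt{D(n)}$, each of these has the form $P(n)+Q(n)\sqrt{D(n)}$ with rational $P,Q$. To prove, say, $P+Q\sqrt D>0$, I would split into cases according to the signs of $P$ and $Q$. When the signs are mixed, I would compare $P^2$ with $Q^2D$, reducing the claim to a polynomial inequality, and verify it by the same $n=26+m$ coefficient-sign method.

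There is an alternative that avoids the square roots, which I would try first if the radicals become unwieldy. One uses $I'(1)=0$ to eliminate $\tau^2$, or else brackets $\tau_n$ between two rational functions $\tau_-(n)<\tau_n<\tau_+(n)$. The second route works because the conditions on $I''(1)$, $J_1(1)$ and $J_2(1)$ are linear in $\tau$, or quadratic in the case of $I''$. Checking each inequality on the interval then reduces to checking it at its endpoints, together with the sign of the leading coefficient.

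For large $n$, each quantity has a leading asymptotic behavior in $n$, and the sign can be read off from it. I would use this leading-order analysis to guide both the choice of root and the overall structure of the argument.

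The main obstacle is the borderline range of $n$ just above $26$. Near $n=26$ the margins are small, which is consistent with the failure at $n=25$, and the factor $1/(n-24)$ makes the coefficients large. As a result, the polynomial inequalities may have negative coefficients in $m$ that cannot be absorbed by a simple argument. I would handle this by checking finitely many small $n$, say $26\le n\le N_0$, numerically with exact rational arithmetic, and using coefficient positivity only for $n\ge N_0$.
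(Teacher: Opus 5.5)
Your plan is correct and is essentially the paper's argument, specifically your ``bracketing'' variant. The paper checks that the discriminant of the quadratic $I'(1)=0$ is positive for $n\ge 26$, takes the larger root, and certifies $\tau_n>11170$ from $A_n>0$ and $11170^2A_n+11170B_n+C_n<0$. It then uses that $I''(1)-3I'(1)$ (your elimination of $\tau^2$), $J_1(1)$ and $J_2(1)$ are linear in $\tau$ with positive slope, so evaluating them at $\tau=11170$ suffices. Finally, it gets $I(1)<0$ for every $\tau$, because the quadratic in $\tau$ appearing in $I(1)$ has negative discriminant.

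Two small corrections. First, $\tau^2$ enters $I(\lambda')$ only through the $\lambda'^4$ term; the other $\tau$-dependent coefficients are linear in $\tau$, although your displayed $A(n)$ is right. Second, the branch must be the larger root: as $n\to\infty$ the two roots tend to $6561$ and $11653$, while $J_1(1)$ changes sign near $\tau=9107$, so the smaller root fails $J_1(1)>0$ for large $n$.
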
 
 \begin{proof}
Given that
\begin{align*}
	&I'(1)= \frac{2(n-4)}{(n-24)(n-22)(n-20)(n+4)(n+6)(n+8)(n+10)(n+12)}\cdot\big(A_{n}\tau^{2}+B_{n}\tau+C_{n}\big),
\end{align*}
where
\begin{align*}
	A_{n} &=     n^{12}-142 n^{11}+8844 n^{10}-317368 n^9+7249296 n^8-109954080 n^7+1122984512 n^6
	\\&-7705357952 n^5+35253785088 n^4-108010784768 n^3+224774037504 n^2-287748587520 n
	\\&+163499212800;
	\\ 	B_{n} &= -18214 n^{12}+2456908 n^{11}-142655544 n^{10}+4636621936 n^9-91495851552 n^8
		\\&+1101089216064 n^7-7443947480192 n^6+18658803730688 n^5+74788571867136 n^4
			\\&-558755740905472 n^3+991310540046336 n^2-862370243149824 n+792508712878080;
	\\ 	C_{n} &=76455333 n^{12}-9496798398 n^{11}+487687831980 n^{10}-13000920861624 n^9
		\\&+176667988757136 n^8-686070988965408 n^7-11894011635817920 n^6+151214182660593024 n^5
		\\&-379057448996411904 n^4-1794812456598964224 n^3
		+5984012982653706240 n^2
			\\&-12808242643596410880 n+11975419412663500800,
\end{align*} 
we see that $I'(1)=0$ is equivalent to
\begin{align*}
	A_{n}\tau^{2}+ B_{n}\tau +C_{n} =0.
\end{align*}
Furthermore, since the discriminant satisfies 
\begin{align*}
	\Delta_{n }=&B_{n} ^{2}-4A_{n} C_{n} 
	\\=& 16 (n-24) (n-22) (n-20) (n-18) (n-2)^2 (n+4)
	\\\big( &
1620529 n^{17}-369276932 n^{16}+38693244560 n^{15}-2464277820464 n^{14}+106229576869392 n^{13}
\\&-3266915908421760 n^{12}+73625792383951616 n^{11}-1230743913746199552 n^{10}
\\&+15302475939568590592 n^9-141249074519139441664 n^8+968729111720801476608 n^7
\\&-5022880287336529661952 n^6+20587626227193041596416 n^5
\\&-68653467325694663163904 n^4
\\&+170053384910948394663936 n^3-262372610147447927734272 n^2
\\&+287758117059735538630656 n-148042446923943452344320\big),
\end{align*}
the largest root of $\Delta_{n}=0$ is approximately $n\approx 25.2137$. It follows that $\Delta_{n}>0$ for all $n\geq 26$. Consequently, for each $n\geq 26$, there exists a root $\tau_{n}>11170$ satisfying $I'(1)=0$.

Indeed, the largest root  of equation
\begin{align*}
	&11170^2A_{n}+ 11170B_{n} +C_{n} 
	\\=&-3 \big(742049 n^{12}-76560054 n^{11}+772814300 n^{10}+269170030568 n^9-19715338205232 n^8
	\\&+701918019214176 n^7-15023545763238080 n^6
+200585338481638272 n^5
	\\&-1618302295008553472 n^4
+7170833628717733888 n^3-15033920374301368320 n^2
	\\&+19447664333668024320 n-13742452889144524800\big)=0
\end{align*}  
is approximately $n\approx 25.2144$, and hence $11170^2A_{n}+11170B_{n}+C_{n}<0$ for all $n\geq 26$.
Notice that the largest root of $A_{n}=0$ is $n=24$, and hence $A_{n}>0$ for $n\geq 25$. Thus, for each $n\geq 26$, there exists a root $\tau_{n}>11170$.
 
For $n\geq 26$,
\begin{align*}
 I'' (1 )|_{\tau_{n}} =&I'' (1 )-3I ' (1)|  _{\tau_{n}}
	\\=&\frac{32(n-4) }{(n-24) (n-22) (n-20) (n+6) (n+8) (n+10) (n+12)}
	\\\cdot&\Big\{   \big(1566 n^{11}-230001 n^{10}+14732610 n^9-538267956 n^8+12301596120 n^7
	\\&-181122590832 n^6+1701179115168 n^5-9695289109824 n^4+30053192924544 n^3
	\\&-41714529818112 n^2+30870200881152 n-24473204490240\big)\tau_{n}
	\\&-13515584 n^{11}+1859758177 n^{10}-108004006090 n^9+3377613429348 n^8
		\\&-58889664760440 n^7+485915001564336 n^6+487778304094688 n^5
 	\\&-42769131943904704 n^4+296499749724193408 n^3-690320202562788864 n^2
 		\\&+1116108537469526016 n-869682352497131520\}
	\\=:&\frac{32 (n-4) ( \mathcal{A}_{1}^{1}\tau_{n} +\mathcal{A}_{1}^{2} )}{(n-24) (n-22) (n-20) (n+6) (n+8) (n+10) (n+12)} 
	\\>&\frac{32(n-4) (11170\mathcal{A}_{1}^{1}  +\mathcal{A}_{1}^{2} )}{(n-24) (n-22) (n-20) (n+6) (n+8) (n+10) (n+12)} 
	\\=:&\mathcal{A}_{1}>0,
\end{align*} 
since the largest root of $\mathcal{A}_{1}=0$ is approximately $n\approx 25.2686$, while $\mathcal{A}_{1}^{1}>0$ because the largest root of $\mathcal{A}_{1}^{1}=0$ is $n=24$.
 	
For $n\geq 26$, we have
\begin{align*}
	& (n-22) (n-20) (n-18) (n-16) (n-2) (n+6) (n+8)J_{1}(1)|_{\tau_{n}} 
	\\=& 	  \big(1273 n^{11}-159550 n^{10}+8700324 n^9-270325864 n^8+5262961488 n^7-66521653920 n^6
	\\&+548035371968 n^5-2897126992256 n^4+9629696254464 n^3-20143268648960 n^2
	\\&+25768971460608 n-15536901980160\big) \tau _{n}
	\\& -11593211 n^{11}+1371873158 n^{10}-68641659972 n^9+1861924238912 n^8-28736231800272 n^7
	\\&+228410960082528 n^6-353883534347584 n^5-8163841713754112 n^4+57364717229618688 n^3
	\\&-140245792558693376 n^2+284729830927712256 n-235962924403507200
	\\=:&\mathcal{A}_{2}^{1}\tau _{n}+\mathcal{A}_{2}^{2}
	\\>&  11170\mathcal{A}_{2}^{1} +\mathcal{A}_{2}^{2} = :\mathcal{A}_{2}>0
\end{align*} 
as the largest root of $\mathcal{A}_{2}=0$ is approximately $n\approx 22.9875$, while $\mathcal{A}_{2}^{1}>0$ because the largest root of $\mathcal{A}_{2}^{1}=0$ is $n=22$.
 	
And
\begin{align*}
	&4 (n-22) (n-20) (n-18) (n-16) (n-2) (n+6) (n+8)J_{2}(1) |_{\tau_{n}} 
	\\= & \big(8126 n^{12}-933217 n^{11}+45703594 n^{10}-1238185620 n^9+20029332248 n^8-191946477456 n^7
 	\\&+954841916896 n^6-721773172928 n^5-16618668229504 n^4+80222193412608 n^3
 		\\&-148984173639680 n^2 +108806134161408 n-15536901980160\big) \tau_{n} 
	\\&-8352153 n^{11}+977561442 n^{10}-48139753884 n^9+1273465425288 n^8-18756163846128 n^7
 	\\&+130839431347872 n^6+110105639248320 n^5-8393232335119488 n^4+52285301346103296 n^3
 	\\&-124338339294219264 n^2+172130995742121984 n-124578057769205760
	\\=&\mathcal{A}_{3}^{1}\tau _{n}+\mathcal{A}_{3}^{2}
	\\>& 11170 \mathcal{A}_{3}^{1} +\mathcal{A}_{3}^{2} = :\mathcal{A}_{3}>0,
\end{align*} 
as the largest root of $\mathcal{A}_{3}=0$ is approximately $n\approx 22.0001$, while $\mathcal{A}_{3}^{1}>0$ because the largest root of $\mathcal{A}_{3}^{1}=0$ is $n=22$.

Since
\begin{align*}
	I(1)= -\frac{n-4}{2 (n-24) (n-22) (n-20) (n+4) (n+6) (n+8) (n+10) (n+12)}\cdot\big(\mathcal{B}_{1}\tau^{2}+\mathcal{B}_{2}\tau+\mathcal{B}_{3}\big),
\end{align*}
where
\begin{align*}
	\mathcal{B}_{1}= &	n^{12}-142 n^{11}+8844 n^{10}-317368 n^9+7249296 n^8-109954080 n^7+1122984512 n^6-7705357952 n^5
		\\&+35253785088 n^4-108010784768 n^3+224774037504 n^2-287748587520 n+163499212800;
	\\\mathcal{B}_{2}= &-13122 n^{12}+1752108 n^{11}-100568472 n^{10}+3226224432 n^9-62714800416 n^8+741477620544 n^7
		\\&-4899895257216 n^6+11703905392896 n^5+50631426812928 n^4-361163754442752 n^3
 	\\&+645118219173888 n^2-586093582614528 n+529583293071360;
	\\\mathcal{B}_{3}= &43046721 n^{12}-5227375422 n^{11}+261337884108 n^{10}-6737176804728 n^9+87122623070352 n^8
		\\&-279542808279840 n^7-6350539679256000 n^6+73120298093589888 n^5-166477479359344128 n^4
	\\&-861066600050608128 n^3+2876091630629855232 n^2-6102096697681182720 n
		\\&+5618627094380544000.
\end{align*}
Then, for $n\geq 26$, the discriminant
\begin{align*}
 &\mathcal{B}_{2}^{2}-4\mathcal{B}_{1}\mathcal{B}_{3}
	\\=&-384 (n-2)^2 \big(1620529 n^{21}-441121646 n^{20}+55843138980 n^{19}-4360164337744 n^{18}
	\\&+234750910395552 n^{17}-9228664185107808 n^{16}+273560805100823168 n^{15}-6223477339605259520 n^{14}
 	\\&+109503959307436125952 n^{13}-1489428781354813212160 n^{12}+15522662178160866147328 n^{11}
 	\\&-121571272407562455662592 n^{10}+689051447430617885958144 n^9-2591685612091941693120512 n^8
 	\\&+4467761841381215891456000 n^7+15361185445136161488896000 n^6
 	\\&-165314646106637036774490112 n^5+777437100544382359805362176 n^4
 	\\&-2213536329196429249584562176 n^3+3670518539840253520936697856 n^2
 	\\&-4194585523626473981780951040 n+2209704403314827483047526400\big)
	\\=&:\mathcal{A}_{4}<0
\end{align*}
since the (largest) root of $\mathcal{A}_{4}$ is $n=24$. Moreover, $\mathcal{B}_{1}>0$ for $n\ge 26$, since the largest root of $\mathcal{B}_{1}=0$ is also $n=24$. Consequently, $I(1)<0$ for all $n\ge 26$.
\end{proof}
 
\begin{prop}\label{fourth prop}
	Let $n\geq 26$ and let $\tau_{n}$ be as in Lemma \ref{fourth lem}. Then the function $F(\xi',\lambda')$ has a strict local minimum at $(0,1)$.
\end{prop}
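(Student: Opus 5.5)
The plan is to check that the Hessian of $F$ at $(0,1)$ is positive definite and that the gradient vanishes there. The strict local minimum then follows from the second-order sufficient condition. All the needed quantities are already computed in the two preceding propositions and in Lemma \ref{fourth lem}.

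\emph{Gradient.} By the symmetry proposition, $F(\xi',\lambda')=F(-\xi',\lambda')$. Hence $\partial_{\xi'}F(0,\lambda')=0$ for all $\lambda'$. The remaining component is $\partial_{\lambda'}F(0,1)$. Since $F(0,\lambda')$ equals a positive constant times $\sum(W_{ikjl}+W_{iljk})^2\,\Gamma(\tfrac n2-9)\Gamma(\tfrac n2+7)/\Gamma(n+1)\cdot I(\lambda')$, this component is a positive multiple of $I'(1)$. The constant is positive for $n\ge 26$: $n-4>0$, $\Gamma(\tfrac n2-9)>0$, and the Weyl sum is positive by assumption. With $\tau=\tau_n$ from Lemma \ref{fourth lem} we have $I'(1)=0$, so $\nabla F(0,1)=0$.

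\emph{Hessian.} The mixed block $\partial^2_{\xi'\lambda'}F(0,1)$ vanishes by the symmetry proposition, so the Hessian is block diagonal. The $\lambda'\lambda'$ entry is a positive multiple of $I''(1)$, which is $>0$ by Lemma \ref{fourth lem}. The $\xi'\xi'$ block is, up to the positive factor $\frac{2(n-4)^2}{(n+2)(n+4)}|\mathbb{S}^{n-1}|\Gamma(\tfrac n2-7)\Gamma(\tfrac n2+5)/\Gamma(n+1)$, the matrix
\[
M_{pq}=\sum_{i,j,k}(W_{ikjp}+W_{ipjk})(W_{ikjq}+W_{iqjk})\,J_1(1)+\sum_{i,j,k,l}(W_{ikjl}+W_{iljk})^2\,\delta_{pq}\,J_2(1).
\]
The first matrix is a Gram matrix: for a vector $v$, $v^TAv=\sum_{i,j,k}\big(\sum_p (W_{ikjp}+W_{ipjk})v_p\big)^2\ge 0$, so it is positive semidefinite. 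Lemma \ref{fourth lem} gives $J_1(1)>0$ and $J_2(1)>0$. Since $\sum(W_{ikjl}+W_{iljk})^2>0$, the second term is a positive multiple of the identity, and therefore $M$ is positive definite.

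\emph{Conclusion and main point to check.} A $C^2$ function with vanishing gradient and positive definite Hessian has a strict local minimum. Here $F$ is $C^2$ in $(\xi',\lambda')$ because $\tilde u_0$ is smooth in the parameters and $\bar w$ is differentiable by Proposition \ref{linear equ zeta C1}. The only delicate points are bookkeeping. First, one must confirm the sign of the Gamma prefactors for $n\ge 26$: $\tfrac n2-9>0$ requires $n>18$, which holds. Second, one must confirm that the $\lambda'$-derivatives of $F(0,\lambda')$ pass to $I$ without extra $\lambda'$-dependent factors, which is clear because the prefactor is independent of $\lambda'$. Finally, $F(0,1)<0$, though not needed for minimality, follows from $I(1)<0$ and is recorded for later use in Proposition \ref{prop h}.
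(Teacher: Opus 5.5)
Your proposal is correct and follows the same route as the paper's proof: the critical point comes from the $\xi'\mapsto-\xi'$ symmetry together with $I'(1)=0$, and the Hessian is positive definite because $I''(1)>0$, $J_1(1)>0$ and $J_2(1)>0$. You also spell out details the paper leaves implicit, namely the vanishing of the mixed $\xi'\lambda'$ block, the positivity of the prefactors, and the Gram-matrix argument for the $J_1$-term; the one overstatement is citing Proposition~\ref{linear equ zeta C1} for $C^2$ regularity, since that result controls only first derivatives, though the paper likewise takes the second-order differentiability behind its Hessian formulas for granted.
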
  
\begin{proof}
Since $I'(1)=0$, we have $\frac{\partial}{\partial\lambda'}F(0,1)=0$. By symmetry, $\frac{\partial}{\partial\xi'}F(0,1)=0$. Therefore, $(0,1)$ is a critical point of $F(\xi',\lambda')$.
 
Since $J_{1}(1)>0$ and $J_{2}(1)>0$, the matrix $\big\{\frac{\partial^{2}}{\partial\xi'_{p}\partial\xi'_{q}}F(0,1)\big\}$ is positive definite. On the other hand, since $I''(1)>0$, we have $\frac{\partial^{2}}{\partial\lambda'^2}F(0,1)>0$. Thus, $(0,1)$ is a strict local minimum of $F(\xi',\lambda')$.
\end{proof}

\subsection{A Function of Fractional Order \texorpdfstring{$3.5$}{3.5} and the Case \texorpdfstring{$n=25$}{n=25}} \label{Sect. n25} 
This subsection completes the final case $n=25$. For this purpose, we introduce the function
 \begin{align*} 
 f(s)=1028(s+1)^{1/2}-10 (s+1)^{3/2}-\frac{121}{6}(s+1)^{5/2}+(s+1)^{7/2} +\tau_{25}(s+1)^{-1/2}, 
 \end{align*}
 where
\begin{align}\label{tau25}
 		\tau  _{25}=-\frac{8 \left(\sqrt{1872413768932826265919670946}+686611801547814\right)}{1997391566979}\approx-2923.3455.
 	\end{align}     

We performed the following calculations using \textit{Mathematica}.
 \begin{prop}
 	\begin{align*}  
 		2^{-21}F(0,\lambda' )    
 		=& \frac{21}{2}|\mathbb{S}^{24}| \sum_{ i,j,k,l}(W_{ikjl}+W_{iljk})^{2}  I(\lambda' ) 
 	\end{align*}
	where:
\begin{align*}
&	I(\lambda')
=	-\frac{\pi \lambda'^4}{804061418471659929600 (\lambda'+1)^{25}} \cdot
\\ \bigg\{ & 12\tau^2 \bigl(206935783873 \lambda'^{23} + 5173394596825 \lambda'^{22} + 61899939898043 \lambda'^{21} \\
	& \quad + 471432421311475 \lambda'^{20} + 2563692929921422 \lambda'^{19} + 10583515167428190 \lambda'^{18} \\
	& \quad + 34419164335856274 \lambda'^{17} + 90307822390129890 \lambda'^{16} + 194173514325267750 \lambda'^{15} \\
	& \quad + 345577279044694710 \lambda'^{14} + 512097684246361650 \lambda'^{13} + 633563405425936770 \lambda'^{12} \\
	& \quad + 654533088444422400 \lambda'^{11} + 563587635674265600 \lambda'^{10} + 403012851502917600 \lambda'^9 \\
	& \quad + 238092552479587680 \lambda'^8 + 115391265522735825 \lambda'^7 + 45420824131737705 \lambda'^6 \\
	& \quad + 14306847118522875 \lambda'^5 + 3525198183836595 \lambda'^4 + 655558478552250 \lambda'^3 \\
	& \quad + 86640750795690 \lambda'^2 + 7261828323750 \lambda' + 290473132950\bigr) \\
	& + 4\tau \bigl(14429703386070 \lambda'^{31} + 360742584651750 \lambda'^{30} + 4231019430734985 \lambda'^{29} \\
	& \quad + 30741028160810625 \lambda'^{28} + 153027734590593660 \lambda'^{27} + 537986052269381100 \lambda'^{26} \\
	& \quad + 1276714383840759388 \lambda'^{25} + 1454736270124343500 \lambda'^{24} - 2996406485732400398 \lambda'^{23} \\
	& \quad - 21123667401110772750 \lambda'^{22} - 62029920896712335809 \lambda'^{21} - 112581744747648590185 \lambda'^{20} \\
	& \quad - 94537899261514939576 \lambda'^{19} + 173328305782064474760 \lambda'^{18} + 947421312418140288270 \lambda'^{17} \\
	& \quad + 2415063391720845134430 \lambda'^{16} + 4478935756575848014650 \lambda'^{15} + 6640759628766212202570 \lambda'^{14} \\
	& \quad + 8156442318784040247975 \lambda'^{13} + 8433272454107931236175 \lambda'^{12} + 7392741900850895991900 \lambda'^{11} \\
	& \quad + 5505711005652520306380 \lambda'^{10} + 3478126266077220324000 \lambda'^9 + 1855123815531500503920 \lambda'^8 \\
	& \quad + 828775443075295895550 \lambda'^7 + 306433823693011396830 \lambda'^6 + 92156382947703776625 \lambda'^5 \\
	& \quad + 21978278652649685625 \lambda'^4 + 4000294174735404000 \lambda'^3 + 522099308029440960 \lambda'^2 \\
	& \quad + 43520137144233750 \lambda' + 1740805485769350\bigr) \\
	& + (\lambda'+1)^2 \bigl(47782021512460620 \lambda'^{37} + 1098986494786594260 \lambda'^{36} + 11946077641925118860 \lambda'^{35} \\
	& \quad + 81338162681837144020 \lambda'^{34} + 387097713106056400975 \lambda'^{33} + 1357487577321380516505 \lambda'^{32} \\
	& \quad + 3587420676624046730190 \lambda'^{31} + 7115211795048081921490 \lambda'^{30} + 10107510757237381038775 \lambda'^{29} \\
	& \quad + 8564206868344862711585 \lambda'^{28} - 264179455788349923780 \lambda'^{27} - 10818199494631961351900 \lambda'^{26} \\
	& \quad - 5739454741760203200375 \lambda'^{25} + 32487442258414169143375 \lambda'^{24} + 92475304980584393226338 \lambda'^{23} \\
	& \quad + 109014885719352132162174 \lambda'^{22} - 16518444189432737120345 \lambda'^{21} - 323693244963573412886959 \lambda'^{20} \\
	& \quad - 663657333642117276586080 \lambda'^{19} - 644026522490534172769216 \lambda'^{18} + 231909054069673697512623 \lambda'^{17} \\
	& \quad + 2234953491734023467936025 \lambda'^{16} + 5091909290671118961294130 \lambda'^{15}
     \\
	& \quad + 7966423219949788447579950 \lambda'^{14}+ 9842740280632634320183245 \lambda'^{13} \\
	& \quad + 10080801724561822596930235 \lambda'^{12} + 8744296338773971687506460 \lambda'^{11} \\
	& \quad + 6485469169584938185104260 \lambda'^{10} + 4123058561675945121629595 \lambda'^9  \\
	& \quad + 2240469034777757217212205 \lambda'^8 + 1032771120987648238443870 \lambda'^7 + 398785243920441249431490 \lambda'^6 \\
	& \quad+ 126602814375858551126025 \lambda'^5  + 32166467181330845493375 \lambda'^4 + 6283665262269877127100 \lambda'^3 \\
	& \quad+ 885251477694649911300 \lambda'^2  + 79983629117653811550 \lambda' + 3477549092071904850\bigr) \bigg\}.
\end{align*} 
 \end{prop}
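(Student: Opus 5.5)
The formula is a direct specialization of Proposition \ref{F(0,lambda)} to $n=25$ and to the $3.5$-order function \eqref{3.5th-poly}. The plan is to substitute and evaluate each radial integral in closed form. The prefactor needs only arithmetic: for $n=25$ we have $2^{4-n}=2^{-21}$, $\frac{n-4}{2}=\frac{21}{2}$ and $|\mathbb{S}^{n-1}|=|\mathbb{S}^{24}|$, which matches the left side and the constant in front. So it remains to show that the curly bracket in Proposition \ref{F(0,lambda)} equals the displayed $I(\lambda')$.

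First, write $f(s)=\sum_{k}c_k(s+1)^{k-1/2}$ for $k\in\{0,1,2,3,4\}$, with $c_0=\tau$, $c_1=1028$, $c_2=-10$, $c_3=-\frac{121}{6}$, $c_4=1$. Then $f,f',f'',f'''$ evaluated at $s=r^2$ are finite combinations of half-integer powers $(1+r^2)^{m-1/2}$. Every integrand in Proposition \ref{F(0,lambda)} is a product of two such factors, times $r^{2k}$ (with $r^{2}=(1+r^2)-1$), times a power of $r$ of the form $r^{n+2j-1}$, divided by $(\lambda'^2+r^2)^{n-2}$ or $(\lambda'^2+r^2)^{n-4}$. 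A product of two half-integer powers is an integer power of $(1+r^2)$. Hence each integral is a finite sum of integrals
\[
\int_0^\infty \frac{r^{2a+1}(1+r^2)^{b}}{(\lambda'^2+r^2)^{c}}\,dr,\qquad a,b,c\in\mathbb{Z},\; c\in\{21,23\}.
\]
The substitution $t=r^2$ turns these into $\frac12\int_0^\infty t^{a}(1+t)^{b}(\lambda'^2+t)^{-c}\,dt$, which are elementary. For $b\ge0$, expanding $(1+t)^b$ gives Beta-function values times powers of $\lambda'$. For $b<0$ we use partial fractions in $t$, which produces rational functions of $\lambda'$ together with a term in $\log\lambda'$.

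The displayed answer has a factor $\pi$ and denominator $(\lambda'+1)^{25}$. This suggests the authors instead used $r=\lambda'\tan\theta$, or evaluated the integrals directly in $r$, where half-integer exponents produce $\pi$ and the combination $(\lambda'+1)$. In that case the logarithmic terms should cancel in the sum, or never appear. I would verify this with \textit{Mathematica}, as the authors do.

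Second, I would group the terms by $\tau^2$, $\tau^1$ and $\tau^0$. Since $F$ is quadratic in $f$, the bracket is a quadratic polynomial in $\tau$ with coefficients depending on $\lambda'$. I would check that each coefficient, after factoring out $-\frac{\pi\lambda'^4}{804061418471659929600(\lambda'+1)^{25}}$, is the stated polynomial. As a consistency check, the $\tau^2$ coefficient must have a definite sign, because it comes from the $(s+1)^{-1/2}$ mode alone.

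The main obstacle is convergence and bookkeeping, not the method. The leading term of $f$ grows like $s^{7/2}$, so at infinity $f(r^2)^2$ behaves like $r^{14}$. For example, $\int f^2 r^{n-1}(\lambda'^2+r^2)^{4-n}\,dr$ behaves like $\int r^{14+n-1-2n+8}\,dr=\int r^{-4}\,dr$ when $n=25$. This converges, but only barely. I would check the borderline integrals one by one, namely those weighted by $r^{n+3}(\lambda'^2+r^2)^{4-n}$ and by $r^{n+5}(\lambda'^2+r^2)^{2-n}$, to confirm that $n=25$ is admissible. This tight convergence is the same reason the construction stops at this regularity. The remaining work is a large exact rational computation, carried out symbolically, with a numerical spot check at, say, $\lambda'=1$ and $\lambda'=2$ comparing direct quadrature with the closed form.
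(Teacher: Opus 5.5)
Your overall route matches the paper's. Its proof consists of substituting $n=25$ and the function \eqref{3.5th-poly} into Proposition~\ref{F(0,lambda)} and letting \textit{Mathematica} evaluate the radial integrals. Your prefactor check ($2^{4-n}=2^{-21}$, $\frac{n-4}{2}=\frac{21}{2}$, $|\mathbb{S}^{24}|$) is also correct.

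However, the reduction you describe is wrong at the one point that determines the shape of the answer. For $n=25$ the weights $r^{n+5},r^{n+3},r^{n+1},r^{n-1}$ are $r^{30},r^{28},r^{26},r^{24}$. These are \emph{even} powers, and the extra factors $r^2,r^4$ keep them even; $r^{n+2j-1}$ is odd only when $n$ is even. So the integrals are $\int_0^\infty r^{2a}(1+r^2)^{b}(\lambda'^2+r^2)^{-c}\,dr$, and $t=r^2$ produces $t^{a-1/2}$, not an integer power of $t$.

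As a result, no $\log\lambda'$ appears at any stage. Each integrand is even in $r$, so each integral equals $\pi i$ times the sum of the residues at $r=i$ and $r=i\lambda'$. This gives $\pi$ times a rational function in which the $(\lambda'-1)$ factors from partial fractions cancel; for instance $\int_0^\infty\frac{dr}{(1+r^2)(\lambda'^2+r^2)}=\frac{\pi}{2\lambda'(\lambda'+1)}$. In general, $(1+r^2)^{-k}(\lambda'^2+r^2)^{-c}$ produces a denominator of at most $(\lambda'+1)^{k+c-1}$. The worst cases here are the $\tau^2(s+1)^{-3}$ part of $f'^2$ against $(\lambda'^2+r^2)^{-23}$, and the $\tau^2(s+1)^{-5}$ part of $f''^2$ against $(\lambda'^2+r^2)^{-21}$. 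These are where the $(\lambda'+1)^{25}$ in the statement comes from.

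Your guess that the authors substituted $r=\lambda'\tan\theta$ and that logarithms cancel cannot be the explanation, because a change of variables does not change the value of an integral. The $\pi$ and the $(\lambda'+1)$ are forced by $n$ being odd. Implementing your reduction as written (integer powers of $t$, partial fractions producing logarithms) would mean integrating the wrong functions.

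Three side remarks should also be corrected.

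\begin{itemize}
\item Convergence is not borderline. With $f(s)\sim s^{7/2}$, every integrand in Proposition~\ref{F(0,lambda)} is $O(r^{21-n})$ or $O(r^{19-n})$ at infinity, which is $O(r^{-4})$ for $n=25$; convergence only needs $n>22$.
\item Convergence has nothing to do with the $C^9\setminus C^{10}$ regularity, which comes from the gluing scaling. With $\mu\epsilon^7=2^{-57N}$ and $x-x_N=2^{-8N}y$, the $N$-th bump is $2^{-73N}f(|y|^2)H_{ij}(y)$. Its $k$-th $x$-derivatives are of order $2^{(8k-73)N}$, which tends to $0$ for $k\le 9$ and blows up for $k=10$, since $\partial_y^{10}\bigl(f(|y|^2)H(y)\bigr)\not\equiv 0$ for non-polynomial $f$.
\item The $\tau^2$ coefficient has no a priori reason to keep one sign as $\lambda'$ varies, because the quadratic form in $f$ is indefinite; that indefiniteness is the point of the construction. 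Its negativity is an output of the computation, not a consistency check. The genuine checks are the exact symbolic evaluation and your proposed numerical quadrature.
\end{itemize}
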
   
   \begin{prop}
 	\begin{align*}  
 		&2^{-21} \frac{\partial^{2}}{\partial\xi'_{p}\partial\xi'_{q}}F(0,\lambda')
 		\\=& \frac{49}{75}|\mathbb{S}^{24}| 
 		 \cdot\bigg\{ \sum_{ i,j,k  }(W_{ikjp}+W_{ipjk})(W_{ikjq}+W_{iqjk}) J_{1}(\lambda' )
 		+  \sum_{ i,j,k,l}(W_{ikjl}+W_{iljk})^{2}  \delta_{pq}J_{2}(\lambda' )\bigg\}, 
 	\end{align*}
	where:
   \begin{align*}
		& J_{1} (\lambda')=-\frac{\pi \lambda'^4}{103634582825236168704 (1+\lambda')^{27}} \cdot \\
	&\bigg\{  -12\tau^2 \Big( 1203388693650 + 32491494728550 \lambda' + 417911082685785 \lambda'^2 \\
	&\quad + 3398996511721395 \lambda'^3 + 19562934790715805 \lambda'^4 + 84465217343350215 \lambda'^5 \\
	&\quad + 282954355636821120 \lambda'^6 + 750471409058839200 \lambda'^7 + 1595393198393421600 \lambda'^8 \\
	&\quad + 2739941670800304000 \lambda'^9 + 3824934893341125150 \lambda'^{10} + 4365794117140792650 \lambda'^{11} \\
	&\quad + 4095381088507508430 \lambda'^{12} + 3169663057350540810 \lambda'^{13} + 2028122630154888540 \lambda'^{14} \\
	&\quad + 1072191530515372020 \lambda'^{15} + 466441978900251510 \lambda'^{16} + 165579754975106706 \lambda'^{17} \\
	&\quad + 47280452455882737 \lambda'^{18} + 10615184617538683 \lambda'^{19} + 1807244800421229 \lambda'^{20} \\
	&\quad + 219600540003767 \lambda'^{21} + 16986139201980 \lambda'^{22} + 629116266740 \lambda'^{23} \Big) \\
	&+ 4\tau \Big( -341762388996600 - 9227584502908200 \lambda' - 120628732603399605 \lambda'^2 \\
	&\quad - 1017748607586066135 \lambda'^3 - 6231048285172596795 \lambda'^4 - 29493954939551352705 \lambda'^5 \\
	&\quad - 112201268013930371130 \lambda'^6 - 351779772255224704830 \lambda'^7 - 924403804975173772350 \lambda'^8 \\
	&\quad - 2060549412110237587770 \lambda'^9 - 3935320391233862807235 \lambda'^{10} \\
	&\quad - 6504356287497196991025 \lambda'^{11} - 9397112786689621100445 \lambda'^{12} \\
	&\quad - 11956214777840037946215 \lambda'^{13} - 13433329284957200958300 \lambda'^{14} \\
	&\quad - 13298390490379739071860 \lambda'^{15} - 11527327327449073892700 \lambda'^{16} \\
	&\quad - 8663387548776382562004 \lambda'^{17} - 5561934075180959620203 \lambda'^{18} \\
	&\quad - 2975925737495445613385 \lambda'^{19} - 1263239403297325468677 \lambda'^{20} \\
	&\quad - 371477202277838945535 \lambda'^{21} - 28095661113751552698 \lambda'^{22} \\
	&\quad + 47901766779748831170 \lambda'^{23} + 36943173622406365938 \lambda'^{24} \\
	&\quad + 16599828097336996694 \lambda'^{25} + 5360641053122110275 \lambda'^{26} \\
	&\quad + 1290977972998108785 \lambda'^{27} + 228926662675563645 \lambda'^{28} \\
	&\quad + 28445896261120455 \lambda'^{29} + 2221617594950640 \lambda'^{30} \\
	&\quad + 82282133146320 \lambda'^{31} \Big) \\
	&+ (1+\lambda')^2 \Big( 13041534430888713750 + 326038360772217843750 \lambda' \\
	&\quad + 3906100966830627327555 \lambda'^2 + 29836545130144371688875 \lambda'^3 \\
	&\quad + 163060144457147799003750 \lambda'^4 + 678083756640387959907750 \lambda'^5 \\
	&\quad + 2226983716260408450982125 \lambda'^6 + 5914318545387113631081525 \lambda'^7 \\
	&\quad + 12888116239188595870452240 \lambda'^8 + 23215024228522798807362000 \lambda'^9 \\
	&\quad + 34560703409386043167181875 \lambda'^{10} + 42066559107685396799906875 \lambda'^{11} \\
	&\quad + 40535299607122904954691090 \lambda'^{12} + 28081242988789780820210450 \lambda'^{13} \\
	&\quad + 8405792195492415791028965 \lambda'^{14} - 10692238540028126846701875 \lambda'^{15} \\
	&\quad - 21895648772218246431883760 \lambda'^{16} - 22587426314917455615528656 \lambda'^{17} \\
	&\quad - 15615190760104063466027055 \lambda'^{18} - 6532379832638700039885687 \lambda'^{19} \\
	&\quad + 107263715325420943150650 \lambda'^{20} + 2751865607055133732802298 \lambda'^{21} \\
	&\quad + 2433708085351280278653375 \lambda'^{22} + 1068082448523480183719319 \lambda'^{23} \\
	&\quad + 17058743862360269088600 \lambda'^{24} - 337823454744207386737000 \lambda'^{25} \\
	&\quad - 241707877644379888578775 \lambda'^{26} - 46917387261185085252735 \lambda'^{27} \\
	&\quad + 63629107019934812134750 \lambda'^{28} + 78536511796721594319070 \lambda'^{29} \\
	&\quad + 51391005423286035291375 \lambda'^{30} + 23676284280462626027495 \lambda'^{31} \\
	&\quad + 8164515502347735724850 \lambda'^{32} + 2123570523688641557250 \lambda'^{33} \\
	&\quad + 407795286522667306000 \lambda'^{34} + 54859320646265626000 \lambda'^{35} \\
	&\quad + 4633114084778718000 \lambda'^{36} + 185324563391148720 \lambda'^{37} \Big) \bigg\},
\end{align*}
and
\begin{align*}
		&J_{2}(\lambda')=-\frac{\pi \lambda'^4}{17272430470872694784 (\lambda'+1)^{27}} \cdot \\
	&\bigg\{ -4\tau^2 \Big( 907449370971 \lambda'^{23} + 24501133016217 \lambda'^{22} + 317128820014786 \lambda'^{21} \\
	&\quad + 2616869861797230 \lambda'^{20} + 15441084103183635 \lambda'^{19} + 69253251122259945 \lambda'^{18} \\
	&\quad + 244921497216685380 \lambda'^{17} + 699165728774667180 \lambda'^{16} + 1635177202827708150 \lambda'^{15} \\
	&\quad + 3161117253853055250 \lambda'^{14} + 5072563388098755900 \lambda'^{13} + 6757967510022857700 \lambda'^{12} \\
	&\quad + 7450300450223121150 \lambda'^{11} + 6754441199987677050 \lambda'^{10} + 4994902610275536000 \lambda'^9 \\
	&\quad + 2988014480514272160 \lambda'^8 + 1435851850681159695 \lambda'^7 + 550078623003941685 \lambda'^6 \\
	&\quad + 166093871425096050 \lambda'^5 + 38770989509951550 \lambda'^4 + 6770196875084175 \lambda'^3 \\
	&\quad + 834792678395325 \lambda'^2 + 64982989457100 \lambda' + 2406777387300 \Big) \\
	&+ 4\tau \Big( 25176829023450 \lambda'^{31} + 679774383633150 \lambda'^{30} + 8737954012979685 \lambda'^{29} \\
	&\quad + 70966174588807095 \lambda'^{28} + 406874784486378585 \lambda'^{27} + 1737492362727846075 \lambda'^{26} \\
	&\quad + 5645680993281843150 \lambda'^{25} + 13778739736524178650 \lambda'^{24} + 22936247183467295236 \lambda'^{23} \\
	&\quad + 12285556551693069372 \lambda'^{22} - 77823383192067445569 \lambda'^{21} \\
	&\quad - 367580212325623937835 \lambda'^{20} - 1041775096047772408153 \lambda'^{19} \\
	&\quad - 2310499173730864313691 \lambda'^{18} - 4302493387986713255388 \lambda'^{17} \\
	&\quad - 6897101846216536031220 \lambda'^{16} - 9581191733923263692550 \lambda'^{15} \\
	&\quad - 11504912949178634423490 \lambda'^{14} - 11855973756893317537005 \lambda'^{13} \\
	&\quad - 10388646744494852409615 \lambda'^{12} - 7666540617069416748825 \lambda'^{11} \\
	&\quad - 4725746641416122175675 \lambda'^{10} - 2419604617741760772450 \lambda'^9 \\
	&\quad - 1026327069981064187190 \lambda'^8 - 360260032082186031480 \lambda'^7 \\
	&\quad - 104450349812996894040 \lambda'^6 - 24872851751637583575 \lambda'^5 \\
	&\quad - 4798764883345287645 \lambda'^4 - 728763773339206215 \lambda'^3 \\
	&\quad - 82280810684309445 \lambda'^2 - 6151723001938800 \lambda' - 227841592664400 \Big) \\
	&+ (\lambda'+1)^2 \Big( 218964753992804820 \lambda'^{37} + 5474118849820120500 \lambda'^{36} \\
	&\quad + 64825444129007936000 \lambda'^{35} + 482019382462613336000 \lambda'^{34} \\
	&\quad + 2511468397632633820125 \lambda'^{33} + 9665410628163293264725 \lambda'^{32} \\
	&\quad + 28079761639847865477705 \lambda'^{31} + 61168734465597319516625 \lambda'^{30} \\
	&\quad + 94254468871874636981825 \lambda'^{29} + 78692455703482359701625 \lambda'^{28} \\
	&\quad - 50179637444926972278335 \lambda'^{27} - 285986916888895778309975 \lambda'^{26} \\
	&\quad - 428721025909210706632675 \lambda'^{25} - 76674077728927168416875 \lambda'^{24} \\
	&\quad + 1091530949349340952792925 \lambda'^{23} + 2782195864453471458563525 \lambda'^{22} \\
	&\quad + 3718721457358679432195373 \lambda'^{21} + 2041707059438566492224325 \lambda'^{20} \\
	&\quad - 3335754154031378705443995 \lambda'^{19} - 11161964391330220945275475 \lambda'^{18} \\
	&\quad - 17504967370718691296283837 \lambda'^{17} - 17590036951058053776464885 \lambda'^{16} \\
	&\quad - 9045200156393057112269125 \lambda'^{15} + 5848183059559680903416275 \lambda'^{14} \\
	&\quad + 21027014623584811983171875 \lambda'^{13} + 30316355431675283552949835 \lambda'^{12} \\
	&\quad + 31087544138241017880140675 \lambda'^{11} + 25138341469519077359455675 \lambda'^{10} \\
	&\quad + 16600194272471682970061375 \lambda'^9 + 9064786653220893246169575 \lambda'^8 \\
	&\quad + 4098575687974128123178975 \lambda'^7 + 1524164923780960111723735 \lambda'^6 \\
	&\quad + 459540308786969865622875 \lambda'^5 + 109706372544630548879875 \lambda'^4 \\
	&\quad + 19975192434374721412375 \lambda'^3 + 2607433805124890496495 \lambda'^2 \\
	&\quad + 217358907181478562500 \lambda' + 8694356287259142500 \Big) \bigg\}.
\end{align*}
 \end{prop}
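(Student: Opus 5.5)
The plan is to specialize Proposition \ref{Hessian F(0,lambda)} to $n=25$ and to the function $f$ of fractional order $3.5$, and then evaluate every radial integral in closed form. First, the overall constant: at $n=25$ the prefactor $\frac{(n-4)^2}{n(n+2)}|\mathbb{S}^{n-1}|$ equals $\frac{441}{675}|\mathbb{S}^{24}|=\frac{49}{75}|\mathbb{S}^{24}|$. This is exactly the constant in the statement. So it suffices to show that $J_1(\lambda')=\tilde J_1(\lambda')$ and $J_2(\lambda')=\tilde J_2(\lambda')$ with $n=25$ and $f$ as in \eqref{3.5th-poly}. Here $\tau$ is kept as a free parameter and only later set to $\tau_{25}$. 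Since $f$ depends affinely on $\tau$ and $\tilde J_1,\tilde J_2$ are quadratic forms in $(f,f',f'',f''')$, each $J_i$ is a quadratic polynomial in $\tau$. I will therefore compute the coefficients of $\tau^2$, $\tau^1$ and $\tau^0$ separately, matching the three blocks in the statement.

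Next, I will write $f(s)=\sum_{k\in\{-1,1,3,5,7\}}c_k(1+s)^{k/2}$. Every derivative $f^{(m)}(s)$ is then again a finite combination of the powers $(1+s)^{k/2-m}$ with $k$ odd. In each integrand of $\tilde J_1,\tilde J_2$ the function $f$ enters bilinearly, through products such as $f^2$, $ff'$, $f'f'''$ and $r^4f''^2$. Each such product is a combination of $(1+r^2)^{(k+l)/2-m}$ with $k+l$ even, i.e. of integer powers of $(1+r^2)$. Since $n=25$ makes every power $r^{n+2j+1}$ even, each integral reduces to a finite sum of terms
\[
\int_0^\infty \frac{r^{2a}(1+r^2)^{b}}{(\lambda'^2+r^2)^{c}}\,dr,\qquad a,c\in\mathbb{N},\ b\in\mathbb{Z}.
\]
I will first check convergence term by term. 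At infinity the worst growth is $f^2\sim r^{14}$, which together with the weights, e.g. $r^{30}/(\lambda'^2+r^2)^{25}$, gives decay like $r^{-6}$. At $r=0$ everything is regular.

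For the integrals themselves, the integrand is an even rational function, so each one equals $\frac12\int_{\mathbb{R}}$. This is $\pi i$ times the sum of the residues at the poles $r=i$ and $r=i\lambda'$ (alternatively one can use partial fractions in $r^2$). The result is $\pi$ times a rational function of $\lambda'$ whose denominator is a power of $(1+\lambda')$. This is the origin of the common factor $\pi\lambda'^4(1+\lambda')^{-27}$, with the prefactors $\lambda'^{n-4}$ and $\lambda'^{n-2}$ producing the powers of $\lambda'$.

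The main obstacle is purely the bookkeeping. There are many terms with high-order poles, and the final polynomials have coefficients of size $10^{25}$, so I will carry out the summation symbolically (in \emph{Mathematica}, as the paper does). I will cross-check it in two ways: by numerical quadrature of $\tilde J_1,\tilde J_2$ at a few values of $\lambda'$ and $\tau$, and by confirming that the same pipeline reproduces the already-stated formula for $I(\lambda')$ obtained from Proposition \ref{F(0,lambda)}.
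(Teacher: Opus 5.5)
Your proposal is correct and follows essentially the same route as the paper. The paper specializes Proposition~\ref{Hessian F(0,lambda)} to $n=25$ and the order-$3.5$ function $f$, and evaluates the resulting radial integrals in \emph{Mathematica}. You note that $\frac{(n-4)^2}{n(n+2)}|\mathbb{S}^{n-1}|=\frac{49}{75}|\mathbb{S}^{24}|$, so $J_i=\tilde J_i$; you observe the quadratic dependence on $\tau$, the reduction to even rational integrands with poles only at $i$ and $i\lambda'$, and the $r^{-6}$ decay at infinity. These points are accurate and simply make explicit what the paper leaves to the computer.
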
   
\begin{lem}\label{3.5 lem}
	Let $n=25$. Then there exists $\tau_{25}\in\mathbb{R}$ such that $I(1)<0$, $I'(1)=0$, $I''(1)>0$, $J_{1}(1)>0$, and $J_{2}(1)>0$.
 \end{lem}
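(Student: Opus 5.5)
The plan mirrors the proof of Lemma \ref{fourth lem}, except that now $n=25$ is fixed. Every quantity is therefore an explicit rational function of $\lambda'$ and $\tau$, and the proof is a finite verification. The first step is to evaluate the closed form of $I(\lambda')$ from the preceding proposition and its $\lambda'$-derivative at $\lambda'=1$. Because of the prefactor $\lambda'^4(\lambda'+1)^{-25}$, $I'(1)$ is a quadratic polynomial in $\tau$ times a nonzero constant, say
\begin{align*}
I'(1)=c\,(a\tau^2+b\tau+e),\qquad c\neq 0 .
\end{align*}
We then choose $\tau_{25}$ as the root of $a\tau^2+b\tau+e=0$. The explicit value \eqref{tau25} has the form $-(8/1997391566979)(\sqrt{D}+686611801547814)$, which is precisely the quadratic formula applied to this equation. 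So the first task is to confirm that the discriminant is $4\cdot 64\,D/1997391566979^{2}$ up to the normalization of $a$, with $D=1872413768932826265919670946>0$. This gives $I'(1)=0$, and hence $\partial_{\lambda'}F(0,1)=0$.

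Next we substitute $\tau=\tau_{25}\approx-2923.3455$ into the remaining quantities. For $I''(1)$, as in the case $n\ge26$, it is convenient to use $I''(1)=I''(1)-3I'(1)$ at $\tau_{25}$. This reduces $I''(1)$ to an expression that is affine in $\tau$, of the form $\mathcal A^1\tau_{25}+\mathcal A^2$. Its sign can then be checked by sandwiching $\tau_{25}$ between two rationals, for instance $-2923.35<\tau_{25}<-2923.34$, and using the sign of $\mathcal A^1$, so that the check becomes exact rational arithmetic. For $J_1(1)$, $J_2(1)$ and $I(1)$, which are genuinely quadratic in $\tau$, we have two options. One is to use the relation $a\tau_{25}^2=-b\tau_{25}-e$ to reduce them to affine functions of $\tau_{25}$ and apply the same interval argument. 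The other is to note that $\tau_{25}=r+s\sqrt D$ with $r,s$ rational, so each quantity has the form $\alpha+\beta\sqrt D$ with rational $\alpha,\beta$. Its sign is then decided exactly by comparing $\alpha^2$ with $\beta^2D$ together with the signs of $\alpha$ and $\beta$. In each case the goal is to establish $I(1)<0$, $I''(1)>0$, $J_1(1)>0$ and $J_2(1)>0$.

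The main obstacle is numerical rather than conceptual: the margins may be tiny relative to the roughly $10^{25}$-sized coefficients. In particular, for $I(1)<0$ we need $\tau_{25}$ to lie strictly inside the interval where the quadratic $\mathcal B_1\tau^2+\mathcal B_2\tau+\mathcal B_3$ has the right sign, since for $n=25$ there is no discriminant-sign shortcut. I would handle this with the exact $\alpha+\beta\sqrt D$ certification, carried out in Mathematica with rational arithmetic, rather than with floating point. With the four inequalities and $I'(1)=0$ established, Lemma \ref{3.5 lem} follows, exactly as in Proposition \ref{fourth prop}.
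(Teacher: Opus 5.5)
This is essentially the paper's proof. The paper computes $I'(1)$ as a nonzero multiple of $5992174700937\tau^2+32957366474295072\tau+45136950172205187200$; since $5992174700937=3\cdot1997391566979$, \eqref{tau25} has exactly the form you describe. It takes $\tau_{25}$ to be the more negative root, writes $I(1)$, $I''(1)$, $J_1(1)$, $J_2(1)$ as explicit quadratics in $\tau_{25}$, and evaluates them numerically ($\approx-0.0041$, $0.0101$, $0.2466$, $0.2138$). Your exact $\alpha+\beta\sqrt D$ certification is a rigorous version of that last step.

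One step of yours is wrong as stated: for $n=25$, $I''(1)-3I'(1)$ is \emph{not} affine in $\tau$.

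\begin{itemize}
\item For $n\ge 26$ the trick works because $\tau$ is the constant term of the polynomial $f$. By scaling, every $\tau^2$-contribution to $I(\lambda')$ is then a multiple of $\lambda'^4$, and $12-3\cdot4=0$.
\item For $n=25$ the term $\tau(s+1)^{-1/2}$ destroys this homogeneity. The $\tau^2$-coefficient of $I(\lambda')$ becomes $\lambda'^4$ times a degree-$23$ polynomial divided by $(\lambda'+1)^{25}$.
\item Consistently, in the paper's formulas the $\tau^2$-coefficient of $I''(1)$ is not three times that of $I'(1)$.
\end{itemize}

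This is easy to repair: treat $I''(1)$ exactly like the other three quantities. Either reduce with $a\tau_{25}^2=-b\tau_{25}-e$, which amounts to subtracting $\frac{27569429864823}{3\cdot 5992174700937}\,I'(1)$ instead of $3I'(1)$, or use the $\alpha+\beta\sqrt D$ form.

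Your concern about $I(1)$ is unfounded. The paper's quadratic $22101086455227\tau^2+96208621810778160\tau+109176791784653017600$ has negative discriminant (roughly $-4\times10^{32}$). Hence $I(1)<0$ for every $\tau$, just as for $n\ge 26$, and there is a discriminant-sign shortcut after all.
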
 
 \begin{proof}
A direct computation shows that
 \begin{align*}
	I'(1)
	=&-\frac{7 \pi  \left(5992174700937 \tau^2+32957366474295072  \tau+45136950172205187200\right)}{8576655130364372582400}.
\end{align*}
Solving the equation $I'(1)=0$, we select the root
\begin{align*}
	\tau_{25}=-\frac{8\left(\sqrt{1872413768932826265919670946}+686611801547814\right)}{1997391566979}\approx-2923.3455.
\end{align*}
Then we have
 \begin{align*}
	I(1) |_{\tau_{25}}
	=& -\frac{\pi  \left(22101086455227  \tau_{25}^2+96208621810778160  \tau_{25}+109176791784653017600\right)}{12864982695546558873600}
	\\  \approx&-0.0041<0,
\end{align*}   
and
\begin{align*}
	I'' (1 )|_{\tau_{25}}
	=&-\frac{7 \pi  \left(27569429864823  \tau_{25}^2+222733074462954048  \tau_{25}+403655945400788504192\right)}{25729965391093117747200}
	\\\approx &0.0101>0.
\end{align*}  
 
On the other hand,
\begin{align*}
	J_{1}(1)|_{\tau_{25}}
	= &\frac{\pi\left(172831653637\tau_{25}^2+306674306360544\tau_{25}-393500867003307648\right)}{2382404202878992384}\\
	\approx &0.2466>0,
\end{align*}
and
\begin{align*}	 J_{2}(1) |_{\tau_{25}} 
  =&\frac{\pi  \left(4878475952883 \tau_{25}^2+6070206880074240 \tau_{25}-8380883301904529792\right)}{228710803476383268864}
	\\\approx &0.2138>0.
 \end{align*} 
 \end{proof}
Analogously to Proposition \ref{fourth prop}, we obtain the following.
\begin{prop}\label{3.5 prop}
	Let $n=25$, and let $\tau_{25}$ be as in \eqref{tau25} and Lemma \ref{3.5 lem}. Then the function $F(\xi',\lambda')$ has a strict local minimum at $(0,1)$.
 \end{prop}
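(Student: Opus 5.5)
The argument will copy the proof of Proposition \ref{fourth prop}, with Lemma \ref{3.5 lem} used in place of Lemma \ref{fourth lem}. The plan has three steps.

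\textbf{Step 1: $(0,1)$ is a critical point.} The spatial gradient vanishes because $F(\xi',\lambda')=F(-\xi',\lambda')$; this symmetry holds since $\bar H_{ij}(-y)=\bar H_{ij}(y)$. It gives $\partial_{\xi'}F(0,\lambda')=0$ for every $\lambda'$. For the radial direction, the expansion of $F(0,\lambda')$ for $n=25$ shows that $\partial_{\lambda'}F(0,1)$ is a fixed positive multiple of $I'(1)$. The multiple is $2^{21}\cdot\frac{21}{2}|\mathbb{S}^{24}|\sum(W_{ikjl}+W_{iljk})^2$, which is nonzero because some components of $W$ are nonzero. With $\tau=\tau_{25}$ from \eqref{tau25}, Lemma \ref{3.5 lem} gives $I'(1)=0$, so $\partial_{\lambda'}F(0,1)=0$.

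\textbf{Step 2: the Hessian at $(0,1)$ is block diagonal and positive definite.} The symmetry proposition gives $\partial^2_{\xi'\lambda'}F(0,\lambda')=0$, so the mixed block vanishes.
\begin{itemize}
\item The $\lambda'\lambda'$ entry is the same positive multiple of $I''(1)$, which is positive by Lemma \ref{3.5 lem}.
\item The $\xi'\xi'$ block is $2^{21}\frac{49}{75}|\mathbb{S}^{24}|\{M J_1(1)+c\,\delta_{pq}J_2(1)\}$. Here $M_{pq}=\sum_{i,j,k}(W_{ikjp}+W_{ipjk})(W_{ikjq}+W_{iqjk})$ is a Gram matrix, hence positive semidefinite, and $c=\sum(W_{ikjl}+W_{iljk})^2>0$.
\item Lemma \ref{3.5 lem} gives $J_1(1)>0$ and $J_2(1)>0$, so this block is a semidefinite matrix plus a positive multiple of the identity, hence positive definite.
\end{itemize}
A nondegenerate critical point with positive definite Hessian is a strict local minimum.

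\textbf{Step 3: $F(0,1)<0$.} This is needed later in Proposition \ref{prop h}. It follows from $I(1)<0$ in Lemma \ref{3.5 lem}, multiplied by the same positive constant.

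\textbf{Main obstacle.} The only real issue is regularity, since $f$ here involves $(s+1)^{\pm1/2}$ rather than being a polynomial. I will check that $f$ is smooth on $[0,\infty)$, which holds because $s+1\ge1$, so that the differentiation formulas of Propositions \ref{F(0,lambda)} and \ref{Hessian F(0,lambda)} apply. I will also check that every radial integral converges at infinity for $n=25$. The integrands grow like $f^2\sim s^7$, and decay of order roughly $r^{n-2(n-4)}$ from the kernels leaves enough margin. Both points are implicitly confirmed by the finite closed-form expressions for $I$, $J_1$, $J_2$ already computed.
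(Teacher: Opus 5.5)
Your proposal is correct and is essentially the paper's proof. The paper only says the argument is analogous to Proposition \ref{fourth prop}: $(0,1)$ is critical by the $\xi'\mapsto-\xi'$ symmetry together with $I'(1)=0$, and the Hessian is positive definite because $I''(1)>0$, $J_1(1)>0$ and $J_2(1)>0$ by Lemma \ref{3.5 lem}. Your additions make explicit what the paper leaves implicit: the vanishing mixed block, the Gram-matrix argument for the $\xi'\xi'$ block, positivity of the prefactor, and smoothness of $f$ on $[0,\infty)$, with the radial integrands in $I$ decaying at worst like $r^{21-n}=r^{-4}$.
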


\subsection{An Auxiliary Estimate for the Energy Expansion}\label{Sect. Auxiliary Est}
In this appendix subsection, we provide an auxiliary estimate for the energy expansion used in the proof of Proposition \ref{energy expansion}.
\begin{prop}\label{Brencdle-Marques 4 poly}
	Suppose $n\geq 25$. Then
 \begin{align*} 
 \big|&\int _{\mathbb{R}^{n}} \tilde{u}_{0} ^{  \frac{n-2}{n-4}}     L_{\tilde{g}_{0}} \tilde{u}_{0} ^{  \frac{n-2}{n-4}}   -\tilde{u}_{0} ^{  \frac{n-2}{n-4}} (-\Delta \tilde{u}_{0} ^{  \frac{n-2}{n-4}} )
 -\frac{1}{2}\int _{B_{\frac{\rho}{\epsilon}} }\sum_{i,k,l }  \tilde{h}_{il}\tilde{h}_{kl}\partial_{i}\tilde{u}_{0} ^{  \frac{n-2}{n-4}}\partial_{k}\tilde{u}_{0} ^{  \frac{n-2}{n-4}} 
\\ &+\frac{n-2}{16(n-1)}\int _{B_{\frac{\rho}{\epsilon}} }\sum_{i,k,l }  (\partial_{l}\tilde{h}  _{ik})^{2} \tilde{u}_{0} ^{  \frac{2n-4}{n-4}}\big| 
 \\= &O( \mu^3 \varepsilon^{23} \rho^ 7|\log \epsilon|)+O\Big(\alpha  (\frac{\epsilon}{\rho} )^{n-4} \Big)  . 
 \end{align*}
\end{prop}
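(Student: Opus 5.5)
The plan is to follow the proof of \cite[Proposition 11]{BrendleMarques2009}, which treats exactly the quantity $\int U L_{\tilde g_0}U-\int U(-\Delta U)$ with $U:=\tilde u_0^{\frac{n-2}{n-4}}=2^{\frac{n-2}{2}}\big(\frac{\lambda'}{\lambda'^2+|y-\xi'|^2}\big)^{\frac{n-2}{2}}$, a standard Yamabe bubble. The only changes are that $f$ is now the quartic \eqref{4th-poly} or the function \eqref{3.5th-poly}, and that the scaling of $\tilde h$ is $\mu\epsilon^{10}$ or $\mu\epsilon^{9}$.

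First, since $\det\tilde g_0=1$, integration by parts gives
\begin{align*}
\int U L_{\tilde g_0}U-\int U(-\Delta U)=\int (\tilde g_0^{ij}-\delta_{ij})\partial_iU\partial_jU+\frac{n-2}{4(n-1)}\int R_{\tilde g_0}U^2 .
\end{align*}
I would split $\mathbb R^n$ into $B_{\rho/\epsilon}$ and its complement. On $\{|y|\ge\rho/\epsilon\}$ we have $|\tilde h|+|\partial\tilde h|+|\partial^2\tilde h|\le C\alpha$, $|\partial U|\le C|y|^{1-n}$, and $\tilde h=0$ for $|y|\ge1/\epsilon$. This region therefore contributes $O(\alpha(\epsilon/\rho)^{n-2})\subset O(\alpha(\epsilon/\rho)^{n-4})$.

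On $B_{\rho/\epsilon}$, I would expand $\tilde g_0^{ij}-\delta_{ij}=-\tilde h_{ij}+\frac12\tilde h_{il}\tilde h_{lj}+O(|\tilde h|^3)$.
\begin{itemize}
\item \emph{Linear term.} It vanishes identically, because $\partial_iU$ is proportional to $(y-\xi')_i$ and $\tilde h_{ij}(y)(y-\xi')_i(y-\xi')_j=\tilde h_{ij}\xi'_i\xi'_j$ integrates against a radial weight in $y-\xi'$. More precisely, the $\xi'$-dependent pieces combine into terms that are absorbed at the same order. For $\xi'$ in $\Lambda$ one uses $x_iH_{ij}(x)=0$ exactly as in Brendle--Marques.
\item \emph{Quadratic term.} It gives $\frac12\int_{B_{\rho/\epsilon}}\tilde h_{il}\tilde h_{kl}\partial_iU\partial_kU$.
\item \emph{Scalar curvature term.} I would use the expansion $R_{\tilde g_0}=\partial_i\partial_j\tilde h_{ij}-\frac14(\partial_l\tilde h_{ik})^2+\partial_i(\text{quadratic})+O(|\tilde h|^2|\partial^2\tilde h|+|\tilde h||\partial\tilde h|^2)$. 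Since $\partial_i\partial_j\tilde h_{ij}=0$ (because $\partial_iH_{ij}=0$ and $x_iH_{ij}=0$ force $\partial_i\bar H_{ij}=0$), the leading part is $-\frac{n-2}{16(n-1)}\int_{B_{\rho/\epsilon}}(\partial_l\tilde h_{ik})^2U^2$.
\item \emph{Divergence part.} The divergence terms of quadratic order are integrated by parts onto $U^2$. They cancel against the cross terms, which is the identity already checked in \cite{BrendleMarques2009} using only the algebraic properties of $H$ and the fact that $\bar H$ is a radial multiple of $H$. The boundary terms on $\partial B_{\rho/\epsilon}$ are bounded by the $\alpha(\epsilon/\rho)^{n-4}$ error.
\end{itemize}

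The remaining task is the cubic error. Here $|\tilde h|\le C\mu\epsilon^{10}(1+|y|)^{10}$ for $n\ge26$, since the quartic $f$ gives $|\bar H|\lesssim|y|^{10}$. For $n=25$ the bound is $C\mu\epsilon^9(1+|y|)^9$, from $f\sim s^{7/2}$. The cubic terms are then bounded by
\begin{align*}
C\mu^3\epsilon^{30}\int_{B_{\rho/\epsilon}}(1+|y|)^{30-2n}\,dy\qquad\text{or}\qquad C\mu^3\epsilon^{27}\int_{B_{\rho/\epsilon}}(1+|y|)^{28-2n}\,dy .
\end{align*}
Here $|\partial U|^2\sim|y|^{2-2n}$, and $U^2|\partial^2\tilde h|\ldots$ yields the same power. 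These integrals are $\lesssim\mu^3\epsilon^{30}(\rho/\epsilon)^{30-n}$ or $\log$ in the borderline case. I expect this bookkeeping to be the main obstacle: the stated rate $\mu^3\epsilon^{23}\rho^7|\log\epsilon|$ must dominate these in every dimension. For $n\ge30$ the integral converges, so the bound is $\mu^3\epsilon^{30}\le\mu^3\epsilon^{23}\rho^7$. For $25\le n\le29$ one gets $\mu^3\epsilon^{n}\rho^{30-n}$ (with $\log$ at $n=30$), and $n\ge25>23$ together with $\epsilon\le\rho$ gives the claim. The $n=25$ case works the same way with the $\epsilon^{9}$ scaling.

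Finally, I would check that the non-polynomial $f$ for $n=25$ causes no trouble. It is smooth on $[0,\infty)$ because of the $(s+1)$ shift, and it satisfies $|f^{(k)}(s)|\le C(1+s)^{7/2-k}$. That is all the growth bounds use.
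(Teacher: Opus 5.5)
Your outline matches the paper's proof. You write the difference as $\int(\tilde g_0^{ij}-\delta_{ij})\partial_iU\partial_jU+\frac{n-2}{4(n-1)}\int R_{\tilde g_0}U^2$ with $U=\tilde u_0^{\frac{n-2}{n-4}}$, expand $e^{-\tilde h}$ to second order and $R_{\tilde g_0}$ by Brendle's formula, use $\partial_i\tilde h_{ij}=0$ on $B_{\rho/\epsilon}$, and bound cubic remainders inside and everything crudely outside.

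\emph{The linear term.} This is the one step you do differently. The paper uses the pointwise identity that $\partial_iU\partial_kU-\frac{n-2}{4(n-1)}\partial_i\partial_kU^2$ is a multiple of $\delta_{ik}$ for a bubble. Since $\tilde h$ is trace-free, this gives
\begin{align*}
\int_{\mathbb R^n}\tilde h_{ik}\partial_iU\partial_kU=\frac{n-2}{4(n-1)}\int_{\mathbb R^n}\partial_i\partial_k\tilde h_{ik}\,U^2 ,
\end{align*}
whose integrand vanishes on $B_{\rho/\epsilon}$. The result is $O(\alpha\epsilon^2(\epsilon/\rho)^{n-4})$, using only the trace-free and divergence-free properties. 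Your symmetry argument is also valid, but not for the reason you give. The factor $f(|y|^2)$ and the ball $B_{\rho/\epsilon}$ are radial about $0$, not about $\xi'$, so the weight is invariant only under rotations fixing $\xi'$. Hence $\int_{B_{\rho/\epsilon}}f(|y|^2)\psi(|y-\xi'|)y_py_q\,dy=a\delta_{pq}+b\xi'_p\xi'_q$. The term then vanishes because $A_{pq}=W_{ipjq}\xi'_i\xi'_j$ satisfies $\sum_pA_{pp}=0$ and $A\xi'=0$. It is exactly zero, so drop the remark about pieces being ``absorbed at the same order''.

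\emph{The curvature part.} No integration by parts or boundary terms are needed here. In Brendle's expansion, every quadratic term other than $-\frac14(\partial_l\tilde h_{ik})^2$ carries a factor $\partial_l\tilde h_{il}$, so it vanishes pointwise in $B_{\rho/\epsilon}$.

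\emph{The cubic remainder.} Your bookkeeping is off by two powers. Since $|\tilde h|^3\le C\mu^3\epsilon^{30}(1+|y|)^{30}$ and $|\partial U|^2\sim(1+|y|)^{2-2n}$, and likewise for $(|\tilde h|^2|\partial^2\tilde h|+|\tilde h||\partial\tilde h|^2)U^2$, the integrand is $\mu^3\epsilon^{30}(1+|y|)^{32-2n}$, not $(1+|y|)^{30-2n}$. The resulting bounds are:
\begin{align*}
\mu^3\epsilon^{n-2}\rho^{32-n}\ (n\le 31),\qquad \mu^3\epsilon^{30}\log(\rho/\epsilon)\ (n=32),\qquad \mu^3\epsilon^{30}\ (n\ge 33).
\end{align*}
All of these are $O(\mu^3\epsilon^{23}\rho^7|\log\epsilon|)$ because $(\epsilon/\rho)^{n-25}\le 1$. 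So the quartic case survives, but with no margin at $n=25$, which is where the stated rate comes from. The extra factor $(\epsilon/\rho)^{n-23}$ you claim is not there.

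\emph{The case $n=25$.} For the fractional $f$ with the $\epsilon^9$ scaling, the correct integrand is $\mu^3\epsilon^{27}(1+|y|)^{29-2n}=\mu^3\epsilon^{27}(1+|y|)^{-21}$. Your estimate then yields only $O(\mu^3\epsilon^{23}\rho^4)$. This does not imply $O(\mu^3\epsilon^{23}\rho^7|\log\epsilon|)$ under $\epsilon\le\rho$; it fails, for instance, for the gluing choice $\rho=\frac{1}{4N^2}$, $\epsilon=2^{-8N}$. Even your own exponent $28-2n$ gives $\mu^3\epsilon^{24}\rho^3$, which does not follow from $\epsilon\le\rho$ either. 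So ``works the same way'' is not justified. For $n=25$ you should state the error as $O(\mu^3\epsilon^{23}\rho^4)$, which is harmless downstream since it is $o(\mu^2\epsilon^{18})$. The paper's proof also writes a single uniform bound, and its surrounding text invokes this estimate for quartic $f$, so the same caveat applies to the statement as written there.
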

\begin{proof}
We estimate
 \begin{align*} 
	& \int _{\mathbb{R}^{n}} |d\tilde{u}_{0} ^{  \frac{n-2}{n-4}}   |_{\tilde{g}_{0}} ^{2}     - |d\tilde{u}_{0} ^{  \frac{n-2}{n-4}}  |  ^{2}    
+\sum_{i,k} \tilde{h}_{ik}   \partial_{i} \tilde{u}_{0} ^{  \frac{n-2}{n-4}} \partial_{k} \tilde{u}_{0} ^{  \frac{n-2}{n-4}}
	 -	\frac{1}{2}\int _{B_{\frac{\rho}{\epsilon}} }\sum_{i,k,l }  \tilde{h}_{il}\tilde{h}_{kl}\partial_{i}\tilde{u}_{0} ^{  \frac{n-2}{n-4}}\partial_{k}\tilde{u}_{0} ^{  \frac{n-2}{n-4}}  
	 \\\leq  & \int _{B_{\frac{\rho}{\epsilon}}}  \big| ( g^{ik}-\delta_{ik}+\tilde{h}_{ik}-\frac{1}{2} \sum_{ l }  \tilde{h}_{il}\tilde{h}_{kl}) \partial_{i} \tilde{u}_{0} ^{  \frac{n-2}{n-4}} \partial_{k} \tilde{u}_{0} ^{  \frac{n-2}{n-4}} \big| +	 \int _{\mathbb{R}^{n}\setminus B_{\frac{\rho}{\epsilon}} }  \big| ( g^{ik}-\delta_{ik}+\tilde{h}_{ik} )\partial_{i} \tilde{u}_{0} ^{  \frac{n-2}{n-4}} \partial_{k} \tilde{u}_{0} ^{  \frac{n-2}{n-4}} \big| 
	 	 \\\leq  & \int _{B_{\frac{\rho}{\epsilon}}} O( \tilde{h}^{3}) \frac{1}{(1+|y-\xi '|)^{2n-2 }  }+	 \int _{\mathbb{R}^{n}\setminus B_{\frac{\rho}{\epsilon}} }  O( \tilde{h}^{2}) \frac{1}{(1+|y-\xi '|)^{2n-2 }  } 
	\\\leq &C \mu^3 \varepsilon^{23} \rho^ 7|\log \epsilon|+C \alpha^2  \left(\frac{\epsilon}{\rho}\right)^{n-2},
\end{align*}
and 
 \begin{align*} 
 & \int _{\mathbb{R}^{n}}   R_{\tilde{g}_{0}}  \tilde{u}_{0} ^{  \frac{2n-4}{n-4}}  +\frac{1}{4}\sum_{i,k,l} \int _{B_{\frac{\rho}{\epsilon}} }(\partial_{l}\tilde{h}_{ik})^{2}   \tilde{u}_{0} ^{  \frac{2n-4}{n-4}} 
 \\	\leq& \int _{\mathbb{R}^{n}} \big| [ R_{\tilde{g}_{0}}  +\frac{1}{4}\sum_{i,k,l} (\partial_{l}\tilde{h}_{ik})^{2} ]  \tilde{u}_{0} ^{  \frac{2n-4}{n-4}}\big| 
+\int _{\mathbb{R}^{n}\setminus B_{\frac{\rho}{\epsilon}} }   |  R_{\tilde{g}_{0}}  |  \tilde{u}_{0} ^{  \frac{2n-4}{n-4}} 
\\	\leq& \int _{B_{\frac{\rho}{\epsilon}}} (|\tilde{h}|^{2}|\partial^{2}\tilde{h}|+|\tilde{h}|  |\partial\tilde{h}|^{2})  \frac{1}{(1+|y-\xi '|)^{2n-4 }  }
+\alpha^{2} \epsilon^{2}\int _{\mathbb{R}^{n}\setminus B_{\frac{\rho}{\epsilon}} }     \frac{1}{(1+|y-\xi '|)^{2n-4 }  }  
\\\leq &C \mu^3 \varepsilon^{23} \rho^ 7|\log \epsilon|+C \alpha^2 \epsilon^{2}\left(\frac{\epsilon}{\rho}\right)^{n-4} .   
\end{align*}	 
On the other hand, note that
 \begin{align*} 
	&\partial_{i}\tilde{u}_{0} ^{\frac{n-2}{n-4}}\partial_{k}\tilde{u}_{0} ^{\frac{n-2}{n-4}}- \frac{n-2}{4(n-1)}\partial_{i} \partial_{k}\tilde{u}_{0} ^{\frac{2n-4}{n-4}}=\frac{1}{n}\big(|d\tilde{u}_{0} ^{\frac{n-2}{n-4}}|^{2}- \frac{n-2}{4(n-1)}\Delta \tilde{u}_{0} ^{\frac{2n-4}{n-4}}\big)\delta_{ik},
\end{align*}	
Since $\tilde{h}$ is trace-free, we have
\begin{align*} 
	&   \int _{\mathbb{R}^{n}}  \tilde{h}_{ik} \partial_{i}\tilde{u}_{0} ^{  \frac{n-2}{n-4}}\partial_{k}\tilde{u}_{0} ^{  \frac{n-2}{n-4}} =  \int _{\mathbb{R}^{n}}  \frac{n-2}{4(n-1)} \tilde{h}_{ik}\partial_{i} \partial_{k}\tilde{u}_{0} ^{\frac{2n-4}{n-4}}
 =   \int _{\mathbb{R}^{n}}  \frac{n-2}{4(n-1)} \partial_{i} \partial_{k}\tilde{h}_{ik}\tilde{u}_{0} ^{\frac{2n-4}{n-4}}.
\end{align*}
Using $\sum_{i}\partial_{i}\tilde{h}_{ik}=0$ in $B_{\frac{\rho}{\epsilon}}$, we obtain
\begin{align*} 
	& \big|  \int _{\mathbb{R}^{n}}  \tilde{h}_{ik} \partial_{i}\tilde{u}_{0} ^{  \frac{n-2}{n-4}}\partial_{k}\tilde{u}_{0} ^{  \frac{n-2}{n-4}} \big| 
\leq C\alpha\epsilon^{2}\int _{\mathbb{R}^{n}\setminus B_{\frac{\rho}{\epsilon}} }    \tilde{u}_{0} ^{\frac{2n-4}{n-4}}\leq C\alpha \epsilon^{2}(\frac{\epsilon}{\rho}  )^{n-4}.
\end{align*}
Combining the above estimates, we obtain the desired result.
 \end{proof}
  
 \section{An Eigenvalue Problem}\label{Sect. eigenvalue problem}

In Appendix B, we discuss an eigenvalue problem \eqref{eigen eq 1}, which is used in Section \ref{Sect. linear problem}.

Denote
 \begin{align}  \label{U def} 
 	U (x): =\alpha_{n} (  1+|x  |^{2}  )^{-\frac{n-4}{2}} ,\ \   x\in\mathbb{R}^{n}\  ,\mathrm{where}  \  \alpha_{n} =2 ^{ \frac{n-4}{ 2} }.
 \end{align}	
We consider the following eigenvalue problem for $v\in H=\{v\in W^{2,2}_{\text{loc}}:\int |\nabla^2 v|^2<\infty\}$:
\begin{align}\label{eigen eq 1}
	\Delta^{2} v  +\mu\frac{ (n^{2}-4 )(n-4)}{4n} [U^{\frac{2}{ n-4 }}  \Delta ( U^{\frac{2}{ n-4 }}v)+\frac{2}{n-2}(U^{\frac{6-n}{ n-4 }}\Delta U^{\frac{n-2}{ n-4 }} )v]=0   \ \ \text{in} \  \mathbb{ R}^{n}.
\end{align} 
Since
\begin{align}\label{U n-2 n-4 U 8}
	\Delta U^{\frac{n-2}{ n-4 }} =-\frac{n(n-2)}{4}U^{\frac{n+2}{ n-4 }},
\end{align}
 equation \eqref{eigen eq 1} is equivalent to
\begin{align} \label{eigen eq} 
	\Delta^{2} v + \frac{ (n^{2}-4 )(n-4)}{4n} \mu[U^{\frac{2}{ n-4 }}  \Delta ( U^{\frac{2}{ n-4 }}v)-\frac{n}{2}U^{  \frac{8}{n-4}} v]=0  \ \ \text{in} \  \mathbb{ R}^{n}.
\end{align}

For any $\xi\in\mathbb{R}^n$ and $\lambda\in\mathbb{R}_+$, define
 \begin{align}\label{U xi  lambda def}
  U_{\xi,\lambda}:= \lambda^{-\frac{n-4}{2}}U\big(\lambda^{-1} (x-\xi) \big).
 \end{align} 

We refer to \cite{LuWei2000,BartschWethWillem2003} for related eigenvalue problems for other equations; the proofs there rely on analyzing the second derivatives of the functional associated with the sharp Sobolev inequalities. To determine the eigenvalues of \eqref{eigen eq}, we rewrite the equation on the round sphere $\mathbb{S}^n$. Using the spectrum of the Laplace operator on $\mathbb{S}^n$, we derive the corresponding eigenvalues of \eqref{eigen eq}. We refer to \cite{ORey} for related computations for the Laplacian.

We now state the following eigenvalue theorem, which is used in the linearized problem \eqref{linear equ zeta z}.
\begin{thm}\label{eigenvalue thm}
The eigenvalues $\mu$ of \eqref{eigen eq} are discrete and given by
\[ 
\frac{\left(\lambda_k+\frac{n(n-2)}{4}\right)\left(\lambda_k+\frac{(n+2)(n-4)}{4}\right)}{\frac{\left(n^2-4\right)(n-4)}{4 n}\left(\lambda_k+\frac{n^2}{4}\right)},
\] 
where $\lambda_k = k(k+n-1)$ are the eigenvalues of $-\Delta_{\mathbb S^n}$. The corresponding eigenspaces consist of the projections onto $\mathbb{R}^{n+1}$ of the spaces of spherical harmonics of degree $k$ on $\mathbb S^n$.

In particular, when $\mu=\frac{n}{n-4}$ (corresponding to $\lambda_1=n$), the eigenspace is spanned by
\[
\left\{\partial_{\lambda} U_{\xi,\lambda},\, \partial_{\xi_j} U_{\xi,\lambda} \;:\; j=1,\ldots,n \right\}.
\]
\end{thm}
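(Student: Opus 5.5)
The plan is to transfer \eqref{eigen eq} to the round sphere by stereographic projection $\pi:\mathbb{S}^n\setminus\{N\}\to\mathbb{R}^n$. Recall that $g_{\mathbb{R}^n}$ pulls back to $\big(\tfrac{2}{1+|x|^2}\big)^{-2}g_{\mathbb{S}^n}$. Since $U=\alpha_n(1+|x|^2)^{-\frac{n-4}{2}}$, we have $U^{\frac{4}{n-4}}g_{\mathbb{R}^n}=g_{\mathbb{S}^n}$ up to the normalization $\alpha_n=2^{\frac{n-4}{2}}$. We then use conformal covariance of the two operators involved:
\[
\Delta^2(U\psi)=U^{\frac{n+4}{n-4}}P_{\mathbb{S}^n}\psi,\qquad -\Delta(U^{\frac{n-2}{n-4}}\varphi)=U^{\frac{n+2}{n-4}}L_{\mathbb{S}^n}\varphi .
\]
The second identity uses that $U^{\frac{n-2}{n-4}}$ is the Yamabe conformal factor, consistent with \eqref{U n-2 n-4 U 8}. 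Writing $v=U\psi$, the term $U^{\frac{2}{n-4}}\Delta(U^{\frac{2}{n-4}}v)$ becomes $U^{\frac{2}{n-4}}\Delta(U^{\frac{n-2}{n-4}}\psi)=-U^{\frac{n+4}{n-4}}L_{\mathbb{S}^n}\psi$, and $U^{\frac{8}{n-4}}v=U^{\frac{n+4}{n-4}}\psi$. Dividing by $U^{\frac{n+4}{n-4}}$ turns \eqref{eigen eq} into the spectral problem on $\mathbb{S}^n$
\[
P_{\mathbb{S}^n}\psi=\mu\,\frac{(n^2-4)(n-4)}{4n}\Big(L_{\mathbb{S}^n}\psi+\frac n2\psi\Big),
\]
where $P_{\mathbb{S}^n}=\big(-\Delta+\tfrac{n(n-2)}{4}\big)\big(-\Delta+\tfrac{(n+2)(n-4)}{4}\big)$ and $L_{\mathbb{S}^n}=-\Delta+\tfrac{n(n-2)}{4}$. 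Hence $L_{\mathbb{S}^n}+\tfrac n2=-\Delta+\tfrac{n^2}{4}$.

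Next I would justify the function spaces. For $v\in H$, the Sobolev and Hardy-type inequalities together with the decay $U\sim|x|^{4-n}$ show that $\psi=v/U$ lies in $W^{2,2}(\mathbb{S}^n)$, with the energy $\int|\nabla^2v|^2$ equivalent to $\int\psi P_{\mathbb{S}^n}\psi$. The point $N$ has capacity zero for $W^{2,2}$ when $n>4$, so a weak solution on $\mathbb{S}^n\setminus\{N\}$ extends to a weak solution on all of $\mathbb{S}^n$. Elliptic regularity then makes it smooth. This removable-singularity and function-space step is where I expect the main care to be needed. What has to be checked is that no spurious solutions, growing like $|x|^{4-n}$ times something singular at infinity, survive. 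This follows from finiteness of the $H$-energy.

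Both operators are polynomials in $-\Delta_{\mathbb{S}^n}$, so we expand $\psi$ in spherical harmonics $\psi=\sum_k Y_k$. The equation decouples into
\[
\Big(\lambda_k+\tfrac{n(n-2)}{4}\Big)\Big(\lambda_k+\tfrac{(n+2)(n-4)}{4}\Big)Y_k=\mu\,\frac{(n^2-4)(n-4)}{4n}\Big(\lambda_k+\tfrac{n^2}{4}\Big)Y_k .
\]
The coefficient $\lambda_k+\frac{n^2}{4}$ is positive, so the admissible $\mu$ are exactly the stated values. These values are strictly increasing in $k$, because $\frac{(\lambda+a)(\lambda+b)}{\lambda+c}$ is increasing for $\lambda\ge0$ when $a,b<c$. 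The eigenvalues are therefore distinct and discrete, and each eigenspace is the space of degree-$k$ harmonics transported back by $v=U\,(Y_k\circ\pi^{-1})$.

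For $k=1$ we have $\lambda_1=n$. Then the numerator is $\frac{(n+2)n}{4}\cdot\frac{n^2-4}{4}$ and the denominator is $\frac{(n^2-4)(n-4)}{4n}\cdot\frac{(n+2)^2}{4}\cdot\frac{4}{4}$. This yields $\mu=\frac{n}{n-4}$, which should be double-checked by direct arithmetic. Finally, the $n+1$ functions $\partial_\lambda U_{\xi,\lambda}$ and $\partial_{\xi_j}U_{\xi,\lambda}$ at $(\xi,\lambda)=(0,1)$ equal $U$ times $\frac{1-|x|^2}{1+|x|^2}$ and $\frac{2x_j}{1+|x|^2}$, up to constants. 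These are precisely the coordinate functions of $\mathbb{S}^n$, that is, the degree-one harmonics. By translation and dilation invariance of the equation, the same description of the eigenspace holds at general $(\xi,\lambda)$.
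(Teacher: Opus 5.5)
Your proposal is correct and follows the paper's proof essentially step for step: stereographic projection with $\varphi=U^{-1}v$, conformal covariance of $P$ and $L$ to reach $P_{\mathbb{S}^n}\varphi=\mu\,\frac{(n^2-4)(n-4)}{4n}\big(-\Delta_{\mathbb{S}^n}+\frac{n^2}{4}\big)\varphi$, then a spherical-harmonic decomposition; your monotonicity remark also usefully supplies the distinctness of the $\mu_k$, which the paper leaves implicit. One arithmetic correction in the $k=1$ check: $\lambda_1+\frac{(n+2)(n-4)}{4}=\frac{(n+4)(n-2)}{4}$ and $\lambda_1+\frac{n^2}{4}=\frac{n(n+4)}{4}$, which give exactly $\mu=\frac{n}{n-4}$, whereas the factors $\frac{n^2-4}{4}$ and $\frac{(n+2)^2}{4}$ you wrote would give $\frac{n^2}{(n-4)(n+2)}$.
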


\begin{proof}
Let $\Phi:\ss^n\setminus\{N\}\to\R^n$ be the stereographic projection from the north pole $N$.
In coordinates $y=\Phi(x)$, the round metric satisfies
\begin{equation}\label{eq:metric}
(\Phi^{-1})^{*}g_{\ss^{n}}=U^{\frac{4}{n-4}}(y) \,|dy|^{2},
\qquad 
U^{\frac{2}{n-4}}(y):=\frac{2}{1+{|y|}^{2}}.
\end{equation}

Given any function $v:\R^n\to\R$, define $\varphi$ on $\ss^n\setminus \{N\}$ by
\begin{equation}\label{eq:phi-def}
\varphi(x):=U^{-1}(y)\,v(y),\qquad y=\Phi(x).
\end{equation}
Equivalently,
\begin{equation}\label{eq:v-phi}
v(y)=U \varphi\circ\Phi^{-1}(y) .
\end{equation}

Let $P_{\ss^n}$ denote the Paneitz operator on the round sphere; that is,
\begin{equation}\label{eq:P-factor}
P_{\Sp^n}
=\left(-\Delta_{\Sp^n}+\frac{n(n-2)}{4}\right)
 \left(-\Delta_{\Sp^n}+\frac{(n+2)(n-4)}{4}\right).
\end{equation}
Since $P_{\R^n}=\Delta^2$ and $(\Phi^{-1})^*g_{\ss^n}=U^{\frac{4}{n-4}}\delta$, the conformal covariance yields
\begin{equation}\label{eq:bilap-cov}
(P_{\ss^n}\varphi)\circ\Phi^{-1}=U^{-\frac{n+4}{n-4}}\Delta^2 \left(U\varphi\circ\Phi^{-1}\right). 
\end{equation}
Using \eqref{eq:v-phi}, this becomes
\begin{equation}\label{eq:bilap-transform}
\Delta^{2}v(y)=U^{\frac{n+4}{n-4}}\,(P_{\ss^n}\varphi)\big(\Phi^{-1}(y)\big).
\end{equation}

Let $L_{\mathbb{S}^n} := -\Delta_{\mathbb{S}^n} + \frac{n(n-2)}{4}$ denote the conformal Laplacian on $\mathbb{S}^n$. Its conformal covariance is given by
\begin{equation}\label{eq:conf-lap-cov}
L_{\ss^n}\varphi\circ\Phi^{-1}
=(U^{\frac{n-2}{n-4}})^{-\frac{n+2}{n-2}}\Big(-\Delta\big(U^{\frac{n-2}{n-4}}\varphi\circ\Phi^{-1}\big)\Big).
\end{equation}
By \eqref{eq:v-phi}, we have $U^{\frac{2}{n-4}} v=U^{\frac{n-2}{n-4}}\varphi\circ\Phi^{-1}$. Hence
\begin{equation}\label{eq:OmegaDeltaOmegav}
U^{\frac{2}{n-4}}\,\Delta(U^{\frac{2}{n-4}} v)
=-U^{\frac{n+4}{n-4}}\,(L_{\ss^n}\varphi)\circ\Phi^{-1}.
\end{equation}
Moreover,
\begin{equation}\label{eq:Omega4v}
U^{\frac{8}{n-4}}v
=U^{\frac{n+4}{n-4}}\varphi\circ\Phi^{-1}.
\end{equation}

Therefore, substituting \eqref{eq:bilap-transform}--\eqref{eq:Omega4v} into \eqref{eigen eq} and noting that $\int_{\mathbb R^n} |\nabla^2 v|^2\,dx<\infty$, elliptic regularity yields the equivalent equation on $\ss^n$:
\begin{equation}\label{eq:sphere}
P_{\ss^n}\varphi
+\frac{(n^{2}-4)(n-4)}{4n}\,\mu
\left(\Delta_{\ss^n}\varphi-\frac{n^{2}}{4}\varphi\right)=0
\qquad\text{on }\ss^n.
\end{equation}

Since the eigenfunctions of $-\Delta_{\ss^n}$ span $W^{2,2}(\mathbb S^n)$ and are also eigenfunctions of \eqref{eq:sphere}, the result follows.

When the eigenvalue of \eqref{eigen eq} is $\mu=\frac{n}{n-4}$, the corresponding eigenvalue of \eqref{eq:sphere} is $\lambda_1 = n$.
The associated eigenfunctions on $\mathbb{S}^n$ are $\{x_1,\dots,x_{n+1}\}$.
Via the stereographic projection \eqref{eq:v-phi}, these correspond in $\mathbb{R}^n$ to
\[
\left\{\partial_{\lambda} U_{\xi,\lambda},\, \partial_{\xi_j} U_{\xi,\lambda} \;:\; j=1,\ldots,n \right\}.
\]
\end{proof}

\begin{cor} For any function $\varphi\in C^{\infty}(\mathbb S^n)$, we have
    \[\frac{\int_{\mathbb S^n} \varphi P_{g_{\mathbb S^n}}\varphi dv_{g_{\mathbb S^n}}}{\int_{\mathbb S^n}|\nabla_{\ss^n}\varphi|^2+\frac{n^{2}}{4}\varphi^2 dv_{g_{\mathbb S^n}}}\ge \frac{n^2-4}{4}.\]
\end{cor}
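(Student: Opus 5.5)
The plan is to diagonalize both quadratic forms using the spherical harmonic decomposition from the proof of Theorem~\ref{eigenvalue thm}. Write $\varphi=\sum_{k\ge 0}\varphi_k$, where $\varphi_k$ is a spherical harmonic of degree $k$, so that $-\Delta_{\mathbb S^n}\varphi_k=\lambda_k\varphi_k$ with $\lambda_k=k(k+n-1)$. The $\varphi_k$ are $L^2$-orthogonal and every operator involved is a polynomial in $\Delta_{\mathbb S^n}$. Using the factorization \eqref{eq:P-factor},
\begin{align*}
\int_{\mathbb S^n}\varphi P_{g_{\mathbb S^n}}\varphi=\sum_k \Big(\lambda_k+\tfrac{n(n-2)}{4}\Big)\Big(\lambda_k+\tfrac{(n+2)(n-4)}{4}\Big)\|\varphi_k\|_{L^2}^2,
\end{align*}
and
\begin{align*}
\int_{\mathbb S^n}|\nabla\varphi|^2+\tfrac{n^2}{4}\varphi^2=\sum_k\Big(\lambda_k+\tfrac{n^2}{4}\Big)\|\varphi_k\|_{L^2}^2 .
\end{align*}
Since both coefficients are positive, the Rayleigh quotient is a convex combination of the modal ratios
\[
g(\lambda)=\frac{\big(\lambda+\frac{n(n-2)}{4}\big)\big(\lambda+\frac{(n+2)(n-4)}{4}\big)}{\lambda+\frac{n^2}{4}}.
\]
The claim therefore reduces to the scalar inequality $g(\lambda_k)\ge \frac{n^2-4}{4}$ for every mode that occurs in $\varphi$.

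Next, clear the positive denominator and form the polynomial
\[
D(\lambda):=\Big(\lambda+\tfrac{n(n-2)}{4}\Big)\Big(\lambda+\tfrac{(n+2)(n-4)}{4}\Big)-\tfrac{n^2-4}{4}\Big(\lambda+\tfrac{n^2}{4}\Big).
\]
Expanding gives
\[
D(\lambda)=\lambda^2+\tfrac{n^2-4n-4}{4}\lambda-\tfrac{n(n^2-4)}{4}.
\]
Two facts about $D$ finish the nonconstant modes. First, $D(n)=n^2-n^2=0$, so equality holds exactly on degree-one harmonics. This matches the eigenvalue $\mu=\frac n{n-4}$ in Theorem~\ref{eigenvalue thm}, i.e.\ the conformal (kernel) directions. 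Second, the roots of $D$ have product $-\frac{n(n^2-4)}{4}<0$, so $n$ is the unique positive root. Hence $D(\lambda)>0$ for $\lambda>n$, and in particular $g(\lambda_k)>\frac{n^2-4}{4}$ for every $k\ge 2$.

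The main obstacle is the $k=0$ (constant) component, and this is where I expect the argument to need care. There $D(0)<0$; indeed $g(0)=\frac{(n^2-4)(n-4)}{4n}$. So the estimate holds only once the constant mode is excluded, and I cannot see how to control it by the other modes since the quotient is a convex combination with no cross terms. I would therefore carry out the proof for $\varphi$ with $\int_{\mathbb S^n}\varphi=0$, which is the form relevant to the linearized analysis. In that class the combination of the two steps above gives the inequality, with equality if and only if $\varphi$ is a first spherical harmonic. Before finalizing, I would check whether the corollary is intended on this mean-zero subspace, since that is the only setting in which the argument above applies.
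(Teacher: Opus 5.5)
Your argument is correct, and you should replace your hesitation about the constant mode with a definite conclusion: the corollary as printed is false. For $\varphi\equiv 1$ the quotient equals $g(0)=\frac{(n^2-4)(n-4)}{4n}$. This is strictly smaller than $\frac{n^2-4}{4}$ for every $n>4$, since the ratio of the two is $\frac{n-4}{n}$. So there is nothing to ``control'': the constant function is a counterexample.

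The paper gives no separate proof of the corollary. It is meant to be read off from Theorem~\ref{eigenvalue thm}, via the same spherical-harmonic diagonalization you use; your modal ratio $g(\lambda_k)$ is exactly $\frac{(n^2-4)(n-4)}{4n}$ times the eigenvalue $\mu$ there. But its constant $\frac{n^2-4}{4}$ corresponds to the eigenvalue $\mu=\frac{n}{n-4}$ of the degree-one modes, not to the bottom eigenvalue $\mu=1$ of the $k=0$ mode. The paper's own Theorem~\ref{eigenvalue thm2} records that bottom eigenvalue as $\mu_1=1$ with eigenfunction $U$, which is $\varphi\equiv 1$ on $\mathbb S^n$.

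The correct statement is your mean-zero version, which is sharp, with equality exactly on first spherical harmonics. Alternatively, for arbitrary $\varphi$ the inequality holds with constant $\frac{(n^2-4)(n-4)}{4n}$, with equality only for constants. This also follows at once from your computation, since $g(\lambda_k)\ge\frac{n^2-4}{4}>g(0)$ for all $k\ge 1$.
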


We also provide a proof of the eigenvalue problem used in the proof of Lemma \ref{est linear equ zeta}.

We begin by recalling that
	\begin{align}  \label{eigen U} 
		\Delta^{2} U + \frac{ (n^{2}-4 )(n-4)}{4n}  (U^{\frac{2}{ n-4 }}  \Delta   U^{\frac{n-2 }{ n-4 }} -\frac{n}{2}U^{  \frac{8}{n-4}} U)=0,
	\end{align} 
and
	\begin{align} \label{eigen Z j} 
		\Delta^{2} Z_{j}+ \frac{  n^{2}-4  } {4 }  [U^{\frac{2}{ n-4 }}  \Delta (  U^{\frac{2}{ n-4 }}Z_{j})-\frac{n}{2}U^{  \frac{8}{n-4}} Z_{j}]=0, \ \ j=0,1,\ldots,n.
	\end{align} 
	where $Z_{0}=\partial_{\lambda}U_{\xi,\lambda}$ and $Z_{j}=\partial_{\xi_{j}}U_{\xi,\lambda}$ for $j=1,\ldots,n$ and $\xi=(\xi_{1},\ldots,\xi_{n})\in \mathbb{R}^n$.
\begin{thm}\label{eigenvalue thm2}
		The eigenvalues $\mu$ of \eqref{eigen eq} are discrete and
		\begin{align*}    
			\mu_{1}=1,\mu_{2}=\mu_{3}=\cdots=\mu_{n+2}=\frac{n}{n-4}<\mu_{n+3} .
		\end{align*}	
		The corresponding linearly independent eigenvectors are given by
		\begin{align*}    
			v_{1}=U, v_{j+2}= Z_j,\ \ j=0,1,\ldots,n.
		\end{align*}	
	\end{thm}
	\begin{proof}
		We follow the argument given in \cite{BartschWethWillem2003}.

		Consider the functional
		\begin{align*}
			Q(u)=\frac{1}{2}	\|u\|^{2}-\frac{ n-4 }{2( n-2) }  \|u\|_{*}^{\frac{2(n-2)}{n-4}},
		\end{align*}
		that is
		\begin{align*}
			Q(u)= \frac{1}{2} \int_{\mathbb{R}^{n }} (\Delta u)^{2}-\frac{ n-4  }{2( n-2) }  \int_{\mathbb{R}^{n }}\frac{ (n+2 )(n-4 )}{4} |\nabla u ^{\frac{n-2}{n-4}}|^{2} .
		\end{align*}
		Then $U$ minimizes the functional $Q(u)$ on the manifold
		\begin{align*}
			\mathcal{N}:=\{  u\in\mathcal{D}_0^{2,2}(\mathbb{R}^{n } ) \cap C^2(\mathbb R^n)\setminus \{0\}:    	\|u\|^{2}= \|u\|_{*}^{\frac{2(n-2)}{n-4}},u >0\,\,\text{and}\,\,R_{g_u}>0  \},
		\end{align*} 
where $\mathcal{D}_0^{2,2}(\mathbb{R}^n)$ is defined as the completion of $C_0^\infty(\mathbb{R}^n)$ under the norm $\|\cdot\|$, and the norms $\|\cdot\|$ and $\|\cdot\|_{*}$ are given by
\begin{align}\label{norm def}
\|u\| &:= \left( \int_{\mathbb{R}^n} (\Delta u)^2   \right)^{\frac{1}{2}}, \\
\|u\|_{*} &:= \left( \int_{\mathbb{R}^n} \frac{(n+2)(n-4)}{4} \, \left| \nabla u^{\frac{n-2}{n-4}} \right|^2  \right)^{\frac{n-4}{2(n-2)}}.
\end{align}
		Indeed, since locally
		\begin{align}  \label{best soblev  const}
			\frac{	\|u\|^{2}}{\|u\|_{*}^{2}}\geq 
			\frac{	\|U\|^{2}}{\|U\|_{*}^{2}} ,
		\end{align}
		we have
		\begin{align*}
			Q(u)= &\big[\frac{1}{2}  -\frac{ n-4  }{2( n-2) }   \big]\|u\|_{*}^{\frac{2( n-2)}{n-4}} 
			\\= &\big[\frac{1}{2}  -\frac{ n-4 }{2( n-2) }    \big](\|u\|_{*}^{\frac{2 }{n-4}} )^{ n-2 }
			\\=&\big[\frac{1}{2}  -\frac{ n- 4  }{2( n-2) }  \big]\Big(\frac{	\|u\|}{\|u\|_{*}}\Big)^{ n-2 } 
			\\\geq  &\big[\frac{1}{2}  -\frac{ n-4 }{2( n-2) }   \big]\Big(\frac{	\|U\|}{\|U\|_{*}}\Big)^{ n-2 }  
			\\=& \big[\frac{1}{2}  -\frac{ n-4  }{2( n-2) }  \big]\|U\|_{*}^{\frac{2( n-2)}{n-4}}=Q(U).
		\end{align*}
		Hence, the second derivative $d^{2} Q(U)$ is given by
		\begin{align*}
			(v,w)\mapsto\int_{\mathbb{R}^{n }}\Delta v\Delta w +\frac{ n^{2}-4  }{4}\int_{\mathbb{R}^{n }}U^{\frac{2}{ n-4 }}v \Delta ( U^{\frac{2}{ n-4 }}w)+\frac{2}{n-2}(U^{\frac{6-n}{ n-4 }}\Delta U^{\frac{n-2}{ n-4 }} )vw,
		\end{align*}
		which is a non-negative quadratic form when restricted to the tangent space $T_{U}\mathcal{N}$. Since $T_{U}\mathcal{N}$ has codimension one, we deduce that $\mu_{2}\geq \frac{n}{n-4}$. By \eqref{eigen Z j}, we have $\mu_{2}=\frac{n}{n-4}$, and $Z_j$ ($j=0,1,\ldots,n$) belong to the corresponding eigenspace $V_{2}$. Moreover, by \eqref{eigen U}, we have $\mu_{1}=1$ with the corresponding eigenfunction $U$, which implies
		\begin{align*}
			\int_{\mathbb{R}^{n }}(\Delta v)^{2}  \geq \frac{ (n^{2}-4 )(n-4)}{4n} \int_{\mathbb{R}^{n }} -  U^{\frac{2}{ n-4 }}v  \Delta ( U^{\frac{2}{ n-4 }}v) -\frac{2}{n-2}(U^{\frac{6-n}{ n-4 }}\Delta U^{\frac{n-2}{ n-4 }} )v^{2}.
		\end{align*}
		That is, by \eqref{U n-2 n-4 U 8},  
		\begin{align*} 
			\int_{\mathbb{R}^{n }}(\Delta v)^{2}  \geq  \frac{ (n^{2}-4 )(n-4)}{4n}  \int_{\mathbb{R}^{n }}  |\nabla( U^{\frac{2}{ n-4 }}v)|^{2} +\frac{n}{2} U^{\frac{8}{ n-4 }}  v^{2}.
		\end{align*}
		More precisely, we have the key inequality
		\begin{align}\label{key eigen ineq}
			\int_{\mathbb{R}^{n }}(\Delta v)^{2}  \geq  \frac{ (n^{2}-4 )(n-4)}{4n}  \int_{\mathbb{R}^{n }}  |\nabla( \frac{2}{1+|x|^{2}}v)|^{2} +\frac{n}{2}\frac{16}{(1+|x|^{2})^{4}} v^{2}
		\end{align}
		for any $v \in\mathcal{D}_0^{2,2}(\mathbb{R}^{n } )$.

		It suffices to prove that the dimension of $V_{2}$ does not exceed $n+1$. Denote the eigenvalues of degree $k$ of the Laplace--Beltrami operator on $\mathbb{S}^{n-1}$ by
		\begin{align*}
			\lambda_{k}=k(k+n-2), \ \ k=0,1,2,\ldots
		\end{align*}	
		and let $\Psi_{k,i}(\theta)$ be the corresponding spherical harmonics on $\mathbb{S}^{n-1}$. For any $v(x)=v(r,\theta)\in V_{2}$ with $r\geq 0$ and $\theta\in\mathbb{S}^{n-1}$, it is clear that
		\begin{align} \label{mu2 eigen eq} 
			\Delta^{2} v + \frac{  n^{2}-4  }{4 } [U^{\frac{2}{ n-4 }} \Delta (U^{\frac{2}{ n-4 }}v)-\frac{n}{2}U^{  \frac{8}{n-4}} v]=0  \ \ \text{in} \  \mathbb{ R}^{n}.
		\end{align}
		Now set
		\begin{align*}    
			\phi_{k,i}(r)= \int_{\mathbb{S}^{n-1}} v(x)\Psi_{k,i} (\theta)d\theta,
		\end{align*}		 
		and
		\begin{align}\label{definition of w} 
			w_{k,i}(r)=-\int_{\mathbb{S}^{n-1}} \Delta v(x)\Psi _{k,i} (\theta)d\theta.
		\end{align}	
		First, we compute
		\begin{align*}    
			\Delta \phi_{k,i} =&  \int_{\mathbb{S}^{n-1}}   \big( \Delta v-\frac{1}{r^{2}}\Delta _{\mathbb{S}^{n-1}} v \big) \Psi  _{k,i} (\theta)d\theta   
			=   \int_{\mathbb{S}^{n-1}} \Delta v (x)\Psi  _{k,i} (\theta)d\theta -\frac{1}{r^{2}} \int_{\mathbb{S}^{n-1}}  v\Delta _{\mathbb{S}^{n-1}}  \Psi  _{k,i} (\theta)d\theta
			\\=& - w_{k,i}  +\frac{\lambda_{i}}{r^{2}} \int_{\mathbb{S}^{n-1}}  v  \Psi  _{k,i} (\theta)d\theta
			=  -w_{k,i} +\frac{\lambda_{i}}{r^{2}} \phi_{k,i},
		\end{align*}	
		that is
		\begin{align} \label{w_ki eq}
			w_{k,i}=-(\Delta -\frac{\lambda_{k}}{r^{2}} )\phi_{k,i}.
		\end{align}	
		Next, by \eqref{definition of w} and then using \eqref{mu2 eigen eq}, we find
		\begin{align*}    
			\Delta w_{k,i} =&  -\int_{\mathbb{S}^{n-1}}   \big[ \Delta^{2} v-\frac{1}{r^{2}}\Delta _{\mathbb{S}^{n-1}} ( \Delta v)\big] \Psi  _{k,i} (\theta)d\theta   
			\\=&  -\int_{\mathbb{S}^{n-1}} \Delta^{2} v (x)\Psi  _{k,i} (\theta)d\theta +\frac{1}{r^{2}} \int_{\mathbb{S}^{n-1}}   \Delta v\Delta _{\mathbb{S}^{n-1}}  \Psi  _{k,i} (\theta)d\theta
			\\=&      \frac{  n^{2}-4  }{4 }\int_{\mathbb{S}^{n-1}}   \big[U^{\frac{2}{ n-4 }} \Delta (  U^{\frac{2}{ n-4 }}v)-\frac{n}{2}U^{  \frac{8}{n-4}} v\big]      \Psi  _{k,i} (\theta)d\theta 
			-\frac{\lambda_{k}}{r^{2}} \int_{\mathbb{S}^{n-1}}   \Delta v   \Psi  _{k,i} (\theta)d\theta
			\\=&   \frac{  n^{2}-4  }{4 } \Big\{ U^{\frac{ 2}{n-4}}  \int_{\mathbb{S}^{n-1}}    \Delta ( U^{\frac{2}{ n-4 }}v)\Psi  _{k,i} (\theta)d\theta   -\frac{n}{2}U^{  \frac{8}{n-4}} \int_{\mathbb{S}^{n-1}}v \Psi  _{k,i} (\theta)d\theta     \Big\}  +\frac{\lambda_{k}}{r^{2}} w_{k,i}
			\\=& \frac{n^{2}-4}{4}  \Big\{ U^{\frac{ 2}{n-4}}  \big[\Delta(U^{\frac{ 2}{n-4}} \phi_{k,i} )-\frac{\lambda_{k}}{r^{2}} (U^{\frac{ 2}{n-4}} \phi_{k,i} )  \big]  -\frac{ n }{2}  U^{\frac{ 8}{n-4}  }    \phi _{k,i}\Big\} 
			+\frac{\lambda_{k}}{r^{2}} w_{k,i},
		\end{align*}	 
		where we used
		\begin{align*}  
			&    \int_{\mathbb{S}^{n-1}} \Delta   ( U^{\frac{2}{ n-4 }}v)\Psi  _{k,i} (\theta)d\theta  
			\\ =&\int_{\mathbb{S}^{n-1}}    \big(  \partial_{rr}+\frac{n-1}{r}\partial_{r}\big)( U^{\frac{2}{ n-4 }}v)\Psi  _{k,i} (\theta)d\theta  +\frac{1}{r^{2}}\int_{\mathbb{S}^{n-1}}  \Delta _{\mathbb{S}^{n-1}}  ( U^{\frac{2}{ n-4 }}v)\Psi  _{k,i} (\theta)d\theta 
			\\ =&\Delta  \int_{\mathbb{S}^{n-1}}     U^{\frac{2}{ n-4 }}v \Psi  _{k,i} (\theta)d\theta  +\frac{1}{r^{2}} \int_{\mathbb{S}^{n-1}} U^{\frac{2}{ n-4 }}v \Delta _{\mathbb{S}^{n-1}}  \Psi  _{k,i} (\theta)d\theta       
			\\=& \Delta   (U^{\frac{2}{ n-4 }}\phi_{k,i}) -\frac{\lambda_{k}}{r^{2}}U^{\frac{2}{ n-4 }}\phi_{k,i}.
		\end{align*}	
		Therefore, we obtain
		\begin{align}   \label{Delta w_ki eq}
			(\Delta -\frac{\lambda_{k}}{r^{2}} ) w_{k,i}  =&\frac{n^{2}-4}{4} \Big[ U^{\frac{ 2}{n-4}}  (\Delta -\frac{\lambda_{k}}{r^{2}} )  (U^{\frac{ 2}{n-4}} \phi_{k,i} ) -\frac{ n }{2}  U^{\frac{ 8}{n-4}  }    \phi _{k,i} \Big].
		\end{align}	 
		Combining \eqref{w_ki eq} and \eqref{Delta w_ki eq}, we obtain
		\begin{align*}
			(\Delta -\frac{\lambda_{k}}{r^{2}} )^{2}\phi_{k,i}=&-\frac{n^{2}-4}{4} \Big[ U^{\frac{ 2}{n-4}}  (\Delta -\frac{\lambda_{k}}{r^{2}} )  (U^{\frac{ 2}{n-4}} \phi_{k,i} ) -\frac{ n }{2}  U^{\frac{ 8}{n-4}  }    \phi _{k,i} \Big].
		\end{align*}	 
Using the operator identity for the radial functions 
\[
\left(\Delta - \frac{\lambda_k}{r^2}\right)
= r^k \left( \frac{\partial^2}{\partial r^2}
+ \frac{n+2k-1}{r} \frac{\partial}{\partial r} \right) r^{-k},
\]
we deduce that
\begin{align*}
\left(\Delta - \frac{\lambda_k}{r^2}\right)^2
= r^k \left( \frac{\partial^2}{\partial r^2}
+ \frac{n+2k-1}{r} \frac{\partial}{\partial r} \right)^2 r^{-k}.
\end{align*}
Consequently, we obtain
		\begin{align*}
			&\big(\frac{\partial ^{2}}{\partial r^{2}}+\frac{n+2k-1}{r}\frac{\partial}{\partial r}\big)^{2} (r^{-k}\phi_{k,i})
			\\=&-\frac{n^{2}-4}{4} \Big[ U^{\frac{ 2}{n-4}}\big(\frac{\partial ^{2}}{\partial r^{2}}+\frac{n+2k-1}{r}\frac{\partial}{\partial r}\big) (U^{\frac{ 2}{n-4}} r^{-k}\phi_{k,i} ) {-}\frac{ n }{2} U^{\frac{ 8}{n-4}  }   (r^{-k}\phi_{k,i}) \Big].
		\end{align*}	

		Set $\tilde{\phi }_{k,i}(r)=r^{-k}\phi_{k,i}(r)\in C^{\infty}(0,\infty)$. Then
		\begin{align*}
			&\big(\frac{\partial ^{2}}{\partial r^{2}}+\frac{n+2k-1}{r}\frac{\partial}{\partial r}\big)^{2} \tilde{\phi }_{k,i} 
			= -\frac{n^{2}-4}{4} \Big[ U^{\frac{ 2}{n-4}}(\frac{\partial ^{2}}{\partial r^{2}}+\frac{n+2k-1}{r}\frac{\partial}{\partial r}) (U^{\frac{ 2}{n-4}} \tilde{\phi }_{k,i} ) {-}\frac{ n }{2} U^{\frac{ 8}{n-4}  }  \tilde{\phi }_{k,i} \Big].
		\end{align*}	
		Subsequently, define the functions $\Phi _{k,i}:\mathbb{R}^{n+2k}\to \mathbb{R}$ by $\Phi _{k,i}(y)=\tilde{\phi }_{k,i}(|y|)$. By \cite[Lemma 2.3]{BartschWethWillem2003} and an argument similar to the proof of \cite[Theorem 2.1]{BartschWethWillem2003}, $\Phi _{k,i}\in \mathcal{D}_0^{2,2}(\mathbb{R}^{n+2k})$ and $\Phi _{k,i}$ is a weak solution of the equation
		\begin{align*}
			\Delta_{\mathbb{R}^{n+2k}}^{2}\Phi _{k,i}=&-\frac{n^{2}-4}{4} \Big[ U^{\frac{ 2}{n-4}} \Delta_{\mathbb{R}^{n+2k}}  ( U^{\frac{ 2}{n-4}}\Phi _{k,i} )   -\frac{ n }{2} U^{\frac{ 8}{n-4}  }    \Phi _{k,i} \Big] .
		\end{align*}
		Moreover, by elliptic regularity theory, $\Phi _{k,i}$ is smooth.
		Multiplying by $\Phi _{k,i} $ and integrating over $\mathbb{R}^{n+2k}$, we have
		\begin{align*}
			\int_{\mathbb{R}^{n+2k}}( \Delta_{\mathbb{R}^{n+2k}}\Phi _{k,i})^{2}=&\frac{n^{2}-4}{4}  \int_{\mathbb{R}^{n+2k}}  \left(|\nabla ( U^{\frac{ 2}{n-4}}\Phi _{k,i} )|^{2}   +\frac{ n }{2} U^{\frac{ 8}{n-4}  }    \Phi _{k,i}  ^{2}\right).
		\end{align*}
		Applying \eqref{key eigen ineq} in $\mathbb{ R}^{n+2k}$, we obtain
		\begin{align*}
			\int_{\mathbb{R}^{n+2k}}( \Delta_{\mathbb{R}^{n+2k}}\Phi _{k,i})^{2}\geq  \frac{ ((n+2k)^{2}-4 )(n+2k-4)}{4(n+2k)}  \int_{ \mathbb{R}^{n+2k } }  |\nabla( \frac{2}{1+ r^{2}}\Phi _{k,i})|^{2} +\frac{n+2k}{2}\frac{16}{(1+r^{2})^{4}} \Phi _{k,i}^{2}
		\end{align*}
		Consequently, we obtain
		\begin{align*}
			&\frac{ ((n+2k)^{2}-4 )(n+2k-4)}{4(n+2k)}  \int_{ \mathbb{R}^{n+2k } }  |\nabla( \frac{2}{1+ r^{2}}\Phi _{k,i})|^{2} +\frac{n+2k}{2}\frac{16}{(1+r^{2})^{4}} \Phi _{k,i}^{2}
			\\\leq& \frac{n^{2}-4}{4} \int_{\mathbb{R}^{n+2k}}  |\nabla ( \frac{2}{1+ r^{2}}\Phi _{k,i} )|^{2}   +\frac{ n }{2} \frac{16}{(1+r^{2})^{4}}  \Phi _{k,i}^{2}.
		\end{align*}
		Thus $\Phi _{k,i}\equiv 0$ for $k\geq 2$, which implies that the dimension of $V_{2}$ is at most $n+1$.
	\end{proof}

\end{document}